\documentclass[a4paper, 10pt]{amsart}
\usepackage[english]{babel}
\usepackage{cite}
\usepackage{amsfonts}
\usepackage{amssymb}
\usepackage{latexsym}
\usepackage{booktabs}
\usepackage{amsthm}
\usepackage{amsopn}
\usepackage{amsmath}
\usepackage[all]{xy}
\usepackage{verbatim}
\usepackage{amssymb}
\usepackage{mathtools}
\usepackage{float}
\usepackage[active]{srcltx}
\usepackage{fullpage}
\usepackage{hyperref}
\usepackage{manyfoot}
\usepackage{amscd}
\usepackage[dvips]{graphics}
\usepackage[dvips]{graphicx}
\usepackage{amscd}
\usepackage{epsfig,color,xcolor}
\usepackage{imakeidx}
\usepackage{pgfplots}
\pgfplotsset{compat=1.15}
\usepackage{mathrsfs}
\usepackage{tikz}
\usepackage{diagbox}
\usepackage{tikz-cd}
\usepackage{geometry}
\usetikzlibrary{arrows}
\usepackage{makecell}
\usetikzlibrary{intersections}

\usepackage{subcaption}

\usetikzlibrary{arrows.meta, positioning, calc}

\usepackage{float}

\usetikzlibrary{arrows.meta}
\usepackage{xcolor}
\usepackage[colorinlistoftodos, textsize=small]{todonotes}

\def\arxiv#1{%
	\href{http://arxiv.org/abs/#1}{arXiv:#1}}%
\def\doi#1{%
	\href{https://doi.org/#1}{doi:#1}}%
    
\theoremstyle{remark}
\newtheorem{remark}{Remark}

\theoremstyle{rmk}

\newtheorem{theorem}{Theorem}[section]
\newtheorem{lemma}[theorem]{Lemma}
\newtheorem{prop}[theorem]{Proposition}
\newtheorem{proposition}[theorem]{Proposition}
\newtheorem{corollary}[theorem]{Corollary}

\newtheorem{definition}[theorem]{Definition}
\newtheorem{example}[theorem]{Example}

\newtheorem*{Question}{Question}

\newcommand{\PP}{\mathbb{P}}

\newcommand{\cO}{\mathcal{O}}

\newcommand{\rk}{\mathrm{rk}}

\newcommand{\NN}{\mathbb{N}}
\newcommand{\ZZ}{\mathbb{Z}}

\newcommand{\QQ}{\mathbb{Q}}
\newcommand{\CC}{\mathbb{C}}

\newcommand{\lin}{\textrm{lin}}
\newcommand{\chig}{\langle \chi, g\rangle} 
\newcommand{\etag}{\langle \eta, g\rangle} 
\newcommand{\ejg}{\langle \epsilon_j, g\rangle} 
\newcommand{\eig}{\langle \epsilon_i, g\rangle} 
	
\def\Spec{\textnormal{Spec}}
\def\NS{\textnormal{NS}}
\def\Mot{\textnormal{Mot}}
\def\Vect{\textnormal{Vect}}
\def\AbMot{\textnormal{AbMot}}
\def\HH{\mathrm{H}_{\textnormal{mot}}}
\def\ChMot{\mathcal{M}_{\textnormal{rat}}}
\def\SmPr{\textnormal{SmPr}}
\def\opp{\textnormal{op}}

\def\AJ{\textnormal{AJ}}
\def\Spec{\textnormal{Spec}}
\def\Mot{\textnormal{Mot}}
\def\AbMot{\textnormal{AbMot}}
\def\HH{\mathrm{H}_{\textnormal{mot}}}

\def\tra{\textnormal{tra}}
\def\alg{\textnormal{alg}}

\renewcommand{\to}{\longrightarrow}

\makeatletter
\def\blfootnote{\xdef\@thefnmark{}\@footnotetext}

\title{The classification of SNC $(p,k)$-planes with Hodge structures of K3 type}

\author{Federico Fallucca, Matteo Penegini and  Anna Ulivi}

\date{}

\begin{document}

\subjclass[2010]{14E20, 14J28, 14J29, 14C30}

\keywords{Plane coverings, K3 surfaces, surfaces of general type, Hodge structures, Mumford--Tate conjecture} 

\maketitle 

\begin{abstract}
A SNC $(p,k)$-plane is a $(\mathbb Z/p)^k$-Galois cover $\pi\colon X \to \PP^2$ with a simple normal crossing branch locus. In this setting, there exist several intermediate quotients $Y_1, Y_2,\ldots, Y_{\phi_{p,k}}$ corresponding to subgroups of $(\mathbb Z/p)^k$ of index $p$. We focus on the case where each $Y_i$ is either a surface with geometric genus $0$ or $1$, with at least one with geometric genus $1$.  We classify all SNC $(p,k)$-planes satisfying these conditions obtaining that an extensive list of cases are minimal regular surfaces of general type with positive geometric genus. Moreover, these assumptions allow us to have strong control over the weight-$2$ Hodge structure of the minimal model of the covering surface $X$. Therefore, we also prove that Tate and Mumford–Tate conjectures hold for such surfaces.
\\
As an independent result of our methods, we obtain a classification of SNC $(p,1)$-planes with geometric genus $1$, which turn out to be precisely all K3 surfaces, and a classification of SNC $(p,1)$-planes with geometric genus $0$, which turn out to be instead rational surfaces.
\end{abstract}


\tableofcontents
\newpage 


\section{Introduction}\label{sec:intro}
The theory of abelian covers was developed over the last decades of the twentieth century by various authors and was ultimately codified in the seminal paper \cite{Par91a}. This theory has proven to be a powerful tool for constructing varieties with rich and intriguing geometric properties. In this work we shall exploit the theory of abelian covers to study the weight--2 Hodge structure of surfaces of general type.  

Our approach is inspired by ideas of Morrison~\cite{Mor87}, who showed that to any surface \( S \) with \( p_g(S) = 1 \) one can associate a K3 surface whose transcendental second cohomology is isomorphic to that of \( S \). In general, however, this isomorphism does not arise from an actual morphism between the surface and its associated K3 surface.

In this paper, we provide explicit examples of surfaces with \( p_g \ge 1 \) whose weight--2 Hodge structures are completely encoded by the Hodge structures of certain K3 surfaces. Moreover, in all our examples these K3 surfaces arise as geometric quotients of the original surfaces.

To be more precise, we consider the following situation, which combines previous work of the first author \cite{FP24} and of the second author \cite{GP22}. A $(p,k)$-plane is a finite morphism $\pi\colon X\to \mathbb P^2$ where $X$ is a normal surface and with the property that there is a subgroup $(\mathbb Z/p)^k$ of $\text{Aut}(X)$ such that $\pi$ factors as the composition of the quotient map $X\to X/(\mathbb Z/p)^k$ and an isomorphism $X/(\mathbb Z/p)^k\cong \mathbb P^2$. Assume that the branch divisor of $\pi$ is simple normal crossing (SNC for short). 
This cover has several intermediate quotients $Y_{H}\coloneqq X/H$ corresponding to the subgroups $H$ of \( (\mathbb Z /p)^k \). In particular, we focus on the \( \phi_{p,k}\coloneqq \frac{p^k - 1}{p - 1}\) subgroups \( H_i \leq (\mathbb Z/p)^k \) of index \(p\), and on the associated intermediate covers \( Y_{H_i} \to \mathbb{P}^2 \) with Galois group $G/H_i\cong \mathbb Z/p$. Hence, from any $(p,k)$-plane we obtain a diagram of the following type.
\begin{equation}\label{diag_HesseRev}\begin{tikzcd}
	&&& S \\
	& \bullet & \bullet & \bullet & \bullet & \bullet \\
	\bullet & \bullet && \bullet && \bullet & \bullet \\
	& {\ \ \ \ \ \ \ Y_{H_1}} & {\ \ \ \ \ Y_{H_2}} & \cdots & {Y_{H_{\phi_{p,k}-1}}} & {\ \ \ Y_{H_{\phi_{p,k}}}} \\
	&&& {\mathbb P^2}
	\arrow[from=1-4, to=2-2]
	\arrow[from=1-4, to=2-3]
	\arrow[from=1-4, to=2-4]
	\arrow[from=1-4, to=2-5]
	\arrow[from=1-4, to=2-6]
	\arrow[dotted, no head, from=2-2, to=3-1]
	\arrow[dotted, no head, from=2-3, to=3-2]
	\arrow[dotted, no head, from=2-4, to=3-4]
	\arrow[dotted, no head, from=2-5, to=3-6]
	\arrow[dotted, no head, from=2-6, to=3-7]
	\arrow[from=3-1, to=4-2]
	\arrow[from=3-2, to=4-3]
	\arrow[from=3-4, to=4-4]
	\arrow[from=3-6, to=4-5]
	\arrow[from=3-7, to=4-6]
	\arrow[from=4-2, to=5-4]
	\arrow[from=4-3, to=5-4]
	\arrow[from=4-4, to=5-4]
	\arrow[from=4-5, to=5-4]
	\arrow[from=4-6, to=5-4]
\end{tikzcd}\end{equation}

We focus our attention on SNC $(p,k)$-planes with the following
\begin{description}
	\item[\hypertarget{Property $\bigstar$}{Property $\bigstar$}] Each \( Y_{H_i} \) is a surface with geometric genus either $0$ or $1$, and at least one $Y_{H_i}$ has geometric genus $1$.
\end{description}
We shall prove in Theorems \ref{thm: main_thm_(p,1)_planes_pg=0} and \ref{thm: pg1_Zp_cover_structure} that the intermediate quotients  \( Y_{H_i} \)  with geometric genus zero are rational surfaces, while those with geometric genus one are $K3$ surfaces. In general, the surface \( Y_{H_i} \) need not be smooth; nevertheless, with a slight abuse of terminology, we shall refer to \( Y_{H_i} \) as a \emph{K3 surface} whenever its minimal model is a K3 surface.

This situation has already been considered in the case \((p,k)=(2,2)\) by several authors, e.g., \cite{CatM, Cat99, G, GP22, Lat19, Lat21, Lat22, PZ19}, who investigated various specific properties under the assumption that the cover is smooth. Indeed, in this case, the transcendental cohomology of the cover splits into transcendental parts of K3 type, making the study of some very deep conjectures accessible, such as the Tate and Mumford-Tate conjectures, and the Voisin conjecture together with the more general Bloch-Beilinson conjecture. Furthermore, Laterveer asked in \cite{Lat19} for the existence of other examples with the same property and high geometric genus $\geq 4$:
\begin{Question}\cite[Question 7.7]{Lat19}
		Do there exist surfaces \(S\) of general type with \(p_g(S)=4\) such that the transcendental cohomology of \(S\) splits into 4 pieces of K3 type? And what about \(p_g >4\)? \end{Question} 
In this paper, we provide a positive and exhaustive answer to this question. Moreover, we present a systematic study of the above-mentioned construction for primes \(p \ge 2\) and integers \(k \ge 1\), allowing also for $(p,k)$-planes  with singularities. More precisely, we give a complete description of SNC \((p,k)\)-planes satisfying \hyperlink{Property $\bigstar$}{Property $\bigstar$}: we not only classify all such coverings in families for all possible primes \(p\) and all positive integers \(k\), but also determine their invariants and explicit defining equations for every resulting surface \(X\) in suitable weighted projective spaces. 
Furthermore, we study the weight-2 Hodge structure of the minimal model of $X$. This cohomology group decomposes into Hodge substructures, and we show that the rational transcendental part is completely determined by the cohomology of those intermediate quotients \( Y_{H_i} \) which are K3 surfaces.  Thus, the extensive list of surfaces of general type with \(p_g\geq 4\) (see Tables \ref{tab: inv_(p,k)_planes_1} and \ref{tab: inv_(p,k)_planes_2})  provide new relevant examples for the question posed by Laterveer.  

 This remarkable property allows us also to draw deeper conclusions. In particular, we prove the Mumford--Tate conjecture, and consequently the Tate conjecture, for all the surfaces constructed, following an argument similar to that in \cite{Com19, CP, GP22}.
 
We can summarize our main results in the following theorems. 
\begin{theorem}\label{thm main1}  Let $p,k \in \NN$ with $p$ a prime number and $k\geq 2$. Moreover, let  $\pi\colon X\to \PP^2$ be a SNC $(p,k)$-plane. 
Let $\rho \colon \widetilde{X}\to X$ be the minimal resolution of singularities of $X$, and $\widetilde{X}_{min}$ its minimal model.  

Let us assume that each intermediate $(p,1)$-plane $Y_{H_i}$ of $\pi$ is either a surface with geometric genus $0$ or $1$, and at least one of them has geometric genus $1$.

Then $\pi$ falls in one of the cases listed in tables \ref{tab: (2,2)} from \ref{tab: (7,2)}. More precisely:
\begin{enumerate}
\item 
The only admissible pairs $(p,k)$ are the following:
\begin{table}[h]
    \centering
    \begin{tabular}{|c||c|c|c|c|c|}
    \hline
    \diagbox{p}{k} & 2 &  3 & 4 & 5 & 6\\
    \hline\hline
    2 & \hyperlink{tab: (2,2)}{48}&  \hyperlink{tab: (2,3)}{63}  & \hyperlink{tab: (2,4)}{42} & \hyperlink{tab: (2,5)}{14}  & \hyperlink{tab: (2,6)}{3} \\
    \hline
    3 & \hyperlink{tab: (3,2)}{13}  & \hyperlink{tab: (3,3)}{5}  & \hyperlink{tab: (3,4)}{1} & --& -- \\
    \hline
    5 & \hyperlink{tab: (5,2)}{3} & \hyperlink{tab: (5,3)}{1} & -- & -- & --\\
    \hline
    7 & \hyperlink{tab: (7,2)}{2}  & -- & -- & -- & -- \\
    \hline
    \end{tabular}
    \caption{Number of families of SNC $(p,k)$-planes.}
    \label{tab:all-family}
\end{table}
\item The surface $\widetilde{X}$ is regular. Moreover, $\widetilde{X}$ is a minimal surface except for the cases $C-(3,2)$, $A-(5,2)$, and $B-(7,2)$, whose minimal models are obtained by contracting further a single $(-1)$-curve.
    The surface $\widetilde{X}$ is a surface of general type, except in the following cases:
    \begin{itemize}
        \item $\widetilde{X}_{min}$ is a K3 surface in cases: $A-(2,2)$, $A-(2,3)$, $B-(2,3)$, $A-(2,4)$, $A-(2,5)$, $C-(3,2)$, $A-(5,2)$;
        \item $\widetilde{X}$ is a minimal properly elliptic surface (with Kodaira dimension one) in cases $B-(3,2)$ and $C-(3,3)$.  
    \end{itemize}
    In tables \ref{tab: inv_(p,k)_planes_1} and \ref{tab: inv_(p,k)_planes_2}, we list for each case the self-intersection of a canonical divisor of $\widetilde{X}_{min}$, along with its geometric genus, topological Euler characteristic, and surface type. We also list  in tables \ref{tab: (2,2)} to \ref{tab: (7,2)} the Picard number together with the Hodge numbers of the transcendental part of the Hodge structure of a very general member of the family.
\item $\widetilde{X}_{min}$ has the following decomposition of its  transcendental lattice:
			\[ T_{\widetilde{X}_{min}}\otimes \QQ\simeq (T_{Y_{H_i}}\oplus \ldots \oplus T_{Y_{H_{\phi_{p,k}}}})\otimes \QQ, 
			\]
 where all the transcendental Hodge structures of the surfaces $Y_{H_i}$ are either trivial or the ones of a K3 surface.
 \item Finally, $\widetilde{X}_{min}$ satisfies the Mumford--Tate and Tate conjectures.
\end{enumerate}
\end{theorem}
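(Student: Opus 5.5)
The proof runs through the combinatorics of the building data of the cover. By Pardini's theory \cite{Par91a}, an SNC $(p,k)$-plane $\pi\colon X\to\PP^2$ with group $G=(\ZZ/p)^k$ is determined by branch lines/curves $D_1,\dots,D_r$ of degrees $d_j$, each labelled by a generator $g_j\in G$. The labels must generate $G$ and satisfy $\sum_j d_j\,\langle\chi,g_j\rangle\equiv 0 \pmod p$ for every character $\chi\in G^\vee$. For each index-$p$ subgroup $H_i$, i.e.\ each nontrivial character $\chi$ up to scalars, the quotient $Y_{H_i}\to\PP^2$ is the cyclic $(p,1)$-plane branched on those $D_j$ with $\chi(g_j)\neq 0$, with multiplicities $\langle\chi,g_j\rangle$. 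So I will first invoke the classification of SNC $(p,1)$-planes with $p_g=0$ (Theorem \ref{thm: main_thm_(p,1)_planes_pg=0}, rational) and with $p_g=1$ (Theorem \ref{thm: pg1_Zp_cover_structure}, K3). These give a finite list of admissible cyclic branch data: total degrees, multiplicity patterns and values of $p$.

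Property $\bigstar$ then becomes a purely combinatorial condition: every character $\chi$ must restrict $(d_j,\langle\chi,g_j\rangle)$ to a pattern in one of the two lists. Since the admissible degree sums are bounded (for instance at most $6$ for $p=2$), the number of branch components and their degrees are bounded. Since the labels must generate $G$, we also get $k\le r$. A linear-algebra argument over $\mathbb F_p$ on the matrix of labels bounds $k$ and $p$ and yields the table of admissible pairs. I would then enumerate the label configurations up to $GL_k(\mathbb F_p)$ and up to permutations of components of equal degree, producing the families in Tables \ref{tab: (2,2)}--\ref{tab: (7,2)}. This enumeration is the main obstacle: it needs careful orbit counting, probably computer-assisted for $p=2$, $k\le 6$, to get exactly $48,63,42,14,3,\dots$ families.

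For part (2), the invariants follow from Pardini's formulas. The decomposition $\chi(\cO_{\widetilde X})$ and $q,p_g$ split into eigenspaces, $H^0(K_{\widetilde X})=\bigoplus_\chi H^0(K)^\chi$, where each $\chi$-piece equals $H^0(K_{\widetilde{Y}_{H_\chi}})$ summed over the $p-1$ characters generating the same subgroup. Hence $p_g(\widetilde X)=\sum_i p_g(Y_{H_i})$ and $q=0$ because each $Y_{H_i}$ is regular. The singularities of $X$ lie over the nodes of the branch divisor and are cyclic quotient singularities. Resolving them by Hirzebruch--Jung strings gives $K_{\widetilde X}^2$ and $e(\widetilde X)$. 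I would check minimality by examining the strict transforms of lines through nodes, which yields the single $(-1)$-curves in $C-(3,2)$, $A-(5,2)$ and $B-(7,2)$. The Enriques classification then follows from $K^2$, $p_g$ and $\kappa$, which is computed by comparing the pullback of $K$ with the branch data.

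For part (3), $H^2(\widetilde X,\QQ)$ splits into $G$-isotypic pieces. The invariant part is $H^2(\PP^2)$ plus exceptional classes, which are algebraic, and the $\chi$-parts combine into $H^2(\widetilde{Y}_{H_i},\QQ)$ modulo invariant classes. Transcendental parts are preserved by the pullbacks, which are injective up to rationality, so $T_{\widetilde X_{min}}\otimes\QQ\simeq\bigoplus_i T_{Y_{H_i}}\otimes\QQ$. Here the summands are trivial for rational $Y_{H_i}$ and of K3 type otherwise. Part (4) then follows as in \cite{Com19,CP,GP22}. The Mumford--Tate conjecture is known for K3 surfaces (Tankeev, André) and is compatible with direct sums of K3-type structures given the correspondences induced by the quotient maps. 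The algebraic part is handled by Lefschetz $(1,1)$, and the Tate conjecture follows from Mumford--Tate together with Lefschetz $(1,1)$.
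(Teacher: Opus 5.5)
Your outline is the paper's: read Property $\bigstar$ off the intermediate cyclic planes, bound the data, enumerate by computer up to $\mathrm{Aut}(G)$, use Pardini's formulas for (2), the isotypic splitting of $H^2$ for (3), and a motivic argument for (4). The paper encodes $\bigstar$ through the degrees $l_\chi$ in Proposition \ref{prop: equivalence_star_prop}, which is equivalent to your matching against the $(p,1)$ lists.

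\textbf{The gap: finiteness of the enumeration.} The step that makes the enumeration finite is asserted, and the reason you give does not yield it. The $(p,1)$ lists only bound the degree of the branch divisor $D^{(H)}$ of each single quotient, by a constant $c_p$ with $c_2=6$, $c_3=5$ and $c_p=4$ for $p\ge 5$. That bounds each $d_g$. It does not bound the number $r$ of components of $D$, because different quotients see different parts of $D$ and there are $\phi_{p,k}$ of them. Combined with $k\le r$, the argument is circular. The global degree really does exceed $c_p$: $\sum_g d_g$ equals $9$, $10$, $10$ in $C-(2,6)$, $J-(2,5)$, $R-(2,4)$.

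The missing idea is a double count. A nonzero $g$ lies outside exactly $\phi_{p,k}-\phi_{p,k-1}=p^{k-1}$ of the index-$p$ subgroups, so
\[
p^{k-1}\sum_{g\in G}d_g=\sum_{[G:H]=p}\deg D^{(H)}\le c_p\,\phi_{p,k},
\qquad\text{hence}\qquad
\sum_{g\in G}d_g\le c_p\,\frac{p^k-1}{(p-1)p^{k-1}}<c_p\,\frac{p}{p-1}.
\]
This gives $\sum_g d_g\le 11,7,4$ for $p=2$, $p=3$, $p\ge 5$ respectively, and then $k\le r\le \sum_g d_g$ makes everything finite. This is the content of Theorem \ref{thm: Dg_elm_group}, via $\sum_{j=1}^{p-1}l_{\chi^j}=\frac{p-1}{2}\deg D^{(\ker\chi)}$, as used in Proposition \ref{prop: final_bound_on_p,k,sum_dg}.

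Your unspecified ``linear-algebra argument'' for $k$ can be made concrete as in the paper. The character $\chi=(p-1)\sum_i\epsilon_i$ takes the value $p-1$ on every basis label, so $(p-1)k\le p\,l_\chi\le 3p$. The bound $p\le 19$ does not come from linear algebra. It comes from the existence of a genus-one quotient together with Theorem \ref{thm: pg1_Zp_cover_structure}, since genus-zero cyclic planes exist for every $p$.

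\textbf{Two smaller points.} In (2), exhibiting $(-1)$-curves among strict transforms of branch lines does not show that the other cases are minimal, nor that a single contraction suffices. The paper handles this in two ways. When all singular points are of type $\frac1p(1,p-1)$, it uses that $pK_{\widetilde X}=\widetilde\pi^*\bigl((-3p+(p-1)\sum_g d_g)H\bigr)$ is nef. In the six remaining cases it writes $K_{\widetilde X}$ as an explicit effective divisor, so every $(-1)$-curve must be one of its components.

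In (4), the Mumford--Tate conjecture is not formally stable under direct sums. You need the splitting realized by motivated correspondences, namely the projectors $Q_H$ and the correspondence $S\dashrightarrow Y_H^\circ$. Then André's theorem that K3 motives are abelian lets you apply Commelin's result \cite{Com19} for sums of abelian motives.
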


As a byproduct of our work, we also obtain a classification of all SNC $(p,1)$-planes \( \pi\colon X\to \mathbb{P}^2 \) 
with \( p_g(X)=1 \). Note that these $(p,k)$-planes are exactly those satisfying \hypertarget{Property $\bigstar$}{Property $\bigstar$} for $k=1$; hence it provides together with  Theorem \ref{thm main1} an exhaustive conclusion of the classification.
\\
 We remark once more that \( X \) is not smooth in general; nevertheless, we analyze its minimal resolution of singularities and minimal model, and in doing so we establish the following theorem. This result is of independent interest and may well be known, although we have been unable to locate it in the existing literature.
\begin{theorem}\label{thm main2}  Let \( p \geq 2\) be a prime number and let \( \pi\colon X \to \mathbb{P}^2 \) be a SNC $(p,1)$-plane. 
	Let \( \widetilde{X}_{min} \) denote the minimal model of \( X \), and assume that \( p_g(\widetilde{X}_{min}) = 1 \). Then:
\begin{enumerate}
\item \( p \in \{2,3,5,7,11,13, 17, 19\} \);
\item \( \widetilde{X}_{min}\) is a K3 surface; 
\item for a very general member of the family holds $\rho(X)=1$;
\end{enumerate}
Furthermore, $\pi$ falls in one of the cases listed in tables \ref{tab: (2,1)} to \ref{tab: (19,1)}, where we also provide the Picard number $\rho(\widetilde{X}_{min})$ together with the Hodge numbers of the transcendental part of the Hodge structure of a very general member of the family.
\end{theorem}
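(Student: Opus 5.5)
We will use the standard description of a cyclic $\mathbb Z/p$-cover of $\PP^2$ with SNC branch locus. The branch divisor is $B=\sum_{j=1}^r B_j$, with $B_j$ smooth curves of degree $d_j$ meeting transversally. Each $B_j$ carries a local monodromy $a_j\in(\mathbb Z/p)^*$, and the building data require a line bundle $L$ with $pL\equiv\sum a_jB_j$, i.e.\ $\sum a_jd_j\equiv 0 \pmod p$. The surface $X$ is normal, with cyclic quotient singularities exactly over the nodes $B_j\cap B_l$; the type there is $\frac1p(1,q)$ with $q$ determined by $a_j,a_l$. We will compute $p_g$ via Pardini's formula for eigensheaves. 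Writing $L_\chi$ for the eigensheaves, $\chi=1,\dots,p-1$, of $\pi_*\cO_X=\bigoplus_\chi L_\chi^{-1}$, we have
\[ \deg L_\chi=\sum_j d_j\Big\{\tfrac{\chi a_j}{p}\Big\}, \]
where $\{\cdot\}$ denotes the fractional part. Since quotient singularities are rational, $p_g(\widetilde X)=\sum_{\chi}h^0(\PP^2,K_{\PP^2}+L_\chi)=\sum_\chi\binom{\deg L_\chi-1}{2}$, and likewise $q(\widetilde X)=0$.

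\textbf{Step 1: the degree constraint.} The condition $p_g=1$ says that exactly one character $\chi$ has $\deg L_\chi=3$, and all others have $\deg L_\chi\le 2$. Summing over all $\chi$ gives $\sum_\chi\deg L_\chi=\frac{p-1}{2}\sum_j d_j$. Combined with $\deg L_\chi\ge 1$ for all $\chi$ (no étale part, since $\PP^2$ is simply connected), this yields
\[ \frac{p-1}{2}\sum d_j\le 3+2(p-2). \]
Hence $\sum d_j\le 4$ for large $p$, and we obtain a finite list of degree/monodromy data $(d_j,a_j)$. Using $\deg L_\chi\ge1$ and the requirement that exactly one eigenvalue equals $3$, we then enumerate, for each $p$, the multisets of fractional parts. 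A priori bound: we must have $\sum_j d_j\ge 3$, and the symmetric pairing $\chi\leftrightarrow p-\chi$ gives $\deg L_\chi+\deg L_{p-\chi}=\sum_j d_j$, which strongly restricts the possibilities. I expect this to force $p\le 19$, with the cases realised for $p\in\{2,3,5,7,11,13,17,19\}$. For instance, for $p=19$ the data should be three lines with weights $(1,7,11)$ or similar, which is a number-theoretic check. This enumeration, organised into tables \ref{tab: (2,1)} to \ref{tab: (19,1)}, is the main combinatorial obstacle. The way to handle it is to reduce the degree equalities to counting conditions on the residues $\chi a_j \bmod p$, and to dispose of large $p$ by averaging arguments.

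\textbf{Step 2: K3.} For each admissible case we compute $K_{\widetilde X}$. We have $K_X\equiv\pi^*(K_{\PP^2}+\frac{p-1}{p}\sum B_j)$ as a $\QQ$-divisor. Over each node, the Hirzebruch--Jung string of the resolution contributes a discrepancy correction. We then show that $K_{\widetilde X}$ equals an effective combination of the exceptional curves and of $(-1)$-curves; contracting these gives $K_{\widetilde X_{min}}\equiv 0$. Alternatively, one can compute $\chi(\cO_{\widetilde X})=2$ and $q=0$, and check that $\kappa=0$ because $K_X$ is numerically trivial up to exceptional curves (the case $\sum\frac{p-1}{p}d_j=3$ on the nose in the weighted sense). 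Then Enriques' classification gives a K3 surface, since $p_g=1$ and $q=0$ exclude the Enriques case.

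\textbf{Step 3: Picard number.} For a very general member, the Néron--Severi group of $\widetilde X$ is spanned over $\QQ$ by $\pi^*H$, the exceptional curves, and the components of $\pi^{-1}(B_j)$. This follows because the $\mathbb Z/p$-invariant part of $H^2$ is the pullback of $H^2(\PP^2)$ plus these classes. The non-invariant part is governed by the eigenspaces $H^2_\chi$, and their transcendental parts have positive dimension and vary with the moduli of the branch curves. A monodromy or deformation count shows that generic members carry no extra algebraic classes. On $X$ itself, which has rational singularities, the classes of $\pi^{-1}(B_j)$ are proportional to $\pi^*H$ in $H^2(X,\QQ)$, giving $\rho(X)=1$. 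Finally, $\rho(\widetilde X_{min})$ and the transcendental Hodge numbers are read off as $22$ minus the count of exceptional curves surviving to the minimal model, together with the distribution of $H^{2,0}$ among the characters.
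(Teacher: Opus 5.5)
\textbf{The gap is in Step 1: you never prove $p\le 19$, and the route you indicate cannot deliver it.} Your own pairing identity $\deg L_\chi+\deg L_{p-\chi}=\sum_j d_j$ forces $\sum_j d_j\ge 4$, not $3$, because the partner of the degree-$3$ character has degree $\ge 1$. So for $p\ge 5$ you are left with four monodromies $b_1,\dots,b_4\in(\mathbb Z/p)^*$, counted with multiplicity $d_j$, with $\sum b_i\equiv 0$. The question is then whether exactly one multiplier $\chi$ satisfies $\sum_i[\chi b_i]_p=3p$. This is an arithmetic statement that must hold uniformly in $p$, and averaging over $\chi$ says nothing about it: $\sum_{\chi}\sum_i[\chi b_i]_p=2p(p-1)$ for \emph{every} such sequence, whatever the number of $\chi$ reaching $3p$. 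Your sample datum shows the problem. Three lines with weights $(1,7,11)$ for $p=19$ give $\sum_i[\chi b_i]_{19}\le 54<57$ for all $\chi$, so $p_g=0$; this is the rational case (4), $g=7$, of the $p_g=0$ list.

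The missing idea is the Hodge-theoretic bound the paper uses. A generator of $\mathbb Z/p$ acts non-trivially on $H^{2,0}(\widetilde X)$, since an invariant $2$-form would descend to a resolution of $\widetilde X/(\mathbb Z/p)$, which is rational. Hence $T(\widetilde X)_{\mathbb Q}$ contains the $(p-1)$-dimensional representation $\mathbb Q(\zeta_p)$. As $p_g=1$, $\widetilde X$ is not ruled, so $K^2_{\widetilde X_{min}}\ge 0$. With $\chi(\mathcal O)=2$ and $q=0$ this gives $p-1\le \mathrm{rk}\,T\le 12\chi(\mathcal O)-K^2_{\widetilde X_{min}}-3\le 21$, hence $p\le 19$. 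Only after this is the enumeration of degree/monodromy data a finite check.

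\textbf{Steps 2 and 3 follow the paper's outline, but two of your shortcuts fail.} In Step 2, the alternative ``$K_X$ is numerically trivial up to exceptional curves'' holds only for $p=2$. For $p=3$ one has $K_X\equiv\frac13\pi^*H$, and for $p\ge 5$ one has $K_X\equiv\frac{p-4}{p}\pi^*H$; both are ample, and $\kappa=0$ comes solely from the discrepancies at the non-canonical singular points. You must therefore compute $K_{\widetilde X}$ explicitly and check case by case that its support is exactly what the successive contractions of $(-1)$-curves remove. That support consists of strict transforms of certain branch lines together with certain Hirzebruch--Jung curves.

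In Step 3, ``a deformation count'' must become a real comparison. You need that the period map from the space $\mathcal B$ of branch configurations to the moduli $\mathcal M_\eta$ of K3 surfaces with the given non-symplectic action has zero-dimensional fibres, and that $\dim\mathcal B=\dim\mathcal M_\eta$. Then the family dominates $\mathcal M_\eta$ and the very general member avoids the Noether--Lefschetz loci. Moreover, for the four-line families (A-(5,1)3), A-(7,1)2), and $p=11,13,17,19$) the configuration is projectively rigid, so there is no ``very general'' member to speak of. There you need a rank sandwich instead: the explicit invariant classes (images of branch and exceptional curves) span a sublattice of rank $23-p$ in $\mathrm{NS}(\widetilde X_{min})$, while $p-1\le\mathrm{rk}\,T$ gives $\rho(\widetilde X_{min})\le 23-p$.
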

A second byproduct of our work consists in the classification of SNC $(p,1)$-planes with geometric genus $0$, which turns out to be rational surfaces. 
\begin{theorem}\label{thm: intro_pg_zero_classification}
	Let $p\geq 2$ be a prime number and let $\pi\colon X\to \mathbb P^2$ be a SNC $(p,1)$-plane with $p_g(X)=0$. Then $X$ is a rational surface and it falls in one of the cases listed in tables \ref{tab: (p,1)_pg=0_p=2} to \ref{tab: (p,1)_pg=0_p>=7_data}. 
\end{theorem}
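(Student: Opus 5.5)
The plan is to use Pardini's description of cyclic covers together with the standard resolution of cyclic quotient singularities. An SNC $(p,1)$-plane is determined by its branch data: smooth curves $D_1,\dots,D_r\subset\PP^2$ of degrees $d_1,\dots,d_r$ meeting transversally, with nonzero characters $a_1,\dots,a_r\in\ZZ/p$. These satisfy the reduced building relations. For each character $\chi^j$, $j=1,\dots,p-1$, there is a line bundle $L_j=\cO(l_j)$ with
\[ p\,l_j=\sum_i \overline{j a_i}\,d_i, \]
where $\overline{\,\cdot\,}$ denotes the representative in $\{0,\dots,p-1\}$.

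\textbf{Step 1: invariants.} Since the quotient singularities are rational, I compute $p_g(X)=\sum_j h^0(\PP^2,K_{\PP^2}+L_j)$. Similarly $q(X)=\sum_j h^1(K_{\PP^2}+L_j)=0$, so every resolution is regular. This gives
\[ p_g(X)=\sum_{j=1}^{p-1}\binom{l_j-1}{2}. \]
The condition $p_g=0$ therefore means exactly that $l_j\le 2$ for all $j$. Since $\sum_j l_j$ is controlled by $\tfrac{p-1}{2}\sum_i d_i$, the constraint $l_j\le 2$ gives $\sum_i d_i\le 4$ up to the distribution of residues.

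\textbf{Step 2: enumeration.} I run a finite combinatorial enumeration over $p$, the tuples $(d_i)$ and the characters $(a_i)$ up to automorphisms of $\ZZ/p$. The building relations require $\sum_i a_i d_i\equiv 0 \pmod p$. The crucial bound comes from averaging over $j$: since $\sum_j \overline{j a}=p(p-1)/2$ for $a\neq 0$, we get $\sum_j l_j=\tfrac{p-1}{2}\sum_i d_i$. Combined with $l_j\le 2$ and $l_j\ge 1$ whenever the branch locus is nonempty and $\chi^j$ is nontrivial, this yields $\sum_i d_i\le 4$. For each $p$ this leaves a finite list, and I expect it to match the tables. For $p$ large only a few configurations of lines and conics survive (total degree $3$ or $4$, or a line arrangement), with the residues constrained. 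I would organize the cases by $\sum_i d_i\in\{2,3,4\}$ and by $p=2$, $p=3$, $p=5$, $p\ge 7$, as the table labels suggest.

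\textbf{Step 3: rationality.} Once $p_g=q=0$, Castelnuovo's criterion reduces rationality to $P_2(\widetilde X)=0$. The plan is to show $P_2=0$ via Kodaira dimension $-\infty$. There are two routes.
\begin{enumerate}
\item Since $\sum_i d_i\le 4$, I look for a pencil of lines through a suitable point (for instance a singular point of the branch locus, or a point on a component) whose general member meets the branch locus in few points. Its preimage is then a curve of genus $0$, so $\widetilde X$ is ruled. Because $q=0$, it is rational.
\item Alternatively, I compute $P_2$ through the eigensheaf decomposition $\pi_*\omega_X^{[2]}=\bigoplus_j \cO(2K_{\PP^2}+\cdots)$, being careful with the reflexive pieces and the discrepancies at the singular points.
\end{enumerate}
I would pursue route 1, constructing in each case an explicit rational pencil. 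For example, for two lines the cover is a toric surface, and for a conic plus lines one can use projection from a point on the conic.

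\textbf{Main obstacle.} I expect the hardest part to be proving that the enumeration in Step 2 is exhaustive and sharp for general $p$, particularly the interplay between residues and the requirement that each $l_j$ be an integer. Proving rationality uniformly, rather than case by case, is the secondary obstacle.
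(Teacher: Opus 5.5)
The gap is in Step 2. The enumeration you describe is not finite. For $p_g=0$ there is no bound on $p$, since nothing like the bound $p\le 19$ of the $p_g=1$ case is available. Table~\ref{tab: (p,1)_pg=0_p>=7_data} contains families for every prime $p\ge 7$, some depending on an extra parameter $g$. Checking each $p$ separately therefore proves the statement for only finitely many primes; you need arguments uniform in $p$.

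Most sub-cases yield to elementary manipulation of $p\,l_j=\sum_i\overline{ja_i}\,d_i$ after normalizing one monodromy to $1$. For example, $p\,l_1=1+g\le p$ forces $g=p-1$ for two lines, and three lines force $l_1=1$ and $g_2=p-1-g_1$. The genuinely hard case is total degree $4$ spread over three or four distinct monodromies, i.e.\ sequences $(1,g_1,g_2,g_2)$ or $(1,g_1,g_2,g_3)$. There, equality in $\sum_j l_j\le 2(p-1)$ forces $l_j=2$, i.e.\ $\sum_i d_i\,\overline{ja_i}=2p$, for every $j=1,\dots,p-1$. You must then prove that the monodromies, counted with degree, pair off as $\{a,-a,b,-b\}$. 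This rules out the first sequence and yields case $(5)$ from the second. Your proposal only names this as the main obstacle and gives no mechanism for it.

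The paper resolves it with the index theorem for minimal zero-sum sequences of length four over $\ZZ/p$. A sequence with all $l_j=2$ has index $2$, so it is not minimal, and the only possible proper zero-sum subsequences have length $2$. A character-sum alternative also works: pairing $j\mapsto\sum_i d_i\overline{ja_i}$ with an odd Dirichlet character $\chi$ gives $p\,B_{1,\chi}\sum_i d_i\bar\chi(a_i)=0$. Since $B_{1,\chi}\ne 0$ for odd $\chi$, the multiset is invariant under $a\mapsto -a$.

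Your rationality step also needs a correction. Route 1 with pencils of lines cannot work for a smooth quartic ($p=2$) or a smooth cubic ($p=3$). The general line of any pencil meets the branch curve in $b=4$, resp.\ $b=3$, branch points (the base point counts if it lies on the curve). Riemann--Hurwitz, $g=\tfrac{(b-2)(p-1)}{2}$, then gives elliptic fibres.

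The uniform fix is the paper's argument. From $p K_{\widetilde X}\equiv\widetilde\pi^*\bigl((-3p+(p-1)\sum_i d_i)H\bigr)-E$ with $E$ exceptional, one gets $K_{\widetilde X}\cdot\widetilde\pi^*H=-3p+(p-1)\sum_i d_i$. This is negative unless $\sum_i d_i=4$ and $p\ge 5$. Since $\widetilde\pi^*H$ is nef, no multiple of $K_{\widetilde X}$ is then effective, and Castelnuovo applies.

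In the remaining cases your pencil idea is actually simpler than the paper's construction, which blows up two $\frac1p(1,1)$ points and contracts $p-1$ curves over a line through them to obtain two meeting $(-1)$-curves. In these cases the monodromies occur in pairs $a,-a$. A general line through a point of $D_a\cap D_{-a}$ meets the branch locus in exactly $4-2=2$ further points. The base point is not a branch point of the restricted cover, because $a+(-a)=0$. So the general member pulls back to $\PP^1$, and $\widetilde X$ is birationally ruled over $\PP^1$, hence rational.
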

As the reader will see, the amount of computation required in the classification process is far too large to be carried out by hand. For this reason, we rely on several computer algebra scripts, which can be found at the webpage 
\begin{center}
	\href{https://github.com/Fefe9696/MAGMA_scripts_results_Classification_of_SNC_p_k_planes_with_Hodge_substructures_of_K3_type}{\texttt{MAGMA} scripts and results repository}
\end{center}
We emphasize that the group-theoretic computations were performed using \verb|MAGMA| \cite{BCP97}. 

In the same webpage one can find the outputs of the classification algorithm together with a commented version which studies the intermediate quotients of each SNC $(p,k)$-plane turned out. As a consequence of this study, we also highlight the following remark.
\begin{remark}
	There are two significant intermediate $(p,k)$-planes:
	\begin{itemize} 
		\item a $(2,3)$-plane 
		branched over seven lines in general position, which is a Campdelli surface with maximal Picard rank $8$, see \cite[Sec. 3]{AlPa24}. This appears as one of the intermediate quotients of the surfaces belonging to the families $I-(2,4), O-(2,4), C-(2,5), F-(2,5), I-(2,5), J-(2,5), A-(2,6), B-(2,6)$.  It has been already met in \cite[Subsec. 5.8]{FP24} and was first constructed in this way by Persson in \cite[Ex. 5.8]{Ulf}. We also mention \cite[Thm. 1.1(1)]{DuGao} where the authors give it through equations of a different (not normal) birational model.
		\item 
		a $(5,2)$-plane 
		branched over four lines in general position, which is the original Godeaux surface with maximal Picard rank $9$ constructed by Godeaux. Indeed, it can be also viewed as the \'etale cyclic quotient of $A-(5,3)$, which is the Fermat quintic $x_0^5+x_1^5+x_2^5+x_3^5=0$  in $\mathbb P^3$, where $(\mathbb Z/5)^3$ acts on each coordinate by a different $5$-root of unity. See also \cite[Theorem 65]{Cat17} and \cite{Par91}.
	\end{itemize}
\end{remark}
We conclude the introduction by outlining the structure of the paper.

In Section~\ref{sec: preliminaries} we briefly recall the theory of abelian covers of algebraic varieties, with particular emphasis on the case of Galois covers with group $(\mathbb Z/p)^k$, following \cite{FP24, Par91a}.

In Section~\ref{sec: classification_SNC_p,k_planes_Prop_STAR} we develop the
necessary tools to bound the admissible values of $p$ and $k$, thereby providing
all the ingredients required to construct an explicit algorithm for the
classification of SNC $(p,k)$-planes satisfying Property~$\bigstar$. Consequently, we present the algorithm implementing the classification and we prove Theorem \ref{thm main1} point (1). 

Section \ref{sec: SNC_(p,1)-planes} is devoted to the proof of Theorem~\ref{thm main2}: any $(p,1)$-plane with geometric genus $1$ is  a K3 surface. We provide a detailed analysis of the $(p,1)$-planes 
appearing in the theorem; specifically we study their minimal resolution of singularities to determine the minimal model via the contraction of $(-1)$-curves, see Figures \ref{fig:global_process_p=7}, \ref{fig:global_process_1}, \ref{fig:global_process_p=13}, \ref{fig:global_process_p=17}, \ref{fig:global_process_p=19}. 

In Section \ref{sec:SNC_(p,1)-planes_zero}, we prove Theorem \ref{thm: intro_pg_zero_classification}, see Theorem \ref{thm: main_thm_(p,1)_planes_pg=0}, where we classify all $(p,1)$-planes with geometric genus $0$ in (infinite) families, and then proving that each of those surfaces are rational. 

In Section~\ref{subsec: K3Hodge}, we discuss some general results on the Hodge structures of SNC $(p,k)$-planes. More precisely, we investigate 
the Tate and Mumford-Tate conjectures of these surfaces following mainly the ideas contained in \cite{GP22}. Thus, we prove Theorem \ref{thm main1} points (3) and (4).

In Section \ref{sec: invariants_SNC_p,k_planes_Prop_STAR} we prove Theorem \ref{thm main1} point (2), see Theorem \ref{thm: SNC_(p,k)_planes_invariants}. More precisely, we analyze each SNC $(p,k)$-plane satisfying Property~$\bigstar$ in
the classification list \ref{tab: (2,2)} to \ref{tab: (7,2)}, explicitly computing its numerical invariants and
determining its Kodaira dimension. For the reader’s convenience, the
tables produced by the algorithm are collected in Appendix \ref{app:classification_tables}, in order to
keep the exposition streamlined.

Finally, in Section~\ref{sec: examples_of_SNC_p,k_planes_Prop_STAR}, we present detailed and instructive
examples of SNC $(p,k)$-planes that we used as a guideline of the entire work. In the same examples, we also show how to determine the Picard number and the rank of the transcendental part of the Hodge structure of a very general member of their family. 
 
\begin{center}
	N{\footnotesize  OTATION AND CONVENTIONS}
\end{center}

	
	We work over the field $\mathbb{C}$ of complex numbers. 

    
    With $\lfloor x\rfloor$ we denote the greatest integer less or equal than the real number $x$. 
	



    We denote for short by $(\mathbb Z/p)^k$ the elementary abelian $p$-group $\left(\ZZ/p\ZZ\right)^k$. 
    
	Let $G$ be an abelian group. The set of characters, that is, group homomorphisms
    $\chi\colon G \to \mathbb{C}^\times$, forms an abelian group under pointwise multiplication.
    We denote the character group by $G^*$.
 
      Given a character $\chi\colon G\to \mathbb C^\times$ of $G=(\mathbb Z/p)^k$ and $g\in G$, define $\langle \chi, g\rangle \in \{0,\dots,p-1\}$ by
\[
\chi(g)=\exp\!\left(\frac{2\pi i}{p}\langle \chi,g\rangle
\right).
\]
    
	By \emph{surface} we mean a projective, connected normal two dimensional complex variety $S$, and
	for such a surface $\omega_S=\cO_S(K_S)$ denotes the canonical
	class, $p_g(S)=h^0(S, \, \omega_S)$ is the \emph{geometric genus},
	$q(S)=h^1(S, \, \omega_S)$ is the \emph{irregularity}, 
	$\chi(\mathcal{O}_S)=1-q(S)+p_g(S)$ is the \emph{Euler-Poincar\'e
		characteristic}, $e(S)$ is the \emph{topological Euler characteristic}, and $\rho(S)$ is the \emph{rank of the Picard group} of $S$. If $q(S)=0$, we call $S$ a \emph{regular surface}.
	\\
	$T(S)_{\mathbb Q}$ denotes the \textit{transcendental lattice} $T(S)=\NS(S)^\perp$ with rational coefficients.  
	\\
	$H^{1,1}_{tra}(S)$ denotes the transcendental part of the complex vector space $H^{1,1}(S)$, and by $h^{1,1}(S)$ its dimension.

By an abuse of notation we will call \emph{K3 surface} also singular surfaces, whose minimal model is a K3 surface.

Throughout the paper, we denote Cartier (or Weil) divisors on a variety by capital letters and the corresponding line bundles by italic letters, so we write for instance $\mathcal{L}=\cO_S(L)$. 

We denote by $\equiv$ the numerical equivalence relation among Cartier divisors, while $\equiv_{lin}$ denotes the linear equivalence relation. 

    A cyclic quotient singularity $x$ of a normal surface $X$ is called of type $\frac{1}{n}(1,a)$, with $\gcd(a,n)=1$, if there exists an open neighborhood of $x$ isomorphic to the quotient $\mathbb C^2/(\mathbb Z/n)$, where $\mathbb Z/n$ acts diagonally on $\mathbb C^2$ by sending $(z_1,z_2)\mapsto \left(\exp(\frac{2\pi i}{n})z_1, \exp(\frac{2\pi i}{n}a)z_2\right)$. 
	\medskip
\begin{center}
	A{\footnotesize CKNOWLEDGMENTS}
\end{center}

The authors wish to thank  M. Bianco, F. Catanese, R. Clifford, A. Garbagnati, Y. Hu, R. Pignatelli, and G.P. Pirola for fruitful discussions.  The first author held a research fellowship
from INdAM, Istituto Nazionale di Alta Matematica. The second author was partially supported by the Research Project PRIN 2020 - CuRVI, CUP J37G21000000001. Matteo Penegini and Anna Ulivi wish to thank the MIUR Excellence Department of Mathematics, University of Genoa, CUP D33C23001110001;  The authors are members of the INDAM-GNSAGA.%
\section{General results of abelian covers with Galois group $(\mathbb Z/p)^k$}\label{sec: preliminaries}
We briefly recall the theory of abelian covers following the notation developed in \cite{FP24}. 

    A \emph{Galois cover} with a finite group $G$ is a finite morphism $\pi\colon X\to Y$ onto a smooth complete algebraic variety $Y$, with $X$ normal admitting a subgroup of $\text{Aut(X)}$ isomorphic to $G$ and such that $\pi$ factors through the composition of the quotient map $X\to X/G$ and an isomorphism $X/G\cong Y$. The group $G$ is the Galois group of $\pi$. 

\begin{definition}
An \emph{abelian cover} is a Galois cover with $Y$ smooth, $X$ normal, and $G$ a finite abelian group. A \emph{$(p,k)$-cover} is an abelian cover with Galois group $G= (\mathbb Z/p)^k$. A \emph{$(p,k)$-plane} is a $(p,k)$-cover with $Y\cong \mathbb P^2$. 
\end{definition}
Let $\pi\colon X\to Y$ be an abelian cover with Galois gruop $G$. Following \cite{FP24} and \cite{Par91a}, the direct image of the structure sheaf $\mathcal{O}_X$ on $Y$ splits as a direct sum of line bundles $\mathcal L_\chi^{-1}$ indexed by characters $\chi\in G^*$
\begin{equation}\label{eq: dec_structure_sheaf}
    \pi_*\mathcal{O}_X= \bigoplus_{\chi\in G^*}\mathcal L_\chi^{-1}. 
\end{equation}
By Zariski-Nagata purity theorem, the branch locus of $\pi$ is a divisor. The reduced structure of this divisor will be denoted by $D$. As is customary for Galois covers, we call the divisor $\pi^{-1}(D)$, endowed with the reduced structure, the \emph{ramification divisor} of $\pi$, and we denote it by $R$.
 Let $T$ be an irreducible component of $R$. The elements of $G$ fixing all the points of $T$ form a cyclic subgroup $H$ of $G$, the so called \textit{inertia group} of $T$. Furthermore, there is a unique character $\psi$ of $H$ and a uniformizing parameter $t$ for $\mathcal O_{X,T}$ such that any $h\in H$ acts on $t$ as $t\mapsto \psi(h)t$. In addition, as observed in \cite{FaPa97}, we can choose a \emph{unique} element $g\in G$ such that $H=\langle g \rangle$ with the property that $\chi(g)=\exp\left(\frac{2\pi i}{\vert H\vert}\right)$, i.e., $h$ acts on $t$ by the multiplication of the first $\vert H\vert$-root of the unity. In this case, $g$ is called the \textit{local monodromy} of the component $T$. This discussion leads to the following  natural decomposition of the ramification divisor
\[
  R=\sum_{g\in G} R_g,
\]
where $R_g$ is the (possibly zero) reduced divisor given by the sum of those irreducible components $T \subset R$ whose inertia group is generated by $g$ and whose local monodromy character corresponds to $g$.
In particular, two irreducible components $T_1$ and $T_2$ appear in the same summand $R_g$ if and only if they have the same local monodromy.  Furthermore, since $G$ is a abelian group, 
then $R_g$ is a union of $G$-orbits.
For this reason, there exists a unique reduced divisor $D_g \subset Y$ such that
\[
R_g = \pi^{-1}(D_g).
\] 
Therefore, we have also a decomposition of the reduced branch locus
\[
D=\sum_{g\in G}D_g.
\]
\begin{definition}
   The \emph{building data} of the abelian cover $\pi\colon X\to Y$ consist of the collection of line bundles
$\{\mathcal L_\chi\}_{\chi\in G^*}$ together with the collection of reduced divisors
$\{D_g\}_{g\in G}$ constructed above.
\end{definition}
\begin{remark}\label{rem: X_connected_implies_L_chi-1_no_sections}
 By construction, $D_0$ is the zero divisor and $\mathcal L_1\cong \mathcal O_Y$. Furthermore, $X$ is connected if and only if $h^0(Y, \mathcal L_\chi^{-1})=0$ for all $\chi\neq 1_G$. Indeed, from the decomposition \eqref{eq: dec_structure_sheaf} one has $h^0(X, \mathcal O_X)=\sum_{\chi \in G^*} h^0(Y, \mathcal L_{\chi}^{-1})$. 
\end{remark}
The most powerful result regarding abelian coverings is Pardini's Existence Theorem \cite[Theorem 2.1]{Par91a}, which states that the building data completely determine an abelian cover of a smooth variety $Y$. This allows one to work with abelian coverings simply by managing the building data, in the sense that all the geometric properties of \(X\), such as its smoothness, canonical class, and numerical invariants, can be expressed in terms of the building data, and vice versa. For the sake of simplicity and for our purposes, we only present the theorem in the case where \(G\cong (\mathbb Z/p)^k\) is an elementary abelian \(p\)-group and the base of the cover is smooth, i.e., for $(p,k)$-covers with $p$ prime number and $k\geq 1$.

\begin{theorem}\label{thm: Par91a}\cite[Thm. 2.1]{Par91a}
    Let $\pi\colon X \to Y$ be a $(p,k)$-cover. Then for any pairs of characters $\chi, \eta\in G^*$ it holds
    \begin{equation}\label{eq: Pardini_eq}
       L_\chi+L_\eta\equiv_{\lin} L_{\chi \eta}+\sum_{g\in \mathbb (Z/p)^k}\left \lfloor \frac{\chig+\etag}{p} \right \rfloor D_g.
    \end{equation}
    Conversely, given a smooth variety $Y$ and 
    \begin{itemize}
        \item a collection of line bundles $\{\mathcal L_\chi\}_{\chi\in G^*}$ on $Y$ indexed by the characters of $G$,
        \item a collection of \textup{divisors} $\{D_g\}_{g\in G}$ on $Y$ indexed by the elements of $G$,
    \end{itemize}
    with the property that $D\coloneqq \sum_{g\in G}D_g$ is reduced and the linear equations \eqref{eq: Pardini_eq} hold for any pair $\chi,\eta\in G^*$, then there exists a unique $(p,k)$-cover, up to isomorphisms of $G$-covers, $\pi\colon X\to Y$  whose building data are $\{\mathcal L_\chi\}_{\chi\in G^*}$ and $\{D_g\}_{g\in G}$. 
\end{theorem}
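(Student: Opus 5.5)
The plan is to reprove the statement in Pardini's way, specialized to $G=(\mathbb Z/p)^k$. Throughout we work with the eigensheaf decomposition \eqref{eq: dec_structure_sheaf} and with the multiplication maps of the $\cO_Y$-algebra $\pi_*\cO_X$. For the first part, note that $\pi_*\cO_X$ is a $G$-equivariant algebra, so multiplication sends the $\chi$-eigensheaf times the $\eta$-eigensheaf into the $\chi\eta$-eigensheaf. This gives morphisms
\[
\mu_{\chi,\eta}\colon \mathcal L_\chi^{-1}\otimes\mathcal L_\eta^{-1}\to \mathcal L_{\chi\eta}^{-1},
\]
that is, global sections $s_{\chi,\eta}$ of $\mathcal L_\chi\otimes\mathcal L_\eta\otimes\mathcal L_{\chi\eta}^{-1}$. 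Each $s_{\chi,\eta}$ is nonzero, since $X$ is integral and products of nonzero local eigenfunctions are nonzero. Hence \eqref{eq: Pardini_eq} reduces to showing that $\operatorname{div}(s_{\chi,\eta})=\sum_g \lfloor(\chig+\etag)/p\rfloor D_g$.

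First I would show that $s_{\chi,\eta}$ is invertible away from $D$. Over $Y\setminus D$ the map $\pi$ is étale, hence étale-locally trivial as a $G$-torsor, and there all multiplication maps between eigensheaves are isomorphisms. The divisor of $s_{\chi,\eta}$ is therefore supported on $D$. Since $Y$ is smooth, its coefficients can be computed at the generic point of each component $\Delta\subset D_g$, which is a DVR. There, with $H=\langle g\rangle\cong\mathbb Z/p$ the inertia group, the cover factors as an étale $G/H$-cover followed by a totally ramified $\mathbb Z/p$-cover. I take a local equation $x$ of $\Delta$ and a uniformizer $t$ upstairs with $t^p=x\cdot(\text{unit})$ and $g\cdot t=\zeta_p t$.

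Next I compute the local eigensheaves. After the étale base change, a local generator of the $\chi$-eigensheaf can be taken as $t^{\chig}$ times an invertible function on which $H$ acts trivially. Multiplying the generators for $\chi$ and $\eta$ gives $t^{\chig+\etag}$. Writing $\chig+\etag=\langle\chi\eta,g\rangle+p\lfloor(\chig+\etag)/p\rfloor$, this equals $x^{\lfloor(\chig+\etag)/p\rfloor}$ times the generator for $\chi\eta$, up to a unit. So $s_{\chi,\eta}$ vanishes to exactly that order along $\Delta$, which proves \eqref{eq: Pardini_eq} up to linear equivalence.

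For the converse, I would construct $X$ as the relative $\operatorname{Spec}$ of $\mathcal A=\bigoplus_\chi\mathcal L_\chi^{-1}$. Multiplication is defined by the sections $\prod_g\sigma_g^{\lfloor(\chig+\etag)/p\rfloor}$, where $\sigma_g$ is a fixed section defining $D_g$, composed with fixed isomorphisms realizing \eqref{eq: Pardini_eq}. The \textbf{main obstacle} is choosing these isomorphisms so that the product is associative and commutative. The floor exponents themselves satisfy the cocycle identity, because both ways of bracketing $\chi,\eta,\xi$ give $\lfloor(\langle\chi,g\rangle+\langle\eta,g\rangle+\langle\xi,g\rangle)/p\rfloor$. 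What remains is a $\mathbb C^*$-valued 2-cocycle ambiguity, and I would remove it using that $G^*\cong(\mathbb Z/p)^k$ has a basis $\chi_1,\dots,\chi_k$.

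Concretely, I define $\mathcal L_{\chi}$ for $\chi=\prod\chi_i^{a_i}$, $0\le a_i<p$, through the $\mathcal L_{\chi_i}$ and the relations $p L_{\chi_i}\equiv_{\lin}\sum_g\langle\chi_i,g\rangle D_g$. Equivalently, $\mathcal A$ is the normalization of the fibre product of the cyclic covers $w_i^p=\prod_g\sigma_g^{\langle\chi_i,g\rangle}$, decomposed into eigenspaces. This gives a commutative, associative algebra, and the local computation above shows it is normal. Along $D_g$ one sees directly that the inertia is $\langle g\rangle$ with the prescribed character; off $D$ the cover is étale. Connectedness I would not need to address here, since the theorem as stated does not require it; by Remark~\ref{rem: X_connected_implies_L_chi-1_no_sections} it follows from the vanishing of $h^0(Y,\mathcal L_\chi^{-1})$ for $\chi\ne 1$, which holds when the $D_g$ generate $G$.

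Uniqueness follows because a $G$-cover with the given building data has $\pi_*\cO_X\cong\mathcal A$ with the same multiplication up to rescaling the summands. Two normal $G$-covers that agree over the generic point and have the same data therefore coincide, since each is the normalization of $Y$ in the same Kummer extension. The rescaling is absorbed by choosing the eigenvectors $w_i$ appropriately; here one uses that $Y$ is complete, so that $H^0(\cO_Y^*)=\mathbb C^*$ and $p$-th roots of scalars exist.
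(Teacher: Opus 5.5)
Your proposal is correct and is essentially Pardini's original argument, which is all the paper relies on here, since it gives no proof of its own and simply cites \cite[Thm.~2.1]{Par91a}: the necessity comes from computing the multiplication sections at the generic points of the branch components, and existence and uniqueness come from the normalized fibre product of the cyclic covers attached to a basis of $G^*$, with completeness of $Y$ absorbing the $\mathbb{C}^*$-ambiguity. Two small points should be made explicit: the prescribed $\mathcal L_\chi$ must be checked to agree with the ones you build from $\mathcal L_{\chi_1},\dots,\mathcal L_{\chi_k}$ (telescope \eqref{eq: Pardini_eq}; the carries add up to $\lfloor \sum_i a_i\langle\chi_i,g\rangle/p\rfloor$), and the converse tacitly needs $D_0=0$.
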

As evident, the number of linear equivalence equations required to guarantee the existence of a $(p,k)$-cover grows quadratically with the cardinality of the group. As a consequence, the explicit determination of a $(p,k)$-cover can be computationally demanding. The following Theorem~\ref{thm: reduction_numb_Pard_eq} reduces the number of equations that need to be checked to $k$, the rank of $(\mathbb{Z}/p)^{k}$.

\begin{theorem}\cite[Prop. 2.1]{Par91a}\label{thm: reduction_numb_Pard_eq}
     Let $\{e_1, \dots, e_k\}$ be a set of generators of $(\ZZ/p)^k=:G$ and let $\{\epsilon_1, \dots, \epsilon_k\}\subset G^*$ be the corresponding dual set of generators of $G^*$.
     
     Let $\pi\colon X\to Y$ be a $(p,k)$-cover. Then 
    \begin{equation}\label{eq: reduction_numb_Pard_eq}
        p\cdot L_{\epsilon_j}\equiv_{\lin} \sum_{g\in G}\ejg D_g, \qquad \makebox{for any} \qquad j=1,\dots, k.
    \end{equation}
    Conversely, given a smooth variety $Y$ and 
    \begin{itemize}
    \item a collection of divisors $\{D_g\}_{g\in G}$ on $Y$ and 
    \item line bundles $\{\mathcal L_{\epsilon_1}, \dots, \mathcal L_{\epsilon_k}\}$ on $Y$
    \end{itemize}
    satisfying equations \eqref{eq: reduction_numb_Pard_eq} for any $j=1, \dots,k$. Then for any $\chi\in G^*$ the divisor $\sum_g\chi(g)D_g$ is $p$-divisible, in the sense that there exists a divisor $L_\chi$ such that $p\cdot L_\chi\equiv \sum_g\chi(g)D_g$. 
    Setting $\mathcal L_\chi\coloneqq \mathcal O_Y(L_\chi)$, then $\{\mathcal L_\chi \}_{\chi\in G^*}$ and $\{D_g\}_{g\in G}$ satisfy equations \eqref{eq: Pardini_eq}. 
    
    Consequently, there exists a unique  $(p,k)$-cover $\pi\colon X\to Y$ (up to isomorphisms of $G$-covers) whose building data are $\{\mathcal L_\chi\}_{\chi\in G^*}$ and $\{D_g\}_{g\in G}$.  
\end{theorem}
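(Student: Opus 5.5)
The statement has a forward part and a converse. For the forward part I specialise Theorem~\ref{thm: Par91a} to the pair $\chi=\eta=\epsilon_j$ and iterate. For the converse I define $L_\chi$ by $p$-division and check the relations of Theorem~\ref{thm: Par91a} modulo $p$. Throughout, $\chi(g)$ in the sum $\sum_g\chi(g)D_g$ is read as the integer $\langle\chi,g\rangle$.

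\emph{Forward direction.} Write $a_g:=\ejg$. For $1\le m\le p-1$ one has $\langle \epsilon_j^m,g\rangle = m a_g \bmod p$. Apply \eqref{eq: Pardini_eq} to the pair $(\epsilon_j^m,\epsilon_j)$:
\[
L_{\epsilon_j^m}+L_{\epsilon_j}\equiv_{\lin} L_{\epsilon_j^{m+1}}+\sum_g\left\lfloor \frac{(ma_g \bmod p)+a_g}{p}\right\rfloor D_g .
\]
Sum these relations for $m=1,\dots,p-1$, using $L_{\epsilon_j^p}=L_1=0$ by Remark~\ref{rem: X_connected_implies_L_chi-1_no_sections}. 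The intermediate terms telescope and leave $pL_{\epsilon_j}$ on the left. The coefficient of each $D_g$ on the right is the total number of carries produced when $a_g$ is added $p$ times modulo $p$. That number equals $(p\,a_g-(p a_g \bmod p))/p=a_g$. This gives \eqref{eq: reduction_numb_Pard_eq}.

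\emph{Converse direction.}
\begin{enumerate}
\item Fix $\chi=\prod_j\epsilon_j^{c_j}$ with $0\le c_j\le p-1$. Then $\chig\equiv \sum_j c_j\ejg \pmod p$, so
\[
\sum_g \chig D_g=\sum_j c_j\sum_g\ejg D_g - p\sum_g n_g D_g
\]
for suitable integers $n_g\ge 0$.
\item By hypothesis the first sum is linearly equivalent to $p\sum_j c_jL_{\epsilon_j}$. Hence $L_\chi:=\sum_j c_jL_{\epsilon_j}-\sum_g n_gD_g$ satisfies $pL_\chi\equiv_{\lin}\sum_g\chig D_g$. This proves the $p$-divisibility.
\item To verify \eqref{eq: Pardini_eq}, note the integer identity
\[
\chig+\etag=\langle\chi\eta,g\rangle+p\left\lfloor\frac{\chig+\etag}{p}\right\rfloor .
\]
Multiply by $D_g$ and sum over $g$. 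This yields $p$ times the desired relation, but only up to linear equivalence of $p$-multiples.
\item It remains to divide by $p$. Here I use that the $L_\chi$ were defined \emph{explicitly} as $\mathbb Z$-combinations of the $L_{\epsilon_j}$ and the $D_g$, not merely as some $p$-th roots. Both sides of \eqref{eq: Pardini_eq} then become combinations of the $L_{\epsilon_j}$ and the $D_g$. For each $j$ the coefficients of $L_{\epsilon_j}$ on the two sides differ by a multiple of $p$. Replacing $p L_{\epsilon_j}$ by $\sum_g\ejg D_g$ reduces the comparison to an identity of divisors. That identity is exactly the integer identity from step 3 divided by $p$, so the relation holds on the nose, with no division in $\mathrm{Pic}$.
\end{enumerate}
Once \eqref{eq: Pardini_eq} holds, existence and uniqueness of the cover follow from the converse part of Theorem~\ref{thm: Par91a}.

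The main obstacle is the bookkeeping in step 4. One has to choose the representatives $L_\chi$ consistently, via the exponent vector $(c_j)$, and check that the carry terms match. A naive division by $p$ in $\mathrm{Pic}(Y)$ would be invalid when $\mathrm{Pic}(Y)$ has $p$-torsion.
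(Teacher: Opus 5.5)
Your proof is correct. The telescoping sum over the pairs $(\epsilon_j^m,\epsilon_j)$ gives the forward relations. For the converse, you define $L_\chi=\sum_j c_jL_{\epsilon_j}-\sum_g\lfloor \sum_j c_j\ejg/p\rfloor D_g$ and check the relations coefficientwise, so that linear equivalence is used only to replace the carries $pL_{\epsilon_j}$. This is the standard argument behind \cite[Prop.~2.1]{Par91a}; the paper gives no proof of its own and simply cites that result.

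Your caveat is also well taken. The statement as the paper words it takes arbitrary $p$-th roots $L_\chi$, and it is only valid for your explicit choice, or when $\mathrm{Pic}(Y)$ has no $p$-torsion, as for $Y=\mathbb P^2$ in the paper's applications. The same tacit hypothesis that $\sum_g D_g$ is reduced is needed when you invoke the converse of Theorem~\ref{thm: Par91a}.
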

It is not so difficult to see that divisors $D_g$ determine $L_\chi$ (namely the line bundle $\mathcal L_\chi$) up to torsion of $\text{Pic}(Y)$. 
\begin{lemma}\label{lem: L_chi_in_function_D_g}\cite[Cor. 1.6]{FP24}
    Any line bundle $\mathcal L_\chi$ in the building data of a $(p,k)$-cover satisfies the linear equivalence equation
    \begin{equation}\label{eq: reduction_Pardini_eq}
        p\cdot L_\chi \equiv_{\lin} \sum_{g\in G}\chig D_g.
    \end{equation}
\end{lemma}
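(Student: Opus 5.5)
We want $p L_\chi \equiv_{\lin} \sum_g \langle\chi,g\rangle D_g$ for every $\chi\in G^*$. The plan is to use Pardini's relations \eqref{eq: Pardini_eq} repeatedly and argue by induction on the multiplicative order of powers of $\chi$. If $\chi=1_G$, then $L_1=0$ by Remark \ref{rem: X_connected_implies_L_chi-1_no_sections} and $\langle 1_G,g\rangle=0$ for all $g$, so the statement holds. Assume therefore $\chi\neq 1_G$.

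The first step is the claim that for $1\le m\le p$,
\[
mL_\chi\equiv_{\lin} L_{\chi^m}+\sum_{g\in G}\left\lfloor \frac{m\langle\chi,g\rangle}{p}\right\rfloor D_g .
\]
We prove this by induction on $m$; the case $m=1$ is trivial. For the inductive step, apply \eqref{eq: Pardini_eq} with the pair $(\chi^m,\chi)$ to get
\[
L_{\chi^m}+L_\chi\equiv_{\lin} L_{\chi^{m+1}}+\sum_g\left\lfloor\frac{\langle\chi^m,g\rangle+\langle\chi,g\rangle}{p}\right\rfloor D_g .
\]
Here $\langle\chi^m,g\rangle$ is the residue of $m\langle\chi,g\rangle$ modulo $p$. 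Write $a=\langle\chi,g\rangle$ and $ma=p\lfloor ma/p\rfloor+r$ with $0\le r<p$. Then $\lfloor (r+a)/p\rfloor=\lfloor (m+1)a/p\rfloor-\lfloor ma/p\rfloor$. Adding this to the inductive hypothesis therefore telescopes and gives the formula for $m+1$.

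The second step is to take $m=p$. Then $\chi^p=1_G$, so $L_{\chi^p}=L_1=0$, and $\lfloor p\langle\chi,g\rangle/p\rfloor=\langle\chi,g\rangle$. This yields exactly \eqref{eq: reduction_Pardini_eq}. The only delicate point is the floor-function identity relating the residue $\langle\chi^m,g\rangle$ to $m\langle\chi,g\rangle$. It is elementary, but it requires consistently using the normalization $\langle\cdot,\cdot\rangle\in\{0,\dots,p-1\}$.
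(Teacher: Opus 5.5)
Your argument is correct: the identity $\lfloor (r+a)/p\rfloor=\lfloor (m+1)a/p\rfloor-\lfloor ma/p\rfloor$ makes the relations \eqref{eq: Pardini_eq} for the pairs $(\chi^m,\chi)$ telescope. At $m=p$ it gives $L_{\chi^p}=L_1=0$ and coefficient $\langle\chi,g\rangle$, which is exactly the claimed equivalence. The paper does not prove the lemma itself but cites \cite[Cor.~1.6]{FP24}, and telescoping Pardini's relations along the powers of $\chi$ is the standard way to derive this identity.
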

In particular, if $\text{Pic}(Y)$ is torsion-free (as in the case when $Y$ is a rational or K3 surface) then the divisors $D_g$ uniquely determine the line bundles.
\vspace{0.2cm}

\begin{remark}\label{rem: onlyD_g}
By Lemma~\ref{lem: L_chi_in_function_D_g}, if the base space $Y$ has torsion-free Picard group,
the building data of a $(p,k)$-cover are uniquely determined by the collection of divisors
$\{D_g\}_{g\in G}$.
Indeed, in this case there is a unique collection of line bundles
$\{\mathcal L_\chi\}_{\chi\in G^*}$ for which the relations~\eqref{eq: reduction_Pardini_eq} are satisfied,
and these are implicitly fixed throughout.

Moreover, in order to test whether a given collection of divisors $\{D_g\}_{g\in G}$
defines a $(p,k)$-cover, it suffices to verify that the divisors
\[
\sum_{g\in G} \eig\, D_g
\]
are $p$-divisible for a suitable dual set of generators $\{\epsilon_1,\dots,\epsilon_k\}$ of $G^*$,
as follows from Theorem~\ref{thm: reduction_numb_Pard_eq}.
\end{remark}

\begin{remark}\label{rem: D_g_generates_G}
    If $Y$ has a torsion-free Picard group and $X$ is connected, then the set of local monodromies $\{g\in G| D_g\neq 0\}$ generates the entire group $G$. Otherwise, if the  subgroup $H$ generated by local monodromies were proper in $G$, one could define a non trivial character $\chi$ vanishing on $H$. However, it would follow from Equation \eqref{eq: reduction_Pardini_eq} that $\mathcal L_\chi$ has $p$-torsion, and so thus it must be the trivial bundle. This would imply $h^0(Y,\mathcal L_\chi^{-1})=1$, which contradicts the connectedness of $X$, according to Remark \ref{rem: X_connected_implies_L_chi-1_no_sections}. 
\end{remark}

As will become apparent below, most geometric and numerical invariants of a $(p,k)$-cover are naturally expressed
in terms of the line bundles $\mathcal L_\chi$, rather than directly in terms of the divisors $D_g$.
Consequently, although the cover can be defined by specifying the divisors $D_g$, constructing a $(p,k)$-cover with
prescribed numerical invariants requires passing between these two types of building data, a process which can be
computationally demanding.


For this reason, the authors in \cite{FP24} investigated whether the line bundles $\mathcal L_\chi$ determine the divisors $D_g$ for a general abelian cover. As observed in \cite[Example 1.7]{FP24}, it is not so difficult to construct (singular) abelian covers having the same $\mathcal L_\chi$ but different $D_g$, so it is not always possible to determine $D_g$ in terms of $\mathcal L_\chi$. However, a positive answer is given in \cite[Thm. 1.11]{FP24} for groups $G=(\mathbb Z/2)^k$. We generalize this crucial result for elementary abelian $p$-groups. Before doing that let us recall the following definition.
\begin{definition}
The natural isomorphism $G \xrightarrow{\sim} G^{**}$ allows us to regard each element
$g \in G$ as a character of $G^*$, which we denote again by $g$, by setting
\[
g(\chi) \coloneqq  \chi(g).
\]
The kernel of $g$ is then the subgroup of $G^*$ consisting of the characters $\chi$
such that $\chi(g)=1$. In other words,
\[
\chi \in \ker g \;\Longleftrightarrow\; g \in \ker \chi.
\]

Let $\mathcal H \subset G^*$ be a subgroup, possibly of the form $\ker g$.
For any $g \in G$, we denote by $g_{\mid \mathcal H}$ the element of $\mathcal H^*$
obtained by restricting $g$ to $\mathcal H$. Since $G$ is an elementary abelian $p$-group, the order $o(g_{\mid \mathcal H})$ of $g_{\mid \mathcal H}$
is either $1$ or $p$. We define the orthogonal subgroup of $\mathcal H$ as
\[
\mathcal H^{\perp} \coloneqq  \{g \in G \mid g_{\mid \mathcal H} = 1\}.
\]
\end{definition}

\begin{theorem}\label{thm: Dg_elm_group}
	Let $\pi \colon X \rightarrow Y$ be a $(p,k)$-cover with building data $\{\mathcal L_\chi\}_{\chi\in G^*}$, $\{D_g\}_{g\in G}$. Then, given a subgroup $\mathcal H\leq G^*$ of index $p^t$ in $G^*$, 
	we have
	\begin{equation}\label{eq: D_g_function_Lchi_H}
	    \frac{p^{k}(p-1)}{2}\sum_{h\in \mathcal H^\perp} D_h \equiv_{\lin}  p\left( \sum_{\chi \not\in \mathcal H} L_\chi - (p^t-1)\sum_{\chi \in \mathcal H} L_\chi\right).
	\end{equation}
    
	In particular, for $\mathcal H =G^*$, we have 
    \begin{equation}\label{eq: sum_dg_sm_lchi}
        \frac{p^{k}(p-1)}{2}\sum_{g\in G} D_g \equiv_{\lin}\, p\sum_{\chi \in G^*} L_\chi.
    \end{equation}
\end{theorem}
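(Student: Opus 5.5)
The plan is to express every $L_\chi$ through the divisors $D_g$ using Lemma~\ref{lem: L_chi_in_function_D_g}, and then to compute the resulting coefficient of each $D_g$ by a character-sum count. By that lemma, $pL_\chi\equiv_{\lin}\sum_{g\in G}\chig D_g$ for every $\chi\in G^*$. Taking the integer linear combination of these relations dictated by the right-hand side of \eqref{eq: D_g_function_Lchi_H}, we get
\[
p\Big(\sum_{\chi\notin\mathcal H}L_\chi-(p^t-1)\sum_{\chi\in\mathcal H}L_\chi\Big)\equiv_{\lin}\sum_{g\in G}c_g\,D_g,
\]
where
\[
c_g=\sum_{\chi\notin\mathcal H}\chig-(p^t-1)\sum_{\chi\in\mathcal H}\chig=\sum_{\chi\in G^*}\chig-p^t\sum_{\chi\in\mathcal H}\chig .
\]
It therefore suffices to show two things: $c_g=\frac{p^k(p-1)}{2}$ for $0\neq g\in\mathcal H^\perp$, and $c_g=0$ for $g\notin\mathcal H^\perp$. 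The element $g=0$ lies in $\mathcal H^\perp$ but is harmless, since $D_0=0$.

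The key observation is that, for fixed $g$, the map $\chi\mapsto\chig$ is a group homomorphism $G^*\to\ZZ/p$, with values represented in $\{0,\dots,p-1\}$. Whenever its restriction to a subgroup $\mathcal K\le G^*$ is nontrivial, it is surjective onto $\ZZ/p$. Each residue is then attained exactly $|\mathcal K|/p$ times, so
\[
\sum_{\chi\in\mathcal K}\chig=\frac{|\mathcal K|}{p}\cdot\frac{p(p-1)}{2}=\frac{|\mathcal K|(p-1)}{2}.
\]
If instead the restriction is trivial, the sum is $0$.

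Apply this first with $\mathcal K=G^*$. For $g\neq0$ the homomorphism is nontrivial, so the total sum is $p^k(p-1)/2$. Next apply it with $\mathcal K=\mathcal H$, which has order $p^{k-t}$:
\begin{itemize}
\item If $g\in\mathcal H^\perp$, the restriction to $\mathcal H$ is trivial, the $\mathcal H$-sum vanishes, and $c_g=p^k(p-1)/2$.
\item If $g\notin\mathcal H^\perp$, then also $g\neq0$, and the $\mathcal H$-sum equals $p^{k-t}(p-1)/2$. Hence $c_g=p^k(p-1)/2-p^t\cdot p^{k-t}(p-1)/2=0$.
\end{itemize}
This gives \eqref{eq: D_g_function_Lchi_H}. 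The special case $\mathcal H=G^*$ has $t=0$, so the second sum drops out, and $\mathcal H^\perp=G$, which yields \eqref{eq: sum_dg_sm_lchi}.

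The only delicate point is bookkeeping rather than a genuine obstacle. One has to note that all coefficients involved are integers, including for $p=2$, where $p^k(p-1)/2=2^{k-1}$. This ensures the manipulation is an honest integer combination of linear equivalences. One should also remember that $\chig$ is the representative in $\{0,\dots,p-1\}$, so the equidistribution count computes the sum exactly in $\ZZ$, not merely modulo $p$.
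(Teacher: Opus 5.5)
Your derivation of \eqref{eq: D_g_function_Lchi_H} is correct and is essentially the paper's argument. The paper sums $pL_\chi\equiv_{\lin}\sum_g\chig D_g$ over $\chi\in\mathcal H$ and evaluates $\sum_{\chi\in\mathcal H}\chig$ by citing \cite[Lemma~1.9]{FP24}. It then specializes this identity to $\mathcal H=G^*$ and subtracts $p^t$ times the $\mathcal H$-identity from the $G^*$-identity. You fold all of this into a single integer combination and prove the character-sum count yourself by equidistribution.

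Your final step, however, is wrong as written. For $\mathcal H=G^*$ one has $t=0$, and the sum over $\chi\notin\mathcal H$ is empty. Moreover $\mathcal H^\perp=\{g\in G\mid \chi(g)=1\text{ for all }\chi\in G^*\}=\{0\}$, not $G$. So \eqref{eq: D_g_function_Lchi_H} collapses to the tautology $0\equiv_{\lin}0$ and does not yield \eqref{eq: sum_dg_sm_lchi}.

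The specialization that works is the trivial subgroup $\mathcal H=\{1\}$. Then $t=k$ and $\mathcal H^\perp=G$. The subtracted term $(p^k-1)\,pL_1$ vanishes because $\mathcal L_1\cong\mathcal O_Y$ (Remark~\ref{rem: X_connected_implies_L_chi-1_no_sections}).

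More simply, \eqref{eq: sum_dg_sm_lchi} is just the sum of the relations $pL_\chi\equiv_{\lin}\sum_g\chig D_g$ over all $\chi\in G^*$. Combine this with your count for $\mathcal K=G^*$ and with $D_0=0$. This is how the paper obtains it, and it is what the phrase ``for $\mathcal H=G^*$'' in the statement actually refers to. That phrase means the $\mathcal H=G^*$ case of the intermediate identity $p\sum_{\chi\in\mathcal H}L_\chi\equiv_{\lin}\sum_g\bigl(\sum_{\chi\in\mathcal H}\chig\bigr)D_g$, not of \eqref{eq: D_g_function_Lchi_H}.
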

\begin{proof}
    Consider a subgroup $\mathcal H$ of $G^*$. Then applying equation \eqref{eq: reduction_Pardini_eq} to each $\chi \in \mathcal{H}$, one has
	\begin{equation}\label{eq: sum_H Lchi}
      \begin{split}
      		p\sum_{\chi \in \mathcal H}L_\chi & \equiv_{\lin}  \sum_{\chi \in \mathcal H} \left(\sum_{g\in G}\chig D_g\right)\\ 
      		& \equiv_{\lin}  \sum_{g\in G} \left(\sum_{\chi \in \mathcal H}\chig \right)D_g \\ 
      		& \equiv_{\lin}  \sum_{g\in G} \frac{|\mathcal H|}{2}p \left( 1 -\frac{1}{o(g_{|\mathcal H})} \right)D_g,
      \end{split}
	\end{equation}
    where the last equality follows from \cite[Lemma~1.9]{FP24}, which is true for an arbitrary abelian group. Furthermore, when $\mathcal H = G^*$, since $o(g_{\mid \mathcal H}) = p$, the last equation~\eqref{eq: sum_H Lchi}
reduces to~\eqref{eq: sum_dg_sm_lchi}.

 Now, using both \eqref{eq: sum_H Lchi} and  \eqref{eq: sum_dg_sm_lchi} we obtain 
	\[\begin{split}
		\sum_{g\in G }\frac{ p^{k+1}}{2}\left(\frac{1}{o(g_{|\mathcal H})}-\frac{1}{p}\right)D_g \equiv_{\lin}  & \sum_{g\in G}\frac{ p^{k+1}}{2}\left(1-\frac{1}{p}\right)D_g-\sum_{g\in G}\frac{ p^{k+1}}{2}\left(1-\frac{1}{o(g_{|\mathcal H})}\right)D_g\\
		\equiv_{\lin} \, &p\sum_{\chi\in G^*}L_\chi-p[G^*: \mathcal H]\sum_{\chi\in \mathcal H}L_\chi \\
		\equiv_{\lin} \, & p\left( \sum_{\chi\not \in \mathcal H}L_\chi-\left(p^t-1\right)\sum_{\chi\in \mathcal H}L_\chi\right).
	\end{split}
	\]
    However, the coefficient on the left member of the equation is either $\frac{p^k}{2}(p-1)$ or $0$  according if $g\in \mathcal H^\perp$ or not. Thus the left summand is over $\mathcal H^\perp$ and 
    \eqref{eq: D_g_function_Lchi_H} follows. 
\end{proof}
\begin{corollary}\label{cor: sum_<g> D_h equal to}
	Let $g\in G$ with $g\neq 0$. Then 
	\begin{equation}\label{eq: sum_<g> D_h equal to}
		\frac{p^{k}(p-1)}{2}\sum_{h\in \langle g\rangle } D_h \equiv_{\lin} \,p\left(\sum_{\chi \not\in \ker g} L_\chi - (p-1)\sum_{\chi \in \ker g} L_\chi\right).
	\end{equation}
\end{corollary}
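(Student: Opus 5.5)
The corollary is the special case of Theorem \ref{thm: Dg_elm_group} with $\mathcal H=\ker g$, viewing $g\in G\cong G^{**}$ as a character of $G^*$. The plan is to check two things. First, $\ker g$ has index $p^t$ with $t=1$. Second, $(\ker g)^\perp=\langle g\rangle$. Then \eqref{eq: D_g_function_Lchi_H} reads exactly as \eqref{eq: sum_<g> D_h equal to}, where $\chi\notin\ker g$ plays the role of $\chi\notin\mathcal H$ and $p^t-1=p-1$.

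For the index: since $g\neq 0$ and the pairing $G\times G^*\to\mu_p$ is perfect, $g$ is a nontrivial homomorphism $G^*\to\mu_p\cong\ZZ/p$. It is therefore surjective, so $\ker g$ has index $p$ in $G^*$, giving $t=1$.

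For the orthogonal: clearly $\langle g\rangle\subseteq(\ker g)^\perp$, since every multiple of $g$ vanishes on $\ker g$. Conversely, $(\ker g)^\perp$ is identified with the character group of $G^*/\ker g\cong\ZZ/p$ via the perfect duality. Its order is therefore $p=|\langle g\rangle|$, which forces equality.

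Substituting these two facts into \eqref{eq: D_g_function_Lchi_H} gives the claimed linear equivalence. The only point needing care, and the mild main step, is the duality count $|(\ker g)^\perp|=[G^*:\ker g]$. This holds for finite abelian groups under the canonical isomorphism $G\to G^{**}$. Alternatively, one can compute it by hand: choose a basis $e_1=g,e_2,\dots,e_k$ of $G$ with dual basis $\epsilon_j$. Then $\ker g=\langle\epsilon_2,\dots,\epsilon_k\rangle$, and the elements killed by all of these are exactly the multiples of $e_1$.
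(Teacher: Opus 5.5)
Your proposal is correct and follows the paper's proof: apply Theorem~\ref{thm: Dg_elm_group} to $\mathcal H=\ker g$ and identify $\mathcal H^\perp=\langle g\rangle$. The only difference is that you prove the reverse inclusion by counting, $|(\ker g)^\perp|=[G^*:\ker g]=p$, and you state the index $p^t=p$ explicitly; the paper instead obtains the reverse inclusion by extending characters from a proper subgroup to nontrivial characters of the whole group.
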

\begin{proof}
	It follows directly from Theorem \ref{thm: Dg_elm_group} applied to $\mathcal H:=\ker g$. Indeed, in this case $\mathcal H^\perp=\langle g\rangle$: the inclusion $\langle g\rangle\subseteq \mathcal H^\perp$ is straightforward, whilst the other direction holds directly by the fact that each character defined over a proper subgroup can be extended to a non-trivial character of the entire group. 
\end{proof}
\begin{remark}
	Formula~\eqref{eq: sum_<g> D_h equal to} shows that the numerical class of $\sum_{h\in \langle g\rangle} D_h$ can be expressed in terms of the line bundles $\mathcal L_\chi$.
	Note that when working with an elementary abelian $2$-group, each subgroup $\langle g\rangle$, $g\neq 0$, consists of exactly two elements, namely the identity and $g$ itself, and the above sum simply reduces to $D_g$. Thus, in the case $G\cong (\mathbb Z/2)^k$, the formula in \cite[Thm.~1.11]{FP24} becomes a particular instance of Formula \eqref{eq: sum_<g> D_h equal to}. 
\end{remark}
\subsection{Smooth and singular $(p,k)$-covers and their invariants}
\begin{definition}\label{defn: smooth_p-k_cover}
	A \emph{smooth \((p,k)\)-cover} is a \((p,k)\)-cover \(\pi\colon X\to Y\) such that the reduced branch divisor \(D\) has normal crossings, and for each \(g \in G\), the divisor \(D_{g}\) is either empty or a disjoint union of smooth irreducible components. Furthermore, we require that
	 \begin{equation}\label{eq_condSmooth_0}  \langle g\rangle \cap\langle h\rangle= \{0\}, 
	 \end{equation} 
for any two distinct irreducible components \(\Delta_g \leq D_g\) and \(\Delta_h \leq D_h\) such that $\Delta_g \cap \Delta_h\neq 0$. 
\end{definition}
\begin{proposition}\label{prop: smooth_cover} Let $\pi\colon  X \to Y$ be a smooth $(p,k)$-cover. Then $X$ is smooth. \end{proposition}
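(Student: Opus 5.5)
Smoothness is a local question, so the plan is to check it at every point $x\in X$ by working analytically (or étale) locally near $y=\pi(x)\in Y$. Away from $D$ the map $\pi$ is étale by Zariski–Nagata purity and the definition of $D$, so $X$ is smooth there. Since $D$ is normal crossing and each $D_g$ is a disjoint union of smooth components, a point $y\in D$ lies either on exactly one component $\Delta_g\le D_g$ or on exactly two components $\Delta_g\le D_g$, $\Delta_h\le D_h$ with $g\neq h$, meeting transversally. In particular $D_g$ has no self-intersections, since its components are disjoint. Triple points are excluded on a surface; in higher dimension we would need the analogous condition for several components, but our $Y$ is $\PP^2$, so I will treat only these two cases. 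If needed, the general case follows from the same argument with $\langle g_1\rangle\oplus\dots\oplus\langle g_r\rangle$.

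The key tool is the local structure of an abelian cover. Choose local coordinates $(u,v)$ at $y$ with $\Delta_g=\{u=0\}$ and, in the second case, $\Delta_h=\{v=0\}$. Let $I_y$ be the subgroup generated by the local monodromies of the branch components through $y$: this is $\langle g\rangle$ in the first case and $\langle g\rangle+\langle h\rangle$ in the second. I would use the standard fact, following Pardini's description in \cite{Par91a}, that over a small neighbourhood $U$ of $y$ the cover $\pi^{-1}(U)$ is a disjoint union of $[G:I_y]$ copies of an $I_y$-cover of $U$ branched only along the components through $y$. This cover is the normalization of $U$ in the field obtained by adjoining $p$-th roots of $u$ (and of $v$) compatibly with the local monodromies. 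Concretely, one trivializes the $\mathcal L_\chi$ on $U$ and checks from the relations \eqref{eq: Pardini_eq} that the local algebra $\bigoplus_\chi \mathcal L_\chi^{-1}$ is the $I_y$-invariant-graded algebra $\mathcal O_U[w,z]/(w^p-u,z^p-v)$, tensored with functions on $G/I_y$.

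In the first case this local model is $w^p=u$, which is smooth. In the second case, condition \eqref{eq_condSmooth_0} gives $\langle g\rangle\cap\langle h\rangle=\{0\}$, so $I_y\cong\langle g\rangle\oplus\langle h\rangle\cong(\ZZ/p)^2$. The local cover therefore has degree $p^2$ and is the normalization of $U$ in $\CC(u,v)(u^{1/p},v^{1/p})$. Its model $\{w^p=u,\ z^p=v\}$ is smooth, with coordinates $(w,z)$, and $X$ is smooth at $x$.

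The main obstacle is justifying the local description rigorously, namely identifying the normalization with $\{w^p=u, z^p=v\}$. Without the intersection condition, if $h\in\langle g\rangle$, the local group would be cyclic of order $p$. The local model would then be $w^p=u^a v^b$, which gives a cyclic quotient singularity, so this is exactly where \eqref{eq_condSmooth_0} enters. I would handle the identification by computing directly: the algebra $\bigoplus_{\chi\in I_y^*}\mathcal L_\chi^{-1}$ has multiplication given by $u^{\lfloor(\langle\chi,g\rangle+\langle\eta,g\rangle)/p\rfloor}v^{\lfloor(\langle\chi,h\rangle+\langle\eta,h\rangle)/p\rfloor}$ by \eqref{eq: Pardini_eq}. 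Take the dual basis $\chi_1,\chi_2$ of $I_y^*$ to $\{g,h\}$, and set $w,z$ to be the local generators of $\mathcal L_{\chi_1}^{-1},\mathcal L_{\chi_2}^{-1}$. Then the monomials $w^az^b$, $0\le a,b<p$, span, and the relations $w^p=u$, $z^p=v$ follow, proving the isomorphism.
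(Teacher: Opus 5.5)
Your proposal is correct and is essentially the paper's argument: the paper simply cites Pardini's Prop.~3.1 (and Cor.~3.1 for normality), and the content of that citation is exactly the local analysis you carry out. Namely, $\pi$ is étale off $D$, the local model is $w^p=u$ at smooth points of $D$, and at a double point condition \eqref{eq_condSmooth_0} makes the inertia group $\langle g\rangle\oplus\langle h\rangle\cong(\ZZ/p)^2$, which gives the smooth model $w^p=u$, $z^p=v$.

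Your caveat about higher dimension is also right and worth keeping. With only the pairwise condition, three transversal branch divisors through a point of a threefold with monodromies $e_1,e_2,e_1+e_2\in(\ZZ/2)^2$ give locally the singular quotient $\CC^3/\{\pm 1\}$. So the statement, and the paper's proof, which only considers double points of $D$, should be read for surfaces, exactly as you do.
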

\begin{proof} This is a special case of \cite[Prop. ~3.1]{Par91a}.  
First, by \cite[Cor. ~3.1]{Par91a} the variety \(X\) is normal.  
Moreover, the only points of \(X\) that may be singular are those lying over 
intersection points of two irreducible components of the reduced branch divisor $D$.  
However, by \cite[Prop. ~3.1]{Par91a}, condition~\eqref{eq_condSmooth_0}  
ensures that all points lying above such intersections are smooth. Hence the variety \(X\) is smooth.
\end{proof}
%
%
%
To consider more general \((p,k)\)-covers, we must allow certain singularities. Thus, we will introduce a slightly weaker condition on the reduced branch divisor \(D\) than the one required for a smooth \((p,k)\)-cover.
\begin{definition}\label{def: SNC_p,k_cover}
    A \emph{Simple Normal Crossing  $(p,k)$-cover} (SNC for short) is a $(p,k)$-cover $\pi\colon X\to Y$ of a smooth surface $Y$ such that the reduced branch divisor $D$ has normal crossings, i.e. all irreducible components of $D$ are smooth, each two of them either do not intersect or intersect transversally, and no point of \(Y\) belongs to more than two of them. 
\end{definition}
For the rest of this section, we shall consider SNC $(p,k)$-covers of a surface $Y$. By Proposition \ref{prop: smooth_cover} (see also \cite[Prop. 3.1]{Par91a}), the singular points of $X$ arise from the intersection points of pairs of distinct irreducible components $\Delta_g\subseteq D_g$ and $\Delta_h\subseteq D_h$ such that $\langle g\rangle \cap \langle h  \rangle\neq 0$. In other words, the image in $Y$ of the singular locus of $X$ is 
\[
\pi(\text{Sing}(X)) = \bigcup_{\substack{\langle g\rangle \cap \langle h\rangle \neq \{0\} \\ \Delta_g \leq D_g, \, \Delta_h \leq D_h}} (\Delta_g \cap \Delta_h).
\]
Such a surface $X$ has only rational singularities, that are determined precisely as follows.
\begin{proposition}\cite[Prop. 3.3]{Par91a}\label{prop: type_of_sing}
Let $\pi\colon X \to Y$ be an SNC $(p,k)$-cover of a smooth surface $Y$, and let
$\Delta_g \subset D_g$ and $\Delta_h \subset D_h$ be smooth irreducible components
such that $\langle g\rangle \cap \langle h\rangle \neq 0$.
Let $1 \le a \le p-1$ be the integer such that $h = (p-a)g$.
If $\Delta_g$ and $\Delta_h$ intersect transversely at a point $y \in Y$,
then the fiber $\pi^{-1}(y)$ consists of $p^{k-1}$ points, each of which is a
cyclic quotient singularity of $X$ of type $\frac{1}{p}(1,a)$.
\end{proposition}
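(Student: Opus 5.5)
The plan is to localize the problem and reduce it to a computation with a cyclic cover of a bidisc branched along the coordinate axes. Since $D$ is SNC at $y$, I choose local analytic coordinates $(x_1,x_2)$ on a small polydisc $U\ni y$ with $\Delta_g=\{x_1=0\}$ and $\Delta_h=\{x_2=0\}$, and with no other branch components meeting $U$. Then $\pi^{-1}(U)\to U$ is an abelian cover whose local monodromies all lie in the subgroup $\langle g,h\rangle=\langle g\rangle$, because $h=(p-a)g$ and $\langle g\rangle\cap\langle h\rangle\neq 0$ force $\langle h\rangle=\langle g\rangle$. As $U$ is simply connected, the restriction of the $G$-cover splits as $G\times_{\langle g\rangle}V$, where $V\to U$ is the connected $\langle g\rangle\cong\mathbb Z/p$-cover. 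Concretely, the stabilizer of a point of $\pi^{-1}(y)$ is the subgroup generated by the inertia groups, namely $\langle g\rangle$. Hence $\pi^{-1}(y)$ has $|G|/p=p^{k-1}$ points, and the germ of $X$ at each of them is the germ of $V$ over $y$. This settles the cardinality part of the statement.

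Next I identify $V$ explicitly. Restricting the building data to $U$, where all line bundles are trivial, the $\mathbb Z/p$-cover with $D_g=\{x_1=0\}$ and $D_{(p-a)g}=\{x_2=0\}$ is, by Pardini's theorem (Theorem \ref{thm: Par91a}) or directly by Kummer theory, the normalization of
\[
 w^p=x_1\,x_2^{\,c},
\]
where $c$ is determined by the local monodromy convention. The element $g$ acts with character value $\exp(2\pi i/p)$ on a uniformizer along $R_g$, and $h=(p-a)g$ plays that role along $R_h$. Comparing the two eigen-characters gives $c\equiv p-a$ or $c\equiv a^{-1}$-type relations. I will pin this down by computing $\langle\epsilon,g\rangle$ and $\langle\epsilon,h\rangle=p-a$ for the generator $\epsilon$ with $\langle \epsilon,g\rangle=1$, which yields $w^p=x_1x_2^{p-a}$ up to units.

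Then I normalize. Take the smooth cover $\mathbb C^2_{u,v}\to U$ given by $x_1=u^p$, $x_2=v^p$. The $(\mathbb Z/p)^2$-action $(u,v)\mapsto(\zeta^i u,\zeta^j v)$ has $V$ as its quotient by the subgroup fixing $w=uv^{p-a}$, that is, by $\{(i,j): i+(p-a)j\equiv 0\}$. This subgroup is cyclic of order $p$, generated by $(a,1)$ up to sign, i.e. by $(u,v)\mapsto(\zeta^{a}u,\zeta v)$, or equivalently by $(\zeta u,\zeta^{a^{-1}}v)$. The quotient $\mathbb C^2/\langle(\zeta^a,\zeta)\rangle$ is normal and finite over $U$ of degree $p$, with the correct branching, so it is $V$. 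Rewriting the generator, its type is $\frac1p(1,a)$ after swapping coordinates: $(v,u)\mapsto(\zeta v,\zeta^a u)$. Since $\frac1p(1,a)\cong\frac1p(1,a^{-1})$, the possible ambiguity between $a$ and $a^{-1}$ in the convention does not affect the singularity type.

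The main obstacle is the bookkeeping of the monodromy convention, namely checking that the exponent is $p-a$ and not $a$. I would do this by reading off the eigenvalue with which $g$ acts on the local uniformizer $u$ of $R_g$, and the eigenvalue with which it acts on $v$. On $v$, the element $g=-(p-a)^{-1}h$ acts by $\zeta^{-(p-a)^{-1}}=\zeta^{a^{-1}}$ as $a\equiv-(p-a)$. This gives the type $\frac1p(1,a^{-1})\cong\frac1p(1,a)$, which matches the statement.
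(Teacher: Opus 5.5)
Your proof is correct. It also supplies something the paper does not: the paper's proof is only the remark that the statement is a special case of \cite[Prop.~3.3]{Par91a}, whereas you reprove it by a local computation.

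\textbf{What your argument does.} You localize at $y$ and note that the meridian monodromies $g,h$ generate $\langle g\rangle=\langle h\rangle$. This gives $\pi^{-1}(U)\cong G\times_{\langle g\rangle}V$, hence $p^{k-1}$ points over $y$, since $V\to U$ is totally ramified there. You then identify $V$ with the normalization of $w^p=x_1x_2^{p-a}$ and realize it as $\mathbb C^2_{u,v}/\{(i,j): i\equiv aj\}$. The generator $(u,v)\mapsto(\zeta^a u,\zeta v)$ of this stabilizer has type $\frac1p(1,a)$.

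\textbf{Comparison.} The citation buys brevity and generality, since Pardini treats arbitrary abelian groups. Your route buys an explicit check of the conventions, and that check matters. For odd $p$ the types $\frac1p(1,a)$ and $\frac1p(1,p-a)$ differ in general; already $\frac13(1,1)\not\cong\frac13(1,2)$.

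\textbf{The exponent $p-a$ needs a clean justification.} Replace the hedge in your second paragraph with the following.
\begin{itemize}
\item Iterating the local equations of Subsection~\ref{subsec: equations_p,k_plane_in_suitable_proj_planes} gives $y_\chi^p=\prod_g f_g^{\langle\chi,g\rangle}$. So $w=y_\epsilon$, with $\langle\epsilon,g\rangle=1$, satisfies $w^p=x_1x_2^{p-a}$.
\item Along $\{x_1=0\}$, $w$ is a uniformizer and $g$ multiplies it by $\zeta$.
\item Along $\{x_2=0\}$, take the uniformizer $t=w^m x_2^{-n}$ with $m(p-a)-np=1$. The element multiplying $t$ by $\zeta$ is exactly $m^{-1}g=(p-a)g=h$.
\end{itemize}
So this model has the local monodromies prescribed by the paper's convention.

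\textbf{Drop your last paragraph.} From $h=(p-a)g$ one gets $g=(p-a)^{-1}h=-a^{-1}h$, not $-(p-a)^{-1}h$. Moreover, $g$ does not act on the coordinates $u,v$ of the auxiliary cover $\mathbb C^2_{u,v}$. It acts on $V$, and its lifts to $(\mathbb Z/p)^2$ are defined only modulo the stabilizer. The singularity type is read off from the stabilizer, which your third paragraph already does correctly, so nothing depends on that paragraph.
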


\begin{proof}
The statement is a special case of \cite[Proposition~3.3]{Par91a}.
\end{proof}
Let $x$ be a singularity of type $\frac{1}{n}(1,a)$ with $\gcd(n,a)=1$. The minimal resolution of singularities of $x$ consists of replacing $x$ by a Hirzebruch-Jung string $E_1\cup \dots \cup E_l$, where the $E_i$ are smooth rational curves with $E_iE_{i+1}=1$, for $i=1,\dots, \ell-1$, and $E_iE_j=0$ otherwise. Their self-intersection is $E_i^2=-b_i$, where $b_i$ are the coefficients of the continued fraction 
\[\frac{n}{a}=b_1-\frac{1}{b_2-\frac{1}{
		b_3-\dots}}=[b_1,\dots, b_\ell].\]
	\begin{definition}\label{def: correct_terms_invariants}
		Let $x$ be a singularity of type $\frac{1}{n}(1,a)$ with $\gcd(n,a)=1$ and let $1\leq a' \leq n-1$, such that $a'=a^{-1}$ in $(\mathbb Z/n)^*$. We define the following 
		\begin{itemize}
			\item $k_x:= k(\frac{1}{n}(1,a)):=-2+\frac{2+a+a'}{n}+\sum_{i=1}^\ell (b_i-2) \geq 0$;
			\item $e_x:=e(\frac{1}{n}(1,a)):=\ell+1-\frac{1}{n}$;
			\item $E_x:=E(\frac{1}{n}(1,a)):=\sum_{i=1}^\ell\left(1-\frac{D_{i,\ell}+D_{i,1}}{n}\right)E_i$, where $D_{i,\ell}$ and $D_{i,1}$ are  the denominators of the truncated continued fractions 
			\[
			  \frac{N_{i,\ell}}{D_{i,\ell}}:=[b_i, b_{i+1},\dots, b_\ell], \qquad \makebox{and} \qquad \frac{N_{i,1}}{D_{i,1}}:=[b_i, b_{i-1},\dots, b_1]. 
			\]
			Clearly, $N_{1,\ell}=n$, $D_{1,\ell}=a$, and $N_{\ell,\ell}=b_\ell$, $D_{\ell,\ell}=1$. It is well known that $N_{\ell, 1}=n$, $D_{\ell, 1}=a'$, and $N_{1,1}=b_1$, $D_{1,1}=1$. 
		\end{itemize}
	\end{definition}
\begin{theorem}\label{thm: invariants_ab_cov_cyclic_sing}
	Let $\pi\colon X\to Y$ be an SNC $(p,k)$-cover of a smooth surface $Y$. Let $\rho\colon \widetilde{X}\to X$ be the minimal resolution of singularities of $X$, and consider the generically finite morphism $\widetilde{\pi}\coloneqq \pi\circ \rho\colon \widetilde{X}\to Y$. Then:
	\begin{equation}\label{eq: pg_ab_cov_quo_sing}
		p_g(\widetilde{X})   =\sum_{\chi\in G^*} h^0(Y, K_Y+\mathcal L_{\chi}),
	\end{equation}
	\begin{equation}\label{eq: q_ab_cov_quo_sing}
		q(\widetilde{X})  = q(Y)+ \sum_{\chi\in G^*\setminus \{1\}}h^1(Y, - \mathcal L_{\chi}),
\end{equation}
\begin{equation}\label{eq: chi_ab_cov_quo_sing}
	\chi(\mathcal{O}_{\widetilde{X}})   =p^{k}\chi(\mathcal{O}_Y)+ \frac{1}{2}\sum_{\chi\in G^*}L_\chi(L_\chi+K_Y),
\end{equation}
\begin{equation}\label{eq: Ksquare_ab_cov_quo_sing}
	K_{\widetilde{X}}^2  =p^{k-2} \left(pK_Y+(p-1)\sum_{g\in G}D_g\right)^2-\sum_{x\in \text{Sing}(X)}k_x,
\end{equation}
\begin{equation}\label{eq: e_ab_cov_quo_sing}
	e(\widetilde{X})  =p^{k}\left(e(Y)-\left(1-\frac{1}{p}\right)\cdot\sum_{\Delta_g}\left(2-2g(\Delta_g)\right)+\frac{1}{2}\left(1-\frac{1}{p}\right)^2\sum_{\Delta_g\neq \Delta_h}\Delta_g\Delta_h\right)+\sum_{x\in \text{Sing}(X)}e_x.
\end{equation}
Furthermore, we have 
\begin{equation}\label{eq: KS_ab_cov_quo_sing}
	pK_{\widetilde{X}} \equiv_{lin}  \widetilde{\pi}^*\left(pK_Y+(p-1)\sum_{g\in G}D_g\right)-\sum_{x\in \text{Sing}(X)}pE_x.
\end{equation} 
\end{theorem}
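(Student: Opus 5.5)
The plan is to reduce everything to the standard formulas for a smooth abelian cover, plus local corrections at the cyclic quotient singularities described in Proposition~\ref{prop: type_of_sing}. Since these singularities are rational, all cohomological invariants of $\widetilde{X}$ agree with those of $X$. Concretely, $R^i\rho_*\mathcal O_{\widetilde X}=0$ for $i>0$ and $\rho_*\omega_{\widetilde X}=\omega_X$, so $H^i(\widetilde X,\mathcal O)=H^i(X,\mathcal O_X)$ and $p_g(\widetilde X)=h^0(X,\omega_X)$. Because $\pi$ is finite, we then get $H^i(X,\mathcal O_X)=\bigoplus_\chi H^i(Y,\mathcal L_\chi^{-1})$ from \eqref{eq: dec_structure_sheaf}.
\begin{itemize}
\item Taking $i=1$ and isolating $\chi=1$ gives \eqref{eq: q_ab_cov_quo_sing}.
\item By Serre duality on $Y$ and the duality $\pi_*\omega_X=\mathcal{H}om(\pi_*\mathcal O_X,\omega_Y)=\bigoplus_\chi \omega_Y\otimes\mathcal L_\chi$ (valid since $X$ is Cohen--Macaulay), we get \eqref{eq: pg_ab_cov_quo_sing}.
\item Riemann--Roch on $Y$ applied to each $\mathcal L_\chi^{-1}$ gives $\chi(\mathcal L_\chi^{-1})=\chi(\mathcal O_Y)+\frac12 L_\chi(L_\chi+K_Y)$. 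Summing over the $p^k$ characters yields \eqref{eq: chi_ab_cov_quo_sing}.
\end{itemize}

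For the canonical class, I first work on $X$ as a $\mathbb Q$-Cartier Weil divisor. Away from the finitely many singular points, Hurwitz's formula gives $K_X=\pi^*K_Y+(p-1)R$. Every component of $R$ has ramification index $p$ (inertia groups are cyclic of order $p$), so $pR=\pi^*D$ with $D=\sum_g D_g$. Hence $pK_X\equiv_{\lin}\pi^*(pK_Y+(p-1)D)$ on the smooth locus, and since $X$ is normal this extends to $X$. Pulling back, $pK_{\widetilde X}=\rho^*(pK_X)-\sum_x pE_x$, where $-E_x$ is the discrepancy divisor of the minimal resolution. The standard computation for a Hirzebruch--Jung string of a $\frac1n(1,a)$ point gives discrepancies $-1+\frac{D_{i,\ell}+D_{i,1}}{n}$, which is exactly the $E_x$ of Definition~\ref{def: correct_terms_invariants}. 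This gives \eqref{eq: KS_ab_cov_quo_sing}.

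Squaring, the cross terms vanish because the $E_x$ are $\rho$-exceptional, so
\[
p^2K_{\widetilde X}^2=\deg(\pi)\,(pK_Y+(p-1)D)^2+p^2\sum_x E_x^2 .
\]
This yields \eqref{eq: Ksquare_ab_cov_quo_sing} once one checks the known identity $-E_x^2=k_x$ for cyclic quotient singularities. That identity is a classical computation via $E_x^2=-\sum_i (2-b_i)\,\mathrm{coef}_i$, which reduces by adjunction on the string to $-2+\frac{2+a+a'}{n}+\sum(b_i-2)$. I expect this continued-fraction bookkeeping to be the main technical obstacle, and I would cite it from the literature on quotient singularities rather than redo it.

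Finally, for $e(\widetilde X)$ I stratify $Y$ and use multiplicativity of the Euler characteristic on unramified strata.
\begin{itemize}
\item Over $Y\setminus D$ the map $\pi$ is étale of degree $p^k$.
\item Over the smooth part of each $\Delta_g$ (minus its intersection points) the fibres have $p^{k-1}$ points.
\item Over each double point the fibres also have $p^{k-1}$ points: the stabilizer is $\langle g,h\rangle$, which has order $p$ or $p^2$, and in the singular case it has order $p$ by Proposition~\ref{prop: type_of_sing}. This should be checked in both cases.
\end{itemize}
Thus
\[
e(X)=p^k\bigl(e(Y)-e(D)\bigr)+p^{k-1}\Bigl(e(D)-N\Bigr)+p^{k-j}N,
\]
where $N$ counts double points with the appropriate fibre cardinalities, and
\[
e(D)=\sum_{\Delta}(2-2g(\Delta))-\sum_{\Delta_g\neq\Delta_h}\tfrac{1}{2}\Delta_g\Delta_h
\]
(the sum over ordered pairs, hence the $\tfrac12$). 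Collecting terms in a form uniform over all double points produces the $\frac12(1-\frac1p)^2$ coefficient. Replacing each singular point by a string of $\ell$ rational curves adds $\ell$ to the Euler number, and this combines with the fibre-count discrepancy to give the correction $e_x=\ell+1-\frac1n$. The bookkeeping of the double-point contributions, distinguishing stabilizers of order $p$ and $p^2$, is what I would verify most carefully.
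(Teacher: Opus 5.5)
Your proposal is correct and follows the same route as the paper: rationality of the singularities together with the eigensheaf decomposition of $\pi_*\mathcal O_X$ for $p_g$, $q$ and $\chi$; the canonical bundle formula for $pK_X$ corrected by the Hirzebruch--Jung discrepancies for $pK_{\widetilde X}$ and $K_{\widetilde X}^2$; and a stratification of $Y$ for $e(\widetilde X)$, which you carry out in more detail than the paper does. There are two small slips to fix. First, since $E_x\cdot E_i=-K_{\widetilde X}\cdot E_i=2-b_i$, one has $E_x^2=\sum_i (2-b_i)\,c_i$ with no minus sign, where $c_i$ is the coefficient of $E_i$ in $E_x$. Second, over a double point with $\langle g\rangle\cap\langle h\rangle=\{0\}$ the fibre has $p^{k-2}$ points, not $p^{k-1}$; your final bookkeeping with $p^{k-j}$ already handles this correctly.
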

\begin{remark}\label{rem: easier_invariants_formulae_smooth_cover}
	Theorem \ref{thm: invariants_ab_cov_cyclic_sing} generalizes \cite[Proposition~4.2]{Par91a} to the case where cyclic quotient singularities are allowed. Indeed, if the covering surface is smooth, then $\text{Sing}(X)$ is empty and the formulae reduce to those in \cite[Proposition~4.2]{Par91a}.
\end{remark}
\begin{proof}
    The formulae for the geometric genus \eqref{eq: pg_ab_cov_quo_sing} and the irregularity \eqref{eq: q_ab_cov_quo_sing} follow directly from \eqref{eq: dec_structure_sheaf}, \cite[Satz~1]{Frei71} and \cite[Proposition~4.1]{Par91a}, since the singular locus of $X$ consists only of rational singularities from Proposition \ref{prop: type_of_sing}.  As for \eqref{eq: chi_ab_cov_quo_sing}, the holomorphic Euler characteristic satisfies
\[
\chi(\mathcal O_{\widetilde{X}})
= 1 - q(\widetilde{X}) + p_g(\widetilde{X})
= \sum_{\chi \in G^*} \chi\bigl(K_Y + \mathcal L_\chi\bigr),
\]
and the claim follows from the Riemann--Roch theorem.

Formula \eqref{eq: KS_ab_cov_quo_sing} is obtained by first applying the canonical bundle formula
for the $(\mathbb Z/p)^k$-cover to express $pK_X$ as the pullback of
$pK_Y+(p-1)\sum_{g\in G}D_g$, and then passing to the minimal resolution
$\rho\colon \widetilde X\to X$, where the discrepancy contributions of the cyclic
quotient singularities are computed explicitly using the Hirzebruch-Jung
resolution data (see e.g. \cite{F26} and \cite[page 286]{HTU17}). 

The self-intersection of the canonical class \eqref{eq: Ksquare_ab_cov_quo_sing} is a standard computation obtained from \eqref{eq: KS_ab_cov_quo_sing} (see \cite[Remark~3.3]{GaPe15} and \cite[Thm.~2.4]{fedeLincei}).

Formula \eqref{eq: e_ab_cov_quo_sing} for $e(\widetilde{X})$ is obtained by applying the additivity of the Euler
characteristic with respect to stratifications.
More precisely, one first computes $e(X)$ by decomposing $Y$ into the complement of the
branch locus and its natural strata, using that $\pi$ is a $(\mathbb Z/p)^k$-cover with
simple normal crossings branch divisor, and then corrects for the singular points of $X$
by passing to the minimal resolution of singularities $\rho\colon \widetilde X \to X$.
\end{proof}
\begin{remark}
	Notice that, by Noether’s formula, the Euler characteristic could also be recovered from
	the previously computed invariants.
	Nevertheless, in the sequel we shall compute the Euler characteristic using
	Formula \ref{eq: e_ab_cov_quo_sing}.
\end{remark}
We automatically obtain the following useful result for SNC $(p,k)$-planes. 
\begin{lemma}\label{lem: (p,k)_planes_are_regular}
The minimal resolution of singularities $\widetilde{X}$ of a SNC $(p,k)$-plane $\pi\colon X\to \mathbb P^2$ is a regular surface, namely $q(\widetilde{X})=0$.
\end{lemma}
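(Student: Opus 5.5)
The plan is to read the statement off the irregularity formula \eqref{eq: q_ab_cov_quo_sing} in Theorem~\ref{thm: invariants_ab_cov_cyclic_sing}, specialised to $Y=\PP^2$. That formula gives
\[
q(\widetilde{X}) = q(\PP^2) + \sum_{\chi\in G^*\setminus\{1\}} h^1(\PP^2, -\mathcal L_\chi).
\]
Since $q(\PP^2)=0$, it suffices to show that every summand vanishes. Each $\mathcal L_\chi$ is a line bundle on $\PP^2$, so $\mathcal L_\chi\cong \cO_{\PP^2}(d_\chi)$ for some integer $d_\chi$. Then $-\mathcal L_\chi\cong\cO_{\PP^2}(-d_\chi)$, and the summand is
\[
h^1(\PP^2,\cO_{\PP^2}(-d_\chi)).
\]

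The key input is the classical computation of the cohomology of line bundles on projective space. For every $m\in\ZZ$ one has $H^1(\PP^n,\cO_{\PP^n}(m))=0$ whenever $n\geq 2$. For $\PP^2$ this follows, for instance, from the standard computation via the \v{C}ech complex. Alternatively, it follows from Serre duality together with the vanishing of $H^i(\PP^n,\cO(m))$ for $0<i<n$. Applying this to $m=-d_\chi$ kills every summand, so $q(\widetilde X)=0$.

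It is worth noting that no sign information on $d_\chi$ is required, although one has it anyway. By Lemma~\ref{lem: L_chi_in_function_D_g}, $p\,d_\chi=\sum_g\chig\deg D_g\geq 0$. By Remark~\ref{rem: X_connected_implies_L_chi-1_no_sections}, connectedness of $X$ forces $d_\chi>0$ for $\chi\neq 1$. Neither fact is used; the intermediate cohomology of $\PP^2$ vanishes in all degrees.

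There is no real obstacle. The only point to check is that formula \eqref{eq: q_ab_cov_quo_sing} legitimately applies to $\widetilde X$ rather than to $X$. This holds because $X$ has only rational (cyclic quotient) singularities by Proposition~\ref{prop: type_of_sing}, so $q(\widetilde X)=h^1(X,\cO_X)$. One then uses $h^1(X,\cO_X)=\sum_\chi h^1(\PP^2,\mathcal L_\chi^{-1})$, which follows from \eqref{eq: dec_structure_sheaf} and the finiteness of $\pi$. Both facts are already built into Theorem~\ref{thm: invariants_ab_cov_cyclic_sing}.
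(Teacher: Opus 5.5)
Your proposal is correct and follows the paper's proof exactly: the paper also specialises the irregularity formula \eqref{eq: q_ab_cov_quo_sing} of Theorem~\ref{thm: invariants_ab_cov_cyclic_sing} to $Y=\PP^2$, and kills each summand $h^1(\PP^2,-\mathcal L_\chi)$ by the vanishing of $H^1$ for every line bundle on $\PP^2$. Your extra justification — rationality of the singularities and finiteness of $\pi$ — is already contained in that theorem, so nothing further is needed.
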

\begin{proof}
Indeed, every line bundle of $\mathbb P^2$ has trivial first cohomology group; thus, from Theorem \ref{thm: invariants_ab_cov_cyclic_sing}, we have 
    \[
    q(\widetilde{X})=\sum_{\chi\in G^*}h^1(\mathbb P^2, -L_\chi)=0. 
    \]
\end{proof}
\subsection{Decomposition of the branch curves under resolution of singularities}
We conclude this section studying the decomposition of the pullback $\widetilde{\pi}^*\Delta_g$ of a smooth irreducible component
$\Delta_g \subset D_g$, keeping the terminology and notation of Theorem \ref{thm: invariants_ab_cov_cyclic_sing}.
We denote by $\widetilde{\Delta}_g$ the strict transform of $\pi^*\Delta_g$ under the resolution $\rho\colon \widetilde{X} \to X$.
\begin{proposition}\label{prop: dec_of_the_pullback}
Let $\pi \colon X \longrightarrow \PP^2$ be a SNC $(p,k)$-plane, and let $\widetilde{\pi} := \pi \circ \rho \colon \widetilde{X} \to Y$. The reduced pullback $\widetilde{\pi}^*\Delta_g$ decomposes as the sum of a smooth irreducible curve $\widetilde{\Delta}_g$ (the strict transform of $\Delta_g$ under the resolution $\rho$) and the Hirzebruch-Jung strings arising from the singular points of $X$ lying over $\Delta_g$.
	
	More precisely, let $x \in \text{Sing}(X)$ be such that $\pi(x) \in \Delta_g$. In this case, there exists a unique integer $1 \leq a_x \leq p-1$ such that $\pi(x) \in \Delta_g \cap \Delta_h$, where $h = (p-a_x)g$, and $x$ is a cyclic quotient singularity of type $\frac{1}{n}(1,a_x)$ (cf. Proposition~\ref{prop: type_of_sing}).
	
	Then, $\widetilde{\Delta}_g$ meets the exceptional chain $E_{1,x} \cup \cdots \cup E_{\ell_x, x}$ transversally in a single point, and this intersection point lies on the first component $E_{1,x}$. Moreover, we have 
	\begin{equation}\label{eq: self Intersecion strict}
		\widetilde{\Delta}_g^2 = p^{k-2}\Delta_g^2 - \sum_{x \in \text{Sing}(X), \, \pi(x) \in \Delta_g} \frac{a_x}{p}
	\end{equation}
	\begin{equation}\label{eq: genus_cover_branch}
		2g(\widetilde{\Delta}_g)-2 = p^{k-1} \left( 2g(\Delta_g)-2 + \sum_{\Delta_t, \, t \notin \langle g \rangle} \left(1-\frac{1}{p}\right) \Delta_g \cdot \Delta_t \right).
	\end{equation}
	Furthermore,
	\begin{equation}\label{eq: decomp_pullback_branch_comp}
		\widetilde{\pi}^*\Delta_g = p\widetilde{\Delta}_g + \sum_{x \in \text{Sing}(X), \, \pi(x) \in \Delta_g} \left( \sum_{i=1}^{\ell_x} D_{i,\ell_x} E_{i,x} \right),
	\end{equation}
	where $\frac{N_{i,\ell_x}}{D_{i,\ell_x}} = [b_i, b_{i+1}, \dots, b_{\ell_x}]$ is the $i$-th truncated continued fraction of $\frac{p}{a_x} = [b_1, \dots, b_{\ell_x}]$.
\end{proposition}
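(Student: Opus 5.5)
The plan is to work locally and then globally. Away from $\text{Sing}(X)$ the cover is smooth. Over a general point of $\Delta_g$ the inertia group is $\langle g\rangle\cong\mathbb Z/p$, so $\pi^*\Delta_g = p\,R_{\Delta_g}$, where $R_{\Delta_g}$ is the reduced preimage. I would first show $R_{\Delta_g}$ is irreducible. The quotient $X/\langle g\rangle$ is an unramified $(\mathbb Z/p)^{k-1}$-cover over the generic point of $\Delta_g$. The components of $R_{\Delta_g}$ correspond to the orbits of $G/\langle g\rangle$ acting on the fiber over $\Delta_g$. Connectedness of this restricted cover should follow because the monodromies $t\notin\langle g\rangle$ of branch components meeting $\Delta_g$, together with the triviality of the restriction of line bundles from $\mathbb P^2$ (Remark \ref{rem: D_g_generates_G}), generate $G/\langle g\rangle$. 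One can also phrase this via the $(\mathbb Z/p)^{k-1}$-cover of $\Delta_g\cong$ a smooth plane curve, using that $\text{Pic}$ of a plane curve restricted from $\mathbb P^2$ is generated by $\mathcal O(1)$. This irreducibility is the step I expect to be the main obstacle; if it fails in general, I would restrict the statement to the connected components and check that the formulas are additive.

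With irreducibility in hand, the genus formula \eqref{eq: genus_cover_branch} follows by Riemann--Hurwitz. The induced map $\widetilde\Delta_g\to\Delta_g$ has degree $p^{k-1}$ and is ramified with index $p$ exactly over the points $\Delta_g\cap\Delta_t$ with $t\notin\langle g\rangle$. Over these points the stabilizer of a point of $R_{\Delta_g}$ is $\langle g,t\rangle$, which acts on $R$ through $\langle g,t\rangle/\langle g\rangle$. Over points with $t\in\langle g\rangle$ it is unramified: the stabilizer there is only $\langle g\rangle$, since the fiber has $p^{k-1}$ points by Proposition \ref{prop: type_of_sing}. Smoothness of $\widetilde\Delta_g$ holds because the normalization separates branches, and the strict transform in the minimal resolution of a cyclic quotient singularity of the image of a coordinate axis is smooth.

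For the local analysis at $x$ with $\pi(x)\in\Delta_g\cap\Delta_h$, $h=(p-a_x)g$, I use the toric model $\mathbb C^2/\frac1p(1,a_x)$. In this model the images of the two coordinate axes are $\pi^{-1}(\Delta_g)$ and $\pi^{-1}(\Delta_h)$, and $\Delta_g$ corresponds to the axis whose strict transform meets $E_{1,x}$ transversally in one point (the standard Hirzebruch--Jung picture). The local function $u$ defining $\Delta_g$ pulls back to $z_1^p$, up to the unramified part. Its total transform on the resolution is the toric divisor $p\widetilde\Delta_g+\sum_i D_{i,\ell_x}E_{i,x}$. The coefficients can be checked by the condition $\widetilde\pi^*\Delta_g\cdot E_{j,x}=0$: one solves the tridiagonal system $-b_jc_j+c_{j-1}+c_{j+1}=-p\delta_{j1}$, and the solution is $c_i=D_{i,\ell_x}$ by the recursion of the truncated continued fraction denominators. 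This gives \eqref{eq: decomp_pullback_branch_comp}.

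Finally, for \eqref{eq: self Intersecion strict} I intersect \eqref{eq: decomp_pullback_branch_comp} with $\widetilde\Delta_g$. By the projection formula, $\widetilde\pi^*\Delta_g\cdot\widetilde\Delta_g=\Delta_g\cdot\widetilde\pi_*\widetilde\Delta_g=p^{k-1}\Delta_g^2$. On the other side, each singular point contributes $D_{1,\ell_x}=a_x$, since $\widetilde\Delta_g$ meets only $E_{1,x}$. Dividing by $p$ yields the formula.
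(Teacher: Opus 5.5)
Your proposal is correct and follows essentially the same route as the paper: a local Hirzebruch--Jung analysis at each singular point, the tridiagonal system $\widetilde{\pi}^*\Delta_g\cdot E_{j,x}=0$ solved by the denominators $D_{i,\ell_x}$, the projection formula for $\widetilde{\Delta}_g^2$, and Riemann--Hurwitz for the genus, with connectedness coming from the local monodromies $[t]\in G/\langle g\rangle$ and Remark~\ref{rem: D_g_generates_G}. State explicitly that every branch component $\Delta_t$ meets $\Delta_g$ in $\mathbb P^2$: this is exactly why those monodromies generate all of $G/\langle g\rangle$, so irreducibility always holds and your fallback to connected components is unnecessary.
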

\begin{proof}
    The description of the intersection between the strict transform $\widetilde{\Delta}_g$
and the Hirzebruch--Jung string $E_{1,x} \cup \cdots \cup E_{\ell_x,x}$ follows from a local
analysis in a neighborhood of the point $x \in X$.
Indeed, since the divisors $\Delta_g$ and $\Delta_h$ intersect transversely at $\pi(x)$,
the local equation of the cover near $x$ shows that $\widetilde{\Delta}_g$ meets the
exceptional chain transversely in a single point on the first component.

We now determine the coefficients $\alpha_{1,x}, \dots, \alpha_{\ell_x,x}$ of the decomposition 
    \[
		\widetilde{\pi}^*\Delta_g = p\widetilde{\Delta}_g + \sum_{x \in \text{Sing}(X), \, \pi(x) \in \Delta_g} \left( \sum_{i=1}^{\ell_x} \alpha_{i,x} E_{i,x} \right),
    \]
    Since $\widetilde{\pi}^*\Delta_g$ is the pullback of a divisor on $Y$, it has zero
intersection with each exceptional curve $E_{j,x}$.
Using the fact that the strict transform $\widetilde{\Delta}_g$ meets the
Hirzebruch--Jung chain only on the first component $E_{1,x}$, this yields a tridiagonal linear system for the coefficients $\alpha_i:=\alpha_{i,x}$, whose unique solution is expressed
explicitly in terms of the continued fraction data of the corresponding cyclic
quotient singularity. More precisely, these coefficients are the solution to the linear system $\widetilde{\pi}^*\Delta_g\cdot E_{i,x}=0$, $i=1,\dots, \ell_x$, which takes the form
    \[
     \begin{cases}
         0 & =p+(-b_1)\alpha_1+\alpha_2 \\
         0 & =0 +\alpha_1-b_2\alpha_2+\alpha_3\\
         \vdots \\
         0 & =0 +\alpha_{\ell_x-1}-b_{\ell_x}\alpha_{\ell_x}\\
     \end{cases}.
    \]
    The coefficient matrix of the linear system is the intersection pair matrix of the exceptional chain, and the constant vector is given by  $(-p, 0, \dots,  0)^t$. Thus, the explicit solution is obtained by multiplying the constant vector by the inverse of the intersection pair matrix, which can be found in \cite{F26}.
Specifically, $\alpha_i$ is the denominator $D_{i,\ell_x}$ of the $i$-th partial continued fraction $\frac{N_{i,\ell_x}}{D_{i,\ell_x}}=[b_i,b_{i+1}, \dots, b_{\ell_x}]$ of $\frac{p}{a_x}=[b_1,\dots, b_{\ell_x}]$. 

    We can now compute the self-intersection of $\widetilde{\Delta}_g$. 
Intersecting the decomposition of $\widetilde{\pi}^*\Delta_g$ with $\widetilde{\Delta}_g$
and using the projection formula, we obtain
\[
(\widetilde{\pi}^*\Delta_g)\cdot \widetilde{\Delta}_g
= \Delta_g\cdot \widetilde{\pi}_*\widetilde{\Delta}_g
= p^{k-1}\Delta_g^2.
\]
On the other hand, since $\widetilde{\Delta}_g$ meets each exceptional chain only on the
first component $E_1^x$, we have
\[
(\widetilde{\pi}^*\Delta_g)\cdot \widetilde{\Delta}_g
= p\,\widetilde{\Delta}_g^2 + \sum_{x \in \text{Sing}(X), \, \pi(x) \in \Delta_g}\ \alpha_{1,x},
\]
where $\alpha_{1,x}=D_{1,x}=a_x$. Therefore,
\[
\widetilde{\Delta}_g^2
= p^{k-2}\Delta_g^2 - \sum_{x \in \text{Sing}(X), \, \pi(x) \in \Delta_g}\frac{a_x}{p}.
\]
 The last formula determines the genus $g(\widetilde{\Delta}_g)$ of the curve $\widetilde{\Delta}_g$.
 We compute it by applying Riemann-Hurwitz to the restriction $\widetilde{\pi}_{|\widetilde{\Delta}_g}\colon \widetilde{\Delta}_g \to \Delta_g$, which is a $G/\langle g\rangle \cong (\mathbb Z/p)^{k-1}$ Galois cover. 
 \\
 Geometrically, restricting the cover to $\Delta_g$ amounts to quotienting the monodromy
 by $\langle g\rangle$, so that $\widetilde{\pi}_{\vert \widetilde{\Delta}_g}$ is branched
 precisely at the intersection points $\Delta_g\cap \Delta_t$ for which the monodromy
 class of $t$ is nontrivial in $G/\langle g\rangle$. This means, more or less, that in a schematic picture of the branch locus, one can recognize the components $\Delta_t$
 contributing to the branching of $\widetilde{\pi}_{\vert \widetilde{\Delta}_g}$ simply as
 those curves intersecting $\Delta_g$ whose associated group element $t$ does not lie in
 $\langle g\rangle$; intersections with components indexed by $t\in\langle g\rangle$
 produce no branching and are therefore invisible in the induced cover. In virtue of this, we claim:

\textbf{Claim:}
The branch locus of the restriction
\[
\widetilde{\pi}_{\vert \widetilde{\Delta}_g}\colon \widetilde{\Delta}_g \to \Delta_g
\]
consists of the points $y\in \Delta_g$ such that $y\in \Delta_t$ for some component
$\Delta_t\neq \Delta_g$ with $t\notin \langle g\rangle$.
Moreover, the local monodromy at such a point $y$ is given by the class
$[t]\in G/\langle g\rangle$.

\medskip

This claim implies that $\widetilde{\Delta}_g$ is a disjoint union of smooth connected
components.
In fact, $\widetilde{\Delta}_g$ is connected.
Indeed, the number of connected components of $\widetilde{\Delta}_g$ equals the index
of the subgroup of $G/\langle g\rangle$ generated by the local monodromies of
$\widetilde{\pi}_{\vert \widetilde{\Delta}_g}$.
We show that this subgroup coincides with the whole group $G/\langle g\rangle$.

Since $X$ is connected and, by Remark~\ref{rem: D_g_generates_G} (and denoting by $d_g \coloneqq  \deg D_g$),
\[
G=\langle\, h\in G \mid d_h\neq 0\,\rangle,
\]
Let $h\in G$ with $d_h\neq 0$, then the curve $\Delta_h$ intersects $\Delta_g$ in at least one point,
since any two curves in $\mathbb P^2$ intersect.
If $h\in \langle g\rangle$, then $[h]=0$ in $G/\langle g\rangle$.
Otherwise, each point of $\Delta_g\cap \Delta_h$ has local monodromy $[h]$ for
the map $\widetilde{\pi}_{\vert \widetilde{\Delta}_g}$.
Hence we obtain
\[
G/\langle g\rangle
= \langle\, [h] \mid d_h\neq 0\,\rangle
\subseteq
\langle\, [t] \mid \Delta_g\cap \Delta_t\neq 0,\ t\notin \langle g\rangle\,\rangle,
\]
which shows that $\widetilde{\Delta}_g$ is connected.

Therefore $\widetilde{\Delta}_g$ is a smooth irreducible curve in $\widetilde{X}$.
Its genus is then computed by applying the Riemann--Hurwitz formula to the morphism
$\widetilde{\pi}_{\vert \widetilde{\Delta}_g}$.

    \textit{Proof of the Claim:} Let us consider a point $y\in \Delta_g$. We distinguish three cases:
    \begin{itemize}
        \item The point $y$ does not belong to any other component of the branch locus of $\pi$. In this case, the fibre $\pi^{-1}(y)$ consists of $p^{k-1}$ points of $X$, which are smooth by \cite[Prop. ~3.1]{Par91a}. Since the resolution is an isomorphism over smooth points, the fibre $\widetilde{\pi}^{-1}(y)$ also consists of $p^{k-1}$ points, hence $y$ is not a branch point;
        \item The point $y$ belongs to another component $\Delta_t\neq \Delta_g$ of the branch locus of $\pi$, and $t\in \langle g\rangle$. Here, the fibre $\pi^{-1}(y)$ still consists of $p^{k-1}$ points on $X$, which are cyclic quotient singularities by Proposition \ref{prop: type_of_sing}. Since $\widetilde{\Delta_g}$ intersects the Hirzebruch-Jung string $E_1^x\cup\dots \cup E_{l_y}^x$ at a single point on $E_1^x$, there is a one-to-one correspondence between the points of $\widetilde{\pi}_{\vert \widetilde{\Delta_g}}^{-1}(y)$ and $\pi^{-1}(y)$. Thus, $y$ is not a branch point.
        \item The point $y\in \Delta_t$, for some $\Delta_t\neq \Delta_g$, with $t\notin \langle g\rangle$. In this case,  the fibre $\pi^{-1}(y)$ consists of $\vert G/\langle g,t\rangle\vert=p^{k-2}$ of $X$, which are smooth for $X$ by \cite[Prop. ~3.1]{Par91a}. Consequently, the fibre $\widetilde{\pi}^{-1}(y)$ consists of $p^{k-2}$ points, making $y$ a branch point for $\widetilde{\pi}_{\vert \widetilde{\Delta_g}}$. 
        
        To determine the local monodromy of $y$, let $x\in \widetilde{X}$ lie over $y$. Since $\rho(x)$ is a smooth point for $X$ and the intersection of $\Delta_g$ and $\Delta_t$ is transversal, there exists an open neighborhood of $x$ with coordinates $(z_0,z_1)$ where $\widetilde{\Delta_g}=(z_0=0)$, $\widetilde{\Delta_t}=(z_1=0)$. The action is $g\colon (z_0,z_1)\mapsto (\exp\left(\frac{2\pi i}{p}\right)z_0,z_1)$ and $t\colon (z_0,z_1)\mapsto (z_0, \exp\left(\frac{2\pi i}{p}\right)z_1)$. The restriction to $\widetilde{\Delta_g}$ shows that $[t]$ acts locally as 
        $(0,z_1)\mapsto (0, \exp\left(\frac{2\pi i}{p}\right)z_1)$, identifying  $[t]$ as the local monodromy.
    \end{itemize} 
\end{proof}
\begin{remark}\label{rem: genus branch case Z/p}
	Notice that in case $k=1$, hence the group $G=\ZZ/p$, then $\Delta_g$ is isomorphic to $\widetilde{\Delta}_g$.
\end{remark}
\medskip
\subsection{Equations of $(p,k)$-planes in weighted projective spaces}\label{subsec: equations_p,k_plane_in_suitable_proj_planes}
Let us consider a collection of building data  $\{\mathcal L_\chi\}_{\chi \in G^*}$ and $\{D_g\}_{g\in G}$ on $\mathbb P^2$, with $G=(\mathbb Z/p)^k$. We conclude this section by constructing the associated $(p,k)$-plane $\pi\colon X\to \mathbb P^2$, realizing $X$ as a subvariety of a suitable weighted projective space, following the approach described in \cite{FP24}.  


Let  $f_g(x_0,x_1,x_2)$ be an homogeneous polynomial of degree $d_g$ in the variables $x_0,x_1,x_2$, whose zero locus is the support of the divisor $D_g$ on $\mathbb P^2$. 

We consider the weighted projective space $\mathbb P\left(1,1,1, (l_{\chi})_{\chi\in G^*\setminus 1}\right)$ whose the first three variables are $x_0,x_1,x_2$, and the remaining ones are $(y_{\chi})_{\chi\in G^*\setminus 1}$  (e.g., for $(p,k)=(2,2)$, we consider the weighted projective space $\mathbb P(1,1,1, l_{\epsilon_1}, l_{\epsilon_2}, l_{\epsilon_1+\epsilon_2})$ with coordinates $(x_0,x_1,x_2,$ $ y_{\epsilon_1}, y_{\epsilon_2}, y_{\epsilon_1+\epsilon_2})$).  

Notice that the projection on the first three variables $\text{pr}\colon \mathbb P\left(1,1,1, (l_{\chi})_{\chi\in G^*\setminus 1}\right)\setminus \{x_0=x_1=x_2=0\}\to \mathbb P^2$ is the vector bundle of $\mathbb P^2$ of rank $p^k-1$ corresponding to $\bigoplus_{\chi\in G^*\setminus 1}\mathcal O_{\mathbb P^2}(l_\chi)$.

We have a natural action of the group $G=(\mathbb Z/p)^k$: 
\[
g\cdot (x_0,x_1,x_2, (y_{\chi})_{\chi\in G^*\setminus 1})=(x_0,x_1,x_2, (\chi(g)\cdot y_{\chi})_{\chi\in G^*\setminus 1}), \qquad g\in G. 
\]
We define $X\subseteq \mathbb P\left(1,1,1, (l_{\chi})_{\chi\in G^*\setminus 1}\right)\setminus \{x_0=x_1=x_2=0\}$ in the following way
\[
X\coloneqq \left\{
y_{\chi}y_{\eta}=y_{\chi\eta}\cdot  \prod_{g\in G} f_g^{\left \lfloor \frac{\chig+\etag}{p} \right \rfloor}, \ \ \text{for all } \chi,\eta\in G^*\setminus 1\right\}.
\]
The action of $G$ naturally extends to $X$ and $\text{pr}_{\vert X}\colon  X\to \mathbb P^2$ is by construction a $(p,k)$-plane with building data $\{\mathcal L_\chi\}_{\chi \in G^*}$ and $\{D_g\}_{g\in G}$.

We refer to Example \ref{ex: h231} for an explicit application, where we find the equations in a suitable weighted projective space for a $(p,k)$-plane $X$ belonging to one of the families H-(2,3) in Table \ref{tab: (2,3)}.

The equations for the other SNC $(p,k)$-planes satisfying \hyperlink{Property $\bigstar$}{Property $\bigstar$} and belonging to one of the families of our classification can be obtained similarly.  
\begin{remark}
	We point out that using these equations we are able to give $X$ defined over $\mathbb Q$. 
\end{remark}
\section{The classification of SNC $(p,k)$-planes with Property~$\bigstar$}
\label{sec: classification_SNC_p,k_planes_Prop_STAR}
We present here the tools and constraints that will be used in the classification of
SNC $(p,k)$-planes 
\[
\pi \colon X \longrightarrow \PP^2
\]
with $p$ prime number and $k\geq 1$ satisfying the extra condition: \hyperlink{Property $\bigstar$}{Property $\bigstar$}.

In particular, we derive conditions on the branch divisors and the associated numerical
invariants that will play a central role in the classification.

The number of subgroups in an elementary abelian \( p \)-group of rank \( k \) is the same as the number of subspaces of a \( k \)-dimensional vector space over the finite field with \( p \) elements. This number can be computed using the Gaussian binomial coefficient
\[
\left[\!\!{k \atop m}\!\!\right]_{p} = \prod_{i=0}^{m-1} \frac{p^k - p^i}{p^m - p^i},
\]
where \( m \) is the dimension of the subspaces to be counted. In particular, subgroups of order \( p \) correspond to subspaces of dimension \( 1 \), and their number is 
\[
\phi_{p,k} \coloneqq  \frac{p^k - 1}{p - 1}.
\] 
Moreover, the number of subgroups of index \( p \) in \( G \cong (\mathbb Z/p)^k\) is equal to the number of subgroups of order \( p \) simply by projective duality, hence also \( \phi_{p,k} \).

For each subgroup \(H \le G\) there is an intermediate quotient \( Y_H\coloneqq  X/H \). We focus on the \( \phi_{p,k} \) maximal subgroups \(H\cong (\mathbb Z/p)^{k-1} \subset G\) and their intermediate  covers $Y_{H} \longrightarrow \mathbb{P}^2$ with Galois group $G/H\cong \mathbb Z/p$. For these intermediate covers 
it holds the following lemma.

\begin{lemma}\label{lem: interm_building_data}
Let $\pi\colon X \to \mathbb P^2$ be a SNC $(p,k)$-plane with building data
$\{\mathcal L_\chi\}_{\chi\in G^*}$ and $\{D_g\}_{g\in G}$, so branched along the divisor $D=\sum_{g\in G} D_g$.
Let $H \le G$ be a subgroup of index $p$, and let $\epsilon \in G^*$ be a nontrivial character such that $\ker\epsilon = H$. Thus, we identify $G/H$ with $\mathbb Z/p$ via the character $\epsilon$.
Then the induced quotient map
\[
\pi_H\colon X/H=Y_H \longrightarrow \mathbb P^2
\]
is a cyclic $G/H\cong (\mathbb Z/p)$-cover whose branch divisor is again SNC and can be expressed as
\[
D^{(H)} \coloneqq  \sum_{\substack{g\in G\\ \langle \epsilon, g\rangle \neq 0}} D_g.
\]
Moreover, the building data $\{\mathcal L^{(H)}_{\overline{j}}\}_{\overline{j}\in \mathbb Z/p}$ and $\{D_{\overline{j}}^{(H)}\}_{{\overline{j}}\in \mathbb Z/p}$ of such intermediate cover are 
\[
  L_{\overline{j}}^{(H)}\equiv_{lin} L_{\epsilon^j}, \qquad \makebox{and} \qquad D_{\overline{j}}^{(H)}=\sum_{\substack{g\in G\\ \langle\epsilon,g\rangle\equiv j\;(\mathrm{mod}\;p)}} D_g, \qquad \overline{j}\in \mathbb Z/p.
\]
\end{lemma}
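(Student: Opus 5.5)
The plan is to work entirely with the decomposition \eqref{eq: dec_structure_sheaf} and with the characterization of building data given by Theorem~\ref{thm: Par91a} and Lemma~\ref{lem: L_chi_in_function_D_g}. Since $\operatorname{Pic}(\PP^2)$ is torsion-free, Remark~\ref{rem: onlyD_g} says the building data of the cyclic cover $\pi_H$ are determined by its branch divisors $D^{(H)}_{\overline j}$. It therefore suffices to do two things: identify $D^{(H)}_{\overline j}$, and then check that the proposed line bundles satisfy \eqref{eq: reduction_Pardini_eq}.

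\textbf{Step 1 (the cover and its sheaf).} I would first record that $Y_H=X/H$ is normal, being a quotient of a normal variety by a finite group. The map $\pi_H$ is finite, and it is Galois with group $G/H$. We have $(\pi_H)_*\mathcal O_{Y_H}=(\pi_*\mathcal O_X)^H$. By \eqref{eq: dec_structure_sheaf}, $H$ acts on the summand $\mathcal L_\chi^{-1}$ through $\chi$, so the invariants are exactly the summands with $\chi|_H=1$. These are the $\chi\in H^\perp=\langle\epsilon\rangle$, namely $\chi=\epsilon^j$. Identifying $(G/H)^*$ with $\langle\epsilon\rangle$ via $\overline j\mapsto \epsilon^j$, this already gives $\mathcal L^{(H)}_{\overline j}\cong\mathcal L_{\epsilon^j}$.

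\textbf{Step 2 (branch divisors and local monodromy).} This is the main point. Take a component $\Delta\subseteq D_g$ and a component $T$ of $R$ over it. The inertia group of $T$ is $\langle g\rangle$, and $g$ acts on a local parameter $t$ by $\zeta_p=\exp(2\pi i/p)$.

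1. On $X/H$, the inertia of the image of $T$ is the image of $\langle g\rangle$ in $G/H$. Under the identification via $\epsilon$, this image is generated by $\langle\epsilon,g\rangle\in\mathbb Z/p$.
2. If $\langle\epsilon,g\rangle=0$, then $g\in H$. The map $X\to Y_H$ is ramified along $T$, but $Y_H\to\PP^2$ is not ramified along the image of $T$, so $\Delta$ is not in the branch locus of $\pi_H$.
3. If $\langle\epsilon,g\rangle=j\neq 0$, then $\langle g\rangle\cap H=0$. Hence $X\to Y_H$ is étale at the generic point of $T$, and $t$ remains a local parameter on $Y_H$. The local monodromy of $\pi_H$ along $\Delta$ is the class $[g]$, which corresponds to $\overline j$.

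This gives $D^{(H)}_{\overline j}=\sum_{\langle\epsilon,g\rangle\equiv j}D_g$, and therefore $D^{(H)}=\sum_{\langle\epsilon,g\rangle\ne0}D_g$. This divisor is a sub-divisor of the SNC divisor $D$, so it is again SNC.

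\textbf{Step 3 (consistency check).} Finally I would verify \eqref{eq: reduction_Pardini_eq} directly, as a cross-check that also pins down the line bundles without appealing to the sheaf argument. For the cyclic group $\mathbb Z/p$, the character $\overline j$ pairs with the element $\overline m$ to give $jm \bmod p$. Then
\[
\sum_{\overline m}\langle \overline j,\overline m\rangle D^{(H)}_{\overline m}=\sum_{g}\bigl(j\langle\epsilon,g\rangle \bmod p\bigr)D_g=\sum_g\langle\epsilon^j,g\rangle D_g\equiv_{lin} pL_{\epsilon^j},
\]
by Lemma~\ref{lem: L_chi_in_function_D_g}. By torsion-freeness of $\operatorname{Pic}(\PP^2)$, this recovers $L^{(H)}_{\overline j}\equiv_{lin}L_{\epsilon^j}$.

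The delicate step is Step 2, specifically the normalization of the local monodromy generator. We need the generator that acts by $\exp(2\pi i/p)$ to map to $\overline j$, and not to some other multiple of the class. This is what makes the pairing with $\epsilon$, rather than with its inverse, appear in the formula.
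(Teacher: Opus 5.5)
Your proposal is correct and is essentially the paper's argument. Like the paper, you read off the branch components and their local monodromy from the image of $g$ in $G/H\cong\mathbb Z/p$; your Step 2 spells out the ``straightforward local computation'' that the paper leaves implicit. You then identify the line bundles exactly as the paper does in your Step 3, via the reduced Pardini equation for $\epsilon^j$ and the torsion-freeness of $\mathrm{Pic}(\mathbb P^2)$. Your Step 1, which identifies $\mathcal L^{(H)}_{\overline j}$ directly with the $H$-invariant eigensheaf $\mathcal L_{\epsilon^j}$ of $\pi_*\mathcal O_X$, is an extra step the paper does not use; it is valid and yields an isomorphism without appealing to torsion-freeness.
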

\begin{proof}
Since $H$ has index $p$ in $G$ and $G$ is elementary abelian, there exists a nontrivial
character $\epsilon\in G^*$ with $\ker\epsilon = H$, and the quotient group $G/H$ is naturally identified with $\mathbb Z/p$ via the homomorphism $\epsilon \colon G\to \mathbb C^\times$.
The quotient map $\pi_H\colon X/H \to \mathbb P^2$ is therefore a cyclic $(\mathbb Z/p)$-cover. Moreover the branch divisor of this cover is given by a subset of the components of the divisor $D$, hence is again SNC. 

A component $D_g$ contributes to the branch locus of $\pi_H$ if and only if the image of
$g$ in $G/H$ is nontrivial, equivalently if $\epsilon(g)\neq 1$, namely  $\langle \epsilon , g\rangle\neq 0$.
Hence the branch divisor of $\pi_H$ is the divisor $D^{(H)}$ defined above. Similarly, a straightforward local computation shows that the divisor $D_g$ has local monodromy $\overline{j}\in (\mathbb Z/p)$ for the intermediate cover $\pi_H$ if and only if $\langle \epsilon, g\rangle\equiv j  \mod p$. Thus, the divisor $D_{\overline{j}}^{(H)}$ with local monodromy  $\overline{j}\in \mathbb Z/p$ is exactly that of the statement. 

Finally, applying Pardini’s equation for the character $\overline{i}\in (\mathbb Z/p)$ yields
\[
\begin{split}
	p L^{(H)}_{\overline{i}}&
	\equiv_{lin} 
	\sum_{\overline{j}\in \mathbb Z/p} [i\cdot j]_pD_j^{(H)}\\ 
	& \equiv_{lin} \sum_{\overline{j}\in \mathbb Z/p} \sum_{\substack{g\in G\\ \langle\epsilon,g\rangle\equiv j\;(\mathrm{mod}\;p)}} [i\cdot j]_pD_g\\ 
	& \equiv_{lin}\sum_{g\in G} \langle \epsilon^i,g\rangle D_g\\ & \equiv_{lin} pL_{\epsilon^i},
\end{split}
\]
where the latter is obtained from Pardini's equation for the character $\epsilon^i\in G^*$. Thus, $p L^{(H)}_{\overline{i}}\equiv_{lin}pL_{\epsilon^i}$ and since $\mathbb P^2$ has a Picard group without torsion, then $L^{(H)}_{\overline{i}}\equiv_{lin} L_{\epsilon^i}$.  
\end{proof}
By considering the Hasse diagram of subgroups of \(G\) and reversing it to reflect the covering structure, we obtain Diagram~\ref{diag_HesseRev}.  
In what follows we analyze the bottom row, i.e.\ the covers lying immediately above \( \mathbb{P}^2 \).

First of all, we need to translate the geometric \hyperlink{Property $\bigstar$}{Property $\bigstar$} in terms of the building data $\{D_g\}_{g\in G}$ and $\{\mathcal L_{\chi}\}_{\chi\in G^*}$ of the cover $\pi$. 

In the projective plane \( \mathbb{P}^2 \), we denote the degrees of the line bundles $\mathcal L_\chi$ and divisors $D_g$ by
\[
l_\chi \coloneqq  \deg \mathcal L_\chi, \qquad \chi \in G^*, 
\qquad\text{and}\qquad 
d_g \coloneqq  \deg D_g, \qquad g \in G.
\]
\begin{proposition}\label{prop: equivalence_star_prop}
    Let $\pi\colon X\to \mathbb P^2$ be a SNC $(p,k)$-plane with $k\geq 1$ and building data $\{\mathcal L_\chi\}_{\chi\in G^*}$ and $\{D_g\}_{g\in G}$. Then \hyperlink{Property $\bigstar$}{Property $\bigstar$} holds if and only if the degrees $\{l_\chi\}_{\chi\in G^*}$ and $\{d_g\}_{g\in G}$ verify the following conditions:
    \begin{enumerate}
        \item $1\leq l_\chi\leq 3$ for each non trivial character $\chi \in G^*$;
        \item there exists $\chi \in G^*$ such that $l_\chi=3$;
        \item \begin{itemize}
            \item if $p=2$, for each non-trivial character $\chi\in G^*$ with $l_\chi=3$, we have $\sum_{g\notin \ker \chi}d_g=6$;
            \item if $p=3$, for each (proper) cyclic subgroup $\langle \chi\rangle\leq G^*$ there exists at most one $\chi$ such that $l_\chi=3$, in which case $l_{\chi^2}=2$, and $\sum_{g\notin \ker \chi}d_g=5$;
            \item if $p\geq 5$, for each (proper) cyclic subgroup $\langle \chi\rangle\leq G^*$ there exists at most one $\chi$ such that $l_\chi=3$, in which case $l_{\chi^{p-1}}=1$, $l_{\chi^i}=2$ for any $i\neq 0,1,p-1$, and $\sum_{g\notin \ker \chi}d_g=4$.
        \end{itemize}
    \end{enumerate}
\end{proposition}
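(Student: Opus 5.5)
We would translate the condition on each $Y_{H}$ into a condition on the degrees, using Lemma \ref{lem: interm_building_data} together with the genus formula \eqref{eq: pg_ab_cov_quo_sing}.

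Fix a subgroup $H$ of index $p$ and a character $\epsilon$ with $\ker\epsilon=H$. By Lemma \ref{lem: interm_building_data}, the intermediate cover $Y_H\to\PP^2$ is an SNC $(p,1)$-plane with line bundles $\mathcal L_{\epsilon^j}$, $j=0,\dots,p-1$. Hence
\[
p_g(\widetilde{Y}_H)=\sum_{j=1}^{p-1}h^0(\PP^2,\cO(l_{\epsilon^j}-3))=\sum_{j=1}^{p-1}\binom{l_{\epsilon^j}-1}{2},
\]
where the binomial is read as $0$ when $l_{\epsilon^j}\le 2$. Each nontrivial character lies in exactly one cyclic subgroup $\langle\epsilon\rangle$, and these correspond bijectively to the index-$p$ subgroups $H=\ker\epsilon$. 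So Property $\bigstar$ says exactly the following: for every nontrivial cyclic $\langle\epsilon\rangle\le G^*$ we have $\sum_j\binom{l_{\epsilon^j}-1}{2}\le1$, with equality for at least one such subgroup. Since $\binom{l-1}{2}\ge3$ for $l\ge4$, this becomes: all $l_\chi\le3$, at most one generator in each cyclic subgroup has $l=3$, and some $l_\chi=3$. This gives (2) and the upper bound in (1). For the lower bound, connectedness of $X$ forces $h^0(\cO(-l_\chi))=0$ by Remark \ref{rem: X_connected_implies_L_chi-1_no_sections}, hence $l_\chi\ge1$.

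Next we extract the degree relations inside a cyclic subgroup. Pardini's equation \eqref{eq: Pardini_eq} for the pair $\chi^i,\chi$, taken in degrees, reads
\[
l_{\chi^i}+l_\chi=l_{\chi^{i+1}}+\sum_{g:\,\langle\chi^i,g\rangle+\langle\chi,g\rangle\ge p}d_g .
\]
Summing over $i$, or more simply using Lemma \ref{lem: L_chi_in_function_D_g} with $\langle\chi^{i},g\rangle\equiv i\langle\chi,g\rangle$, we get
\[
\sum_{i=1}^{p-1}l_{\chi^i}=\frac{p-1}{2}\sum_{g\notin\ker\chi}d_g .
\]
Also $l_\chi+l_{\chi^{-1}}=\sum_{g\notin\ker\chi}d_g$, since $\langle\chi,g\rangle+\langle\chi^{-1},g\rangle=p$ whenever $g\notin\ker\chi$.

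We then split by $p$.

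For $p=2$ we have $\chi=\chi^{-1}$, so $2l_\chi=\sum_{g\notin\ker\chi}d_g$, and $l_\chi=3$ is equivalent to the sum being $6$.

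For $p=3$, suppose $l_\chi=3$. Uniqueness within the subgroup gives $l_{\chi^2}\in\{1,2\}$. The Pardini relation $2l_\chi=l_{\chi^2}+\sum_{\langle\chi,g\rangle=2}d_g$ combined with $l_\chi+l_{\chi^2}=\sum_{g\notin\ker\chi}d_g$ gives $l_{\chi^2}=1$ only if $\sum_{\langle\chi,g\rangle=1}d_g=-1$, which is impossible. Hence $l_{\chi^2}=2$ and the sum is $5$.

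For $p\ge5$, put $n_j=\sum_{\langle\chi,g\rangle=j}d_g$ and $N=\sum_j n_j$. All $l_{\chi^i}$ with $i\ne1$ lie in $\{1,2\}$. The relation $l_\chi+l_{\chi^{-1}}=N$ gives $N\in\{4,5\}$, and the averaged identity gives
\[
\sum_{i=1}^{p-1}l_{\chi^i}=\frac{(p-1)N}{2}.
\]
The left side is at most $3+2(p-2)=2p-1$, whereas $N=5$ would give $5(p-1)/2>2p-1$ for $p\ge5$. So $N=4$ and $l_{\chi^{-1}}=1$. The sum then equals $2(p-1)$, which forces $l_{\chi^i}=2$ for all $i\ne 0,\pm1$.

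For the converse, conditions (1)–(3) directly give $p_g(Y_H)\in\{0,1\}$ for every $H$, and some $p_g(Y_H)=1$, by the displayed genus formula. The extra degree conditions in (3) are only necessary consequences, so the converse does not depend on them.

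The main obstacle is the $p=3$ and $p\ge5$ bookkeeping, specifically using integrality and nonnegativity of the $n_j$ correctly. We expect the averaged identity $\sum_i l_{\chi^i}=\tfrac{p-1}{2}N$ to be the key tool, so we would establish it first.
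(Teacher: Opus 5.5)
Your proposal is correct and follows essentially the same route as the paper. The genus formula for the intermediate $(p,1)$-planes (via Lemma~\ref{lem: interm_building_data}), together with connectedness, gives (1)--(2), and the degree form of Pardini's relations gives (3) case by case; your $p=3$ argument ($n_1=2l_{\chi^2}-3\ge 0$) is the paper's inequality in exact form. The only variation is for $p\ge 5$: you rule out $\sum_{g\notin\ker\chi}d_g=5$ with the averaged identity $\sum_{i=1}^{p-1}l_{\chi^i}=\frac{p-1}{2}\sum_{g\notin\ker\chi}d_g$, while the paper uses the paired relation $l_{\chi^2}+l_{\chi^{p-2}}=\sum_{g\notin\ker\chi}d_g$. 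Both are equally elementary.
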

\begin{proof}
    Let $\chi\in G^*$ be a non-trivial character of $G$, and let $H:=\ker \chi $ be its subgroup of index $p$ in $G$.  
By Lemma \ref{lem: interm_building_data} the intermediate quotient $\pi_H\colon Y_{\ker \chi } \longrightarrow \mathbb P^2$ is a SNC $(p,1)$-plane with building data $\{D^{(H)}_{\overline{j}}\}_{\overline{j}\in \mathbb Z/p}$ and $\{\mathcal L_{\chi^i}\}_{i \in \{0, \ldots ,p-1\}}$. Therefore, we can apply Theorem \ref{thm: invariants_ab_cov_cyclic_sing} to $\pi_H$:
   \begin{equation}\label{eq: geom_genus_int_quotient}
       p_g(Y_{\ker \chi })=\sum_{i=1}^{p-1} h^0\left(\mathbb P^2, \mathcal O_{\mathbb P^2}(-3+l_{\chi^i})\right)=\sum_{i=1}^{p-1}\frac{(l_{\chi^i}-1)(l_{\chi^i}-2)}{2}.
   \end{equation} 
   Assuming Property~$\bigstar$, for any nontrivial character
$\chi \in G^*$ the induced $(p,1)$-plane $Y_{\ker \chi}$
necessarily has geometric genus at most one. Equation \eqref{eq: geom_genus_int_quotient} implies $l_{\chi^i}\leq 3$ for all $i$. Specifically, if   $Y_{\ker \chi}$ has geometric genus one, exactly one $l_{\chi^j}$ equals to $3$, while the others satisfy $l_{\chi^i}\leq 2$, for $j \ne i$. Furthermore, $l_\chi\geq 1$ as $X$ is connected, see Remark \ref{rem: X_connected_implies_L_chi-1_no_sections}. This proves $(1)$. Moreover, $(2)$ directly follows from Property~$\bigstar$, which requires that at least one of the intermediate
$(p,1)$-planes $Y_{\ker \chi}$ has geometric genus equal to one. 

Let us prove point $(3)$ with a case-by-case analysis depending on the prime $p$.

   
   First, assume $p=2$, and $l_\chi=3$. Then, by Lemma \ref{lem: L_chi_in_function_D_g} applied for the character $\chi$, we have 
   \[
     \sum_{g\notin \ker \chi}d_g=2l_\chi=6. 
   \]
   Next, assume $p=3$ and $l_\chi=3$, so $l_{\chi^2}\leq 2$. From Pardini's equations \eqref{eq: Pardini_eq}, we have
   \[
     \begin{split}
         l_\chi+l_{\chi^2}&= \sum_{g\notin \ker \chi}d_g, \qquad \makebox{and}\\
         6 =l_\chi+l_\chi & =l_{\chi^2}+\sum_{g\in G}\left\lfloor\frac{2\chig}{3}\right\rfloor d_g\leq l_{\chi^2}+\sum_{g\notin \ker \chi}d_g.
     \end{split}
   \]
   Combining these, we obtain 
   \[
   6\leq l_{\chi^2} +\sum_{g\notin \ker \chi}d_g=3+2l_{\chi^2},
   \] 
   which forces $l_{\chi^2}\geq \frac{3}{2}$, hence $l_{\chi^2}=2$, and consequently  $\sum_{g\notin \ker \chi}d_g=5$. 
   
   Finally, assume $p\geq 5$ and $l_\chi=3$, so $l_{\chi^i}\leq 2$ for each $i\neq 1$. By contradiction, if $l_{\chi^{p-1}}=2$, then Pardini's equations \ref{eq: Pardini_eq} imply
   \[
   l_{\chi^2}+l_{\chi^{p-2}}=\sum_{g\notin \ker \chi} d_g=l_{\chi}+l_{\chi^{p-1}}\geq 5, 
   \]
   which is not possible as both $l_{\chi^2}, l_{\chi^{p-2}}\leq 2$. Thus, $l_{\chi^{p-1}}=1$ and $\sum_{g\notin \ker \chi} d_g=4$. For $2\leq i\leq p-2$, the characters $\chi^i$ and $\chi^{p-i}$ are distinct from $ \chi, \chi^{p-1}$. Using Pardini equations again, we find $l_{\chi^i}+l_{\chi^{p-i}}=\sum_{g\notin \ker \chi} d_g=4$,
   which forces $l_{\chi^i}=l_{\chi^{p-i}}=2$. This proves point $(3)$. 
   
   Conversely, by Equation \ref{eq: geom_genus_int_quotient}, points $(1)-(3)$  imply \hyperlink{Property $\bigstar$}{Property $\bigstar$}.  
\end{proof}
Let us discuss the classification, distinguishing between the cases $k=1$ and $k\geq 2$.
\begin{remark}
For $k=1$, we observe that SNC $(p,1)$-planes $\pi\colon X \to \mathbb P^2$ satisfying
Property~$\bigstar$ have geometric genus $p_g(X)=1$.
In general, the surface $X$ need not be smooth; indeed, as discussed in the
previous section, it is a normal surface with only cyclic quotient singularities.

In this situation, the minimal model of $X$ is a K3 surface, as will be shown in Section~\ref{sec: SNC_(p,1)-planes},  Theorem \ref{thm: pg1_Zp_cover_structure}.
Accordingly, with a slight abuse of notation, we shall refer to $X$ itself as
a \emph{K3 surface} whenever its minimal model is a K3 surface. Such covers will be also classified in Section~\ref{sec: SNC_(p,1)-planes}, Proposition \ref{prop: complete_classification_p,1_planes_pg=1} 
\end{remark}
From now on we assume $k\ge 2$.
Let $\pi\colon X \to \mathbb P^2$ be a SNC $(p,k)$-plane.
By Remark~\ref{rem: D_g_generates_G}, the set of local monodromies
\[
\{\, g \in G \mid d_g \neq 0 \,\}
\]
generates the whole group $G \cong (\mathbb Z/p)^k$. 

We denote from now on by $e_1,\dots,e_k$ a set of generators of $G$ such that $d_{e_i}\geq 1$ for each $i=1,\dots,k$, and by  $\epsilon_1,\dots, \epsilon_k$ the corresponding dual set of generators of $G^*$, i.e. those characters for which $\langle \epsilon_i,e_j\rangle= \delta_{ij}$, for all $i,j=1,\dots, k$, where $\delta_{ij}$ is the Kronecker delta.

Since the rank of $G$ is $k$, this immediately yields the rough inequalities
\begin{equation}\label{eq: rough_ineq_k}
2 \le k \le \bigl|\{\, g \in G \mid d_g \neq 0 \,\}\bigr|
\le \sum_{g\in G} d_g .
\end{equation}
We now show that the numerical constraints \eqref{eq: rough_ineq_k} strongly restrict the possible building data of a SNC $(p,k)$-plane. 
In particular, either the cover belongs to a very special and explicit class, or the total degree of the branch locus must satisfy stronger lower bounds than \eqref{eq: rough_ineq_k}. 
\begin{proposition}\label{prop: hirzebruch_planes}
Let $\pi\colon X \to \mathbb P^2$ be a SNC $(p,k)$-plane with building data $\{\mathcal L_\chi\}_{\chi\in G^*}$ and $\{D_g\}_{g\in G}$. 
Then one of the following holds:
\begin{enumerate}
    \item $X$ is a smooth $(p,k)$-plane whose building data satisfy
    \[
    d_{e_1}=\cdots=d_{e_k}=d_{(p-1)\sum_{i=1}^k e_i}=1,
    \qquad \makebox{and} \qquad 
    d_g=0 \text{ for all other } g\in G.
    \]
    In this case, $X$ satisfies Property~$\bigstar$ if and only if
    \[
    (p,k)\in \{(2,5),(2,6),(3,4),(5,3)\}.
    \]
    \item The branch data satisfy the numerical inequalities
    \[
    k+2 \le \sum_{g\in G} d_g
    \qquad\text{and}\qquad
    k \le \bigl|\{\, g\in G \mid d_g\neq 0 \,\}\bigr|.
    \]
\end{enumerate}
\end{proposition}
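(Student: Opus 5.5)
The plan is to start from the fact, recalled just before the statement, that the local monodromies $\{g\mid d_g\neq 0\}$ generate $G$. Fix generators $e_1,\dots,e_k$ with $d_{e_i}\ge 1$ and the dual basis $\epsilon_1,\dots,\epsilon_k$. Applying Theorem~\ref{thm: reduction_numb_Pard_eq} (equivalently Lemma~\ref{lem: L_chi_in_function_D_g}) to $\epsilon_j$ gives the degree identity
\[
p\, l_{\epsilon_j}=\sum_{g\in G}\langle\epsilon_j,g\rangle d_g .
\]
Set $N\coloneqq\sum_g d_g\ge k$. It suffices to show that $N\in\{k,k+1\}$ forces alternative (1). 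In that case the second inequality in (2) is automatic from \eqref{eq: rough_ineq_k}.

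\textbf{Ruling out $N=k$.} If $N=k$, then the only nonzero $d_g$ are $d_{e_i}=1$. The identity for $\epsilon_j$ reads $p\,l_{\epsilon_j}=1$, which is impossible.

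\textbf{The case $N=k+1$.} Here there is exactly one extra unit of degree. Either some $d_{e_i}=2$, or there is an extra element $g_0\notin\{e_1,\dots,e_k\}$ with $d_{g_0}=1$.
\begin{itemize}
\item In the first case, take $j\neq i$, which exists since $k\ge 2$. Then $p\,l_{\epsilon_j}=1$, which is impossible.
\item In the second case, write $g_0=\sum_i a_ie_i$ with $0\le a_i\le p-1$. The identity becomes $p\,l_{\epsilon_j}=1+a_j$, which forces $a_j=p-1$ for every $j$. Hence $g_0=(p-1)\sum_i e_i$, and this is exactly the building data in (1).
\end{itemize}
Smoothness of $X$ then follows from Proposition~\ref{prop: smooth_cover}. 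The branch curves are $k+1$ lines, which we take in general position since the cover is SNC. The pairwise generated cyclic subgroups meet trivially: $\langle e_i\rangle\cap\langle e_j\rangle=0$ for $i\ne j$, and $\langle e_i\rangle\cap\langle g_0\rangle=0$ because $k\ge2$.

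\textbf{Property $\bigstar$ in case (1).} Here I would compute all $l_\chi$ explicitly. For $\chi=\sum_j c_j\epsilon_j$ with $0\le c_j\le p-1$, Lemma~\ref{lem: L_chi_in_function_D_g} gives
\[
p\,l_\chi=\sum_j c_j+\Big\langle\chi,(p-1)\textstyle\sum e_i\Big\rangle=\sum_j c_j+\Big[-\textstyle\sum_j c_j\Big]_p,
\]
so $l_\chi=\lceil \sum_j c_j/p\rceil$. Thus $l_\chi$ ranges from $1$ up to $\lceil k(p-1)/p\rceil$.

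I then check the conditions of Proposition~\ref{prop: equivalence_star_prop}.
\begin{itemize}
\item Conditions (1) and (2) together say that the maximum of $l_\chi$ equals $3$, i.e.\ $2p<k(p-1)\le 3p$.
\item For $p=2$ this gives $k\in\{5,6\}$. Condition (3) holds because $l_\chi=3$ means $\sum_j c_j\in\{5,6\}$. Also $\sum_{g\notin\ker\chi}d_g$ counts the $e_j$ with $c_j=1$ plus $1$ if $\sum_j c_j$ is odd, which totals $6$.
\item For $p=3$ the inequality gives $k\in\{4\}$. One checks that $l_\chi=3$ means $\sum_j c_j\in\{7,8\}$, so $\chi^2$ has $\sum_j c'_j\le 2\cdot$ small and $l_{\chi^2}=2$. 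The degree count is $5$; the case $k=3$ fails, since there the maximum of $l_\chi$ is $2$.
\item For $p=5$ one gets $k=3$. For $p\ge7$ no $k$ satisfies $2p<k(p-1)\le3p$ while also satisfying condition (3). For $p=7$, $k=3$ gives $18\le 21$ but $18>14$, so it has to be excluded via condition (3), checking $l_{\chi^{p-1}}=1$ and the degree sum $4$.
\end{itemize}
This last verification of condition (3) in Proposition~\ref{prop: equivalence_star_prop}, excluding the borderline pairs such as $(7,3)$, $(3,3)$ and $(2,4)$, is the main obstacle. My plan there is to take the character with all $c_j=p-1$ or a near-extremal one and exhibit a failure of $l_{\chi^{p-1}}=1$ or of $l_{\chi^i}=2$.
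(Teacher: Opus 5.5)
Your reduction (excluding $\sum_g d_g=k$, splitting $\sum_g d_g=k+1$ into $d_{e_i}=2$ versus one extra $g_0$, forcing $g_0=(p-1)\sum_i e_i$ through the $\epsilon_j$, and getting smoothness from Proposition~\ref{prop: smooth_cover}) is the paper's argument. For the Property~$\bigstar$ clause you take a different route.

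The paper evaluates only the extremal character $(p-1)\sum_i\epsilon_i$, which gives $k\le 3+\frac{3}{p-1}$. It then uses the degree identity of Proposition~\ref{prop: equivalence_star_prop}(3) to get $k+1\ge 6,5,4$. The cases $(2,5),(2,6),(3,4)$ are left to direct verification. Among the $(p,3)$ it isolates $p=5$ by invoking the classification of $(p,1)$-planes with $p_g=1$ (Proposition~\ref{prop: complete_classification_p,1_planes_pg=1}); this is a forward reference resting on the bound $p\le 19$ and a computer search.

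Your formula $l_\chi=\lceil \sum_j c_j/p\rceil$ computes every $l_\chi$ at once. It turns conditions (1)--(2) into the exact criterion $2p<k(p-1)\le 3p$ and reduces condition (3) to a hand check. This makes the argument self-contained. It also yields $p_g(\widetilde X)=|\{\chi\colon l_\chi=3\}|=1,7,5,4$ in the four surviving cases, in agreement with the tables.

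What you flag as the main obstacle is a one-line computation with your formula, but you still have to carry it out, in both directions.

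\emph{Excluding $(p,3)$ for $p\ge 7$.} Take $\chi_0=(p-1)(\epsilon_1+\epsilon_2+\epsilon_3)$. Then $\chi_0^2$ has all coefficients equal to $p-2$, so $l_{\chi_0^2}=\lceil 3(p-2)/p\rceil=3=l_{\chi_0}$, since $3p-6>2p$. Hence $p_g(Y_{\ker\chi_0})\ge 2$ by \eqref{eq: geom_genus_int_quotient}, and $\bigstar$ fails.

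\emph{Including $(5,3)$.} You have not yet checked that condition (3) holds here. We have $l_\chi=3$ iff the coefficient vector of $\chi$ is $(4,4,4)$ or a permutation of $(4,4,3)$. The powers $\chi^2,\chi^3,\chi^4$ then have vectors $(3,3,3),(2,2,2),(1,1,1)$, respectively permutations of $(3,3,1),(2,2,4),(1,1,2)$. So $l_{\chi^2}=l_{\chi^3}=2$ and $l_{\chi^4}=1$, and $\sum_{g\notin\ker\chi}d_g=l_\chi+l_{\chi^4}=4$.

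Two small corrections. First, $(3,3)$ and $(2,4)$ are not borderline cases for condition (3): your own criterion already excludes them, since $\lceil k(p-1)/p\rceil=2$ there. Second, the garbled $p=3$ step should read as follows. The condition $\sum_j c_j\in\{7,8\}$ forces every $c_j\ge 1$, with at least three of them equal to $2$. So $\chi^2$ has coefficient sum $4$ or $5$, and $l_{\chi^2}=2$.
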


\begin{proof}

We first consider the extremal situation where the rough inequality
\eqref{eq: rough_ineq_k} is sharp, that is, when $\sum_{g\in G} d_g = k$.
Then \eqref{eq: rough_ineq_k} forces
\[
k = \bigl|\{\, g\in G \mid d_g\neq 0 \,\}\bigr| = \sum_{g\in G} d_g.
\]
In particular, we must have 
\[
d_{e_1}=\cdots=d_{e_k}=1,
\qquad
d_g=0 \text{ for all } g\in G\setminus\{e_1,\dots,e_k\}.
\]
By Lemma~\ref{lem: L_chi_in_function_D_g} applied to $\epsilon_1$, we obtain
\[
p\cdot L_{\epsilon_1} \equiv_{lin} \sum_{g\in G}\langle \epsilon_1,g\rangle\,D_g
= D_{e_1},
\]
Passing to degrees on $\mathbb P^2$ yields 
\[
pl_{\epsilon_1}= d_{e_1}=1,\]
a contradiction.

Thus, by \eqref{eq: rough_ineq_k}, we must have $k+1 \le \sum_{g\in G} d_g$.
We now consider the case $\sum_{g\in G} d_g = k+1$.
Again from \eqref{eq: rough_ineq_k}, there are two possibilities for the cardinality
$\bigl|\{\, g\in G \mid d_g\neq 0 \,\}\bigr|$, namely either $k$ or $k+1$.
If it was equal to $k$, we would obtain a contradiction by the same argument as in the previous case.

We therefore assume that
\[
\bigl|\{\, g\in G \mid d_g\neq 0 \,\}\bigr| = k+1,
\]
that is, there exists exactly another element $h$ other than $e_1,\dots,e_{k}$  such
that
\[
d_{e_1}=\cdots=d_{e_k}=d_h=1,
\qquad \makebox{and} \qquad 
d_g=0 \text{ for all } g\in G\setminus\{e_1,\dots,e_{k},h\}.
\]
By Lemma~\ref{lem: L_chi_in_function_D_g}, for each $1\le j\le k$ we have
\[
p\,L_{\epsilon_j}
= \sum_{i=1}^{k} \langle \epsilon_j, e_i\rangle  \, D_{e_i}+ \langle \epsilon_j,h\rangle \, D_h,
\]
which implies passing to the degrees
\[
pl_{\epsilon_j}= 1 + \langle \epsilon_j, h \rangle
\le p,
\]
thus
\[
l_{\epsilon_j}=1
\qquad\text{and}\qquad
\langle \epsilon_j, h \rangle=p-1, \quad \makebox{ for each } j=1,\dots, k.  
\]
It follows that there is only one element $h$ for which Pardini's equations \eqref{eq: reduction_Pardini_eq} hold, and it is  equal to 
\[
h
= \sum_{j=1}^k \langle \epsilon_j, h\rangle\, e_j
= (p-1)\sum_{j=1}^k e_j,
\]
as claimed.
This yields case~(1) of Proposition~\ref{prop: hirzebruch_planes}.
If this situation does not occur, then necessarily $\sum_{g\in G} d_g \ge k+2$, which is
case~(2).

Notice that in case~(1), the surface $X$ is smooth, since the branch divisor consists of $k+1$
distinct lines in $\mathbb P^2$ with simple normal crossings satisfying Condition \eqref{eq_condSmooth_0}.

We now turn to a detailed analysis of the exceptional situation described in
case~(1), 
under the additional assumption that $X$ satisfies Property~$\bigstar$.
In order to exploit the numerical constraints, let us consider the following character 
\[
\chi \coloneqq  (p-1)\sum_{i=1}^k \epsilon_i.
\]
A direct computation of Pardini's equation \eqref{eq: reduction_Pardini_eq} for the character $\chi$ yields
\begin{equation}\label{eq: rank_and_prime}
pl_\chi = k(p-1)+[k]_p.
\end{equation}
Moreover, since $X$ satisfies Property~$\bigstar$, we have $\,l_\chi \le 3$, and hence
\begin{equation*}\label{eq: contraints_on_k}
3p \ge p\,l_\chi = k(p-1) + [k]_p \ge k(p-1),
\end{equation*}
which immediately implies
\begin{equation}\label{eq: contraints_on_k}
k \le 3 + \frac{3}{p-1}.
\end{equation}
Let $\eta\in G^*$ be a character such that $l_\eta=3$.
By the bound \eqref{eq: contraints_on_k} on $k$ and
Proposition~\ref{prop: equivalence_star_prop} (3), we obtain
\[
4+\frac{3}{p-1}
\;\ge\;
k+1=\sum_{g\in G} d_g
\;\ge\;
\sum_{g\notin \ker \eta} d_g
=
\begin{cases}
6 & \text{if } p=2,\\
5 & \text{if } p=3,\\
4 & \text{if } p\ge 5.
\end{cases}
\]
It follows that the only possible pairs $(p,k)$ are
\[
(p,k)\in \{(2,5),(2,6),(3,4)\}\,\cup\,\{(p,3)\mid p\ge 5\}.
\]

One can directly verify that $(2,5)$, $(2,6)$, $(3,4)$ satisfy  \hyperlink{Property $\bigstar$}{Property $\bigstar$}. 

Consider now the case $(p,3)$ with $p\ge 5$.
In this situation, the building data are given by
\[
d_{e_1}=d_{e_2}=d_{e_3}
= d_{(p-1)\sum_{i=1}^3 e_i}=1,
\qquad
d_g=0 \text{ for all remaining } g\in G.
\]
Thus, from Equation \eqref{eq: rank_and_prime}, we have for $\chi = (p-1)\sum_{i=1}^3 \epsilon_i$ that
\[
l_\chi = \frac{3(p-1)+[3]_p}{p} = 3,
\]
so that $p_g(Y_{H})=1$, where $H:=\ker \chi$. 

Now, using the notation introduced in Lemma \ref{lem: interm_building_data} we have that the building data of the intermediate $(\mathbb Z/p)$-cover $\pi_H\colon Y_H\to \mathbb P^2$ are
\[
D^{(H)}_{\bar j}
= \sum_{\langle\chi,g\rangle \equiv j \,(\mathrm{mod}\, p)} D_g, \qquad \makebox{for} \quad \overline{j}\in \mathbb Z/p. 
\]
In the present case, $\langle\chi,e_i\rangle=p-1$ for $i=1,2,3$, while
$\langle\chi,(p-1)\sum_{i=1}^3 e_i\rangle=3$.
Therefore,
\[
D^{(H)}_{\overline{p-1}}=D_{e_1}+D_{e_2}+D_{e_3},
\qquad \makebox{and} \qquad 
D^{(H)}_{\overline{3}}=D_{(p-1)\sum_{i=1}^3 e_i},
\]
and $D^{(H)}_{\bar j}=0$ for all other $\bar j\in\mathbb Z/p$.
According to Proposition \ref{prop: complete_classification_p,1_planes_pg=1}, the unique SNC $(p,1)$-plane of geometric genus
one with these building data occurs for $p=5$. More precisely, this is the case $A-(5,1)(3)$ of Table \ref{tab: (5,1)}. 
Consequently, the only pair $(p,3)$ with $p\geq 5$ satisfying Property~$\bigstar$ is $(5,3)$. 
\end{proof}

The $(p,k)$-planes appearing in the previous Proposition~\ref{prop: hirzebruch_planes}(1) form a
distinguished class that was extensively studied by Hirzebruch~\cite{H83}.
Therefore, we introduce the following terminology.

\begin{definition}
A $(p,k)$-plane $\pi\colon X \to \mathbb P^2$ is said to be \emph{of Hirzebruch type}
if its building data satisfy the conditions of Proposition~\ref{prop: hirzebruch_planes}(1).
\end{definition}

The next proposition provides a decisive bound on the parameters $(p,k)$ and the degrees of the components of the branch divisor.
In particular, it implies that only finitely many families of $(p,k)$-planes can satisfy
Property~$\bigstar$, making a complete classification possible.
\begin{proposition}\label{prop: final_bound_on_p,k,sum_dg}
Let $\pi\colon X \to \mathbb P^2$ be an SNC $(p,k)$-plane with $k\geq 2$ and building data
$\{\mathcal L_\chi\}_{\chi\in G^*}$ and $\{D_g\}_{g\in G}$, not of Hirzebruch type.
If $\pi$ satisfies Property~$\bigstar$, then the following conditions hold:
\begin{enumerate}
    \item The parameters $(p,k)$ satisfy
    \[
    2 \le p \le 19
    \qquad\text{and}\qquad
    2 \le k \le 3+\frac{3}{p-1}.
    \]
    \item The number of nonzero branch divisors $D_g$ satisfies
    \[
    \bigl|\{\, g\in G \mid d_g\neq 0 \,\}\bigr| \ge k.
    \]
    \item For every $g\in G$, one has
    \[
    d_g \le 3+\frac{3}{p-1}.
    \]
    Moreover, if $(p,k)=(2,6)$, then $d_g\in\{0,1\}$ for all $g\in G$.
    \item The total degree of the branch divisor satisfies the following bounds,
    depending on $p$:
    \begin{itemize}
        \item[$p=2:$] either $(p,k)=(2,2)$ and
        \[
        d_{e_1}=d_{e_2}=d_{e_1+e_2}=3,
        \]
        in which case $X$ is a SNC degeneration of the Garbagnati surface of type~$G3$, see \cite{Lat22}, 
        or else
        \[
        \max\{6,k+2\} \le \sum_{g\in G} d_g \le 12-2^{\,4-k}.
        \]
        \item[$p=3:$]
        \[
        \max\{5,k+2\} \le \sum_{g\in G} d_g
        \le \frac{5}{2}\left(3-\frac{1}{3^{k-1}}\right).
        \]
        \item[$p\ge 5:$] $\sum_{g\in G} d_g = 4$ and $k=2$.
    \end{itemize}
\end{enumerate}
\end{proposition}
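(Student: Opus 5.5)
The plan is to combine three ingredients: the degree version of Theorem~\ref{thm: Dg_elm_group} (formula \eqref{eq: sum_dg_sm_lchi}), Pardini's equation \eqref{eq: reduction_Pardini_eq} applied to well-chosen characters, and the constraints of Proposition~\ref{prop: equivalence_star_prop}. Passing to degrees in \eqref{eq: sum_dg_sm_lchi} gives
\[
\sum_{g\in G} d_g=\frac{2}{p^{k-1}(p-1)}\sum_{\chi\in G^*} l_\chi ,
\]
so upper bounds on the $l_\chi$ become upper bounds on the total branch degree. The lower bounds come for free. Point (2) and the bound $k+2\le\sum d_g$ are Proposition~\ref{prop: hirzebruch_planes}(2), since $\pi$ is not of Hirzebruch type. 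Moreover, taking $\eta$ with $l_\eta=3$ in Proposition~\ref{prop: equivalence_star_prop}(3) gives $\sum_g d_g\ge\sum_{g\notin\ker\eta}d_g$, which equals $6$, $5$ or $4$ according to $p$.

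For the bound on $k$ I would mimic the Hirzebruch case. Take $\chi=(p-1)\sum_i\epsilon_i$, so that $\langle\chi,e_i\rangle=p-1$ and $d_{e_i}\ge1$. Then $3p\ge p\,l_\chi=\sum_g\langle\chi,g\rangle d_g\ge k(p-1)$, which gives $k\le 3+3/(p-1)$. For the bound on $d_g$ with $g\neq0$, choose a character $\chi$ with $\langle\chi,g\rangle=p-1$. Then $3p\ge p\,l_\chi\ge (p-1)d_g$, which gives (3). For $(p,k)=(2,6)$ the inequality $6\ge 2l_\chi\ge\sum_{\chi(g)\ne1}d_g\ge 6$ is tight, so no other element outside $\ker\chi$ carries branch divisor and all $d_{e_i}=1$. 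Since any $g$ with $d_g\neq0$ can be taken as one of the generators $e_i$, this forces $d_g=1$.

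For the upper bounds in (4) I would group the nontrivial characters into the $\phi_{p,k}$ cyclic subgroups and bound $\sum_{i=1}^{p-1}l_{\chi^i}$ on each, using Proposition~\ref{prop: equivalence_star_prop}. For $p=3$ the sum is at most $3+2=5$. For $p\ge5$ it is at most $3+1+2(p-3)=2p-2$ when some power has $l=3$, and at most $2(p-1)$ otherwise.
\begin{itemize}
\item For $p=3$ this gives $\sum d_g\le \frac{2}{3^{k-1}\cdot 2}\cdot 5\cdot\frac{3^k-1}{2}=\frac52\bigl(3-3^{1-k}\bigr)$.
\item For $p\ge5$ it gives $\sum d_g\le \frac{4p}{p-1}(1-p^{-k})<5$. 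Combined with the lower bound $4$, this yields $\sum d_g=4$, and then $k+2\le4$ forces $k=2$.
\item For $p=2$ the crude bound $l_\chi\le3$ gives $\sum d_g\le 3(2^k-1)/2^{k-2}$. This is an integer only for $k=2$, where the value is $9$ and all three $l_\chi$ equal $3$. By Proposition~\ref{prop: equivalence_star_prop}(3) each pair sum is then $6$, which forces $d_{e_1}=d_{e_2}=d_{e_1+e_2}=3$, the Garbagnati case. Otherwise integrality of $\sum d_g$ forces $\sum l_\chi\le 3(2^k-1)-1$, giving $\sum d_g\le 12-2^{4-k}$.
\end{itemize}

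Finally, for $p\le 19$: when $p\ge5$ the intermediate quotient $Y_{\ker\eta}$ with $l_\eta=3$ is an SNC $(p,1)$-plane of geometric genus one. Theorem~\ref{thm main2}, via Proposition~\ref{prop: complete_classification_p,1_planes_pg=1}, restricts $p$ to $\{2,3,5,7,11,13,17,19\}$. I expect this step to be the main obstacle, because it relies on the full classification of $(p,1)$-planes with $p_g=1$ rather than on a self-contained numerical argument. I would therefore first try to prove it directly: note that $D^{(\ker\eta)}$ has degree $4$ with monodromies constrained by $p\,l_{\eta^i}=\sum_j[ij]_p d_j^{(H)}$ for all $i$, and show that such a system has no solution for large $p$. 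If that direct argument fails, I would simply invoke the later classification.
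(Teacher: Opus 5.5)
Your proposal is correct and follows the paper's proof essentially step by step. It uses the same character $(p-1)\sum_i\epsilon_i$ for the bound on $k$ and the same choice $\langle\chi,g\rangle=p-1$ for the bound on $d_g$. It bounds $\sum_{\chi}l_\chi$ cyclic subgroup by cyclic subgroup and feeds this into \eqref{eq: sum_dg_sm_lchi}, exactly as the paper does. It also appeals to the classification of $(p,1)$-planes with $p_g=1$ to get $p\le 19$, just as the paper does.

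There are two local differences, and both shortcuts are valid:
\begin{itemize}
\item For $(p,k)=(2,6)$ you re-choose the basis, whereas the paper works with the auxiliary character $\sum_{j\neq i_1}\epsilon_j$. Re-choosing is legitimate because the set $\{g\mid d_g\neq 0\}$ spans $G$, so any of its elements extends to a basis contained in it. You should state that exchange-property justification explicitly.
\item You isolate the Garbagnati case through the integrality of $3(2^k-1)/2^{k-2}$, whereas the paper uses the corollary on $\sum_{h\in\langle g\rangle}D_h$.
\end{itemize}
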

\begin{proof}
    The bound $2\le p\le 19$ follows directly from Property~$\bigstar$ since the $(p,k)$-plane would admit an intermediate SNC $(p,1)$-plane with geometric genus one, and we classified them in Proposition \ref{prop: complete_classification_p,1_planes_pg=1}. 
    
Next, we establish a bound on $k$.
Applying Lemma~\ref{lem: L_chi_in_function_D_g} to the character
$(p-1)\sum_{i=1}^k \epsilon_i$, we obtain
\begin{equation}\label{eq: bound_k}
\begin{aligned}
3p
&\ge p\, l_{(p-1)\sum_{i=1}^k \epsilon_i} \\
&\ge (p-1)\sum_{i=1}^k d_{e_i}
   + \sum_{g\notin \ker(\sum_{i=1}^k \epsilon_i)} d_g \\
&\ge (p-1)k .
\end{aligned}
\end{equation}
It follows that
\[
2 \le k \le 3+\frac{3}{p-1}.
\]
This proves (1). We have already observed (2) in \eqref{eq: rough_ineq_k}. So let us work on (3).

    We first address the exceptional subcase $(p,k)=(2,6)$.
In this case the inequality \eqref{eq: bound_k} reads
\[
6
\ge 2\,l_{\sum_{i=1}^6 \epsilon_i}
=  \sum_{g\notin \ker(\sum_{i=1}^6 \epsilon_i)} d_g \ge \sum_{i=1}^6 d_{e_i} 
\ge 6,
\]
hence all inequalities are equalities and in particular $d_{e_1}=\dots =d_{e_k}=1$, and if $d_h\neq 0$, then either $h$ is one among $e_1,\dots, e_k$, or we have
\[
\Bigl(\sum_{i=1}^6 \epsilon_i\Bigr)(h)=0.
\]
Equivalently, $h$ is the sum of an even number of basis elements:
$h=e_{i_1}+\cdots+e_{i_{2j}}$.
 
Fix such a $h\neq 0$ and choose an index $i_1$ appearing in its expression.
Set
\[
\epsilon \coloneqq  \epsilon_{i_1} + \sum_{j=1}^6 \epsilon_j \in G^*.
\]
Then $\epsilon(e_{i_1})=1$, while $\epsilon(e_i)=-1$ for every $i\neq i_1$.
Moreover, since $h$ is a sum of an even number of basis elements containing $e_{i_1}$,
we have $\epsilon(h)=-1$,  hence $h\notin \ker(\epsilon)$.
Applying Lemma~\ref{lem: L_chi_in_function_D_g} to $\epsilon$ we obtain
\[
2\,l_\epsilon
= \sum_{h\notin \ker\epsilon} d_h
\ge \sum_{i\neq i_1} d_{e_i} + d_h
\ge 5 + d_h.
\]
Since Property~$\bigstar$ gives $l_\epsilon\le 3$, we conclude
\[
6 \ge 2\,l_\epsilon \ge 5 + d_h,
\]
and hence $d_h\le 1$.
In particular, for $(p,k)=(2,6)$ every nonzero branch component has degree $d_g\in\{0,1\}$.

    Let us turn to the proof of $(3)$. We now bound the degrees $d_g$ by a similar argument.
Fix $g\in G$, and choose a character $\chi\in G^*$ such that
$\langle \chi,g\rangle = p-1$. 
By Lemma~\ref{lem: L_chi_in_function_D_g}, we have
\[
3p \ge p\,l_\chi
= \sum_{h\in G} \langle \chi,h\rangle\, d_h
\ge (p-1)d_g,
\]
and therefore
\[
d_g \le 3+\frac{3}{p-1}.
\]

    To prove~(4), let us assume first that $l_\eta=3$ for any non trivial character $\eta\in G^*$. For any non-zero $g\in G$, Corollary \ref{cor: sum_<g> D_h equal to} yields
    \[
      \begin{split}
          \frac{p^{k-1}(p-1)}{2}\sum_{h\in \langle g\rangle} d_h & =3\vert G\setminus \ker g \vert -(p-1)\left(0+3(\vert \ker g\vert-1)\right)\\
          &= 3p^k-3p^{k-1}-3(p-1)p^{k-1}+3(p-1)\\
          &= 3(p-1),
      \end{split}
    \]
    which implies 
    \[
     \sum_{h\in \langle g\rangle} d_h = \frac{6}{p^{k-1}}.
    \]
    Since the left-hand side is an integer and $k\geq 2$ , this forces $(p,k)=(2,2)$ or $(3,2)$. The case $(2,2)$ gives $d_g=3$ for all $g\neq 0$, which are the building data defining either the Garbagnati surface of type G3 or its degenerations.  Conversely, $(p,k)=(3,2)$ is automatically excluded by Proposition \ref{prop: equivalence_star_prop} because for $p=3$ any character $\chi$ with $l_\chi=3$ satisfies $l_{\chi^2}=2$ whilst we were assuming that all non-trivial characters $\eta\in G^*$ have $l_\eta=3$. 
    
 Let us conclude the proof of~(4) by assuming the converse, namely that there exists at least one non-trivial character
$\chi\in G^*$ such that $l_\chi\le 2$.
    
   For $p=2$, we have 
\[
\sum_{\chi\in G^*} l_\chi \le 0+2+3(2^{k}-2)=4\bigl(3\cdot 2^{k-2}-1\bigr).
\]  
For $p=3$,  Proposition \ref{prop: equivalence_star_prop} point (3) provides more information: for each proper cyclic subgroup $\langle \chi\rangle\leq G^*$  with $l_\chi=3$, we have $l_{\chi^2}=2$. Thus, 
    \[
    \sum_{t=0}^2 l_{\chi^t}\leq 0+2+3=5,
    \]
    and we can conclude 
    \[
    \sum_{\chi\in G^*} l_\chi=0+\sum_{\langle \chi\rangle \in \mathbb P((\mathbb Z/3^k)}\left(\sum_{\eta \in \langle \chi\rangle} l_\eta\right)\leq 5\cdot \phi_{3,k}=5\cdot \frac{3^k-1}{2}.
    \]

    Similarly, for $p\geq 5$, Proposition \ref{prop: equivalence_star_prop} point (3) implies that each proper subgroup $\langle \chi\rangle\leq G^*$ with $l_\chi=3$ has $l_{\chi^{p-1}}=1$ and $l_{\chi^i}=2$ otherwise; hence \[
    \sum_{i=0}^{p-1}l_{\chi^i}\leq 0+1+3+2(p-3)=2(p-1).
    \]Consequently, 
    \[
    \sum_{\chi\in G^*} l_\chi=0+\sum_{\langle \chi\rangle \in \mathbb P((\mathbb Z/3)^k)}\left(\sum_{i=0}^{p-1} l_{\chi^i}\right)\leq 2\cdot\phi_{p,k}\cdot (p-1)=2(p^{k}-1).
    \]
    
    Finally, since $X$ is assumed to be not of Hirzebruch type, Proposition \ref{prop: hirzebruch_planes} implies $k+2\leq \sum_{g}d_g$ and $k\leq \vert \{g\in G\vert d_g\neq 0\}\vert $. Applying Theorem \ref{thm: Dg_elm_group}\eqref{eq: sum_dg_sm_lchi}, we obtain 
\begin{equation}\label{eq: bound_sum_dg1}
\sum_{g\in G} d_g
= \frac{2}{p^{k-1}(p-1)} \sum_{\chi\in G^*} l_\chi
\le
\begin{cases}
\displaystyle
\frac{2}{2^{k-1}}\cdot 4\bigl(3\cdot 2^{k-2}-1\bigr) & \text{if } p=2,\\[6pt]
\displaystyle
\frac{2}{3^{k-1}\cdot 2}\cdot 5\cdot \frac{3^k-1}{2} & \text{if } p=3,\\[8pt]
\displaystyle
\frac{2}{p^{k-1}(p-1)}\cdot 2(p^k-1) & \text{if } p\ge 5.
\end{cases}
\end{equation}
In particular,
\begin{equation}\label{eq: bound_sum_dg}
k+2 \le \sum_{g\in G} d_g \le
\begin{cases}
\displaystyle
12-2^{\,4-k} & \text{if } p=2,\\[6pt]
\displaystyle
\frac{5}{2}\left(3-\frac{1}{3^{k-1}}\right) & \text{if } p=3,\\[10pt]
\displaystyle
4+\frac{4}{p-1}-\frac{4}{p^{k-1}(p-1)} & \text{if } p\ge 5.
\end{cases}
\end{equation}

    Note that for $p\geq 5$, since $k\geq 2$, then  $\sum_gd_g$ is forced to be equal to $4$, which implies directly $k=2$. Combining 
    \eqref{eq: bound_sum_dg}  with the constraints from Proposition \ref{prop: equivalence_star_prop} on $\sum_gd_g$, we obtain point (4). 
\end{proof}

This completes the numerical analysis of SNC $(p,k)$-planes satisfying
Property~$\bigstar$. 
We are now in a position to carry out a classification. 

A classification of SNC $(p,1)$-planes satisfying Property $\bigstar$, so those with geometric genus one, will be achieved in Section \ref{sec: SNC_(p,1)-planes}, Proposition \ref{prop: complete_classification_p,1_planes_pg=1}. Thus, we only consider the case $k\geq 2$. In this case, Proposition~\ref{prop: final_bound_on_p,k,sum_dg} provides explicit numerical
constraints on $p$, $k$, and $\sum_{g\in G} d_g$ that reduce the problem to a finite
and computationally manageable set of cases.

We emphasize that Proposition~\ref{prop: final_bound_on_p,k,sum_dg} plays a crucial
role in controlling the computational complexity.
Without these bounds, for large values of $p$, $k$, and $\sum_{g} d_g$, there could
be millions of distinct ways to label the irreducible components of the branch
locus of a cover so that the reduced Pardini equations of
Theorem~\ref{thm: reduction_numb_Pard_eq} are satisfied. Moreover, many of these combinatorial configurations define isomorphic SNC
$(p,k)$-planes.
Indeed, an automorphism of the group $(\mathbb Z/p)^k$ may send the building data of
one cover to that of another, yielding isomorphic $(\mathbb Z/p)^k$-coverings of
$\mathbb P^2$.
\\
The classification is implemented algorithmically through a single fundamental
routine, which generates all admissible building data. We used the computational
algebra system \verb!MAGMA!~\cite{BCP97}. All routines, functions, auxiliary scripts used in the implementation of the
algorithm, and a commented version of the outputs are available online here
\begin{center}
	\href{https://github.com/Fefe9696/MAGMA_scripts_results_Classification_of_SNC_p_k_planes_with_Hodge_substructures_of_K3_type}{\texttt{MAGMA} scripts and results repository}
\end{center}
where the full source code and documentation can be found.

The algorithm proceeds as follows. The function 
\[
\verb!Building_Data(card,d,G)!\] takes as input a elementary $p$-abelian group $G\cong (\mathbb Z/p)^k$ 
and two positive integers \verb!card!, and \verb!d!.
It returns a list of vectors
\[
[(g,d_g)\colon g\in G,\ d_g\neq 0]
\]
satisfying the following conditions:
\begin{enumerate}
	\item for every character $\chi\in G^*$, the numbers
	\[
	l_\chi\coloneqq \frac{1}{p}\sum_{g\in G}\chig d_g
	\]
	are integers;
	\item the collections $\{l_\chi\}_{\chi\in G^*}$ and $\{d_g\}_{g\in G}$ satisfy
	the numerical conditions~(1), (2), and~(3) of
	Proposition~\ref{prop: equivalence_star_prop};
	\item $\bigl|\{\,g\in G \mid d_g\neq 0\,\}\bigr|=\verb!card!$;
	\item $\sum_{g\in G} d_g=\verb!d!$.
\end{enumerate}
As an explicit example, we apply the function
\verb!Building_Data(card,d,G)! to reproduce cases $G-(2,3)1)$ and $H-(2,3)1)$ of
Table~\ref{tab: (2,3)}, corresponding to $(p,k)=(2,3)$.
\\
\verb!G:=AbelianGroup([2,2,2]);!\\
\verb!Building_Data(5,8,G);! \\
\verb!{!\\
	\verb!    [!\\
	\verb!                <G.3, 3>,! \\
	\verb!                <G.1 + G.2, 1>,! \\
	\verb!                <G.2, 2>,! \\
	\verb!                <G.1, 1>,! \\
	\verb!                <G.2 + G.3, 1>!\\
	\verb!    ]! \\
	\verb!    [!\\
	\verb!               <G.3, 1>,! \\
	\verb!               <G.1 + G.2 + G.3, 1>,! \\
	\verb!               <G.2, 1>,! \\
	\verb!               <G.1, 1>,! \\
	\verb!               <G.2 + G.3, 4>! \\
	\verb!    ]! \\
	\verb!}! \\
\\
We refer to Section \ref{sec: examples_of_SNC_p,k_planes_Prop_STAR} where we study the example $H-(2,3)1)$ in full detail.
\vspace{0.2cm}

Given such an output vector, one then chooses a collection of (possibly not prime) divisors
$\{D_g\}_{g\in G}$ on $\mathbb P^2$ such that 
$\deg D_g=d_g$ and the divisor 
\[
D\coloneqq \sum_{g\in G} D_g
\]
is reduced, has smooth irreducible components, and has
simple normal crossings. By the properties (1)--(4) imposed above on the degrees $\{d_g\}_{g\in G}$, the divisors
$\{D_g\}_{g\in G}$ satisfy the reduced Pardini's equations
(cf.~Theorem~\ref{thm: reduction_numb_Pard_eq} and the
Remark \ref{rem: onlyD_g}),
and therefore define a SNC $(p,k)$-plane for which
Property~$\bigstar$ holds, from Proposition \ref{prop: equivalence_star_prop}. 

To avoid redundancy, the function filters out vectors that define isomorphic
coverings.
Consequently, the output consists of vectors defining SNC $(p,k)$-planes that are
pairwise non-isomorphic as $(\ZZ/p)^k$-coverings of $\mathbb P^2$.
\begin{remark}
	The relation of being isomorphic as $(\mathbb Z/p)^k$-coverings of $\mathbb P^2$ is stronger than the standard notion of isomorphism between surfaces. Consequently, it is possible for two SNC $(p,k)$-planes to be isomorphic as surfaces even if they are not isomorphic as $(\mathbb Z/p)^k$-coverings of $\mathbb P^2$. 
\end{remark}
The complete classification of SNC $(p,k)$-planes satisfying Property $\bigstar$ and with $k\geq 2$  is attained using the function \verb!Building_Data(card,d,G)! for each possible \verb!G!, \verb!card!, and \verb!d! according to the finite list of possibilities established in Proposition \ref{prop: final_bound_on_p,k,sum_dg}. 
\\
More precisely, we have that $2\leq p\leq 19$, and for $5\le p\le 19$, then Proposition~\ref{prop: final_bound_on_p,k,sum_dg}(2)
implies that $k=2$ and $\sum_{g\in G} d_g=4$.
Consequently, there are only three possible values for
\[
\verb!card! = \bigl|\{\, g\in G \mid d_g\neq 0 \,\}\bigr|,
\]
namely $2$, $3$, or $4$.
Running the function \verb!Building_Data(card,4,AbelianGroup([p,p]))! for each $p$ and each
$\verb!card!\in\{2,3,4\}$, we find that admissible cases occur only for $p=5$ and
$p=7$.

For the rest of cases, namely $p\in\{2,3\}$, there is greater flexibility in the choice of the parameters
$k$, $\bigl|\{g\in G \mid d_g\neq 0\}\bigr|$, and $\sum_{g} d_g$, although they remain
bounded by Proposition~\ref{prop: final_bound_on_p,k,sum_dg}, point~(2).
We therefore ran the function for all valid combinations of these parameters,
obtaining a more extensive list of cases.

The resulting SNC $(p,k)$-planes, including those of Hirzebruch type
(cf.~Proposition~\ref{prop: hirzebruch_planes}, point~(1)) and the Garbagnati
surfaces of type~$G3$ together with their SNC degenerations
(cf.~Proposition~\ref{prop: final_bound_on_p,k,sum_dg}, point~(4) for $p=2$), are
studied in Section \ref{sec: invariants_SNC_p,k_planes_Prop_STAR}. 

For the sake of readability, all cases produced by the algorithm are collected in
Appendix~\ref{app:classification_tables}, where they are organized into several
tables according to the values of $(p,k)$.
\section{SNC $(p,1)$-planes with geometric genus one}\label{sec: SNC_(p,1)-planes}

We discuss SNC $(p,1)$-planes $\pi\colon Y \to \mathbb{P}^2$ with $p_g(Y)=1$. The reader can find the proof of the main Theorem \ref{thm main2}, which is Theorem \ref{thm: pg1_Zp_cover_structure} in this section. 
 
 First of all, we study automorphisms of prime order on a projective surface $S$. 
In particular, we analyze the induced action on the second cohomology group and, more precisely, on the Néron–Severi group and on the transcendental lattice. 
\begin{proposition}\label{prop: bound_on_the _prime_p}
	Let $S$ be a smooth projective surface. Let $\rho(S)$ be the Picard number of $S$ and let $T(S):=\NS(S)^\perp\subset H^2(S,\mathbb Z)$ be the transcendental lattice of $S$. Then 
	\begin{equation}\label{eq: ineq_rk_T(S)}
		\rho(S)\leq 10\chi(S)+2q(S)-K^2_S \quad \makebox{and} \qquad \rk \ T(S)\leq 12\chi(S)+4q(S)-K_{S_{min}}^2-3, 
	\end{equation}
	where $S_{min}$ is the minimal model of $S$. 
	
	Furthermore, if $g\in\mathrm{Aut}(S)$ has prime order $p$ and acts non-trivially on
	$H^{2,0}(S)$, then
	\begin{equation}\label{eq: ineq_prime_order_automor_of_S}
		p\le 12\chi(S)+4q(S)-K_{S_{min}}^2-2.
	\end{equation}
\end{proposition}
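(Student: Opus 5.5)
The plan is to work with the Hodge decomposition of $H^2(S,\mathbb C)$ together with Noether's formula, and to treat the two inequalities in \eqref{eq: ineq_rk_T(S)} separately before turning to the automorphism bound.

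For the first inequality, the Lefschetz $(1,1)$ theorem gives $\rho(S)\le h^{1,1}(S)$. We have $b_2=e(S)-2+4q$, since $b_1=b_3=2q$. Noether's formula $12\chi=K_S^2+e(S)$ then yields
\[
h^{1,1}=b_2-2p_g=12\chi-K_S^2-2+4q-2p_g .
\]
Substituting $p_g=\chi-1+q$ gives $h^{1,1}=10\chi(S)+2q(S)-K_S^2$, which proves the first bound.

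For the second inequality, note that $T(S)$ is a birational invariant and so can be computed on $S_{min}$: blowing up adds classes to $\NS$ and leaves $T$ unchanged. We have $\rk T(S)=b_2-\rho$, and $\rho\ge 1$ because $S$ is projective (an ample class exists). Applying the formula for $b_2$ on $S_{min}$, whose $\chi$ and $q$ agree with those of $S$, gives
\[
\rk T(S)\le b_2(S_{min})-1=12\chi+4q-K_{S_{min}}^2-3 .
\]

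For the automorphism bound, consider the action of $g^*$ on $T(S)_{\QQ}$; it preserves $T$ because $g^*$ preserves $\NS$. Since $g$ acts non-trivially on $H^{2,0}(S)\subset T(S)\otimes\CC$, the operator $g^*$ on $T(S)_{\QQ}$ has an eigenvalue $\zeta\ne 1$ that is a $p$-th root of unity, hence a primitive one. As $g^*$ is defined over $\QQ$, the cyclotomic polynomial $\Phi_p$, of degree $p-1$, divides its characteristic polynomial. Therefore $p-1\le \rk T(S)\le 12\chi+4q-K_{S_{min}}^2-3$, which is \eqref{eq: ineq_prime_order_automor_of_S}.

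The main subtle point is justifying the inequality $\rk T(S)\le b_2(S_{min})-1$, that is, the birational invariance of $T$ together with $\rho(S_{min})\ge1$. A further point to check is the claim that the non-trivial eigenvalue on $H^{2,0}$ forces all primitive $p$-th roots of unity to appear in $T(S)_{\QQ}$. This follows because $T(S)_{\QQ}$ is a $\QQ[\ZZ/p]$-module, so it splits as trivial summands plus copies of $\QQ(\zeta_p)$, and $H^{2,0}$ cannot lie in the trivial summand.
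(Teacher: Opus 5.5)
Your proposal is correct and follows essentially the same route as the paper. The paper likewise uses Lefschetz $(1,1)$ plus Noether's formula for the bound on $\rho(S)$, and the birational invariance of $T$ together with $\rho\ge 1$ for the rank bound. For the bound on $p$ it uses the decomposition of $T(S)_{\mathbb{Q}}$ into trivial and cyclotomic $\mathbb{Q}[\mathbb{Z}/p]$-summands, which is equivalent to your argument that $\Phi_p$ divides the characteristic polynomial of $g^*$ on $T(S)_{\mathbb{Q}}$.
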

\begin{proof}
	By Lefschetz Theorem on $(1,1)$-classes, then $\rho(S)\leq h^{1,1}(S)$. From Hodge theory, we have
	\[
	b_1(S)=h^{1,0}(S)+h^{0,1}(S)=2q(S), \quad \makebox{and} \quad b_2(S)=h^{0,2}(S)+h^{1,1}(S)+h^{2,0}(S)=2p_g(S)+h^{1,1}(S),
	\]
	so applying Noether's formula we obtain 
	\begin{equation}\label{eq: formula_h^11_general}
		\begin{split}
			10\chi(S)-K^2_S & =e(S)-2\chi \\ & = 2-2b_1(S)+b_2(S)-2+2q(S)-2p_g(S) \\
			& =h^{1,1}(S)-2q(S). 
		\end{split}
	\end{equation}
	This proves $\rho(S)\leq h^{1,1}(S)=10\chi(S)+2q(S)-K^2_S$. Furthermore, $S$ is projective, hence  $\rho(S)\geq 1$ and by the definition of trascendental lattice we obtain 
	\begin{equation}\label{eq: ineq_rkTS_computation}
		\begin{split}
			\rk \ T(S) & =b_2(S)-\rho(S)\\ 
			& \leq b_2(S)-1\\ 
			&  =2p_g(S)+h^{1,1}(S)-1 \\
			& =2p_g(S)+2q(S)+10\chi(S)-K^2_S-1\\
			&=12\chi(S)+4q(S)-K^2_S-3.
		\end{split}
	\end{equation}
	Let $S_{min}$ be the minimal model of $S$, so there exists a blow-up $b \colon S\to S_{\min}$ at finitely many points. Blowing up a smooth projective surface does not change its transcendental lattice, so $T(S_{min})\cong T(S)$ as integral lattices and in particular 
	\[
	\rk \ T(S)=\rk \ T(S_{min})\leq 12\chi(S)+4q(S)-K^2_{S_{min}}-3.    \] 
	Finally, let $g\in \mathrm{Aut}(S)$ has prime order $p$ and acts non-trivially on $H^{2,0}(S)$.
	Then $g^*$ preserves $\mathrm{NS}(S)$ and the cup-product form, hence it preserves $T(S)$.
	Since the irreducible representations of $\mathbb Z/p$ over $\mathbb Q$ are only the trivial one and the cyclotomic representation $\mathbb Q(\zeta_p)$ of dimension $p-1$, then  $T(S)_\mathbb Q$ splits as a direct sum of two isotypic components 
	\begin{equation}\label{eq: dec_isotypic_comp_trascend_lattice}
		T(S)_\mathbb Q \simeq 1^{\oplus n}\oplus \mathbb Q(\zeta_p)^{\oplus m}, \qquad \makebox{for some} \qquad n,m\geq 0. 
	\end{equation}
	However,  the representation of $\mathbb Z/p$ on $T(S)_\mathbb Q$ is not trivial otherwise $g$ would act trivially on $H^{2,0}(S)$, so $m\geq 1$. 
	In particular,
	\[
	p-1 \leq  \mathrm{rk}\,T(S),
	\qquad\text{and hence}\qquad
	p \le \mathrm{rk}\,T(S)+1\le 12\chi(S)+4q(S)-K^2_{S_{min}}-2.
	\]
	This concludes the proof.
\end{proof}
\begin{remark}
	The assumption that the action of $g$ on $H^{2,0}(S)$ is non-trivial is essential. 
	If $g^*|_{H^{2,0}(S)}=\mathrm{id}$, the automorphism is cohomologically symplectic, and the above representation-theoretic argument on the transcendental lattice no longer applies. 
	In this situation, additional phenomena may occur; see for instance the next Section \ref{sec:SNC_(p,1)-planes_zero} for examples of SNC $(p,1)$ planes with $p_g(S)=0$ and unbounded $p$, and Cai \cite{Cai} for examples and a detailed analysis in the elliptic case.
	
	Moreover, by the result of Morrison \cite{Mor87} on the structure of the transcendental lattice of projective surfaces with $p_g=1$, the transcendental lattice is an irreducible Hodge structure of K3 type. 
	Consequently, once the non-trivial action on $H^{2,0}(S)$ is assumed, the above argument specialized to surfaces with $p_g=1$ reduces immediately to the well-known theory of non-symplectic automorphisms of K3 surfaces, for which is known (see for instance \cite{AST} and \cite{GS13}) that $n=0$ in Equation \eqref{eq: dec_isotypic_comp_trascend_lattice} and then we would get the stronger condition $p-1 \mid \rk \ T(S)$ instead of $p-1\leq \rk \ T(S)$.
\end{remark}
\begin{remark}\label{rem: ineq_order_aut_without_Ksquare}
	Assume that $S$ is a smooth projective surface not ruled, and that $g\in \text{Aut}(S)$ has prime order $p$ and acts non-trivially on $H^{2,0}(S)$. From Enriques-Kodaira classification, then $K_{S_{min}}^2\geq 0$, so Equation \ref{eq: ineq_prime_order_automor_of_S} gives
	\[
	p\leq 12\chi(S)+4q(S)-2.
	\]
\end{remark}
We are ready to provide a classification of SNC $(p,1)$-planes with $p_g(Y)=1$. 
\begin{proposition}\label{prop: complete_classification_p,1_planes_pg=1}
	Let $\pi\colon Y\to \mathbb P^2$ be a SNC $(p,1)$-plane with $p_g(Y)=1$. Then
	$p\in \{2,3,5,7,$ $11,13,17,19\}$,
	and the building data of the cover is one among those listed in tables \ref{tab: (2,1)} to \ref{tab: (19,1)} of the Appendix \ref{app:classification_tables}. 
\end{proposition}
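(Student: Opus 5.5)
The plan is to turn $p_g(Y)=1$ into a finite combinatorial problem on the building data and then enumerate it. For $G=\mathbb Z/p$, write $d_j\coloneqq \deg D_{\overline{j}}$ for $j=1,\dots,p-1$ and $l_i\coloneqq l_{\overline{i}}$. By Lemma \ref{lem: L_chi_in_function_D_g}, $p\,l_i=\sum_j [ij]_p\, d_j$. By \eqref{eq: pg_ab_cov_quo_sing}, $p_g(Y)=\sum_{i=1}^{p-1}\binom{l_i-1}{2}$. Connectedness gives $l_i\ge 1$, as in the proof of Proposition \ref{prop: equivalence_star_prop}. Hence $p_g=1$ means that exactly one $l_i$ equals $3$ and all the others are $1$ or $2$. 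The same argument as in Proposition \ref{prop: equivalence_star_prop}(3), applied with $\chi=\overline{i_0}$ where $l_{i_0}=3$, then pins down the remaining data: the total branch degree is $\sum_j d_j=6,5,4$ for $p=2$, $p=3$, $p\ge 5$ respectively. For $p\ge5$ it also gives $l_{-i_0}=1$ and $l_i=2$ for all other $i$. After replacing the generator by $i_0$, one may assume $l_1=3$.

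The main step is to bound $p$, and I would use Proposition \ref{prop: bound_on_the _prime_p} for this. Let $S=\widetilde{Y}$ be the minimal resolution. It is regular by Lemma \ref{lem: (p,k)_planes_are_regular}, and $\chi(\mathcal O_S)=2$. The generator of $\mathbb Z/p$ lifts to $S$, and it acts non-trivially on $H^{2,0}(S)$: indeed $H^{2,0}$ is the single eigenspace $H^0(\mathbb P^2,K+L_{1})$, which has a non-trivial character, since invariant $2$-forms descend to $\mathbb P^2$ and $p_g(\mathbb P^2)=0$. $S$ is not ruled because $p_g=1$, so $K^2_{S_{\min}}\ge 0$. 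Then Remark \ref{rem: ineq_order_aut_without_Ksquare} gives $p\le 22$, and primality leaves $p\le 19$. Running the same argument through Morrison's refinement, $p-1\mid \rk T(S)\le 21$, gives the same list, and $p=23$ is excluded.

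With $p\le 19$ and, for $p\ge5$, $\sum_j d_j=4$ with prescribed $l_i$, the remaining work is an enumeration. One lists all vectors $(d_1,\dots,d_{p-1})$ of nonnegative integers with the prescribed sum such that every $l_i=\frac1p\sum_j [ij]_p d_j$ is an integer with the required value. These vectors are taken up to the action of $(\mathbb Z/p)^\times$ fixing the normalisation, which amounts to the filter used by \verb!Building_Data!. For $p=2,3$ this is a short hand computation. For $p=2$ one gets sextics split in all ways. For $p=3$ one gets $d_1+2d_2\equiv 0 \pmod 3$ with $d_1+d_2=5$ and $l_1=3$, $l_2=2$.

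For $p\ge5$ the search space is partitions of $4$ into at most $4$ labels in $\mathbb Z/p$. Here the equalities $l_i=2$ for $i\ne\pm1$ are very restrictive, and a computer check over $p\le19$ produces the listed tables. The surviving primes are $p\in\{5,7,11,13,17,19\}$; for example $p=5$ with four lines labelled $(4,4,4,3)$ is the Hirzebruch-type case. Finally, every numerical solution is realised by Theorem \ref{thm: reduction_numb_Pard_eq} by taking SNC configurations of curves of the given degrees, so the tables are exactly the admissible building data.

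I expect the main obstacle to be the step verifying that the automorphism acts non-trivially on $H^{2,0}$ and lifts to the resolution, so that Proposition \ref{prop: bound_on_the _prime_p} applies. After that, the main burden is the exhaustive, non-redundant enumeration for each prime.
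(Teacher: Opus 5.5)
Your proposal is correct and follows the paper's proof essentially step by step. Regularity gives $\chi=2$, and the lifted generator acts non-trivially on $H^{2,0}$, so Proposition~\ref{prop: bound_on_the _prime_p} with Remark~\ref{rem: ineq_order_aut_without_Ksquare} yields $p\le 22$; Proposition~\ref{prop: equivalence_star_prop} then fixes $\sum_j d_j\in\{6,5,4\}$ before the finite computer enumeration (the extra appeal to Morrison's divisibility $p-1\mid \rk T(S)$ is harmless but unnecessary, since $p\le 22$ already rules out $p=23$).
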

\begin{proof}
	From Lemma \ref{lem: (p,k)_planes_are_regular}, then the minimal resolution of singularities $\widetilde{Y}$ of $Y$ has $q(\widetilde{Y})=0$, so $\chi(\widetilde{Y})=2$. 
	
	The action of $\mathbb Z/p$ on $Y$ lifts naturally to an action on $\widetilde{Y}$. The action on $\widetilde{Y}$ induces an action on $H^{2,0}(\widetilde{Y})$, which is not trivial. Otherwise, an invariant global holomorphic 2-form would descend to a global holomorphic $2$-form on $Y/(\mathbb Z/p)\cong \mathbb P^2$, a contradiction. 
	
	Since $\widetilde{Y}$ is a smooth projective surface that is not ruled, then from Proposition \ref{prop: bound_on_the _prime_p} and Remark \ref{rem: ineq_order_aut_without_Ksquare} we have $p\leq 12\chi-2=22$. Finally, from Proposition \ref{prop: equivalence_star_prop}, then $\sum_g \deg(D_g)\leq 6,5,4$ accordingly if $p$ is equal either to $2,3$ or $5\leq p\leq 19$, respectively. Thus, for any prime number $p\leq 19$, one can compute either by hand or using a computer station, via the commented function \verb!Building_Data(card,d,G)! explained in Section \ref{sec: classification_SNC_p,k_planes_Prop_STAR},
	all the few possibilities for the degrees of the divisors $D_g$ for which the reduced Pardini's equations of Theorem \ref{thm: reduction_numb_Pard_eq} are satisfied; the obtained cases are those listed in tables \ref{tab: (2,1)} to \ref{tab: (19,1)}.
\end{proof}
Let us study more in detail the geometry of the obtained SNC $(p,1)$-planes $\pi\colon Y\to \mathbb P^2$ with $p_g(Y)=1$. In particular, we prove that the minimal model of $Y$ is a K3 surface. 

The strategy to prove this is as follows.
\begin{remark}\label{rem: strategy_proof_(p,1)_planes_pg1_are_K3}
	By Proposition \ref{prop: complete_classification_p,1_planes_pg=1}, $Y$ falls in one of the cases listed in tables \ref{tab: (2,1)} to \ref{tab: (19,1)}. We show for each of those cases that the numerical canonical class of the minimal resolution of singularities $\rho\colon \widetilde{Y}\to Y$ is supported on a divisor whose irreducible components are precisely the curves contracted in the passage from $\widetilde{Y}$ to the minimal model $\widetilde{Y}_{min}$. 
	\\
	Thus, we would obtain a minimal surface $\widetilde{Y}_{min}$ with trivial numerical canonical class 
	that has $p_g(\widetilde{Y}_{min})=1$, $q(\widetilde{Y}_{min})=0$. Thus, it is a K3 surface from Enriques-Kodaira classification. 
\end{remark}
The cases from tables \ref{tab: (2,1)} to \ref{tab: (5,1)}, so those with $p\in \{2,3,5\}$, can be treated together and are discussed in the following lemma. 
\begin{lemma}\label{lem: (p,1)_planes_pg1_case_p=2,3,5}
	Let $\pi\colon Y\to \mathbb P^2$ be a SNC $(p,1)$-plane with $p_g(Y)=1$, and $p\in \{2,3,5\}$. Then the minimal model of $Y$ is a K3 surface.
\end{lemma}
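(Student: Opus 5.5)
We follow the strategy of Remark \ref{rem: strategy_proof_(p,1)_planes_pg1_are_K3}. By Proposition \ref{prop: equivalence_star_prop}, for $p=2$ we have $l_\chi=3$ and the branch curve has total degree $6$; for $p=3$, $(l_\chi,l_{\chi^2})=(3,2)$ and the degree is $5$; for $p=5$, $(l_\chi,l_{\chi^2},l_{\chi^3},l_{\chi^4})=(3,2,2,1)$ and the degree is $4$. In each case the pullback divisor $pK_{\mathbb P^2}+(p-1)D$ has degree $-3p+(p-1)\deg D$. This equals $-6+6=0$ for $p=2$, $-9+10=1$ for $p=3$ and $-15+16=1$ for $p=5$. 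By \eqref{eq: KS_ab_cov_quo_sing} we therefore get
\[
pK_{\widetilde Y}\equiv \widetilde\pi^*\cO_{\mathbb P^2}(c)-\sum_{x\in \mathrm{Sing}(Y)} pE_x,\qquad c\in\{0,1\}.
\]

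For $p=2$ the cover is smooth: all $D_g$ lie in $D_1$, so no singularities occur. Hence $2K_{\widetilde Y}\equiv 0$. Together with $p_g=1$ and $q=0$ (Lemma \ref{lem: (p,k)_planes_are_regular}), $Y$ is minimal with $K_Y\equiv 0$, so it is a K3 surface (the double plane branched over a sextic).

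For $p\in\{3,5\}$ the key point is to show that the line class $\widetilde\pi^*\cO(1)$ can be absorbed by the exceptional curves. We take a line $\ell$ that is a branch component, or a component of degree $\le 2$, and use \eqref{eq: decomp_pullback_branch_comp} to write $\widetilde\pi^*\ell$ as $p\widetilde\ell$ plus combinations of Hirzebruch–Jung curves. The case analysis runs through the few entries of Tables \ref{tab: (3,1)}–\ref{tab: (5,1)}: for instance $p=5$ with four lines of monodromies $1,1,1,2$, or $p=3$ with degree $5$ split among $D_1,D_2$. In each entry we compute explicitly
\[
pK_{\widetilde Y}\equiv p\widetilde\ell+\sum_x\Big(\sum_i D_{i,\ell_x}E_{i,x}\Big)-\sum_x pE_x,
\]
and thus obtain $K_{\widetilde Y}$ as an effective $\QQ$-divisor supported on $\widetilde\ell$ and exceptional curves. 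Using \eqref{eq: self Intersecion strict} and \eqref{eq: genus_cover_branch}, together with Remark \ref{rem: genus branch case Z/p} (so $\widetilde\ell\cong\PP^1$), we find $\widetilde\ell^2=1/p\cdot$ (integer correction), which should give $\widetilde\ell^2=-1$. Consequently $\widetilde\ell$ is a $(-1)$-curve.

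Next we contract $\widetilde\ell$ and then iteratively contract the exceptional curves that become $(-1)$-curves. The $(-2)$- and $(-3)$-chains coming from $\tfrac13(1,1)$, $\tfrac13(1,2)$, $\tfrac15(1,a)$ have to be tracked, and we check that at each step the pushed-forward canonical $\QQ$-divisor loses exactly the contracted curve from its support. At the end $K\equiv 0$ numerically on a smooth surface with $p_g=1$ and $q=0$, and by the Enriques–Kodaira classification this is a K3 surface. As a consistency check, one can also compute $K^2_{\widetilde Y}$ from \eqref{eq: Ksquare_ab_cov_quo_sing} and confirm that it equals minus the number of contractions.

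The main obstacle is the bookkeeping in this contraction process: verifying, for each configuration of singular points (which depends on the pairs of monodromies $g,h$ and hence on $a_x$), that the coefficients in $K_{\widetilde Y}$ match the multiplicities needed for successive $(-1)$-contractions, and that nothing nonzero survives. We would first handle the configurations with the most singular points and check the rest similarly.
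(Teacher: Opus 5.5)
Your overall strategy is the paper's (Remark~\ref{rem: strategy_proof_(p,1)_planes_pg1_are_K3}), but one claim is false and the decisive step for $p\in\{3,5\}$ is missing.

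\textbf{The case $p=2$.} The cover is \emph{not} smooth in general. $D=D_{e_1}$ may be a reducible nodal sextic (quintic $+$ line, two cubics, six lines, \dots). Over each intersection point of two of its components one has $\langle g\rangle\cap\langle h\rangle=\langle e_1\rangle\neq 0$, so $Y$ acquires $\frac12(1,1)$ points; Table~\ref{tab: (2,1)} lists up to fifteen. The conclusion $2K_{\widetilde Y}\equiv 0$ still holds, but the reason is that these are $A_1$ points, for which $E_x=0$ in \eqref{eq: KS_ab_cov_quo_sing}. Also, minimality and $K\equiv 0$ should be stated for $\widetilde Y$, not for the singular $Y$.

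\textbf{The cases $p\in\{3,5\}$.} You never say which line to take. You only assert that the resulting expression is effective, and you locate the difficulty in an iterative contraction that never occurs.

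The right choice is $\ell=D_{2e_1}$ for $p=3$ and $\ell=D_{3e_1}$ for $p=5$. The key observation is that every non-canonical singular point of $Y$ lies over $\ell$:
\begin{itemize}
\item for $p=3$ these are the four points over $\ell\cap D_{e_1}$, of type $\frac13(1,1)$;
\item for $p=5$ they are the three points over $\ell\cap D_{4e_1}$, of type $\frac15(1,2)$.
\end{itemize}
By contrast, points over intersections of two components of $D_{e_1}$, resp.\ $D_{4e_1}$, are of type $\frac13(1,2)=A_2$, resp.\ $\frac15(1,4)=A_4$, and have $E_x=0$.

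At the non-canonical points, $pE_x$ equals $E_1$, resp.\ $2E_1+E_2$. This is exactly the Hirzebruch--Jung part of $\widetilde\pi^*\ell$ in \eqref{eq: decomp_pullback_branch_comp}. So your displayed formula collapses to $K_{\widetilde Y}\equiv\widetilde\ell$, with $\widetilde\ell\cong\PP^1$ and
\[
\widetilde\ell^2=\tfrac{1-4\cdot 1}{3}=-1,\qquad\text{resp.}\qquad \widetilde\ell^2=\tfrac{1-3\cdot 2}{5}=-1 .
\]
Contracting this single $(-1)$-curve gives a surface with numerically trivial canonical class, hence already minimal, and you conclude by the classification. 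No further bookkeeping is needed.

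The choice of line genuinely matters. Take $\ell'$ a line component of $D_{4e_1}$ in case $A-(5,1)3)$. Then $\widetilde{\ell'}^2=(1-4-4-3)/5=-2$. Moreover, the two $\frac15(1,2)$ points not lying over $\ell'$ enter your expression for $pK_{\widetilde Y}$ with negative coefficients. So ``a line that is a branch component, or a component of degree $\le 2$'' does not produce the effective divisor you claim, and your hedged ``should give $\widetilde\ell^2=-1$'' is not justified as stated.
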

\begin{proof}
	From Proposition \ref{prop: complete_classification_p,1_planes_pg=1}, then $Y$ falls in one of the cases in tables \ref{tab: (2,1)} to \ref{tab: (5,1)}. The case $p=2$ is already well-understood and known in the literature: a double cover of $\mathbb P^2$ ramified over a (possibly singular with normal crossings) sextic curve has  a minimal resolution of singularities that is a K3 surface. Indeed, from Formula \eqref{eq: KS_ab_cov_quo_sing}, we have 
	\[
	2K_{\widetilde{Y}}\equiv_{lin} \widetilde{\pi}^*\left(-2K_{\mathbb P^2}+\sum_{g\in G}D_g\right)=\widetilde{\pi}^*(-6H+6H)=0, 
	\]
	so $K_{\widetilde{Y}}$ is numerically trivial, and then $\widetilde{Y}$ is a K3 surface, as explained in Remark \ref{rem: strategy_proof_(p,1)_planes_pg1_are_K3}.  
	
	The remaining cases for \(p=3\) and \(p=5\) are quite similar. We consider the line \(\ell \) of the branch locus of \(\pi \) (cf. Tables \ref{tab: (3,1)} to \ref{tab: (5,1)}) passing through the points of \(\mathbb{P}^{2}\) corresponding to the points of \(Y\) which are non-canonical singularities. It turns out that the strict transform $\widetilde{\ell}$ of $\widetilde{\pi}^*\ell$ in $\widetilde{Y}$ is a $(-1)$-curve and that $K_{\widetilde{Y}}\equiv \widetilde{\ell}$. Thus, after contracting $\widetilde{\ell}$ we obtain a surface with trivial canonical class, hence a K3 surface by the discussion in Remark \ref{rem: strategy_proof_(p,1)_planes_pg1_are_K3}. 
	
	In particular, for $p=3$, looking at the local monodromies of Table \ref{tab: (3,1)}, the line $\ell$ is the branch divisor $\ell=D_{2e_1}$ which intersects transversally in four points the (possibly singular with normal crossings) quartic $D_{e_1}$. The singularities of $Y$ lying over those four points are of type $\frac{1}{3}(1,1)$.  From Proposition \ref{prop: dec_of_the_pullback}, we have
	\[
	\widetilde{\pi}^*\ell=3\widetilde{\ell}+\sum_{i=1}^4 E_{i}^1, 
	\] 
	and from Formulae \eqref{eq: self Intersecion strict} and Remark \ref{rem: genus branch case Z/p}, we have $g(\widetilde{\ell})=0$ and 	
	\[
	\widetilde{\ell}^2=\frac{1}{3}\cdot 1-4\cdot \frac{1}{3}=-1,
	\]
	so $\widetilde{\ell}$ is a $(-1)$-curve of $\widetilde{Y}$. Furthermore, from Formula \ref{eq: KS_ab_cov_quo_sing} we also have 
	\[
	\begin{split}
		K_{\widetilde{Y}} \equiv \frac{1}{3}\left(\widetilde{\pi}^*(3K_{\mathbb P^2}+2\sum_{g\in G}D_g)-\sum_{i=1}^4E_{i}^1\right) =\frac{1}{3}\left(\widetilde{\pi}^*(H)-\sum_{i=1}^4E_{i}^1\right)\equiv \widetilde{\ell},
	\end{split}
	\]
	as we wanted to prove. 
	
	Instead, for $p=5$, looking at Table \ref{tab: (5,1)}, the line $\ell$ has local monodromy $\ell=D_{3e_1}$, and it intersects transversally in three points the (possibly singular with normal crossings) cubic $D_{4e_1}$. The singularities of $\widetilde{Y}$ over those three points are of type $\frac{1}{5}(1,2)$, so that we have 
	\[
	\widetilde{\pi}^*(\ell)=5\widetilde{\ell}+\sum_{i=1}^3 (2E_{1,i}+E_{2,i}),
	\]
	and from Formula \eqref{eq: self Intersecion strict} and Remark \ref{rem: genus branch case Z/p}, we have $g(\widetilde{\ell})=0$ and 
	\[
	\widetilde{\ell}^2=\frac{1}{5}\cdot 1 -3\cdot \frac{2}{5}=-1.
	\]
	In other words, $\widetilde{\ell}$ is a $(-1)$-curve of $\widetilde{Y}$. Furthermore, Formula \eqref{eq: KS_ab_cov_quo_sing} gives
		\[
	\begin{split}
		K_{\widetilde{Y}}  \equiv \frac{1}{5}\left(\widetilde{\pi}^*(5K_{\mathbb P^2}+4\sum_{g\in G}D_g)-\sum_{i=1}^3 (2E_{i}^1+E_{i}^2)\right)   \equiv \frac{1}{5}\left(\widetilde{\pi}^*(H)-\sum_{i=1}^3 (2E_{i}^1+E_{i}^2)\right) \equiv \widetilde{\ell},
	\end{split}
	\] 
	so the thesis follows. 
\end{proof}
Let us discuss the cases for $p=7$. 
\begin{lemma}\label{lem: (p,1)_planes_pg1_case_p=7}
	Let $\pi\colon Y\to \mathbb P^2$ be a SNC $(7,1)$-plane with $p_g(Y)=1$. Then the minimal model of $Y$ is a K3 surface.  
\end{lemma}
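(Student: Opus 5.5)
The plan is to follow the strategy of Remark~\ref{rem: strategy_proof_(p,1)_planes_pg1_are_K3}, as was done for $p=3,5$ in Lemma~\ref{lem: (p,1)_planes_pg1_case_p=2,3,5}, but with a longer chain of contractions. By Proposition~\ref{prop: complete_classification_p,1_planes_pg=1} and Proposition~\ref{prop: equivalence_star_prop}, for $p=7$ we have $\sum_g d_g=4$. The relevant data are then read off from Table~\ref{tab: (7,1)}.

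\textbf{Step 1: the branch configuration and singularities.} The branch locus consists of curves of total degree $4$ with local monodromies $a_j e_1$, where $a_j\in\{1,\dots,6\}$. Pardini's equation \eqref{eq: reduction_Pardini_eq} for the character with $l_\chi=3$ forces $\sum_j a_j d_j=21$. In practice the configuration is four lines, or a line arrangement degenerating from this; I will check the table to confirm. Every intersection point of two branch components is, by Proposition~\ref{prop: type_of_sing}, a cyclic quotient singularity $\frac17(1,a)$, with $a$ determined by the ratio of the two monodromies. For each such point I compute the Hirzebruch--Jung string from the continued fraction $7/a$ and the correction divisor $E_x$ of Definition~\ref{def: correct_terms_invariants}.

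\textbf{Step 2: express the canonical class.} Formula \eqref{eq: KS_ab_cov_quo_sing} gives
\[
7K_{\widetilde{Y}}\equiv \widetilde{\pi}^*(-21H+6\cdot 4H)-7\sum_x E_x=\widetilde{\pi}^*(3H)-7\sum_x E_x .
\]
I then substitute $\widetilde{\pi}^*H=\widetilde{\pi}^*\ell$ for suitable branch lines $\ell$, using the decomposition \eqref{eq: decomp_pullback_branch_comp}: $\widetilde{\pi}^*\ell=7\widetilde{\ell}+\sum_x\sum_i D_{i,\ell_x}E_{i,x}$. The aim is to write $K_{\widetilde{Y}}$ numerically as a nonnegative integral combination of strict transforms $\widetilde{\ell}$ of branch lines and exceptional curves $E_{i,x}$. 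By Remark~\ref{rem: genus branch case Z/p} each $\widetilde\ell$ is rational, and its self-intersection is given by \eqref{eq: self Intersecion strict}, namely $\widetilde\ell^2=\frac17-\sum_x a_x/7$. I expect at least one such $\widetilde\ell$ to be a $(-1)$-curve.

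\textbf{Step 3: contract and conclude.} Starting from that $(-1)$-curve, I contract successively. After each contraction I update the self-intersections of the adjacent curves, namely the first components $E_{1,x}$ of the strings it meets. This should produce new $(-1)$-curves, so that the whole support of the effective divisor found in Step 2 is contracted; this is the process to be recorded in Figure~\ref{fig:global_process_p=7}. Since $K$ pushes forward to $0$ along these contractions, the resulting minimal surface has numerically trivial canonical class. It also has $p_g=1$ and $q=0$ by Lemma~\ref{lem: (p,k)_planes_are_regular}, so it is a K3 surface by the Enriques--Kodaira classification.

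\textbf{Main obstacle.} The hard part is Step 3. I must verify that the coefficients in the expression for $K_{\widetilde{Y}}$ exactly match the multiplicities with which the exceptional curves appear in the chain of blow-ups, so that the support contracts completely and $K_{\widetilde{Y}}$ equals the total exceptional divisor of the contraction $\widetilde{Y}\to\widetilde{Y}_{min}$. I would first do this bookkeeping case by case for each entry of Table~\ref{tab: (7,1)}, treating the singularities of types $\frac17(1,a)$ with longer strings (for instance $a=1$, string $[7]$, or $a=6$, string $[2,2,2,2,2,2]$) explicitly. As a cross-check I would compare with $K^2_{\widetilde{Y}}$ from \eqref{eq: Ksquare_ab_cov_quo_sing}, which must equal minus the number of contractions.
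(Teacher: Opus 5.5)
Your plan is correct and is essentially the paper's proof. The paper writes $7K_{\widetilde{Y}}$ via \eqref{eq: KS_ab_cov_quo_sing}, rewrites $\widetilde{\pi}^*(3H)$ as a combination of pullbacks of branch lines, and observes that $K_{\widetilde{Y}}$ is supported on exactly the curves contracted to reach the minimal model; only this support condition is needed, not the multiplicity matching you list as the main obstacle. One correction to Step 1: by Table~\ref{tab: (7,1)} the generic configuration is two lines $\ell_1,\ell_2$ (monodromies $4e_1,5e_1$) plus a conic (monodromy $6e_1$), with four lines only as its degeneration. Using $\widetilde{\pi}^*(3H)=2\widetilde{\pi}^*\ell_1+\widetilde{\pi}^*\ell_2$ gives $K_{\widetilde{Y}}\equiv 2\widetilde{\ell}_1+\widetilde{\ell}_2+E_1^1$, where $\widetilde{\ell}_1,\widetilde{\ell}_2$ are $(-1)$-curves and $E_1^1$ becomes one after they are contracted, consistent with $K_{\widetilde{Y}}^2=-3$.
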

\begin{proof}
	By Table \ref{tab: (7,1)},  the branch divisor of $\pi$ consists of two lines $\ell_1$, $\ell_2$ and a conic ${\mathcal C}$ with local monodromies \(4e_1,5e_1\) and \(6e_1\), respectively. We have two possibilities: either $\mathcal C$ is smooth or it decomposes as two distinct lines. We only treat the first case, which corresponds to the case $A-(7,1)(1)$  of Table \ref{tab: (7,1)}. The second case $A-(7,1)(2)$ can be done in a  similar way. The corresponding cyclic quotient singularities of \(Y\) in case $A-(7,1)(1)$ are shown in Figure~\ref{fig: A-(7,1)(1)_config}.
			%
			%
			%
			%
			%
			%
			%
			%
			%
	We now proceed with the calculation of the self-intersection of the strict transform of the lines $\ell_1,\ell_2$ and the conic $\mathcal C$ on $\widetilde{Y}$. Using the Formula \eqref{eq: self Intersecion strict} we see that
	\[
	\widetilde{\ell_1}^2=-1, \quad \widetilde{\ell_2}^2=-1, \quad  \widetilde{C}^2=-2.
	\]
	Clearly the curves $\tilde{\ell_1}$, $\tilde{\ell_2}$ are rational from Remark \ref{rem: genus branch case Z/p}, so they are $(-1)$-curves.  We can therefore determine the dual graph of the exceptional configuration in the minimal resolution $\widetilde{Y}$, shown in Figure~\ref{2fig:p7}. To facilitate the minimalization process, we highlight the $(-1)$-curves in red. Moreover, at each step of the minimalization process, we shall denote the strict transforms of the curves by the same symbols as the original curves in order to avoid unnecessarily heavy notation.
	We proceed by contracting the two $(-1)$-curves $\widetilde{\ell_1}$ and $\widetilde{\ell_2}$. The diagram that we obtain is given in Figure~\ref{3fig:p7}.
	After this contraction, we have only another $(-1)$-curve $E^1_1$ to contract and no others occurs in the exceptional configuration, see Figure \ref{3fig:p7_2}. Notice that the new exceptional $(-2)$-curves $E_2^1$ and $E_4^1$ become tangent. In this case, we use the convention that a double edge indicates that the corresponding curves are tangent. 
	
	\begin{figure}[H]
		\centering
		\begin{subfigure}[b]{0.60\textwidth} 
			\centering
			\begin{tikzpicture}[
				scale=1,
				every node/.style={font=\scriptsize, transform shape}, 
				curve/.style={circle, draw, thick, minimum size=10mm},
				exc/.style={circle, fill=black, inner sep=1.6pt},
				edge/.style={thick}
				]
				\draw[thick] (0,0) ellipse (2.8 and 1.15);
				\node[right] at (2.85,-0.25) {$\mathcal C$};
				
				\draw[thick] (-2.4,-1.5) -- (2.2,1.7) node[above right] {$\ell_1$};
				\draw[thick] (-2.3,1.45) -- (2.4,-1.45) node[below right] {$\ell_2$};
				
				
				\fill[] (1.26,1.02) circle (2.3pt)
				node[above left] {$\frac1{7}(1,2)$};
				
				\fill[] (-1.58,-0.95) circle (2.3pt)
				node[below right] {$\frac1{7}(1,2)$};
				
				\fill[] (-1.52,0.95) circle (2.3pt)
				node[above right] {$\frac1{7}(1,3)$};
				
				\fill[] (1.55,-0.95) circle (2.3pt)
				node[below] {$\frac1{7}(1,3)$};
				
				\fill[] (-0.1,0.1) circle (2.3pt)
				node[left] {$\frac1{7}(1,4)$};
				
			\end{tikzpicture}
			\caption{Branch configuration of $A-(7,1)(1)$.}\label{fig: A-(7,1)(1)_config}
		\end{subfigure}
		
		\par\vspace{0.8cm} 
		
		\begin{subfigure}[b]{0.48\textwidth}
			\centering
			\begin{tikzpicture}[
				scale=1,
				every node/.style={font=\scriptsize, transform shape}, 
				curve/.style={circle, draw, thick, minimum size=10mm},
				exc/.style={circle, fill=black, inner sep=1.6pt},
				edge/.style={thick},
				rededge/.style={violet, thick},
				tang/.style={blue, thick, double, double distance=1.5pt},
				blueedge/.style={blue, thick, double, double distance=1.5pt}
				]
				
				\node[curve] (l1) at (-3,2) {{\color{red}$\widetilde{\ell_1}$}};
				\node[curve] (l2) at (3,2) {{\color{red}$\widetilde{\ell_2}$}};
				\node[curve] (C)  at (0,-2) {$\mathcal C$};
				
				\node[exc,label=above:{$E_1^1$},label=below:{\scriptsize $(-2)$}] (E11) at (-1,2) {};
				\node[exc,label=above:{$E_1^2$},label=below:{\scriptsize $(-4)$}] (E12) at (1,2) {};
				\draw[edge] (l1) -- (E11) -- (E12) -- (l2);
				
				\node[exc,label=left:{$E_2^1$},label=right:{\scriptsize $(-4)$}] (E21) at (-3,0.8) {};
				\node[exc,label=left:{$E_2^2$},label=right:{\scriptsize $(-2)$}] (E22) at (-2.2,-0.7) {};
				\draw[edge] (l1) -- (E21) -- (E22) -- (C);
				
				\node[exc,label=left:{$E_4^1$},label=right :{\scriptsize $(-4)$}] (E41) at (-1.45,0.65) {};
				\node[exc,label=left:{$E_4^2$},label=right :{\scriptsize $(-2)$}] (E42) at (-0.8,-0.75) {};
				\draw[edge] (l1) -- (E41) -- (E42) -- (C);
				
				\node[exc,label=right:{$E_5^1$},label=left:{\scriptsize $(-3)$}] (E51) at (1.35,0.65) {};
				\node[exc,label=right:{$E_5^2$},label=left:{\scriptsize $(-2)$}] (E52) at (0.9,-0.35) {};
				\node[exc,label=right:{$E_5^3$},label=left:{\scriptsize $(-2)$}] (E53) at (0.55,-1.25) {};
				\draw[edge] (l2) -- (E51) -- (E52) -- (E53) -- (C);
				
				\node[exc,label=right:{$E_3^1$},label=below right:{\scriptsize $(-3)$}] (E31) at (3,0.8) {};
				\node[exc,label=right:{$E_3^2$},label=below right:{\scriptsize $(-2)$}] (E32) at (2.2,-0.7) {};
				\node[exc,label=right:{$E_3^3$},label=below right:{\scriptsize $(-2)$}] (E33) at (1.55,-1.55) {};
				\draw[edge] (l2) -- (E31) -- (E32) -- (E33) -- (C);
				
			\end{tikzpicture}
			\caption{Dual graph of the minimal resolution of singularities $\widetilde{Y}$.}
			\label{2fig:p7}
		\end{subfigure}
		\hfill
		\begin{subfigure}[b]{0.48\textwidth}
			\centering
			\begin{tikzpicture}[
				scale=1,
				every node/.style={
					font=\scriptsize, 
					transform shape 
				},
				curve/.style={circle, draw, thick, minimum size=10mm},
				exc/.style={circle, fill=black, inner sep=1.6pt},
				edge/.style={thick},
				rededge/.style={violet, thick},
				tang/.style={blue, thick, double, double=none},
				blueedge/.style={blue, thick, double, double distance=1.5pt},
				tang2/.style={blue, thick, double, double distance=1.5pt}
				]
				\node[curve] (C) at (0,-2.3) {$\mathcal C$};
				
				\node[exc,label=above:{{\color{red}$E_1^1$}},label=left:{{\color{red}\scriptsize $(-1)$}}] (E11) at (-1.4,1.8) {};
				\node[exc,label=above:{$E_1^2$},label=below left:{\scriptsize $(-3)$}] (E12) at (1.4,1.8) {};
				\draw[edge] (E11)--(E12);
				
				\node[exc,label=above left:{$E_2^1$},label=below left:{\scriptsize $(-3)$}] (E21) at (-3.2,-0.1) {};
				\node[exc,label=left:{$E_2^2$},label=below:{\scriptsize $(-2)$}] (E22) at (-2.1,-1.7) {};
				\draw[edge] (E21)--(E22)--(C);
				
				\node[exc,label=above right:{$E_4^1$},label=right:{\scriptsize $(-3)$}] (E41) at (-0.9,-0.5) {};
				\node[exc,label=left:{$E_4^2$},label=below left:{\scriptsize $(-2)$}] (E42) at (-0.4,-1.4) {};
				\draw[edge] (E41)--(E42)--(C);
				
				\node[exc,label=above left:{$E_5^1$},label=left:{\scriptsize $(-2)$}] (E51) at (1.0,0) {};
				\node[exc,label=above right:{$E_5^2$},label=right:{\scriptsize $(-2)$}] (E52) at (0.6,-1) {};
				\node[exc,label=above right:{$E_5^3$},label=right:{\scriptsize $(-2)$}] (E53) at (0.2,-1.7) {};
				\draw[edge] (E51)--(E52)--(E53)--(C);
				
				\node[exc,label=right:{$E_3^1$},label=below right:{\scriptsize $(-2)$}] (E31) at (3.2,-0.1) {};
				\node[exc,label=right:{$E_3^2$},label=below right:{\scriptsize $(-2)$}] (E32) at (2.1,-1.4) {};
				\node[exc,label=right:{$E_3^3$},label=below right:{\scriptsize $(-2)$}] (E33) at (1.3,-2.0) {};
				\draw[edge] (E31)--(E32)--(E33)--(C);
				
				\draw[rededge] (E21)--(E11)--(E41)--(E21);
				
				\draw[rededge] (E12)--(E51)--(E31)--(E12);
				
			\end{tikzpicture}
			\caption{First contraction of the $(-1)$-curves $\widetilde{\ell_1}$ and $\widetilde{\ell_2}$.}
			\label{3fig:p7}
		\end{subfigure}
		\hfill
		
		\vspace{0.6cm}
		
		\hspace*{-1cm}
		\begin{subfigure}[b]{0.48\textwidth}
			\centering
			\begin{tikzpicture}[
				scale=1,
				every node/.style={font=\scriptsize, transform shape}, 
				curve/.style={circle, draw, thick, minimum size=10mm},
				exc/.style={circle, fill=black, inner sep=1.6pt},
				edge/.style={thick},
				rededge/.style={violet, thick},
				tang/.style={blue, thick, double, double=none},
				blueedge/.style={blue, thick, double, double distance=1.5pt},
				tang2/.style={blue, thick, double, double distance=1.5pt}
				]
				
				\node[curve] (C) at (0,-2.3) {$\mathcal C$};
				
				\node[exc,label=above:{$E_1^2$},label=left:{\scriptsize $(-2)$}] (E12) at (1.4,1.8) {};
				
				\node[exc,label=above left:{$E_2^1$},label=below left:{\scriptsize $(-2)$}] (E21) at (-3.2,-0.1) {};
				\node[exc,label=left:{$E_2^2$},label=below:{\scriptsize $(-2)$}] (E22) at (-2.1,-1.7) {};
				\draw[edge] (E21)--(E22)--(C);
				
				\node[exc,label=above left:{$E_4^1$},label=right:{\scriptsize $(-2)$}] (E41) at (-0.9,-0.5) {};
				\node[exc,label=left:{$E_4^2$},label=below left:{\scriptsize $(-2)$}] (E42) at (-0.4,-1.4) {};
				\draw[edge] (E41)--(E42)--(C);
				
				\node[exc,label=above left:{$E_5^1$},label=left:{\scriptsize $(-2)$}] (E51) at (1.0,0) {};
				\node[exc,label=above right:{$E_5^2$},label=right:{\scriptsize $(-2)$}] (E52) at (0.6,-1) {};
				\node[exc,label=above right:{$E_5^3$},label=right:{\scriptsize $(-2)$}] (E53) at (0.2,-1.7) {};
				\draw[edge] (E51)--(E52)--(E53)--(C);
				
				\node[exc,label=right:{$E_3^1$},label=below right:{\scriptsize $(-2)$}] (E31) at (3.2,-0.1) {};
				\node[exc,label=right:{$E_3^2$},label=below right:{\scriptsize $(-2)$}] (E32) at (2.1,-1.4) {};
				\node[exc,label=right:{$E_3^3$},label=below right:{\scriptsize $(-2)$}] (E33) at (1.3,-2.0) {};
				\draw[edge] (E31)--(E32)--(E33)--(C);
				
				\draw[tang] (E21)--(E12);
				\draw[tang] (E41)--(E12);
				\draw[tang2] (E41)--(E21);
				
				\draw[tang] (E12)--(E51);
				\draw[tang] (E12)--(E31);
				\draw[tang] (E51)--(E31);
			\end{tikzpicture}
			\caption{Final contraction of the $(-1)$-curve $E_1^1$.}
			\label{3fig:p7_2}
		\end{subfigure}
		\caption{Complete contraction process for the case $A-(7,1)(1)$.}\label{fig:global_process_p=7}
	\end{figure}
\vspace{-0.4cm}

	As explained in Remark \ref{rem: strategy_proof_(p,1)_planes_pg1_are_K3} and already done in Lemma \ref{lem: (p,1)_planes_pg1_case_p=2,3,5}, we proceed by giving the numerical canonical class of the minimal resolution $\widetilde{Y}$ using Formula \eqref{eq: KS_ab_cov_quo_sing}:
\[
\begin{split}
	7K_{\widetilde{Y}}\equiv_{lin}\widetilde{\pi}^*(3H)-\left(2E_{1}^1+4E_1^2\right)-\left(4E_{2}^1+2E_2^2\right)-\left(4E_{4}^1+2E_4^2\right)-\sum_{j= 3,5}\left(3E_j^1+2E_j^2+E_j^3\right).
\end{split}
\] 
	However, using Formula \ref{eq: decomp_pullback_branch_comp}, then 
\[
\begin{split}
	\widetilde{\pi}^*\ell_1& =7\widetilde{\ell_1}+(4E_1^1+E_1^2)+(2E_{2}^1+E_2^2)+(2E_4^1+E_4^2) \\
	\widetilde{\pi}^*\ell_2& =7\widetilde{\ell_2}+(E_1^1+2E_1^2)+\sum_{j=3,5}(3E_j^1+2E_j^2+E_j^3),
\end{split}
\]
so that the numerical class of $K_{\widetilde{Y}}$ can be written as 
\[
K_{\widetilde{Y}}\equiv 2\widetilde{\ell_1}+\widetilde{\ell_2}+E^1_1. 
\]
Thus, we see that the support of the canonical divisor of $\widetilde{Y}$ is on contracted curves. Hence the minimal model $\widetilde{Y}_{min}$ is a K3 surface, as explained in Remark \ref{rem: strategy_proof_(p,1)_planes_pg1_are_K3}. 
\end{proof}
The remaining cases $p=11,13,17,19$ can be studied by the same computation. 
\begin{lemma}\label{lem: (p,1)_planes_pg1_case_p=11,13,17,19}
	Let $\pi\colon Y\to \mathbb P^2$ be a SNC $(p,1)$-plane with $p_g(Y)=1$, and $p\in \{11,13,17,19\}$. Then the minimal model of $Y$ is a K3 surface. 
\end{lemma}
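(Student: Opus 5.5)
We follow the strategy of Remark \ref{rem: strategy_proof_(p,1)_planes_pg1_are_K3}, exactly as in Lemma \ref{lem: (p,1)_planes_pg1_case_p=7}. By Proposition \ref{prop: complete_classification_p,1_planes_pg=1}, for each $p\in\{11,13,17,19\}$ the building data are among the few cases of Tables \ref{tab: (11,1)} to \ref{tab: (19,1)}. By Proposition \ref{prop: equivalence_star_prop}, the branch divisor has total degree $4$ and $l_{\chi^{p-1}}=1$. Hence it consists of lines and, in some subcases, a conic, with prescribed local monodromies $a_ie_1$. For each case we first list the singular points of $Y$. These lie over the pairwise intersections of branch components, and by Proposition \ref{prop: type_of_sing} the point over $\Delta_{ae_1}\cap\Delta_{be_1}$ is of type $\frac1p(1,c)$, where $c$ is determined by $b\equiv -c\,a \pmod p$. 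We then write down the Hirzebruch--Jung strings from the continued fractions of $p/c$, and compute the correction divisors $E_x$ of Definition \ref{def: correct_terms_invariants}.

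Next we compute the self-intersections of the strict transforms of the branch components via Formula \eqref{eq: self Intersecion strict}. These components are rational by Remark \ref{rem: genus branch case Z/p}, and we expect some of the lines to become $(-1)$-curves. We then express the numerical class of $K_{\widetilde Y}$ as a combination of curves. We start from Formula \eqref{eq: KS_ab_cov_quo_sing},
\[
pK_{\widetilde Y}\equiv \widetilde\pi^*(H)-\sum_x pE_x,
\]
using that $pK_{\mathbb P^2}+(p-1)\cdot 4H$ has degree $4(p-1)-3p=p-4$. The cleanest way to do this is not to use $\widetilde\pi^*(H)$ directly. Instead, we replace it by a suitable rational combination of the pullbacks $\widetilde\pi^*\ell_i$ of the branch lines, whose decompositions are given by Formula \eqref{eq: decomp_pullback_branch_comp}. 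We choose the coefficients so that all the exceptional contributions cancel as far as possible. For $p=7$, the analogous computation gave $K_{\widetilde Y}\equiv 2\widetilde{\ell_1}+\widetilde{\ell_2}+E^1_1$. We expect here an expression of the form
\[
K_{\widetilde Y}\equiv \sum_i m_i\widetilde{\ell_i}+\sum m_{j,x}E_{j,x},
\]
with nonnegative integer coefficients, supported on an effective configuration. Since $K_{\widetilde Y}$ is determined numerically by its intersections with the curves, an efficient check is to verify $K_{\widetilde Y}\cdot C$ by adjunction for each curve $C$ in the configuration.

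Finally, we run the contraction process on the dual graph. We contract the $(-1)$-curves $\widetilde{\ell_i}$, update the self-intersections of the adjacent exceptional curves, and iterate, as in Figure \ref{fig:global_process_p=7}. We check that precisely the curves in the support of the divisor above get contracted, each contraction lowering the coefficients consistently, so that at the end the image of $K$ is numerically trivial. The resulting minimal surface has $p_g=1$ and $q=0$ by Lemma \ref{lem: (p,k)_planes_are_regular}, and numerically trivial canonical class, so it is a K3 surface by the Enriques--Kodaira classification.

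The main obstacle is the bookkeeping in the last two steps. For $p=17,19$ the Hirzebruch--Jung strings are longer, and successive contractions create tangencies and multiple intersections among exceptional curves, so tracking which curves become $(-1)$-curves at each stage is delicate. As a consistency check, we would compare the number of contracted curves with $K_{\widetilde Y}^2$ from Formula \eqref{eq: Ksquare_ab_cov_quo_sing}. Each blow-down raises $K^2$ by one and the target has $K^2=0$, so the count of contractions must equal $-K_{\widetilde Y}^2$. We would also check $e=24$ on the minimal model using Formula \eqref{eq: e_ab_cov_quo_sing}.
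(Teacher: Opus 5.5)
Your plan is the paper's argument. You compute the self-intersections of the strict transforms of the four branch lines (for $p=11$: three $(-1)$-curves and one $(-2)$-curve), write the numerical class of $K_{\widetilde Y}$ as an effective combination supported on the curves contracted during minimalization, and conclude that the minimal model is K3. The paper states the explicit results you only anticipate, for instance $K_{\widetilde Y}\equiv 4\widetilde{\ell}_1+2\widetilde{\ell}_2+\widetilde{\ell}_3+2E_1^1+E_2^1+E_5^1$ for $p=11$, with six contractions matching $K_{\widetilde Y}^2=-6$.

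One slip: your displayed formula should read $pK_{\widetilde Y}\equiv\widetilde\pi^*((p-4)H)-\sum_x pE_x$ (so $7H$ for $p=11$), as your own degree count says. Consequently the coefficients $m_i$ of the lines must sum to $p-4$.
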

\begin{proof}
	The argument is essentially identical for \(p=11,13,17\) and \(19\) and it follows the strategy presented in Remark \ref{rem: strategy_proof_(p,1)_planes_pg1_are_K3} already used to prove lemmas \ref{lem: (p,1)_planes_pg1_case_p=2,3,5} and \ref{lem: (p,1)_planes_pg1_case_p=7}. Therefore, we present only the case \(p=11\) in full detail.
	
	By Table~\ref{tab: (11,1)}, for \(p=11\) the branch divisor of $\pi$ consists of four lines with monodromies \(6e_1,8e_1,9e_1\) and \(10e_1\). The corresponding cyclic quotient singularities of \(Y\) are shown in Figure~\ref{1fig:p11}.
	
	We now proceed with the calculation of the self intersection of the strict transform of the lines $\ell_1, \ldots ,\ell_4$. Using Formula \eqref{eq: self Intersecion strict} we see that
	\[
	\tilde{\ell_1}^2=-1, \quad \tilde{\ell_2}^2=-1, \quad  \tilde{\ell_3}^2=-1, \quad  \tilde{\ell_4}^2=-2. 
	\]
	Clearly the curves $\tilde{\ell_1}$, $\tilde{\ell_2}$ and $\tilde{\ell_3}$ are all rational, by Remark \ref{rem: genus branch case Z/p}.  We can therefore determine the dual graph of the exceptional configuration in the minimal resolution of singularities $\widetilde{Y}$, shown in Figure~\ref{2fig:p11}. To facilitate the minimalization process, we highlight the $(-1)$-curves in red. 
	
	Moreover, at each step of the minimalization process, we shall denote the strict transforms of the curves by the same symbols as the original curves in order to avoid unnecessarily heavy notation.
	
	The surface obtained after the first contraction of $\widetilde{\ell}_1,\widetilde{\ell_2}, \widetilde{\ell_3}$ is still not minimal. Indeed, it contains two further $(-1)$-curves shown in red in Figure \ref{3fig:p11}. We therefore proceed by contracting these curves.
	
	In Figure~\ref{fig:p11}, we adopt the convention that a double edge indicates that the corresponding curves are tangent, rather than merely intersecting transversely, as done for the Figure \ref{3fig:p7_2} in Lemma \ref{lem: (p,1)_planes_pg1_case_p=7}. We have one last curve $E^1_2$ to contract,  
	this is done in the last Figure \ref{4fig:p11}. We point out a subtle phenomenon that does not occur in the cases $p\leq 7$, but appears in all the cases with $p\geq 11$. Indeed, during the minimalization process we contract a $(-1)$-curve that is tangent to other curves in the configuration. 

Consequently, the strict transforms of the tangent curves become singular and their self-intersection must be computed with particular care. In the present case, the self-intersection of the strict transform of $E_3^1$ increases by $4$ 
due to the tangential intersection with the contracted curve.

	We finally give the numerical canonical class of the minimal resolution 
	using Theorem \ref{thm: invariants_ab_cov_cyclic_sing}:
	\[
	K_{\widetilde Y}
	\equiv
	4\widetilde{\ell}_1
	+
	2\widetilde{\ell}_2
	+
	\widetilde{\ell}_3
	+
	2\widetilde E_1^{\,1}
	+
	\widetilde E_2^{\,1}
	+
	\widetilde E_5^{\,1}.
	\]
	Proceeding with the contraction as explaind above we see that the support of the canonical divisor $K_{\widetilde{Y}}$ was on contracted curves. Hence the minimal model $\widetilde{Y}_{min}$ is a K3 surface, as explained in Remark \ref{rem: strategy_proof_(p,1)_planes_pg1_are_K3}. 
			%
			%
			%
			%
			%
			%
			%
			%
			%
	\begin{figure}[H]
		\centering
		\begin{subfigure}[b]{0.60\textwidth} 
			\centering
			\begin{tikzpicture}[
				scale=0.95,
				every node/.style={font=\scriptsize, transform shape}, 
				curve/.style={circle, draw, thick, minimum size=10mm},
				exc/.style={circle, fill=black, inner sep=1.6pt},
				edge/.style={thick}
				]
				\draw[thick] (-3,0) -- (3,0) node[right] {$\ell_1$};
				\draw[thick] (-2.4,-1.2) -- (2.1,3.0) node[right] {$\ell_2$};
				\draw[thick] (-1.8,2.7) -- (2.3,-1.0) node[right] {$\ell_3$};
				\draw[thick] (-0.2,-1.3) -- (1.35,2.9) node[right] {$\ell_4$};
				
				\fill[] (-1.10,0) circle (2.3pt)
				node[above left] {$\frac1{11}(1,6)$};
				
				\fill[] (0.28,0) circle (2.3pt)
				node[below left] {$\frac1{11}(1,2)$};
				
				\fill[] (1.2,0) circle (2.3pt)
				node[below right] {$\frac1{11}(1,4)$};
				
				\fill[] (0,1.05) circle (2.3pt)
				node[left] {$\frac1{11}(1,3)$};
				
				\fill[] (0.50,0.6) circle (2.3pt)
				node[above right] {$\frac1{11}(1,5)$};
				
				\fill[] (1,2.0) circle (2.3pt)
				node[above left] {$\frac1{11}(1,7)$};
				
				
			\end{tikzpicture}
			\caption{Branch configuration for the case $A-(11,1)$.}\label{1fig:p11}
		\end{subfigure}
		
		\par\vspace{0.6cm} 
		
		\hspace*{-1cm}
		\begin{subfigure}[b]{0.48\textwidth}
			\centering
			\begin{tikzpicture}[
				scale=0.9,
				every node/.style={font=\scriptsize, transform shape}, 
				curve/.style={circle, draw, thick, minimum size=10mm},
				exc/.style={circle, fill=black, inner sep=1.6pt},
				edge/.style={thick},
				rededge/.style={violet, thick},
				tang/.style={blue, thick, double, double distance=1.5pt},
				blueedge/.style={blue, thick, double, double distance=1.5pt}
				]
				
				\node[curve] (l1) at (-3,2) {{\color{red}$\tilde{\ell_1}$}};
				\node[curve] (l2) at (3,2) {{\color{red}$\tilde{\ell_2}$}};
				\node[curve] (l3) at (-3,-2) {{\color{red}$\tilde{\ell_3}$}};
				\node[curve] (l4) at (3,-2) {$\tilde{\ell_4}$};
				
				\node[exc,label=above:{$E_1^1$},label=below:{\scriptsize $(-2)$}] (E11) at (-1.2,2) {};
				\node[exc,label=above:{$E_1^2$},label=below:{\scriptsize $(-6)$}] (E12) at (0.9,2) {};
				
				\draw[edge] (l1) -- (E11);
				\draw[edge] (E11) -- (E12);
				\draw[edge] (E12) -- (l2);
				
				\node[exc,label=right:{$E_2^1$},label=left:{\scriptsize $(-3)$}] (E21) at (-3,0.7) {};
				\node[exc,label=right:{$E_2^2$},label=left:{\scriptsize $(-4)$}] (E22) at (-3,-0.7) {};
				
				\draw[edge] (l1) -- (E21);
				\draw[edge] (E21) -- (E22);
				\draw[edge] (E22) -- (l3);
				
				\node[exc,label=above:{$E_3^1$},label=below:{\scriptsize $(-6)$}] (E31) at (-1.15,0.55) {};
				\node[exc,label=right:{$E_3^2$},label=below:{\scriptsize $(-2)$}] (E32) at (0.8,-0.55) {};
				
				\draw[edge] (l1) -- (E31);
				\draw[edge] (E31) -- (E32);
				\draw[edge] (E32) -- (l4);
				
				\node[exc,label=above:{$E_4^1$},label=below:{\scriptsize $(-4)$}] (E41) at (1.15,0.55) {};
				\node[exc,label=left:{$E_4^2$},label=below:{\scriptsize $(-3)$}] (E42) at (-0.8,-0.55) {};
				
				\draw[edge] (l2) -- (E41);
				\draw[edge] (E41) -- (E42);
				\draw[edge] (E42) -- (l3);
				
				\node[exc,label=right:{$E_5^1$},label=left:{\scriptsize $(-2)$}] (E51) at (3,1.1) {};
				\node[exc,label=right:{$E_5^2$},label=left:{\scriptsize $(-3)$}] (E52) at (3,0.25) {};
				\node[exc,label=right:{$E_5^3$},label=left:{\scriptsize $(-2)$}] (E53) at (3,-0.55) {};
				\node[exc,label=right:{$E_5^4$},label=left:{\scriptsize $(-2)$}] (E54) at (3,-1.25) {};
				
				\draw[edge] (l2) -- (E51);
				\draw[edge] (E51) -- (E52);
				\draw[edge] (E52) -- (E53);
				\draw[edge] (E53) -- (E54);
				\draw[edge] (E54) -- (l4);
				
				\node[exc,label=above:{$E_6^1$},label=below:{\scriptsize $(-3)$}] (E61) at (-1.65,-2) {};
				\node[exc,label=above:{$E_6^2$},label=below:{\scriptsize $(-2)$}] (E62) at (-0.7,-2) {};
				\node[exc,label=above:{$E_6^3$},label=below:{\scriptsize $(-2)$}] (E63) at (0.25,-2) {};
				\node[exc,label=above:{$E_6^4$},label=below:{\scriptsize $(-2)$}] (E64) at (1.15,-2) {};
				\node[exc,label=above:{$E_6^5$},label=below:{\scriptsize $(-2)$}] (E65) at (2.0,-2) {};
				
				\draw[edge] (l3) -- (E61);
				\draw[edge] (E61) -- (E62);
				\draw[edge] (E62) -- (E63);
				\draw[edge] (E63) -- (E64);
				\draw[edge] (E64) -- (E65);
				\draw[edge] (E65) -- (l4);
				
			\end{tikzpicture}
			\caption{Dual graph of the minimal resolution of singularities $\widetilde{Y}$.}
			\label{2fig:p11}
		\end{subfigure}
		\hfill
		\begin{subfigure}[b]{0.48\textwidth}
			\centering
			\begin{tikzpicture}[
				scale=0.9,
				every node/.style={
					font=\scriptsize, 
					transform shape 
				},
				curve/.style={circle, draw, thick, minimum size=10mm},
				exc/.style={circle, fill=black, inner sep=1.6pt},
				edge/.style={thick},
				rededge/.style={violet, thick}
				]
				
				\node[curve] (l4) at (3,-2.2) {$\ell_4$};
				
				\node[exc,label=above:{{\color{red}$E_1^1$}},label=above right :{{\color{red}\scriptsize $(-1)$}}] (E11) at (-1.1,2.2) {};
				\node[exc,label=above:{$E_1^2$},label=above right :{\scriptsize $(-5)$}] (E12) at (1.2,2.2) {};
				\draw[edge] (E11) -- (E12);
				
				\node[exc,label=left:{$E_2^1$},label=below left:{\scriptsize $(-2)$}] (E21) at (-3,1.2) {};
				\node[exc,label=left:{$E_2^2$},label=below left:{\scriptsize $(-3)$}] (E22) at (-3,-0.8) {};
				\draw[edge] (E21) -- (E22);
				
				\node[exc,label=right:{$E_3^1$},label=below:{\scriptsize $(-5)$}] (E31) at (-1.1,0.9) {};
				\node[exc,label=right:{$E_3^2$},label=below:{\scriptsize $(-2)$}] (E32) at (1.0,-0.75) {};
				\draw[edge] (E31) -- (E32);
				\draw[edge] (E32) -- (l4);
				
				\node[exc,label=left:{$E_4^1$},label=below:{\scriptsize $(-3)$}] (E41) at (1.0,0.95) {};
				\node[exc,label=right:{$E_4^2$},label=below:{\scriptsize $(-2)$}] (E42) at (-1.2,-0.7) {};
				\draw[edge] (E42) -- (E41);
				
				\node[exc,label=right:{{\color{red} $E_5^1$}},label=below right :{{\color{red}\scriptsize $(-1)$}}] (E51) at (3,1.05) {};
				\node[exc,label=right:{$E_5^2$},label=below right:{\scriptsize $(-3)$}] (E52) at (3,0.25) {};
				\node[exc,label=right:{$E_5^3$},label=below right:{\scriptsize $(-2)$}] (E53) at (3,-0.55) {};
				\node[exc,label=right:{$E_5^4$},label=below right:{\scriptsize $(-2)$}] (E54) at (3,-1.35) {};
				\draw[edge] (E51) -- (E52) -- (E53) -- (E54) -- (l4);
				
				\node[exc,label=above:{$E_6^1$},label=below:{\scriptsize $(-2)$}] (E61) at (-1.7,-2.2) {};
				\node[exc,label=above:{$E_6^2$},label=below:{\scriptsize $(-2)$}] (E62) at (-0.75,-2.2) {};
				\node[exc,label=above:{$E_6^3$},label=below:{\scriptsize $(-2)$}] (E63) at (0.15,-2.2) {};
				\node[exc,label=above:{$E_6^4$},label=below:{\scriptsize $(-2)$}] (E64) at (1.05,-2.2) {};
				\node[exc,label=above:{$E_6^5$},label=below:{\scriptsize $(-2)$}] (E65) at (1.9,-2.2) {};
				\draw[edge] (E61) -- (E62) -- (E63) -- (E64) -- (E65) -- (l4);
				
				\draw[rededge] (E21) -- (E11) -- (E31) -- (E21);
				\draw[rededge] (E22) -- (E61) -- (E42) -- (E22);
				\draw[rededge] (E12) -- (E41) -- (E51) -- (E12);
				
			\end{tikzpicture}
			\caption{First contraction of the $(-1)$-curves $\widetilde{\ell}_1$, $\widetilde{\ell}_2$, $\widetilde{\ell}_3$.}
			\label{3fig:p11}
		\end{subfigure}
		\hfill
		
		\vspace{0.6cm}
		
		\hspace*{-1cm}
		\begin{subfigure}[b]{0.48\textwidth}
			\centering
			\begin{tikzpicture}[
				scale=0.9,
				every node/.style={font=\scriptsize, transform shape}, 
				curve/.style={circle, draw, thick, minimum size=10mm},
				exc/.style={circle, fill=black, inner sep=1.6pt},
				edge/.style={thick},
				rededge/.style={violet, thick},
				tang/.style={blue, thick, double, double distance=1.5pt},
				blueedge/.style={blue, thick, double, double distance=1.5pt}
				]
				
				\node[curve] (l4) at (3,-2.2) {$\ell_4$};
				
				\node[exc,label=above:{$E_1^2$},label=above right:{\scriptsize $(-3)$}] (E12) at (0.25,2.0) {};
				
				\node[exc,label=left:{{\color{red}$E_2^1$}},label=below left:{{\color{red}\scriptsize $(-1)$}}] (E21) at (-3,0.8) {};
				\node[exc,label=left:{$E_2^2$},label=below left:{\scriptsize $(-3)$}] (E22) at (-3,-0.75) {};
				
				\node[exc,label=above:{$E_3^1$},label=below:{\scriptsize $(-4)$}] (E31) at (-1.1,0.55) {};
				\node[exc,label=right:{$E_3^2$},label=below:{\scriptsize $(-2)$}] (E32) at (0.75,-0.55) {};
				
				\node[exc,label=above right:{$E_4^1$},label=below:{\scriptsize $(-2)$}] (E41) at (0.85,0.55) {};
				\node[exc,label=below:{$E_4^2$},label=below right :{\scriptsize $(-2)$}] (E42) at (-1.0,-0.65) {};
				
				\node[exc,label=right:{$E_5^2$},label=below right:{\scriptsize $(-2)$}] (E52) at (2.45,0.45) {};
				\node[exc,label=right:{$E_5^3$},label=below right:{\scriptsize $(-2)$}] (E53) at (2.45,-0.45) {};
				\node[exc,label=right:{$E_5^4$},label=below right:{\scriptsize $(-2)$}] (E54) at (2.45,-1.25) {};
				
				\node[exc,label=above:{$E_6^1$},label=below:{\scriptsize $(-2)$}] (E61) at (-1.85,-2.2) {};
				\node[exc,label=above:{$E_6^2$},label=below:{\scriptsize $(-2)$}] (E62) at (-0.95,-2.2) {};
				\node[exc,label=above:{$E_6^3$},label=below:{\scriptsize $(-2)$}] (E63) at (-0.05,-2.2) {};
				\node[exc,label=above:{$E_6^4$},label=below:{\scriptsize $(-2)$}] (E64) at (0.85,-2.2) {};
				\node[exc,label=above:{$E_6^5$},label=below:{\scriptsize $(-2)$}] (E65) at (1.75,-2.2) {};
				
				\draw[edge] (E21) -- (E22);
				
				\draw[edge] (E31) -- (E32);
				\draw[edge] (E32) -- (l4);
				
				\draw[edge] (E42) -- (E41);
				
				\draw[edge] (E52) -- (E53) -- (E54) -- (l4);
				
				\draw[edge] (E61) -- (E62) -- (E63) -- (E64) -- (E65) -- (l4);
				
				\draw[edge] (E31) -- (E32);
				\draw[edge] (E32) -- (l4);
				
				\draw[tang] (E21) -- (E31);
				\draw[blue, thick] (E21) -- (E12);
				\draw[blue, thick] (E31) -- (E12);
				
				\draw[tang] (E12) -- (E41);
				\draw[blue, thick] (E41) -- (E52);
				\draw[blue, thick] (E12) -- (E52);
				
				\draw[violet, thick] (E22) -- (E42);
				\draw[violet, thick] (E42) -- (E61);
				\draw[violet, thick] (E61) -- (E22);
				
			\end{tikzpicture}
			\caption{Second contraction of the $(-1)$-curves $E_1^1$ and $E_5^1$.}
			\label{fig:p11}
		\end{subfigure}
		\hfill
		\begin{subfigure}[b]{0.48\textwidth}
			\centering
			\begin{tikzpicture}[
				scale=0.9,
				every node/.style={font=\scriptsize, transform shape}, 
				curve/.style={circle, draw, thick, minimum size=10mm},
				exc/.style={circle, fill=black, inner sep=1.6pt},
				edge/.style={thick},
				rededge/.style={violet, thick},
				tang/.style={blue, thick, double, double distance=1.5pt},
				blueedge/.style={blue, thick, double, double=none},
				tang2/.style={blue, thick, double, double distance=1.5pt},
				tanggreen/.style={green!45!black, thick, double, double distance=1.5pt}
				]
				
				\node[curve] (l4) at (3,-2.2) {$\ell_4$};
				
				\node[exc,label=above:{$E_1^2$},label=above right:{\scriptsize $(-2)$}] (E12) at (0.15,2.0) {};
				
				\node[exc,label=left:{$E_2^2$},label=below left:{\scriptsize $(-2)$}] (E22) at (-3,-0.55) {};
				
				\node[exc,label=right:{$E_3^1$},label=below:{\scriptsize $(0)$}] (E31) at (-0.85,0.35) {};
				\node[exc,label=right:{$E_3^2$},label=below:{\scriptsize $(-2)$}] (E32) at (0.65,-0.35) {};
				
				\node[exc,label=above right:{$E_4^1$},label=below:{\scriptsize $(-2)$}] (E41) at (0.55,0.65) {};
				\node[exc,label=below:{$E_4^2$},label=below right:{\scriptsize $(-2)$}] (E42) at (-1.1,-0.55) {};
				
				\node[exc,label=right:{$E_5^2$},label=below right:{\scriptsize $(-2)$}] (E52) at (2.45,0.45) {};
				\node[exc,label=right:{$E_5^3$},label=below right:{\scriptsize $(-2)$}] (E53) at (2.45,-0.45) {};
				\node[exc,label=right:{$E_5^4$},label=below right:{\scriptsize $(-2)$}] (E54) at (2.45,-1.25) {};
				
				\node[exc,label=above:{$E_6^1$},label=below:{\scriptsize $(-2)$}] (E61) at (-1.85,-2.2) {};
				\node[exc,label=above:{$E_6^2$},label=below:{\scriptsize $(-2)$}] (E62) at (-0.95,-2.2) {};
				\node[exc,label=above:{$E_6^3$},label=below:{\scriptsize $(-2)$}] (E63) at (-0.05,-2.2) {};
				\node[exc,label=above:{$E_6^4$},label=below:{\scriptsize $(-2)$}] (E64) at (0.85,-2.2) {};
				\node[exc,label=above:{$E_6^5$},label=below:{\scriptsize $(-2)$}] (E65) at (1.75,-2.2) {};
				
				\draw[edge] (E31) -- (E32);
				\draw[edge] (E32) -- (l4);
				
				\draw[edge] (E42) -- (E41);
				
				\draw[edge] (E52) -- (E53) -- (E54) -- (l4);
				
				\draw[edge] (E61) -- (E62) -- (E63) -- (E64) -- (E65) -- (l4);
				
				\draw[edge] (E31) -- (E32);
				
				\draw[rededge] (E22) -- (E42);
				\draw[rededge] (E42) -- (E61);
				\draw[rededge] (E61) -- (E22);
				
				\draw[tang2] (E12) -- (E41);
				\draw[blueedge] (E41) -- (E52);
				\draw[blueedge] (E52) -- (E12);
				
				\draw[green!45!black, thick] (E31) -- (E22);
				\draw[green!45!black, thick] (E22) -- (E12);
				
				\draw[tanggreen] (E31) -- (E12);
				
			\end{tikzpicture}
			\caption{Final contraction of the $(-1)$-curve $E_2^1$.}
			\label{4fig:p11}
		\end{subfigure}
		\caption{Complete contraction process for the case $A-(11,1)$.}\label{fig:global_process_1}
	\end{figure} 
	We conclude the proof showing the numerical canonical class of each remaining case $p=13,17,19$ together with its minimalization process. In this way, we exhibit the numerical canonical class of $\widetilde{Y}$ as supported on the contracted curves of the minimalization, so that the minimal model of $Y$ turns out to be a K3 surface, still by Remark \ref{rem: strategy_proof_(p,1)_planes_pg1_are_K3}.
\begin{figure}[H]
	\centering
	\begin{subfigure}[b]{0.60\textwidth} 
		\centering

		\caption{Branch configuration for the case $A-(13,1)$.}\label{1fig:p13}
	\end{subfigure}
	
	\par\vspace{0.8cm} 
	
	\hspace*{-1cm}
	\begin{subfigure}[b]{0.48\textwidth}
		\centering
		%
		\caption{Dual graph of the minimal resolution of singularities $\widetilde{Y}$.}
		\label{2fig:p13}
	\end{subfigure}
	\hfill
	\begin{subfigure}[b]{0.48\textwidth}
		\centering
		%
		\caption{First contraction of the $(-1)$-curves $\widetilde{\ell}_1$, $\widetilde{\ell}_2$, $\widetilde{\ell}_3$.}
		\label{3fig:p13}
	\end{subfigure}
	\hfill
	
	\vspace{0.6cm}
	
	\hspace*{-1cm}
	\begin{subfigure}[b]{0.48\textwidth}
		\centering
		%
		\caption{Second contraction of the $(-1)$-curves $E_1^1$, $E_2^2$, and $E_4^1$.}
		\label{fig:p13}
	\end{subfigure}
	\hfill
	\begin{subfigure}[b]{0.48\textwidth}
		\centering
		%
		\caption{Final contraction of the $(-1)$-curves $E_3^1$ and $E_4^2$.}
		\label{4fig:p13}
	\end{subfigure}
	\caption{Complete contraction process for the case $A-(13,1)$.}\label{fig:global_process_p=13}
\end{figure}
\begin{figure}[H]
	\centering
	\begin{subfigure}[b]{0.60\textwidth} 
		\centering
		%
		\caption{Branch configuration for the case $A-(17,1)$.}\label{1fig:p17}
	\end{subfigure}
	
	\par\vspace{0.8cm} 
	
	\hspace*{-1cm}
	\begin{subfigure}[b]{0.48\textwidth}
		\centering
		%
		\caption{Dual graph of the minimal resolution of singularities $\widetilde{Y}$.}
		\label{2fig:p17}
	\end{subfigure}
	\hfill
	\begin{subfigure}[b]{0.48\textwidth}
		\centering
		%
		\caption{First contraction of the $(-1)$-curves $\widetilde{\ell}_1$, $\widetilde{\ell}_2$, $\widetilde{\ell}_3$ and $\widetilde{\ell}_4$.}
		\label{3fig:p17}
	\end{subfigure}
	\hfill
	
	\vspace{0.6cm}
	
	\hspace*{-1cm}
	\begin{subfigure}[b]{0.48\textwidth}
		\centering
		%
		\caption{Second contraction of the $(-1)$-curves $E_1^1$, $E_2^2$, and $E_4^1$.}
		\label{fig:p17}
	\end{subfigure}
	\hfill
	\begin{subfigure}[b]{0.48\textwidth}
		\centering
		%
		\caption{Final contraction of the $(-1)$-curves $E_3^1$, $E_4^2$, and then $E_3^2$,$E_4^3$.}
		\label{4fig:p17}
	\end{subfigure}
	\caption{Complete contraction process for the case $A-(17,1)$.}\label{fig:global_process_p=17}
\end{figure}
\begin{figure}[H]
	\centering
	\begin{subfigure}[b]{0.60\textwidth} 
		\centering
		%
		\caption{Branch configuration for the case $A-(19,1)$.}\label{1fig:p19}
	\end{subfigure}
	
	\par\vspace{0.8cm} 
	
	\hspace*{-1cm}
	\begin{subfigure}[b]{0.48\textwidth}
		\centering
		%
		\caption{Dual graph of the minimal resolution of singularities $\widetilde{Y}$.}
		\label{2fig:p19}
	\end{subfigure}
	\hfill
	\begin{subfigure}[b]{0.48\textwidth}
		\centering
		%
		\caption{First contraction of the $(-1)$-curves $\widetilde{\ell}_1$, $\widetilde{\ell}_2$, $\widetilde{\ell}_3$ and $\widetilde{\ell}_4$.}
		\label{3fig:p19}
	\end{subfigure}
	\hfill
	
	\vspace{0.6cm}
	
	\hspace*{-1cm}
	\begin{subfigure}[b]{0.48\textwidth}
		\centering
		%
		\caption{Second contraction of the $(-1)$-curves $E_1^2$, $E_2^1$, $E_4^3$, and $E_5^3$.}
		\label{fig:p19}
	\end{subfigure}
	\hfill
	\begin{subfigure}[b]{0.48\textwidth}
		\centering
		%
		\caption{Final contraction of the $(-1)$-curves $E_2^2$, $E_4^2$, $E_5^1$, and then $E_3^1$.}
		\label{4fig:p19}
	\end{subfigure}
	\caption{Complete contraction process for the case $A-(19,1)$.}\label{fig:global_process_p=19}
\end{figure}
	For the case $A-(13,1)$, we have that the canonical numerical class is 
\[
K_{\widetilde{Y}_{min}}\equiv 4\widetilde{\ell}_1 +3\widetilde{\ell}_2+2\widetilde{\ell}_3+2E_{1}^1+E_2^2+E_3^1+2E_4^1+E_4^2,
\]
and the minimalization process is Figure \ref{fig:global_process_p=13}. 

For the case $A-(17,1)$, we have that the canonical numerical class is 
\[
K_{\widetilde{Y}_{min}}\equiv 6\widetilde{\ell}_1 +4\widetilde{\ell}_2+2\widetilde{\ell}_3+\widetilde{\ell}_4+3E_{1}^1+E_2^2+2E_3^1+E_3^2+3E_4^1+2E_4^2+E_4^3,
\]
and the minimalization process is Figure \ref{fig:global_process_p=17}. 

Finally, for the case $A-(19,1)$, we have that the canonical numerical class is 
\[
K_{\widetilde{Y}_{min}}\equiv 6\widetilde{\ell}_1 +4\widetilde{\ell}_2+3\widetilde{\ell}_3+2\widetilde{\ell}_4+2E_{1}^2+4E_2^1+2E_2^2+E_3^1+E_4^2+2E_4^3+E_5^1+E_5^3,
\]
and the minimalization process is Figure \ref{fig:global_process_p=19}. 
\end{proof}
To conclude the section, we determine the Picard number and the transcendental part of the Hodge structure of a very general SNC $(p,1)$-plane with geometric genus one. 

 We denote by \(\mathcal M^p_{K3}\) the moduli space of K3 surfaces admitting a non-symplectic automorphism of order \(p\). For each family of \((p,1)\)-planes, let \(\eta\) denote the action on the K3 lattice induced by the covering automorphism on the minimal K3 model, up to conjugacy. We shall study the corresponding moduli locus \(\mathcal M_\eta\subseteq\mathcal M^p_{K3}\), parametrizing K3 surfaces equipped with a non-symplectic automorphism whose cohomological action is conjugate to \(\eta\).
\begin{proposition}\label{prop: formulae_rho_Y_min_pg=1}
Let $\pi: Y \to \mathbb{P}^2$ be a very general member of one of the families of SNC $(p,1)$-planes with $p_g(Y) = 1$ with building data  $\{D_g\}_{g\in G}$. 
Let $ \widetilde{Y}_{\min}$ be the minimal K3 model of the minimal resolution of singularities $\rho\colon \widetilde{Y}\to Y$, and let $\sigma$ be the non-symplectic automorphism of order $p$ on $\widetilde{Y}_{min}$ given by a generator of the Galois group of $\pi$. Let $S(\sigma):=\{x\in H^2(\widetilde{Y}_{min}, \mathbb Z)\colon \sigma^*(x)=x\}$. Then
\begin{itemize}
	\item $\operatorname{NS}(\widetilde{Y}_{min})_{\mathbb{Q}}=S(\sigma)_{\mathbb{Q}}$; 
	\item the Picard number of the minimal model $\widetilde{Y}_{min}$ of $Y$ is 
	\begin{equation}\label{eq: rho_min_model_K3_surf}
		\rho(\widetilde{Y}_{min})=22+(6+[p]_2)(p-1)-(p-1)\sum_{\underset{\Delta_g\leq D_g}{g\in G}}\left(\frac{(\deg\Delta_g+2)(\deg \Delta_g+1)}{2}-1\right),
	\end{equation}
	where $[p]_2\in \{0,1\}$ is the residue of $p$ modulo $2$;
	\item the Picard number of $Y$ is $\rho(Y)=1$;
	\item the transcendental part of the Hodge structure of $Y$ is of K3 type, and precisely is equal to 
	\[
	(h^{2,0}(Y), h^{1,1}(Y), h^{0,2}(Y))_{tra}=(1,20-\rho(\widetilde{Y}_{min}),1).
	\]
\end{itemize}
Moreover, the corresponding family of pairs
$(\widetilde Y_{\min},\sigma)$ is dense in the irreducible
moduli locus $\mathcal{M}_\eta$ associated with the cohomological action
$\eta=\sigma^*$. 
\end{proposition}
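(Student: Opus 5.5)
The plan rests on the theory of non-symplectic automorphisms of prime order on K3 surfaces (Nikulin, Artebani--Sarti--Taki). By the results above, $\widetilde Y_{min}$ is a K3 surface. The generator $\sigma$ of $\mathbb Z/p$ acts non-symplectically, since an invariant $2$-form would descend to $\mathbb P^2$ (as in the proof of Proposition \ref{prop: complete_classification_p,1_planes_pg=1}). For such $\sigma$ the invariant lattice $S(\sigma)$ is contained in $\NS(\widetilde Y_{min})$, and $T(\widetilde Y_{min})_\QQ$ is a sum of copies of $\QQ(\zeta_p)$ with no trivial summand. The inclusion $S(\sigma)_\QQ\subseteq \NS_\QQ$ therefore always holds. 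Equality for a very general member will follow from a dimension count: the eigenspace of $\sigma^*$ in $H^2(\widetilde Y_{min},\CC)$ containing $H^{2,0}$ has dimension $m=\rk\, S(\sigma)^\perp/(p-1)$, and $\mathcal M_\eta$ has dimension $m-1$ (a ball quotient, or a type IV domain when $p=2$).

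\textbf{Step 1: dimension of the family.} The family is parametrized by tuples of smooth plane curves $\Delta_g$ of the prescribed degrees, modulo $PGL_3$. Its dimension is
\[
\sum_{\Delta_g}\left(\tfrac{(\deg\Delta_g+2)(\deg\Delta_g+1)}{2}-1\right)-8 .
\]
I will show that the period map from this family to $\mathcal M_\eta$ is quasi-finite. The intended argument: $(\widetilde Y_{min},\sigma)$ recovers $Y$ as the normal model, hence recovers $\pi$ as the quotient map and the branch curves with their labels, up to finitely many choices. Given quasi-finiteness, the family dimension equals $m-1$ as soon as the image is open, which gives the density statement. Openness I plan to obtain from equality of dimensions. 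Concretely, I will compute $\rk\, S(\sigma)$ independently, via the holomorphic Lefschetz/topological Lefschetz formula on $\widetilde Y_{min}$, or via $S(\sigma)\otimes\QQ\cong H^2(\widetilde Y_{min}/\sigma)$ and the Euler characteristic of the resolved quotient. I will then check that $22-\rk\, S(\sigma)=(p-1)(\dim+1)$.

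\textbf{Step 2: the Picard number.} Rewriting the dimension count gives
\[
\rho(\widetilde Y_{min})=\rk\, S(\sigma)=22-(p-1)(\dim+1).
\]
The constant $(6+[p]_2)(p-1)$ in \eqref{eq: rho_min_model_K3_surf} should come out of $-8+1$ together with the parity correction. For $p=2$ the period domain is not a ball and the dimension is $m-2$, which presumably accounts for the $[p]_2$ shift. I would verify the formula case by case against Tables \ref{tab: (2,1)}--\ref{tab: (19,1)} rather than trust a uniform derivation.

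\textbf{Step 3: remaining claims.} On $Y$ itself, $\NS(\widetilde Y)_\QQ=S(\sigma)_\QQ$ is spanned by $\widetilde\pi^*H$ together with the exceptional curves of $\rho$ and the curves contracted to reach the minimal model, all of which are $\sigma$-invariant. Since $Y$ is $\QQ$-factorial with quotient singularities, $\rho(Y)=\rho(\widetilde Y)-\#\{\text{exceptional curves of }\rho\}$. So $\rho(Y)=1$ amounts to showing that $S(\sigma)$ is generated rationally by $\widetilde\pi^*H$ and the $\rho$-exceptional curves. This follows from $H^2(\widetilde Y,\QQ)^\sigma\cong H^2(\widetilde Y/\sigma,\QQ)$, a resolution of $\mathbb P^2$ obtained by blowing up exactly over the singular points. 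The Hodge numbers of the transcendental part are then $(1,\rk T-2,1)=(1,20-\rho,1)$.

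\textbf{Main obstacle.} I expect the hardest part to be the openness/quasi-finiteness in Step 1, i.e.\ that very general members really realize the generic point of $\mathcal M_\eta$. The fallback is to compare $\rk\, S(\sigma)$, computed from the fixed locus of $\sigma$ (the strict transforms of the branch curves and the isolated fixed points on exceptional chains), with the Artebani--Sarti--Taki classification. That classification identifies $\mathcal M_\eta$ as irreducible of dimension $m-1$, and equality of dimensions would then give the density statement.
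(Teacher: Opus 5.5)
Your overall architecture is the paper's. Non-symplecticity gives $S(\sigma)_\QQ\subseteq\NS(\widetilde Y_{\min})_\QQ$. You then compare $\dim\mathcal B=\sum\dim|\Delta_g|-8$ with $\dim\mathcal M_\eta=\frac{22-\rk S(\sigma)}{p-1}-2+[p]_2$, check equality family by family, and use a period map $\Psi\colon\mathcal B\to\mathcal M_\eta$ with zero-dimensional fibres to get density, hence $\NS=S(\sigma)$ for a very general member.

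Where you genuinely differ is in obtaining $\rk S(\sigma)$, and your route is better. The paper gets only the lower bound $\rk V=1+|\mathrm{Exc}(\rho)|+K^2_{\widetilde Y}$, from a \texttt{SAGE} intersection-matrix computation. It gets the matching upper bound from quasi-finiteness of $\Psi$, with the four-line families handled separately via $p-1\le\rk T$. Your Step 3 already yields the exact value. $H^2(\widetilde Y,\QQ)^\sigma\cong H^2(\widetilde Y/\sigma,\QQ)$ has dimension $1+|\mathrm{Exc}(\rho)|$, because $\widetilde Y/\sigma\to\PP^2$ is birational with exceptional curves the images of the $\rho$-exceptional curves. ($\widetilde Y/\sigma$ need not be smooth, as your wording suggests, but that is harmless for $b_2$.) Splitting off the $-K^2_{\widetilde Y}$ exceptional classes of $\widetilde Y\to\widetilde Y_{\min}$, which are spanned by $\sigma$-stable curves, gives $\rk S(\sigma)=1+|\mathrm{Exc}(\rho)|+K^2_{\widetilde Y}$ for every member. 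So the closed formula for $\rk S(\sigma)$ and the equality $\dim\mathcal B=\dim\mathcal M_\eta$ become arithmetic, uniformly including the zero-dimensional families. Quasi-finiteness is then needed only for density and for $\NS(\widetilde Y_{\min})_\QQ=S(\sigma)_\QQ$.

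The gap is that quasi-finiteness of $\Psi$, which you rightly call the crux, is not actually supplied. The claim that $(\widetilde Y_{\min},\sigma)$ recovers $Y$ ``as the normal model, up to finitely many choices'' is not an argument. For $p\ge3$ there is no morphism $\widetilde Y_{\min}\to Y$. $Y$ is the normalization of $\PP^2$ in $\CC(\widetilde Y_{\min})$ only after choosing a birational map $\widetilde Y_{\min}/\sigma\dashrightarrow\PP^2$, and such maps modulo $PGL_3$ are not a priori finite in number. So nothing yet rules out positive-dimensional fibres. What is needed, as the paper indicates, has two parts.
\begin{itemize}
\item $\widetilde Y$ is obtained from $\widetilde Y_{\min}$ by $\sigma$-equivariant blow-ups at fixed points, and you must show the centres are \emph{isolated} fixed points; a centre on a fixed curve would move in a one-parameter family. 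For $p=3,5$ the contracted curve $\widetilde\ell$ is pointwise fixed, so $\sigma$ acts on the tangent plane at its image by a nontrivial scalar, making that point isolated. For $A-(7,1)1)$ the eigenvalues $\lambda,\lambda^2$ at $q$ leave only two possible centres on the exceptional curve.
\item On $\widetilde Y$, $\widetilde\pi$ is determined up to $PGL_3$ by the class of $\widetilde\pi^*\mathcal O_{\PP^2}(1)$ in the discrete group $\operatorname{Pic}(\widetilde Y)$, together with its three-dimensional space of invariant sections. So only countably many choices remain.
\end{itemize}
Your fallback does not get around this. Artebani--Sarti--Taki give irreducibility and $\dim\mathcal M_\eta$, but $\dim\mathcal B=\dim\mathcal M_\eta$ makes $\Psi$ dominant only when its fibres are finite. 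Otherwise the closure of the image is a proper subvariety, possibly inside a Noether--Lefschetz locus, and neither density nor $\NS(\widetilde Y_{\min})_\QQ=S(\sigma)_\QQ$ for the very general member follows.
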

\begin{proof}
	Since $\sigma$ is non-symplectic, we have
\[
S(\sigma) \subseteq \operatorname{NS}(\widetilde{Y}_{min}),
\]
see  e.g., \cite[Thm. 2.1(a)]{AST}. We prove the equality of their ranks for the very general member of each family of $(p,1)$-planes in tables \ref{tab: (2,1)} to \ref{tab: (19,1)}.

The pushforward of the strict transforms of the ramification components of $\pi$ together with the exceptional curves of the resolution, which are invariant divisor classes on the minimal K3 model $\widetilde{Y}_{min}$, generate a sublattice $V$ of $S(\sigma)$. For $p=2$, the pullback of a line and the $N$ exceptional curves give $1+N$ independent invariant classes; for $p=3$, the preceding computations give $4+2N$ independent invariant classes, where $N$ denotes the number of nodes of the branch quartic. For $5\leq p\leq 19$, using \verb*|SAGE| \cite{Sag95} (see \
\href{https://github.com/Fefe9696/MAGMA_scripts_results_Classification_of_SNC_p_k_planes_with_Hodge_substructures_of_K3_type/tree/main/cyclic_cover_lattices}{\texttt{SAGE} scripts and results repository}), we computed the intersection pair matrix among those invariant divisor classes, which has been shown case-by-case that the rank of such a sublattice is $\operatorname{rk}(V)=1+\vert {\rm Exc}(\rho)\vert+K_{\widetilde{Y}}^2$, where ${\rm Exc(\rho)}$ is the set of irreducible curves contracted by $\rho$. Note that the same formula agrees with $1+N$ for $p=2$ and $4+2N$ for $p=3$. Thus, for any $p$, it gives then the inequality 
\begin{equation}\label{eq: rk_Sigma_is_at_least_TOT}
	1+\vert {\rm Exc}(\rho)\vert+K_{\widetilde{Y}}^2=\operatorname{rk}(V)\leq \operatorname{rk}(S(\sigma)).
\end{equation}
For the case when the branch locus of $\pi$ consists of four lines, namely one of the cases among $A-(5,1)3), A-(7,1)2), A-(11,1)1), A-(13,1)1), A-(17,1)1), A-(19,1)1)$, we have $\rk(V)=1+\vert {\rm Exc}(\rho)\vert+K_{\widetilde{Y}}^2=23-p$. 
Furthermore, as shown in the proof of Proposition \ref{prop: bound_on_the _prime_p}, we have $p-1\leq \operatorname{rk}T(\widetilde{Y}_{min})$, which implies $\rho(\widetilde{Y}_{min})\leq 23-p$. Hence
\[
23-p\leq \operatorname{rk}S(\sigma)\leq \rho(\widetilde{Y}_{min})\leq 23-p,
\]
and the equality of the ranks follows. Note that we did not use arguments involving a very general member of the family. Indeed, in such cases the corresponding family of the $(p,1)$-cover $\pi$ consists only of one point, which is the cover itself.  The period-domain dimension formula gives \(\dim\mathcal M_\eta=0\); hence the nonempty family coincides with \(\mathcal M_\eta\).

For the remaining families of $(p,1)$-planes to be studied, let $\mathcal B$ denote the parameter space of the corresponding branch configurations modulo projectivity, and consider the period map
\[
\Psi:\mathcal B\longrightarrow\mathcal M_\eta,
\qquad [\pi:Y\to\mathbb P^2]\longmapsto[\widetilde Y_{min},\sigma],
\]
where $\widetilde Y_{min}$ is the minimal K3 model of $Y$ and $\sigma$ is the induced automorphism.

{\bf Claim.} $\Psi$ has zero-dimensional fibres. 

Once the claim is proved, we can compare the dimension of $\mathcal B$ with the dimension of $\mathcal M_\eta$, computed using the period-domain dimension formula \cite[Sec. 9]{AST}. What we obtain is the inequality 
\[
 \dim \mathcal B= \sum_{\underset{\Delta_g\leq D_g}{g\in G}}\dim \vert \Delta_g\vert-8\leq \dim M_{\eta}= \frac{22-\rk (S(\sigma))}{p-1}-2+[p]_2,
\]
which gives the upper-bound 
\begin{equation}\label{eq: upper_bound_rk_Sigma}
	\operatorname{rk}(S(\sigma))\leq 22-\left(\sum_{\underset{\Delta_g\leq D_g}{g\in G}}\dim \vert \Delta_g\vert-6-[p]_2\right)(p-1).
\end{equation}
Surprisingly, one can check that the left-hand side of  \eqref{eq: rk_Sigma_is_at_least_TOT} and the right hand side of \eqref{eq: upper_bound_rk_Sigma} are equal case-by-case. This directly implies  \eqref{eq: upper_bound_rk_Sigma} is an equality. 
Since $\Psi$ has zero-dimensional fibres and $\dim\mathcal B=\dim\mathcal M_\eta$, it is dominant.
A very general member of our family therefore avoids the proper Noether–Lefschetz loci where additional classes become algebraic, yielding the desired equality of the ranks of $S(\sigma)$ and $\NS(\widetilde{Y}_{min})$. Thus, $\rho(\widetilde{Y}_{min})=\operatorname{rk} S(\sigma)$ and the equality \eqref{eq: upper_bound_rk_Sigma} gives Formula \eqref{eq: rho_min_model_K3_surf}.  
\\
 Moreover, being also \eqref{eq: rk_Sigma_is_at_least_TOT} an equality, we also have $\rho(\widetilde{Y}_{min})=1+\vert{\rm Exc}(\rho)\vert +K^2_{\widetilde{Y}}$. We observe that since $\widetilde{Y}_{min}$ is a K3 surface, then the number of $(-1)$-curves to contract to pass from $\widetilde{Y}$ to the minimal model equals exactly $-K^2_{\widetilde{Y}}$. Thus, the Picard number of $\widetilde{Y}$ is equal to 
$\rho(\widetilde{Y})=1+\vert{\rm Exc}(\rho)\vert$.
Consequently, the equality $\rho(Y)=1$ follows by subtracting from $\rho(\widetilde{Y})$ the number of curves contracted by the  resolution $\rho$.

Finally, resolutions and blow-ups only add algebraic classes, so the transcendental Hodge structure of \(Y\) is isomorphic to that of \(\widetilde Y_{\min}\).

${}$

{\it proof of the Claim.} Let us consider the case $p = 2$.
Note that in this case, the preceding computations show that $\widetilde Y$ is already minimal, so $\widetilde Y=\widetilde Y_{\min}$. On a fixed K3 surface $\widetilde{Y}_{min}$, the pullback $\mathcal L:=\widetilde\pi^*\mathcal O_{\mathbb P^2}(1)$ satisfies  $h^0(\widetilde{Y}_{min},\mathcal L)=3$ and $L^2=2$, so the complete linear system \(|L|\) determines the degree $2$ morphism $\widetilde{\pi}$ up to projective transformations of the target. Since \(\operatorname{Pic}^0(\widetilde Y_{\min})=0\), then ${\rm Pic}(\widetilde{Y}_{min})$ is a countable set, so there are only countably many line bundles $\mathcal L$ on the fixed K3 surface that can arise as \(\widetilde\pi^*\mathcal O_{\mathbb P^2}(1)\) from such $(2,1)$-plane presentations.  Thus the fibres of $\Psi$ are countable and hence zero-dimensional.

For $p \in \{ 3, 5\}$ we proceed in a similar way.  Note that the resolution $\widetilde{Y}$ is recovered by blowing up an isolated fixed point of $\widetilde{Y}_{min}$ (see Lemma \ref{lem: (p,1)_planes_pg1_case_p=2,3,5}). For the fixed pair \((\widetilde{Y}_{min},\sigma)\), there are only finitely many such points. On each resulting surface $\widetilde{Y}$, the possible pullback line bundles form a countable set, as in the case \(p=2\), and the three-dimensional space of invariant sections for the natural linearization determines a degree $3$ (resp. 5) morphism $\widetilde{\pi}$ up to projective transformations. Thus the fibres of the map $\Psi$ to moduli are zero-dimensional.

It only remains to discuss the case $A-(7,1)1)$. Here we reverse the contractions from $\widetilde{Y}$ to $\widetilde{Y}_{min}$ described in Lemma \ref{lem: (p,1)_planes_pg1_case_p=7}, see Figure \ref{fig: A-(7,1)(1)_config}. Thus, first, we blow up the point \(q\) on the K3 surface obtained by contracting \(E_1^1\), and then a point \(a\) on the resulting exceptional curve, recovering the ramification curve \(\widetilde\ell_1\). Since \(\widetilde\ell_1\) is fixed pointwise, the tangent action at \(a\) is scalar and nontrivial. The local blow-up calculation therefore shows that the tangent eigenvalues at \(q\) are \(\lambda,\lambda^2\), with \(\lambda\) a nontrivial seventh root of unity. Thus \(q\) is an isolated fixed point, and the action on its exceptional curve is nontrivial, with exactly two fixed points from which \(a\) can be chosen. Finally, \(\widetilde\ell_2\) is recovered by blowing up another isolated fixed point of the K3 surface, away from \(q\). All blow-up centres therefore have only finitely many possibilities. The argument is then the same as in the previous cases, which implies that the fibres of the map $\Psi$ to moduli are zero-dimensional. 
\end{proof}
\begin{remark}\label{rem:diPirola}
The rank of the invariant lattice determines the dimension of the moduli space with the prescribed non-symplectic action. The Picard number may nevertheless increase on Noether–Lefschetz loci, where additional non-invariant classes become algebraic. A family contained in such a locus may therefore have \(\operatorname{NS}(S)_{\mathbb Q}\supsetneq S(\sigma)_{\mathbb Q}\) even for its very general member. The density established in Proposition \ref{prop: formulae_rho_Y_min_pg=1} 
\end{remark}
	
\begin{remark}
For a very general member of each family in Tables \ref{tab: (5,1)}--\ref{tab: (19,1)}, the
invariant lattice, and hence the Néron--Severi lattice of
$\widetilde Y_{\min}$ by Proposition \ref{prop: formulae_rho_Y_min_pg=1} 
is as follows:
\[
\begin{array}{c|c|c}
\text{case} & \operatorname{rk}S(\sigma) & S(\sigma) \\ \hline
A-(5,1)1)  &  6 & H_5\oplus A_4\\
A-(5,1)2)  & 14 & H_5\oplus A_4\oplus E_8\\
A-(5,1)3)  & 18 & H_5\oplus E_8^{\oplus 2}\\
A-(7,1)1)  & 10 & U(7)\oplus E_8\\
A-(7,1)2)  & 16 & U\oplus E_8\oplus A_6\\
A-(11,1)1) & 12 & U\oplus A_{10}\\
A-(13,1)1) & 10 & H_{13}\oplus E_8\\
A-(17,1)1) &  6 & U\oplus L_{17}\\
A-(19,1)1) &  4 & U\oplus K_{19}
\end{array}
\]
The intersection matrices and their lattice identifications are
recorded in the \verb|SAGE| script and its output log
\verb|cyclic-cover-lattices.log|. The resulting invariant lattices agree
with the classification of Artebani--Sarti--Taki~\cite{AST}.

These families describe the moduli loci associated with their
respective cohomological actions; our $\mathcal M_{\eta}$ need not be maximal
irreducible components of the moduli space $\mathcal M_{K3}^p$.

For $p=3$, write $Q=D_{e_1}$ and let $N$ and $s$ denote
the number of nodes and irreducible components of $Q$,
respectively. The fixed locus on the minimal K3 model
consists of the $s$ ramification curves lying over the
components of $Q$, one isolated point over each node of $Q$,
and the isolated point obtained by contracting the ramification
curve over $D_{2e_1}$.
Thus the pairs $(n,k)$ (in the notation of \cite[Table~2]{AS}) are, respectively,
$(1,1)$, $(4,2)$, $(5,2)$, $(6,3)$ and $(7,4)$;
the following lattice identifications follow from
\cite[Table~2]{AS}.
\[
\begin{array}{c|c}
\text{case} & S(\sigma) \\ \hline
A-(3,1)1) & U(3)\oplus A_2\\
A-(3,1)2) & U\oplus A_2^{\oplus4}\\
A-(3,1)3) & U\oplus A_2^{\oplus5}\\
A-(3,1)4) & U\oplus E_6\oplus A_2^{\oplus3}\\
A-(3,1)5) & U\oplus E_6^{\oplus2}\oplus A_2
\end{array}
\]
For $p=2$, the invariant lattices can be identified using
the classification of $2$-elementary lattices; see for example \cite{vGS} and ~\cite{GP22} for computations of this kind and the works of Nikulin \cite{NKummer, N, NikSym}.
We do not pursue these identifications here, since the ranks
established in Proposition \ref{prop: formulae_rho_Y_min_pg=1} 
suffice for our purposes.
\end{remark}	
	
We conclude the section with the following theorem, which summarizes the results obtained above. 
\begin{theorem}\label{thm: pg1_Zp_cover_structure}
	Let $\pi\colon Y\to \mathbb P^2$ be a SNC $(p,1)$-plane with $p_g(Y)=1$. Then the minimal model of $Y$ is a K3 surface and the building data 
	of the cover falls in one of the cases listed in tables \ref{tab: (2,1)} to \ref{tab: (19,1)} in Appendix \ref{app:classification_tables}. 
	Furthermore, a very general member of the family has Picard number one, i.e. $\rho(Y)=1$, and we also provide for such member the Picard number $\rho(\widetilde{Y}_{min})$  of the minimal model of $Y$ together with the transcendental part of its Hodge structure. 
\end{theorem}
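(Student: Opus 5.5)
The theorem summarizes the section, so the proof will assemble the results already established rather than argue anything new.

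First, given an SNC $(p,1)$-plane $\pi\colon Y\to\mathbb P^2$ with $p_g(Y)=1$, Proposition \ref{prop: complete_classification_p,1_planes_pg=1} places its building data in one of Tables \ref{tab: (2,1)}--\ref{tab: (19,1)} and gives $p\in\{2,3,5,7,11,13,17,19\}$. Recall how that bound arises. By Lemma \ref{lem: (p,k)_planes_are_regular} we have $q(\widetilde Y)=0$, hence $\chi(\mathcal O_{\widetilde Y})=2$. The Galois action on $H^{2,0}(\widetilde Y)$ is non-trivial, since an invariant $2$-form would descend to $\mathbb P^2$. Proposition \ref{prop: bound_on_the _prime_p} together with Remark \ref{rem: ineq_order_aut_without_Ksquare} then gives $p\le 22$. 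Finally, the finite enumeration through Proposition \ref{prop: equivalence_star_prop} produces the tables.

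Second, for the K3 statement we split according to $p$, following the strategy of Remark \ref{rem: strategy_proof_(p,1)_planes_pg1_are_K3}. Lemma \ref{lem: (p,1)_planes_pg1_case_p=2,3,5} handles $p\in\{2,3,5\}$, Lemma \ref{lem: (p,1)_planes_pg1_case_p=7} handles $p=7$, and Lemma \ref{lem: (p,1)_planes_pg1_case_p=11,13,17,19} handles the remaining primes. In each case, formula \eqref{eq: KS_ab_cov_quo_sing} combined with the pullback decompositions \eqref{eq: decomp_pullback_branch_comp} writes $K_{\widetilde Y}$ numerically as an effective combination of exactly the curves contracted in passing to $\widetilde Y_{min}$. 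Hence $K_{\widetilde Y_{min}}\equiv 0$. Since also $q=0$ and $p_g=1$, the Enriques--Kodaira classification makes $\widetilde Y_{min}$ a K3 surface. The main obstacle sits here, in the cases $p\ge 11$. There the contracted $(-1)$-curves can be tangent to other components, so one has to track carefully how self-intersections change through the successive contractions, in order to certify that the supporting curves of $K_{\widetilde Y}$ are all eventually contracted and that no $(-1)$-curve survives. I would take these computations from the cited lemmas and the figures.

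Third, for the Hodge-theoretic statements I would invoke Proposition \ref{prop: formulae_rho_Y_min_pg=1}. For a very general member it gives $\mathrm{NS}(\widetilde Y_{min})_{\mathbb Q}=S(\sigma)_{\mathbb Q}$ and the formula \eqref{eq: rho_min_model_K3_surf} for $\rho(\widetilde Y_{min})$. It also gives $\rho(Y)=1$, obtained by subtracting the exceptional curves of $\rho$ from $\rho(\widetilde Y)=1+|\mathrm{Exc}(\rho)|$. Finally, it gives the transcendental Hodge numbers $(1,20-\rho(\widetilde Y_{min}),1)$, using that resolutions and blow-ups add only algebraic classes. Evaluating the formula on each row of the tables yields the recorded values, which completes the proof.
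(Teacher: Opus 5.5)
Your proposal is correct and matches the paper's proof. The paper also proves this theorem purely by assembling earlier results: Proposition~\ref{prop: complete_classification_p,1_planes_pg=1} for the classification tables, Lemmas~\ref{lem: (p,1)_planes_pg1_case_p=2,3,5}, \ref{lem: (p,1)_planes_pg1_case_p=7} and \ref{lem: (p,1)_planes_pg1_case_p=11,13,17,19} for the K3 minimal model, and Proposition~\ref{prop: formulae_rho_Y_min_pg=1} for $\rho(Y)=1$, $\rho(\widetilde{Y}_{min})$ and the transcendental Hodge numbers of a very general member.
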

\begin{proof}
	The cases listed in tables \ref{tab: (2,1)} to \ref{tab: (19,1)} are attained in Proposition \ref{prop: complete_classification_p,1_planes_pg=1}. The proof that each obtained case has a minimal model that is a K3 surface follows by lemmas \ref{lem: (p,1)_planes_pg1_case_p=2,3,5}, \ref{lem: (p,1)_planes_pg1_case_p=7}, and \ref{lem: (p,1)_planes_pg1_case_p=11,13,17,19}. 
	
	Finally, the proof that a very general member of the family has Picard number one and the computation of the Picard number of its minimal model together with its transcendental Hodge structure can be found in tables \ref{tab: (2,1)} to \ref{tab: (19,1)} and are computed case-by-case using the formulae in Proposition \ref{prop: formulae_rho_Y_min_pg=1}. 
\end{proof}
\section{SNC $(p,1)$-planes with geometric genus zero}\label{sec:SNC_(p,1)-planes_zero}
In this section,  we classify SNC $(p,1)$-planes $\pi\colon Y \to \mathbb{P}^2$ with $p_g(Y)=0$. The main theorem of the section is the following
\begin{theorem}\label{thm: main_thm_(p,1)_planes_pg=0}
	 Let $\pi\colon Y\to \mathbb P^2$ be a SNC $(p,1)$-plane. Let $\rho\colon \widetilde{Y}\to Y$ be the minimal resolution of singularities of $Y$. Then $\widetilde{Y}$ is a rational surface if and only if $p_g(Y)=0$. In this case, the building data of the cover falls in one of the cases listed in tables \ref{tab: (p,1)_pg=0_p=2} to \ref{tab: (p,1)_pg=0_p>=7_data}, where we also provide the Picard number $\rho(\widetilde{Y})=h^{1,1}(\widetilde{Y})$ of $\widetilde{Y}$ together with the Picard number $\rho(Y)$ of $Y$. 
\end{theorem}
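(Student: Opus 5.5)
The plan has two directions. The easy one is ``rational implies $p_g=0$'': a rational surface has $p_g(\widetilde{Y})=0$, and $p_g$ is unchanged by the minimal resolution because the singularities are rational. For the converse I would first translate $p_g(Y)=0$ into building data. Formula \eqref{eq: geom_genus_int_quotient} gives
\[
p_g(Y)=\sum_{i=1}^{p-1}\frac{(l_{\epsilon^i}-1)(l_{\epsilon^i}-2)}{2},
\]
so $p_g=0$ forces $l_{\epsilon^i}\in\{1,2\}$ for every $i\neq 0$. Pardini's equations \eqref{eq: Pardini_eq} give $l_{\epsilon^i}+l_{\epsilon^{p-i}}=\sum_g d_g$, hence $d:=\sum_g d_g\in\{2,3,4\}$.
\begin{itemize}
\item If $d=2$, every $l=1$.
\item If $d=4$, every $l=2$.
\item If $d=3$, exactly one of $l_{\epsilon^i},l_{\epsilon^{p-i}}$ equals $2$ for each $i$.
\end{itemize}
Next I would enumerate the monodromy assignments $D=\sum_j D_{je_1}$ with $\sum_j j\,d_{je_1}\equiv 0 \pmod p$ and satisfying these degree constraints. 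Since the conditions are uniform in $p$ (the floors $\lfloor (\langle\chi,g\rangle+\langle\eta,g\rangle)/p\rfloor$ depend only on residues), the cases come in infinite families for $p\ge 7$, plus sporadic ones for $p=2,3,5$. This produces the tables \ref{tab: (p,1)_pg=0_p=2} to \ref{tab: (p,1)_pg=0_p>=7_data}. The branch curve has degree at most $4$, so the possibilities are a conic, two lines, a cubic or line plus conic or three lines, and quartics up to four lines; all are SNC.

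To prove rationality I would use Castelnuovo's criterion. We already have $q(\widetilde{Y})=0$ by Lemma \ref{lem: (p,k)_planes_are_regular}, so it suffices to show $P_2(\widetilde{Y})=0$. Equivalently, I would show $K_{\widetilde{Y}}$ is not pseudo-effective, i.e. exhibit a moving curve $C$ with $K_{\widetilde{Y}}\cdot C<0$. Take $C=\widetilde{\pi}^*H$ for a general line $H$. By \eqref{eq: KS_ab_cov_quo_sing}, since $C$ misses the exceptional curves,
\[
pK_{\widetilde{Y}}\cdot C=\widetilde{\pi}^*\bigl(-3pH+(p-1)D\bigr)\cdot\widetilde{\pi}^*H=p\bigl(-3p+(p-1)d\bigr).
\]
This is negative precisely when $d<3p/(p-1)$. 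That holds for $d=2$ always, for $d=3$ whenever $p\ge 5$, and for $d=4$ whenever $p\ge 5$. Since $C$ is nef and $K\cdot C<0$, $K$ is not pseudo-effective, so $\widetilde{Y}$ is ruled, and with $q=0$ it is rational.

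The main obstacle is the remaining small-$p$ cases where this intersection is nonnegative: $p=2$ with $d\le 4$ (gives $-3\cdot 2+d<0$ for $d\le 5$, so fine), $p=3$ with $d=4$ (gives $0$), and $d=3$, $p=3$ (where $l=2$ forces a contradiction). For these I would replace $H$ by a line through singular branch points, or argue as in Lemma \ref{lem: (p,1)_planes_pg1_case_p=2,3,5}: write $K_{\widetilde{Y}}$ numerically as a negative combination supported on $\widetilde{\pi}^*$ of a line minus exceptional chains, using Proposition \ref{prop: dec_of_the_pullback}, and then check $K\cdot\widetilde{\ell}<0$ for a moving class. Failing that, one can use $p=3$, $d=4$: then $K_{\widetilde{Y}}\equiv -\sum pE_x/p$ with $K^2<0$ computed by \eqref{eq: Ksquare_ab_cov_quo_sing}, which is anti-effective and nonzero. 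Hence $P_2=0$.

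Finally, $\rho(\widetilde{Y})=h^{1,1}(\widetilde{Y})$ comes from $p_g=q=0$ together with Noether's formula via \eqref{eq: formula_h^11_general}. Then $\rho(Y)=\rho(\widetilde{Y})-|\mathrm{Exc}(\rho)|$.
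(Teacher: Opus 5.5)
\textbf{The rationality step has a gap.} It comes from a reversed inequality. Since $3p/(p-1)\in(3,6]$, the condition $d<3p/(p-1)$ holds for $d\le 3$ and every $p$. For $d=4$ it holds only when $p\in\{2,3\}$, because $-3p+4(p-1)=p-4>0$ for $p\ge 5$. So your claim that $\widetilde{\pi}^*H$ works ``for $d=4$ whenever $p\ge5$'' is exactly backwards, and your list of obstacles is misplaced:
\begin{itemize}
\item $p=3$, $d=4$ gives $-1$, not $0$;
\item $p=3$, $d=3$ is the cubic-surface case, not a contradiction;
\item both are already settled by $\widetilde{\pi}^*H$.
\end{itemize}
The cases you leave uncovered are the infinite families with $d=4$ and $p\ge 5$. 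These are $p=5$ (3),(4) and $p\ge 7$ (2),(5) of Proposition~\ref{prop: p,1-cover_with_pg=0}: conics or line pairs with monodromies $1,p-1$, and four lines with monodromies $1,g,p-g,p-1$. There $pK_{\widetilde{Y}}\equiv\widetilde{\pi}^*\bigl((p-4)H\bigr)-\sum_x pE_x$ has a positive pullback part. Hence neither the general line nor your fallback ``$K_{\widetilde{Y}}$ is anti-effective'' can work. That fallback needs $-3p+(p-1)d\le 0$, which is precisely the situation you had already handled.

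\textbf{What is missing} is a test curve adapted to the $\frac{1}{p}(1,1)$ points lying over $D_1\cap D_{p-1}$ (resp.\ $D_g\cap D_{p-g}$).

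The paper takes a line $\ell$ through two such points. Then $\widetilde{\pi}^*\ell$ consists of $p$ disjoint $(-1)$-curves, each meeting both $(-p)$-curves $E_1,E_2$ once. After contracting $p-1$ of them, $E_1+E_2$ is nef with $K\cdot(E_1+E_2)=-2$.

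Your vague hint about lines through singular branch points can also be made to work with a single such point $y$. Let $E$ be the $(-p)$-curve over $y$, whose contribution to $\sum_x pE_x$ is $(p-2)E$, and let $\ell$ be a general line through $\pi(y)$.
\begin{itemize}
\item The restriction of the cover to $\ell$ is branched at only two points, so $F:=\widetilde{\pi}^*\ell-E$ is a smooth rational curve.
\item It satisfies $F\cdot E=p$ and $F^2=0$.
\item It satisfies $pK_{\widetilde{Y}}\cdot F=p(p-4)-(p-2)p=-2p$.
\item Since $F$ moves in a pencil, $K_{\widetilde{Y}}$ is not pseudo-effective.
\end{itemize}
Either argument has to be supplied.

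\textbf{The classification step is also incomplete.} ``Enumerate'' is not a proof uniformly in $p$. You still have to exclude the degree pattern $(1,1,2)$ and show that four lines with all $l_j=2$ must have monodromies $\{1,g,p-g,p-1\}$. This is a genuine number-theoretic statement, which the paper obtains from the Index Theorem for minimal zero-sum sequences of length $4$ over $\mathbb{Z}/p$.
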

\begin{proof}
	The cases listed from tables \ref{tab: (p,1)_pg=0_p=2} to \ref{tab: (p,1)_pg=0_p>=7_data} are attained in Proposition \ref{prop: p,1-cover_with_pg=0}. The rationality of the obtained cases is proved in Proposition \ref{prop: rationality_p,1_planes_pg=0} below. Furthermore, rational surfaces have $\chi(\widetilde{Y})=1$, $q(\widetilde{Y})=0$  and maximal Picard number $\rho(\widetilde{Y})=h^{1,1}(\widetilde{Y})$, so from Equation \eqref{eq: formula_h^11_general} we have
	\[
	\rho(\widetilde{Y})=h^{1,1}(\widetilde{Y})=10-K^2_{\widetilde{Y}}.
	\]
	The self-intersection of the canonical class of $\widetilde{Y}$ can be easily computed with Formula \eqref{eq: KS_ab_cov_quo_sing} once we determine the type of cyclic quotient singularities of $Y$, using Proposition \ref{prop: type_of_sing}. 
	
	Finally, the Picard group of $Y$ is obtained from $\widetilde{Y}$ by contracting all the exceptional curves of the resolution of $Y$, so $\rho(Y)=\rho(\widetilde{Y})-\sum_{x\in \text{Sing}(Y)}l_x$, where $l_x$ is the length of the Hirzebruch-Jung string of the resolution of the singular point $x$. 
\end{proof}
 \begin{table}[H]
	\footnotesize
	\setlength{\tabcolsep}{5pt}
	\renewcommand{\arraystretch}{1.2}
	\begin{tabular}{|c||c|c|c|c|c||c|}\hline
		case& $D_{1}$&$\textrm{Sing}(Y)$ & $K_{\widetilde{Y}}^2$ & $\rho(\widetilde{Y})$ & $\rho(Y)$ & \textrm{Surface} \\
		\hline
		 $1$&conic&{\rm $\emptyset$} & 8 &  2&  2&$\mathbb P^1\times \mathbb P^1$ \\
		\hline
		$2$ &2 lines & {\rm $1/2$}&  8 & 2& 1& $\mathbb F_2$\\
		\hline
		$3$ &quartic& {\rm $\emptyset$}& 2&8& 8&{\rm del Pezzo  of degree 2}\\
		\hline
		$4$ & line + cubic& {\rm $1/2^3$}&  2&  8& 5& {\rm Weak del Pezzo  of degree 2}\\
		\hline
		$5$ &2 conics& {\rm $1/2^4$}&  2&  8& 4&{\rm Weak  del Pezzo  of degree 2} \\
		\hline
		$6$ &  2 lines + conic& {\rm $1/2^5$}&  2&  8& 3& {\rm Weak del Pezzo  of degree 2}\\
		\hline
		$7$ & 4 lines& {\rm $1/2^6$}&  2&  8& 2& {\rm Weak del Pezzo  of degree 2}\\
		\hline
	\end{tabular}
	\caption{SNC $(2,1)$-planes $\pi\colon Y\to \mathbb P^2$ with $p_g(Y)=0$.} \label{tab: (p,1)_pg=0_p=2}
\end{table}
 \begin{table}[H]
	\footnotesize
	\setlength{\tabcolsep}{5pt}
	\renewcommand{\arraystretch}{1.2}
	\begin{tabular}{|c||c|c|c|c|c|c||c|}\hline
		case& $D_{1}$& $D_2$&$\textrm{Sing}(Y)$ & $K_{\widetilde{Y}}^2$ & $\rho(\widetilde{Y})$ & $\rho(Y)$ & \textrm{Surface} \\
		\hline
		$1$&line&line&{\rm $1/3$} & 8 &  2&  1&$\mathbb F_3$ \\
		\hline
		$2$ &cubic &0& {\rm $\emptyset$}&  3 & 7& 7& {\rm del Pezzo  of degree 3}\\
		\hline
		$3$ &line + conic&0& {\rm $2/3^2$}& 3&7& 3&{\rm Weak del Pezzo  of degree 3}\\
		\hline
		$4$ & 3 lines&0& {\rm $2/3^3$}&  3&  7& 1& {\rm Weak del Pezzo  of degree 3}\\
		\hline
		$5$ & conic&conic& {\rm $1/3^4$}&  -1&  11& 7& {\rm Rational}\\
		\hline
		$6$ & 2 lines&conic& {\rm $2/3, 1/3^4$}&  -1&  11& 5& {\rm Rational}\\
		\hline
		$7$ & 2 lines&2 lines& {\rm $2/3^2, 1/3^4$}&  -1&  11& 3& {\rm Rational}\\
		\hline
	\end{tabular}
	\caption{SNC $(3,1)$-planes $\pi\colon Y\to \mathbb P^2$ with $p_g(Y)=0$.} \label{tab: (p,1)_pg=0_p=3}
\end{table}
 \begin{table}[H]
	\footnotesize
	\setlength{\tabcolsep}{5pt}
	\renewcommand{\arraystretch}{1.2}
	\begin{tabular}{|c||c|c|c|c|c|c|c|c||c|}\hline
		case& $D_{1}$& $D_2$&$D_3$&$D_4$&$\textrm{Sing}(Y)$ & $K_{\widetilde{Y}}^2$ & $\rho(\widetilde{Y})$ & $\rho(Y)$ & \textrm{Surface} \\
		\hline
		$1$&line&0&0&line&{\rm $1/5$} & 8 &  2&  1&$\mathbb F_5$ \\
		\hline
		$2$ &line&conic&0&0& {\rm $3/5^2$}&  1 & 9& 5& {\rm Rational}\\
		\hline
		$3$ &line&2 lines& 0&0&{\rm $3/5^2,4/5$}& 1&9& 1&{\rm Rational}\\
		\hline
		$4$ & conic&0& 0&conic&{\rm $1/5^4$}&  -7&  17& 13& {\rm Rational}\\
		\hline
		$5$ & 2 lines&0&0 &conic&{\rm $1/5^4, 4/5$}&  -7&  17& 9& {\rm Rational}\\
		\hline
		$6$ & 2 lines&0& 0&2 lines&{\rm $1/5^4, 4/5^2$}&  -7&  17& 5& {\rm Rational}\\
		\hline
		$7$ & line&line&line &line&{\rm $1/5^2, 2/5^2, 3/5^2$}&  -5&  15& 5& {\rm Rational}\\
		\hline
	\end{tabular}
	\caption{SNC $(5,1)$-planes $\pi\colon Y\to \mathbb P^2$ with $p_g(Y)=0$.} \label{tab: (p,1)_pg=0_p=5}
\end{table}
\vspace{-0.3cm}

\begin{table}[H]
	\small
	\begin{center}
		\begin{tabular}{|c||c|c|c|c|c|c|c|}\hline
				case& $D_{1}$& $D_g$&$D_{\frac{p-1}{2}}$&$D_{p-1-g}$ & $D_{p-g}$&$D_{p-1}$&$\textrm{Sing}(Y)$ \\
				\hline
				$1$&line&0&0&0&0&line&{\rm $1/p$} \\
				\hline
				$2$ &conic&0&0&0& 0&conic&{\rm $(1/p)^4$}\\
				\hline
				$3$ &2 lines&0& 0&0&0&conic&{\rm $(1/p)^4,(p-1)/p$}\\
				\hline
				$4$ & 2 lines&0& 0&0&0&2 lines &{\rm $(1/p)^4,((p-1)/p)^2$}\\
				\hline
				$5$ & line&0&conic &0&0&0&{\rm $(((p+1)/2)/p)^2$}\\
				\hline
				$6$ & line&0& 2 lines&0&0&0&\makecell{$(((p+1)/2)/p)^2,$ \\$(p-1)/p$}\\
				\hline
				\makecell{$7$ \\ $2\leq g<\frac{p-1}{2}$} & line&line&0 &line&0&0& \makecell{$(p-g)/p, (g+1)/p,$\\ $ (g^{-1}+1)/p$}\\
				\hline
				\makecell{$8$ \\ $2\leq g\leq \frac{p-1}{2}$} & line&line&0 &0&line&line&\makecell{$(p-g)/p, g/p, (1/p)^2,$ \\ $g^{-1}/p,(p-g^{-1})/p$}\\
				\hline
		\end{tabular}
		\caption{SNC $(p,1)$-planes $\pi\colon Y\to \mathbb P^2$ with $p_g(Y)=0$, and $p\geq 7$.} \label{tab: (p,1)_pg=0_p>=7_data}
	\end{center}
\end{table}
\begin{table}[H]
	\small
	\begin{center}
		\begin{tabular}{|c||c|c|c||c|}\hline
				case&  $K_{\widetilde{Y}}^2$ & $\rho(\widetilde{Y})$ & $\rho(Y)$ & \textrm{Surface} \\
				\hline
				$1$& 8 &  2&  1&$\mathbb F_p$ \\
				\hline
				$2$ &  $8-3p$ & $2+3p$& $3p-2$& {\rm Rational}\\
				\hline
				$3$ & $8-3p$&$2+3p$& $2p-1$&{\rm Rational}\\
				\hline
				$4$ &  $8-3p$&  $2+3p$& $p$& {\rm Rational}\\
				\hline
				$5$ & $6-p$&  $4+p$& $p$& {\rm Rational}\\
				\hline
				$6$ &   $6-p$&  $4+p$& $1$& {\rm Rational}\\
				\hline
				\makecell{$7$ \\ $2\leq g<\frac{p-1}{2}$} &  $9-\left(l_{\frac{p-g}{p}}+l_{\frac{g+1}{p}}+l_{\frac{g^{-1}+1}{p}}\right)
				$&  $1+\left(l_{\frac{p-g}{p}}+l_{\frac{g+1}{p}}+l_{\frac{g^{-1}+1}{p}}\right)
				$& $1$& {\rm Rational}\\
				\hline
				\makecell{$8$ \\ $2\leq g\leq \frac{p-1}{2}$} &  $8-p-2\left(l_{\frac{g}{p}}+l_{\frac{p-g}{p}}\right)$&  $2+p+2\left(l_{\frac{g}{p}}+l_{\frac{p-g}{p}}\right)$& $p$& {\rm Rational}\\
				\hline
		\end{tabular}
		\caption{Invariants of SNC $(p,1)$-planes with $p_g(Y)=0$, and $p\geq 7$ of Table \ref{tab: (p,1)_pg=0_p>=7_data}.} \label{tab: (p,1)_pg=0_p>=7_invariants}
	\end{center}
\end{table}
The rest of this section is devoted to give a proof of the two propositions \ref{prop: p,1-cover_with_pg=0} and \ref{prop: rationality_p,1_planes_pg=0} used to prove Theorem \ref{thm: main_thm_(p,1)_planes_pg=0}. 
\begin{proposition}\label{prop: p,1-cover_with_pg=0}
	Let $\pi\colon Y \to \mathbb{P}^2$ be a  SNC $(p,1)$-plane. Then  $p_g(Y)=0$ if and only if we have one of the following possibilities for the degrees of the building data $\{\mathcal L_j\}_{j\in \mathbb Z/p}$ and $\{D_j\}_{j\in \mathbb Z/p}$ of the cover, depending on $p$:
	\begin{itemize}
		\item[$p=2$:] \begin{enumerate}
			\item[(1)] $l_1=1$, and $d_1=2$; or 
			 \item[(2)] $l_1=2$, and $d_1=4$.
		\end{enumerate}
	\item[$p=3$:] \begin{enumerate}
		\item[(1)] $l_1=l_2=1$, and $d_1=d_2=1$; 
		\item[(2)] $l_1=1, l_2=2$, and $d_1=3$;
		\item[(3)] $l_1=l_2=2$, and $d_1=d_2=2$.
	\end{enumerate} 
   \item[$p=5$:] \begin{enumerate}
   	  \item[(1)] $l_j=1$, for $j\neq 0$, and $d_1=d_4=1$; 
   	  \item[(2)] $l_1=l_3=1$, $l_2=l_4=2$, and $d_1=1$, $d_2=2$; 
   	  \item[(3)] $l_j=2$, for $j\neq 0$, and $d_1=d_4=2$; 
   	  \item[(4)] $l_j=2$, for $j\neq 0$,and $d_1=d_2=d_3=d_4=1$.
   \end{enumerate}
  \item[$p\geq 7$] \begin{enumerate}
  	\item[(1)] $l_j=1$, for $j\neq 0$, and $d_1=d_{p-1}=1$; 
  	\item[(2)] $l_j=2$, for $j\neq 0$, and $d_1=d_{p-1}=2$; 
  	\item[(3)] $l_j=2-[j]_2$, for $j\neq 0$, and $d_1=1$, $d_{\frac{p-1}{2}}=2$; 
  	\item[(4)] for each chosen element $g\in \mathbb Z/p$ with $2\leq g<\frac{p-1}{2}$, we have
  	\\
  	$l_{j}=1+\left\lfloor \frac{[jg]_p+j-1}{p}\right\rfloor\in\{1,2\}$, for $j\neq 0$, and $d_1=d_g=d_{p-1-g}=1$; 
  	\item[(5)] for each chosen element $g\in \mathbb Z/p$ with $2\leq g\leq \frac{p-1}{2}$, we have 
  	\\
  	$l_j=2$, for $j\neq 0$, and $d_1=d_g=d_{p-g}=d_{p-1}=1$. 
  \end{enumerate}
	\end{itemize}
\end{proposition}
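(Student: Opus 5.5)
The plan is to turn the condition $p_g(Y)=0$ into a finite system of numerical constraints on the degrees, exactly as in the proof of Proposition~\ref{prop: equivalence_star_prop}. By Formula~\eqref{eq: geom_genus_int_quotient}, applied to the cover itself with $k=1$, we have
\[
p_g(Y)=\sum_{j=1}^{p-1}\frac{(l_j-1)(l_j-2)}{2}.
\]
Since $l_j\geq 1$ by connectedness (Remark~\ref{rem: X_connected_implies_L_chi-1_no_sections}), the condition $p_g(Y)=0$ is equivalent to $l_j\in\{1,2\}$ for every $j\neq 0$. The converse direction is then immediate: each listed degree vector is read off through $p\,l_j=\sum_i [ij]_p d_i$ (Lemma~\ref{lem: L_chi_in_function_D_g}), and one checks that all $l_j$ lie in $\{1,2\}$. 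So the real work is to enumerate the degree vectors $(d_1,\dots,d_{p-1})$ for which every
\[
l_j=\frac1p\sum_i [ij]_p\, d_i
\]
is an integer in $\{1,2\}$.

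First I bound the total degree $d=\sum_i d_i$. Summing Pardini's relation over $j$ (Theorem~\ref{thm: Dg_elm_group}\eqref{eq: sum_dg_sm_lchi} with $k=1$) gives $\frac{p-1}{2}\,d=\sum_j l_j$, which is at most $2(p-1)$, so $d\leq 4$. Pairing $j$ with $p-j$ gives the sharper identity
\[
l_j+l_{p-j}=\sum_{i\neq 0} d_i=d .
\]
Hence $d\in\{2,3,4\}$, with $d\geq 2$ forced by integrality, as in the Hirzebruch-type argument of Proposition~\ref{prop: hirzebruch_planes}. Moreover:
\begin{itemize}
\item if $d=2$, all $l_j=1$;
\item if $d=4$, all $l_j=2$;
\item if $d=3$, exactly one of $l_j,l_{p-j}$ equals $2$ for each pair.
\end{itemize}
For $p=2$, the pairing degenerates into $2l_1=d_1$, so $d_1\in\{2,4\}$, which gives the two listed cases.

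Next I solve each value of $d$ separately.

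\emph{Case $d=2$.} The support is either $\{a,b\}$ with $a\neq b$, or $\{a\}$ with $d_a=2$. The condition $p\mid 2a$ excludes the second option for odd $p$. For the first, $p\mid a+b$ forces $b=p-a$. Rescaling the generator, i.e.\ applying an automorphism of $\mathbb Z/p$, normalizes this to $d_1=d_{p-1}=1$.

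\emph{Case $d=4$.} Here every $l_j=2$ means $\sum_i [ij]_p d_i=2p$ for all $j$. Equivalently, the multiset $M$ of $4$ monodromies must satisfy
\[
\sum_{m\in M}[jm]_p=2p \qquad\text{for all } j\neq 0 .
\]
This holds when $M$ is a union of two pairs $\{a,-a\}$. I then have to show these are the only solutions. Up to rescaling, this yields $\{1,1,-1,-1\}$ and $\{1,g,-g,-1\}$ with $2\leq g\leq\frac{p-1}{2}$. For $p=3$ the second type collapses into the first, giving $d_1=d_2=2$; for $p=5$ it is $d_1=\dots=d_4=1$.

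\emph{Case $d=3$.} This is the odd case. The integrality condition $p\mid \sum_{m\in M} m$ with $|M|=3$, after normalizing one element to $1$, gives $M=\{1,a,p-1-a\}$. The condition $l_j\in\{1,2\}$ is then automatic, since
\[
\sum_{m\in M}[jm]_p\in\{p,2p\},
\]
given that the sum lies strictly between $0$ and $3p$ and is divisible by $p$. The remaining bookkeeping is sorting the solutions up to $\mathrm{Aut}(\mathbb Z/p)$:
\begin{itemize}
\item $a=1$ gives $d_1=2$ with $d_{p-2}=1$, which rescales to $d_1=1$, $d_{\frac{p-1}{2}}=2$;
\item the generic $a=g$ gives $d_1=d_g=d_{p-1-g}=1$;
\item the orbit of $\{1,g,p-1-g\}$ under multiplication and permutation has to be reduced to the representatives $2\leq g<\frac{p-1}{2}$;
\item for $p=3,5$ the options degenerate to the listed cases $d_1=3$ for $p=3$, and $d_1=1$, $d_2=2$ for $p=5$.
\end{itemize}
The formula for $l_j$ in case~(4) then follows from $p\,l_j=[j]_p+[jg]_p+[-j(1+g)]_p$.

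I expect the main obstacle to be the uniqueness in the case $d=4$: showing that a multiset of four nonzero residues whose dilates always sum to $2p$ must be a union of two pairs $\{a,-a\}$. My first attempt would use a finite Fourier / Bernoulli argument. The function $m\mapsto [m]_p-\frac p2$ is an odd function on $(\mathbb Z/p)^*$, and its dilates span the odd part of the character space by the nonvanishing of $L(1,\chi)$ for odd characters $\chi$. So the condition forces the indicator function of $M$ to be even, $\mu(m)=\mu(-m)$, which is exactly the pairing property. A secondary nuisance is keeping the normalization up to $\mathrm{Aut}(\mathbb Z/p)$ consistent with the tables, so that each case is counted once.
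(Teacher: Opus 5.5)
Your proof is correct, but it takes a different route from the paper's.

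\textbf{The paper's route.} It splits according to the multiplicity pattern of the monodromies (e.g.\ $d_1=3$, $d_1=1,d_g=2$, $d_1=1,d_g=3$, $d_1=d_{g_1}=1,d_{g_2}=2$, \dots). It pins down $l_1$ through bounds such as $p\,l_1\le 2p-1$, and it rules out $d_1=1,d_g=3$ by exhibiting $l_3=3$. For the two hardest patterns with $d=4$ it invokes the Index Theorem for minimal zero-sum sequences of length four: index $2$ forces non-minimality, hence a zero-sum pair.

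\textbf{Your route.}
\begin{itemize}
\item The pairing identity $l_j+l_{p-j}=d$ fixes all $l_j$ at once when $d=2,4$.
\item For $d=3$, integrality of $l_1$ alone forces $M=\{1,a,p-1-a\}$, and $l_j\in\{1,2\}$ is then automatic because $\sum_{m\in M}[jm]_p$ is a multiple of $p$ strictly between $0$ and $3p$. So no sub-case analysis by pattern is needed.
\item For $d=4$ you prove a general lemma: if $M\subset(\mathbb Z/p)^*$ is a multiset with $\sum_{m\in M}[jm]_p$ independent of $j$, then $M$ is stable under $m\mapsto -m$. The proof uses Fourier analysis on $(\mathbb Z/p)^*$ and the non-vanishing of $\sum_x x\,\chi(x)=p\,B_{1,\chi}$ for odd $\chi$.
\end{itemize}

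\textbf{Comparison.} Your lemma disposes of every pattern with $d=4$ uniformly, including $d_1=4$, $d_1=1,d_g=3$ and $d_1=d_g=2$. It is also not tied to sequences of length four. The price is analytic input (Dirichlet's $L(1,\chi)\neq 0$, or positivity of the relative class number of $\mathbb Q(\zeta_p)$) in place of a specialised result from zero-sum theory. The paper's argument is more hands-on in the easy sub-cases.

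\textbf{On your ``secondary nuisance''.} No further reduction under $\mathrm{Aut}(\mathbb Z/p)$ is needed. The statement lists every admissible $g$ and does not claim the list is irredundant, so it suffices that each solution can be normalised to some listed representative, which your argument already does.
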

\begin{proof}
	From Equation \eqref{eq: pg_ab_cov_quo_sing} in Theorem \ref{thm: invariants_ab_cov_cyclic_sing}, we have 
	\[
	0=p_g(Y)=\sum_{j=0}^{p-1}h^0(\mathbb P^2, \mathcal{O}_{\mathbb P^2}(-3+l_j))=\sum_{j=1}^{p-1}\frac{(l_j-1)(l_j-2)}{2},
	\]
	so that $l_j\in \{1,2\}$ for all $j\neq 0$. However, from Theorem \ref{thm: Dg_elm_group}(7), we also have 
	\begin{equation}\label{eq: sum_dg_less_4}
			\frac{p-1}{2}\sum_{j=1}^{p-1} d_g=\sum_{j=1}^{p-1}{l_j}\leq 2(p-1), \qquad \makebox{so} \qquad \sum_{j=1}^{p-1}d_j\leq 4. 
	\end{equation}
	We analyse case-by-case according to the possible values of $ \sum_{j=1}^{p-1}d_j\in \{1,2,3,4\}$. 
	We automatically discard the case $\sum_{j=1}^{p-1}d_j=1$ because otherwise from Lemma \ref{lem: L_chi_in_function_D_g} we would have $pl_1=\sum_{j=1}^{p-1}jd_j<p$, so $l_1=0$, a contradiction. 
	
	Up to automorphisms of $\mathbb Z/p$, we can always assume from the rest of the proof that $d_1\neq 0$.
	Let us consider the case where $\sum_{j=1}^{p-1}d_j=2$.  If $d_1=2$, then from Lemma \ref{lem: L_chi_in_function_D_g} we would have $pl_1=2$, which forces $p=2$, $l_1=1$ and $d_1=2$. This is the case $p=2(1)$. 
	
	Instead, if $d_1=1$, then there is only another  $g\neq 0, 1$ such that $d_g=1$. In this case, still by Lemma \ref{lem: L_chi_in_function_D_g}, we would have $pl_1=1+g\cdot 1\leq 1+(p-1)=p$, so $l_1=1$ and $g=p-1$. Furthermore, from Lemma \ref{lem: L_chi_in_function_D_g}, we have $pl_j=j+(p-j)\cdot 1=p$, so $l_j=1$ for each $j\neq0$. This is $p=3(1)$, $p=5(1)$, and $p\geq 7(1)$. 
	
	We completely covered all the possibilities for $\sum_{j=1}^{p-1}d_j=2$, so let us consider the next case $\sum_{j=1}^{p-1}d_j=3$.  
	
	If $d_1=3$, then we would simply have $pl_1=3$, which forces $p=3$, $l_1=1$, and $d_1=3$. This is the case $p=3(2)$. 
	
	If $d_1=1$ and $d_g=2$ for some other $g\neq 0,1$, then $pl_1=1+g\cdot 2\leq 1+2(p-1)=2p-1$. This implies $l_1=1$ and $g=\frac{p-1}{2}$; moreover 
	\[
	pl_j=j+\left[\frac{p-1}{2}j\right]_p\cdot 2=2p-p[j]_2, \qquad \makebox{so} \qquad l_j=2-[j]_2. 
	\]
	This is the case $p=5(2)$ and $p\geq 7(3)$. Note that the case $d_1=2$ and $d_g=1$ can be skipped because we would get a corresponding covering isomorphic to the case just studied, via the automorphism $g^{-1}$.
   \\
   Thus, let us discuss the case $d_1=d_{g_1}=d_{g_2}=1$ for some $1<g_1<g_2\leq p-1$. From Lemma \ref{lem: L_chi_in_function_D_g}, we have $pl_1=1+g_1\cdot 1+g_2\cdot 1\leq 1+(p-1)+(p-1)=2p-1$, so $l_1=1$ and then $g_2=p-1-g_1$. For simplicity, we denote $g:=g_1$, so now the building data are $d_1=d_g=d_{p-1-g}=1$. In this case, we have 
   \[
     pl_j=j+[jg]_p+[(p-1-g)j]_p=j+[jg]_p+\left(p-j-[jg]_p+\left\lfloor \frac{[jg]_p+j-1}{p}\right\rfloor\cdot p\right),
   \]
   so $l_j=1+\left\lfloor \frac{[jg]_p+j-1}{p}\right\rfloor$; we obtained the case $p\geq 7(4)$. Note that from the symmetry of the local monodromies $g$ and $p-1-g$, all the possible cases are obtained choosing $2	\leq g\leq \frac{p-1}{2}$. Furthermore, the choice $g=\frac{p-1}{2}$ must be discarded otherwise we would have $g_1=g_2=\frac{p-1}{2}$. 
   
   We continue with the last case $\sum_{j=1}^{p-1}d_j=4$. In this case, the left and right hand side of Equation \eqref{eq: sum_dg_less_4} are the same, that is equal to $2(p-1)$, and this can happen only if
   \begin{equation}\label{eq: sumd_eq_4_imply_lj_eq_2}
   	l_j=2 \qquad \text{for \ each}\qquad j\neq 0.    
   \end{equation}
   Assume that $d_1=4$. Then from Lemma \ref{lem: L_chi_in_function_D_g} we have $pl_1=4$, which forces $p=2$, $l_1=2$ and $d_1=4$. This is $p=2(2)$ case. 
   \\
   Let us consider now the case $d_1=1$ and $d_g=3$ for some $g\neq 0,1$. Then $2p=pl_1=1+g\cdot 3$ so we must have $p\equiv 2 \mod 3$ and $g=\frac{2p-1}{3}$. However, 
   \[
     pl_{3}=3+\left[3g\right]_p\cdot 3=3+(p-1)3=3p, \qquad \makebox{so} \qquad l_3=3,
   \]
   a contradiction. We can exclude also the symmetric case  $d_1=3$ and $d_g=1$, $g\neq 0,1$, since it is isomorphic to the previous case by applying the automorphism  $g^{-1}$.
   \\
   Let us study what happens for $d_1=d_g=2$, for some $g\neq 0,1$. In this case, still from Lemma \ref{lem: L_chi_in_function_D_g}, we have that $2p=pl_1=2+g\cdot 2$, so $g=p-1$. This is the case $p=3(3)$, $p=5(3)$, and $p\geq 7(2)$. 
   \\
   Let us consider the case $d_1=d_{g_1}=1$ and $d_{g_2}=2$, where $g_1,g_2\neq 0,1$ and $g_1\neq g_2$. 
   Pardini equations \eqref{eq: reduction_numb_Pard_eq} are satisfied when
  \begin{equation}\label{eq: Pardini_eq_cyclic_prime_p}
  	2p=pl_j=j+[jg_1]_p+[jg_2]_p\cdot 2 \qquad \makebox{for all} \qquad j=1,\dots, p-1.
  \end{equation}
   This would imply that the zero-sum sequence (cf. \cite[Def. 1.1]{IndexZeroSum}) $s=(1,g_1,g_2,g_2)$ over the cyclic group $\mathbb Z/p$ of prime order $p$ has index $ind(s)=2$. However, by the Index Theorem for length $4$ zero-sum sequences \cite[Thm. 1.2]{IndexZeroSum}, any minimal zero-sum sequence of length $4$ have to be of index $ind(s)=1$. This forces our $s$ to not be a minimal zero-sum sequence, so there exists a proper subsequence of $s$ that is a zero-sum sequence. This proper subsequence could not have length $1$ or $3$, otherwise one of the elements of $s$ would be a multiple of $p$, because from \eqref{eq: Pardini_eq_cyclic_prime_p} we have $2p=1+g_1+2g_2$. Thus, it there exists a zero-sum length $2$ subsequence of $s$. If this subsequence is not containing  $1$, then the condition $2p=1+g_1+2g_2$ would  implies that the complementary length $2$ subsequence of $s$, which is containing $1$, has zero-sum. Thus, one between $g_1$ and $g_2$ has to be equal to $p-1$, forcing either $2g_2=p$, if $g_1=p-1$, or $g_1=1$ if $g_2=p-1$. In both cases, we have a contradiction, so we can exclude the case $d_1=d_{g_1}=1$ and $d_{g_2}=2$. 
   
   It only remains to study the case $d_1=d_{g_1}=d_{g_2}=d_{g_3}=1$. Here we adopt a similar strategy as in the previous case. Indeed, remembering  \eqref{eq: sumd_eq_4_imply_lj_eq_2} and  Pardini equations \eqref{eq: reduction_numb_Pard_eq} we have 
    \[
   2p=pl_j=j+[jg_1]_p+[jg_2]_p+[jg_3]_p\qquad \makebox{for all} \qquad j=1,\dots, p-1,
   \] 
   so  $s=(1,g_1,g_2,g_3)$ is a zero-sum length $4$ sequence of index $ind(s)=2$, which then must be not minimal, by \cite[Thm. 1.2]{IndexZeroSum}. Hence, such as in the previous case, it there exists a zero-sum length $2$ subsequence of $s$, which is containing $1$ (use the condition $2p=1+g_1+g_2+g_3$). Thus,  one among $g_1,g_2,g_3$ has to be equal to $p-1$, let us say $g_3$, and so this also forces $g_2=p-g_1$. We have obtained the building data $d_1=d_g=d_{p-g}=d_{p-1}=1$, which corresponds to the cases $p=5(4)$ and $p\geq 7(5)$.   
\end{proof}

We are ready to prove that any surface belonging to one of the  families obtained in Proposition \ref{prop: p,1-cover_with_pg=0} is rational. 
\begin{proposition}\label{prop: rationality_p,1_planes_pg=0}
	Let $\pi\colon Y\to \mathbb P^2$ be a SNC $(p,1)$-plane with $p_g(Y)=0$. Then the minimal resolution of singularities $\widetilde{Y}$ of $Y$ is a rational surface. 
\end{proposition}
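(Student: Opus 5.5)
By Proposition \ref{prop: p,1-cover_with_pg=0}, $Y$ falls into one of finitely many combinatorial types, described by the degrees $l_j\in\{1,2\}$ and $d_j$ with $\sum_j d_j\le 4$. Lemma \ref{lem: (p,k)_planes_are_regular} gives $q(\widetilde Y)=0$, and by hypothesis $p_g(\widetilde Y)=0$. So Castelnuovo's criterion reduces the problem to showing that $P_2(\widetilde Y)=h^0(\widetilde Y,2K_{\widetilde Y})=0$. The cleanest way to get this is to show that $\widetilde Y$ carries a nonzero effective divisor $F$ with $K_{\widetilde Y}\cdot F<0$ that moves in a covering family; equivalently, that $-K_{\widetilde Y}$ is effective and nonzero up to a rational multiple. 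Then every pluricanonical divisor would have negative intersection with a nef moving class, so all plurigenera vanish.

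The main computation uses Formula \eqref{eq: KS_ab_cov_quo_sing}. Since $\sum_j d_j\le 4$, we have
\[
pK_{\widetilde Y}\equiv_{lin}\widetilde\pi^*\Bigl(-3pH+(p-1)\textstyle\sum_j d_j H\Bigr)-\sum_x pE_x ,
\]
and the coefficient of $H$ is $-3p+(p-1)\sum_j d_j\le p-4$. Whenever this coefficient is negative, $K_{\widetilde Y}$ is the pullback of an anti-ample class minus an effective exceptional divisor. Here $E_x$ is effective because $k_x\ge 0$ and the coefficients $1-(D_{i,\ell}+D_{i,1})/n$ are nonnegative. Intersecting with $F=\widetilde\pi^*(\text{general line})$, which is nef, moving, and disjoint from the exceptional curves, gives $K_{\widetilde Y}\cdot F<0$. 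Hence $P_m=0$ for all $m$, and $\widetilde Y$ is rational. The negative-coefficient case covers every case with $\sum_j d_j\le 3$, and also $\sum_j d_j=4$ when $p\le 3$.

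The remaining cases are those with $\sum_j d_j=4$ and $p\ge 5$, namely $p=5(3),(4)$ and $p\ge7\,(2),(5)$. There the coefficient $p-4$ is positive, so I would instead use a covering family on the base that passes through the branch points. I would take the pencil of lines through a singular point $y=\Delta_a\cap\Delta_b$ of the branch locus, or in case (2), where the branch locus is two conics $D_1,D_{p-1}$, the pencil of conics through the four points $D_1\cap D_{p-1}$. Let $F$ be the strict transform on $\widetilde Y$ of the pullback of a general member. Since $\widetilde\pi^*(\text{member})=F+(\text{exceptional part over the base points})$, the exceptional multiplicities can be read off from Formula \eqref{eq: decomp_pullback_branch_comp}, and $K_{\widetilde Y}\cdot F$ can be computed from Formula \eqref{eq: KS_ab_cov_quo_sing} using the continued-fraction data of Definition \ref{def: correct_terms_invariants}. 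A cleaner alternative is to note that $F$ is a $\mathbb Z/p$-cover of $\mathbb P^1$. In case (2), the conic pencil meets the branch locus outside the base points with total local monodromy $1+(p-1)\equiv0$ at only two points, so $F$ is rational by Riemann--Hurwitz. Then $F^2\ge 0$ (it moves) and $F\cdot K_{\widetilde Y}=-2-F^2<0$, which again kills all plurigenera. Case $p=5(3)$ is the same configuration.

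For case (5), with four lines of monodromies $1,g,p-g,p-1$, I would choose the pencil of lines through $\Delta_g\cap\Delta_{p-g}$. A general member meets the other two lines $\Delta_1,\Delta_{p-1}$ in two points with monodromies $1$ and $p-1$. The induced cyclic cover of $\mathbb P^1$ is then branched at two points, so it is rational, and the same argument applies. Case $p=5(4)$ is the instance $g=2$.

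I expect the main obstacle to be verifying, in these last cases, that the strict transform really gives the claimed cyclic cover of the line: the points over the base point lie on Hirzebruch--Jung strings, and I must check that the general member contributes no extra branching there. I would settle this with the same local analysis used in the proof of Proposition \ref{prop: dec_of_the_pullback}. Alternatively, I would verify rationality by explicit contraction of $(-1)$-curves, as done in Section \ref{sec: SNC_(p,1)-planes}.
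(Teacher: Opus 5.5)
Your treatment of the cases with $-3p+(p-1)\sum_j d_j<0$ is exactly the paper's. For the remaining cases $p=5\,(3),(4)$ and $p\ge 7\,(2),(5)$ you take a different route, and it works.

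The paper chooses one special line $\ell$ through two branch points lying under singularities of type $\frac1p(1,1)$. On $\mathrm{Bl}_2(\mathbb P^2)$ its strict transform is a $(-1)$-curve missing the branch locus. Its preimage is therefore $p$ disjoint $(-1)$-curves, each meeting the two $(-p)$-curves $E_1,E_2$. After contracting $p-1$ of them, $E_1+E_2$ is nef with $K\cdot(E_1+E_2)=-2$.

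You instead move a curve in a pencil through such a point. This gives a covering family of smooth rational curves of self-intersection $0$ on $\widetilde Y$ itself. It avoids the contractions and the bookkeeping after them, and in fact exhibits $\widetilde Y$ as fibred over $\mathbb P^1$ with rational general fibre.

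Both arguments rest on the same local fact, which also settles the obstacle you flag. Suppose the two components through the base point have monodromies adding up to $0$; these are precisely the $\frac1p(1,1)$ points, by Proposition~\ref{prop: type_of_sing}. Then the exceptional curve $T$ of the blow-up of $\mathbb P^2$ at that point has coefficient $\equiv 0 \pmod p$ in the pullback of $\sum_j jD_j$, so $T$ is not in the branch locus. Its preimage is the single $(-p)$-curve, which is a $p$-fold cyclic cover of $T$ branched only over the two directions of the branch curves. A member of the pencil with general tangent direction therefore meets it transversally in $p$ points, with no new branching. This is also why the base point must be chosen this way. A pencil of lines through the $A_{p-1}$ point $\ell_1\cap\ell_2$, which appears when $D_1$ splits into two lines, gives curves of genus $\frac{p-1}{2}$.

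One step is misstated. In case (2), a general conic $C$ through the four points of $D_1\cap D_{p-1}$ has $C\cdot D_1=C\cdot D_{p-1}=4$. So $C$ meets the branch locus only at the base points, where the monodromy of the restricted cover is $1+(p-1)\equiv 0$. The induced $\mathbb Z/p$-cover of $C\cong\mathbb P^1$ is therefore \'etale, hence trivial. Consequently $F$ splits into $p$ disjoint smooth rational curves $F_1,\dots,F_p$, each with $F_i^2=\widetilde C^{\,2}=0$, where $\widetilde C$ is the strict transform of $C$ on $\mathrm{Bl}_4(\mathbb P^2)$.

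So $F$ is not a connected cyclic cover branched at two points, and $K_{\widetilde Y}\cdot F=-2-F^2$ is false for it. There are two easy repairs:
\begin{itemize}
\item Run your argument on a single component, for which $K_{\widetilde Y}\cdot F_i=-2$.
\item Use the pencil of lines through a single point of $D_1\cap D_{p-1}$. There your description is exactly right: off the base point a general line meets $D_1$ and $D_{p-1}$ once more each, with monodromies $1$ and $p-1$. This works equally when $D_1$ or $D_{p-1}$ is a pair of lines.
\end{itemize}

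Finally, the existence of such an $F$ is not equivalent to $-K_{\widetilde Y}$ being effective, as your first paragraph asserts, but nothing in your argument depends on that remark.
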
 
\begin{proof}
	From Lemma \ref{lem: (p,k)_planes_are_regular}, then $\widetilde{Y}$ is a regular surface, so $q(\widetilde{Y})=0$. Thus, if we prove that $h^0(\widetilde{Y}, 2K_{\widetilde{Y}})=0$, then $\widetilde{Y}$ is a rational surface from Castelnuovo Rationality criterion. 
	
	The strategy is to find a nef divisor $D$ on $\widetilde{Y}$ such that $K_{\widetilde{Y}}D<0$. Indeed, in this case no multiples of $K_{\widetilde{Y}}$ would be effective divisors and so $h^0(\widetilde{Y}, 2K_{\widetilde{Y}})=0$. 
	
	From Equation \eqref{eq: KS_ab_cov_quo_sing}, then we have 
	\begin{equation}
		pK_{\widetilde{Y}}\equiv \widetilde{\pi}^*((-3p+(p-1)\sum_gd_g)H)- E,
	\end{equation}
where $E$ is a divisor supported on the Hirzebruch-Jung strings of the resolution of $Y$. In particular, if  $-3p+(p-1)\sum_gd_g<0$, which means $\sum_gd_g<3+\frac{3}{p-1}$, then $D:=\widetilde{\pi}^*(H)$ is a nef divisor with 
\[
K_{\widetilde{Y}}D=-3p+(p-1)\sum_gd_g<0,
\]
so $\widetilde{Y}$ is a rational surface. This automatically proves the rationality of the cases $p=2(1)$ and $(2)$, $p=3(1)$,$(2)$, and $(3)$, $p=5(1)$ and $(2)$, and finally $p\geq7(1)$, $(3)$, and $(4)$. 

Let us discuss the remain cases $p=5(3)$ and $(4)$, and $p\geq 7(2)$ and $(5)$. For these cases, the divisor $D$ is constructed as follows. We prove that there exist two $(-1)$-curves $E_1$ and $E_2$ of a birational model $S$ of $\widetilde{Y}$ intersecting each other. Indeed, in this case $D:=E_1+E_2$ is a nef divisor with $K_{S}D=-2<0$. Thus, $S$ is rational and then $\widetilde{Y}$ is rational too. 

All of these remain cases can be treated in an analogue way by observing that one can select a line $\ell\subseteq \mathbb P^2$ not belonging to the branch locus of $\pi$ but passing through exactly two intersection points of the branch locus, the points whose inverse images $p_1$ and $p_2$ on $Y$ are two singularities of type $\frac{1}{p}(1,1)$; see Figure \ref{fig: p>=7_2_is_rational} where we consider the case $p\geq7(2)$. 

We claim that $\widetilde{\pi}^*(\ell)$ consists of $p$ disjoint $(-1)$-curves on $\widetilde{Y}$, each of them intersecting transversally in one point two exceptional curves $E_1$ and $E_2$ of self-intersection $-p$, those obtained by the resolution of the two singularities $p_1$ and $p_2$. If this would be true, then after contracting $p-1$ of these $(-1)$-curves, we would obtain two $(-1)$-curves, the images of $E_1$ and $E_2$, intersecting each other. This would prove the rationality of $\widetilde{Y}$. 

To prove the claim, let us blowing-up ${\rm Bl}_2(\mathbb P^2)\to \mathbb P^2$ the two intersection points of the branch locus of $\pi$ where $\ell$ is passing from. One can verify via Pardini equations \eqref{eq: Pardini_eq} that we can construct another $(\mathbb Z/p)$-cover $\pi'\colon Y'\to \text{Bl}_2(\mathbb P^2)$ having building data that are the strict transforms on $\text{Bl}_2(\mathbb P^2)$ of the components of the branch locus of $\pi$. We point out that $Y'$ could still be singular but it is resolving the singularities of the two singular points $p_1$ and $p_2$ of $Y$. Indeed,  denoted by $T_1$ and $T_2$ the two $(-1)$-curves of the blow-up $\text{Bl}_2(\mathbb P^2)$, then $E_i:=(\pi')^*(T_i)$ is a smooth rational curve of self-intersection $-p$ by projection formula, for   $i=1,2$. The contraction of these exceptional curves $E_1$ and $E_2$ is the resolution $Y'\to Y$ of the singular points $p_1$ and $p_2$. 
We have the following commutative diagram
\[
\begin{tikzcd}[row sep=small, column sep=small]
	\widetilde{Y} 
	\arrow[r] 
	\arrow[rr, "\rho", bend left=25] 
	\arrow[to=P2, "{\widetilde{\pi}}"', bend right=45] 
	& Y' 
	\arrow[r] 
	\arrow[d, "{\pi'}"'] 
	& Y 
	\arrow[d, "\pi"] \\
	& {\text{Bl}_2(\mathbb{P}^2)} 
	\arrow[r] 
	& |[alias=P2]| {\mathbb{P}^2}
\end{tikzcd}
\]
Let us denote by $\widetilde{\ell}$ the strict transform of $\ell$ on $\text{Bl}_2(\mathbb P^2)$, which is a $(-1)$-curve. Then  $\widetilde{\ell}$ is intersecting transversally in one point $T_1$ and $T_2$ and it is disjoint from the branch locus of $\pi'$.  
, \begin{figure}[H]
	\centering
	\begin{tikzpicture}[
		>=Stealth, 
		dot/.style={circle, fill=black, inner sep=1.5pt},
		label font/.style={font=\small, color=black}, scale=0.6, transform shape
		]
		
		\coordinate (O) at (-2.5, 0);
		
		\draw[thick, black, name path=ellipseD1] (O) ellipse (2cm and 0.8cm);
		\node[right, label font] at ($(O)+(1.2, 0.8)$) {$D_1$};
		
		\draw[thick, black, name path=ellipseDp] (O) ellipse (0.8cm and 1.6cm);
		\node[below=3.1cm, label font] at ($(O)+(0.6, 1.6)$) {$D_{p-1}$};
		
		\path[name intersections={of=ellipseD1 and ellipseDp}];
		
		\coordinate (Q1) at (intersection-1); 
		\coordinate (Q2) at (intersection-2); 
		\coordinate (Q3) at (intersection-3); 
		\coordinate (Q4) at (intersection-4); 
		
		\draw[thick, black, shorten <= -1cm, shorten >= -1cm] (Q3) -- (Q1) 
		node[pos=1, anchor=south west, label font, xshift=0.7cm, yshift=0.7cm] {$\ell$};
		
		\node[dot] at (Q1) {};
		\node[dot] at (Q2) {};
		\node[dot] at (Q3) {};
		\node[dot] at (Q4) {};
		
		\node[above left, xshift=-0.1cm, yshift=0cm, font=\tiny, text=black] at (Q1) {$\frac{1}{p}(1,1)$};
		\node[above left, font=\tiny, text=black] at (Q2) {$\frac{1}{p}(1,1)$};
		\node[below left, xshift=-0.2cm, yshift=0.1cm, font=\tiny, text=black] at (Q3) {$\frac{1}{p}(1,1)$};
		\node[below right, font=\tiny, text=black] at (Q4) {$\frac{1}{p}(1,1)$};
		
		\node[below right=2cm of O, font=\large, text=black] {$\mathbb{P}^2$};
		
		\draw[<-] ($(O)+(0, 2.6)$) -- ($(O)+(0, 4.1)$) 
		node[midway, right, font=\large] {$\pi$};
		
		\coordinate (P1) at (-2.5, 6.8);
		\coordinate (P2) at (-2.5, 5.2);
		
		\draw[thick, black] ($(P1)+(-1.2, 0)$) ellipse (0.25cm and 0.5cm);
		\draw[thick, black] ($(P1)+(-1.2, 0.5)$) -- (P1);
		\draw[thick, black] ($(P1)+(-1.2, -0.5)$) -- (P1);
		
		\draw[thick, black] ($(P1)+(1.2, 0)$) ellipse (0.25cm and 0.5cm);
		\draw[thick, black] ($(P1)+(1.2, 0.5)$) -- (P1);
		\draw[thick, black] ($(P1)+(1.2, -0.5)$) -- (P1);
		
		\node[dot] at (P1) {};
		\node[above=0.15cm, label font, font=\large] at (P1) {$p_1$};
		
		\draw[thick, black] ($(P2)+(-1.2, 0)$) ellipse (0.25cm and 0.5cm);
		\draw[thick, black] ($(P2)+(-1.2, 0.5)$) -- (P2);
		\draw[thick, black] ($(P2)+(-1.2, -0.5)$) -- (P2);
		
		\draw[thick, black] ($(P2)+(1.2, 0)$) ellipse (0.25cm and 0.5cm);
		\draw[thick, black] ($(P2)+(1.2, 0.5)$) -- (P2);
		\draw[thick, black] ($(P2)+(1.2, -0.5)$) -- (P2);
		
		\node[dot] at (P2) {};
		\node[below=0.15cm, label font, font=\large] at (P2) {$p_2$};
		
		\draw[dashed, black, thin] ($(P1)+(-1.45, 0)$) to[out=-110, in=110] ($(P2)+(-1.45, 0)$);
		\draw[dashed, black, thin] ($(P1)+(-0.95, 0)$) to[out=-70, in=70] ($(P2)+(-0.95, 0)$);
		
		\draw[dashed, black, thin] ($(P1)+(0.95, 0)$) to[out=-110, in=110] ($(P2)+(0.95, 0)$);
		\draw[dashed, black, thin] ($(P1)+(1.45, 0)$) to[out=-70, in=70] ($(P2)+(1.45, 0)$);
		
		\node[right=1.4cm of P1, font=\large, yshift=-2.3cm] {$Y$};
		
		\coordinate (BL) at (-9, 0);
		
		\draw[thick, black, name path=curveMain] (BL) ellipse (2.2cm and 1.5cm);
		\node[above left,  xshift=-1.1cm, yshift=-3.9cm, label font] at ($(BL)+(0, 2.25)$) {$D_{1}$};
		
		\draw[above=1cm, thick, black, name path=curveVertical] ($(BL)+(-0.8, -0.3)$) to[out=105, in=180] ($(BL)+(0, 2.25)$) to[out=0, in=95] ($(BL)+(1, -0.3)$);
		\node[above left,  xshift=-0.1cm, yshift=-0.2cm, label font] at ($(BL)+(0, 2.25)$) {$D_{p-1}$};
		
		\path[name intersections={of=curveMain and curveVertical}];
		\coordinate (B1) at (intersection-1); 
		\coordinate (B2) at (intersection-2); 
		
		\node[dot] at (B1) {};
		\node[dot] at (B2) {};
		
		\node[above right, font=\tiny, text=black] at (B1) {$\frac{1}{p}(1,1)$};
		\node[above left, font=\tiny, text=black] at (B2) {$\frac{1}{p}(1,1)$};
		
		\draw[thick, black] ($(BL)+(-1.5, 0.3)$) parabola bend ($(BL)+(0, -1.1)$) ($(BL)+(1.7, 0.3)$);
		\node[below left=-0.5, label font] at ($(BL)+(-1.5, -0.5)$) {$\tilde{\ell}$};
		
		\draw[thick, black, name path=lineT1] ($(BL)+(-2.5, -0.1)$) -- ($(BL)+(-0.5, -0.1)$) 
		node[left=2cm, label font] {$T_1$} node[above right, xshift=-0.4cm, yshift=-0.1cm, font=\tiny] {$(-1)$};
		
		\draw[thick, black, name path=lineT2] ($(BL)+(0.5, -0.1)$) -- ($(BL)+(2.5, -0.1)$) 
		node[below right, xshift=-0.4cm, yshift=-0.01cm, label font] {$T_2$} node[above left, xshift=-1.6cm, yshift=-0.1cm, font=\tiny] {$(-1)$};

		\node[below, xshift=-6.8cm, yshift=-1.3cm, font=\large, text=black] {${\rm Bl}_2(\mathbb{P}^2)$};

		\coordinate (YPRIME) at (-9, 6);
		
		\draw[thick, black] ($(YPRIME)+(-2, 0.8)$) -- ($(YPRIME)+(2, 0.8)$) node[right, label font] {$E_1$};
		\draw[thick, black] ($(YPRIME)+(-2, -0.8)$) -- ($(YPRIME)+(2, -0.8)$) node[right, label font] {$E_2$};
		
		\draw[thick, black] ($(YPRIME)+(-1.2, 1.4)$) -- ($(YPRIME)+(-1.2, -1.4)$) node[below, font=\tiny] {$(-1)$};
		\node[dot] at ($(YPRIME)+(-1.2, 0.8)$) {};
		\node[dot] at ($(YPRIME)+(-1.2, -0.8)$) {};
		
		\draw[thick, black] ($(YPRIME)+(-0.4, 1.4)$) -- ($(YPRIME)+(-0.4, -1.4)$) node[below, font=\tiny] {$(-1)$};
		\node[dot] at ($(YPRIME)+(-0.4, 0.8)$) {};
		\node[dot] at ($(YPRIME)+(-0.4, -0.8)$) {};
		
		\node[font=\large] at ($(YPRIME)+(0.2, 0)$) {$\dots$};
		
		\draw[thick, black] ($(YPRIME)+(0.9, 1.4)$) -- ($(YPRIME)+(0.9, -1.4)$) node[below, font=\tiny] {$(-1)$};
		\node[dot] at ($(YPRIME)+(0.9, 0.8)$) {};
		\node[dot] at ($(YPRIME)+(0.9, -0.8)$) {};
		
		\node[above, font=\scriptsize] at ($(YPRIME)+(-0.15, 1.4)$) {$p\text{-copies of }\tilde{\ell}$};
		\node[left, font=\tiny] at ($(YPRIME)+(-2, 0.8)$) {$(-p)$};
		\node[left, font=\tiny] at ($(YPRIME)+(-2, -0.8)$) {$(-p)$};
		
		\node[below right,  xshift=-7.2cm, yshift=4.8cm, font=\large, text=black] {$Y'$};

		\draw[->] ($(BL)+(2.6, 0.3)$) -- ($(O)+(-2.4, 0.3)$);
		
		\draw[->] ($(YPRIME)+(0, -1.9)$) -- ($(BL)+(0, 2.6)$) node[midway, right, font=\small] {$\pi'$};
		
		\draw[->] ($(YPRIME)+(2.6, 0.3)$) -- ($(O)+(-2.4, 6.3)$);
		
		\coordinate (TOPLEFT) at (-14, 3);
		
		\node[dot] (sp1) at ($(TOPLEFT)+(-0.8, 0)$) {};
		\node[dot] (sp2) at ($(TOPLEFT)+(0, 0)$) {};
		\node[dot] (sp3) at ($(TOPLEFT)+(0.8, 0)$) {};
		
		\node[font=\small] at ($(TOPLEFT)+(-0.4, 0)$) {$\dots$};
		\node[font=\small] at ($(TOPLEFT)+(0.4, 0)$) {$\dots$};
		
		\node[below=0.15cm, font=\small] at (sp1) {};
		\node[below=0.15cm, font=\small] at (sp3) {};
		
		\draw[thick, black] ($(TOPLEFT)+(-1.4, 0.8)$) to[out=-45, in=135] (sp1) 
		to[out=-45, in=225] (sp2) 
		to[out=45, in=135] (sp3) 
		to[out=-45, in=180] ($(TOPLEFT)+(1.4, 0.8)$)
		node[right, label font] {$E_1$} node[above left, xshift=-1.9cm, yshift=-2cm, font=\tiny] {$(-1)$};
		
		\draw[thick, black] ($(TOPLEFT)+(-1.4, -0.8)$) to[out=45, in=225] (sp1) 
		to[out=45, in=135] (sp2) 
		to[out=-45, in=225] (sp3) 
		to[out=45, in=180] ($(TOPLEFT)+(1.4, -0.8)$)
		node[right, label font] {$E_2$} node[above left, xshift=-1.9cm, yshift=1.4cm, font=\tiny] {$(-1)$};
		
		
		\node[below right=1.3cm of TOPLEFT, font=\large, text=black] {$S$};
		
		\coordinate (YTILDE) at (-14, 6);
		
		\node[font=\large, text=black] at (YTILDE) {$\widetilde{Y}$};

		\draw[<-] ($(YPRIME)+(-1.7, 0)$) -- ($(YTILDE)+(0.3, 0)$);
		
		\draw[->] ($(YTILDE)+(0, -0.5)$) -- ($(TOPLEFT)+(0, 1.1)$);
	\end{tikzpicture}
	\caption{The figure shows the case $p\geq 7(2)$. As one can see, $(\pi')^*(\widetilde{\ell})$ is the disjoint union of $p$ copies of $\widetilde{\ell}$. There is the same configuration of curves on $\widetilde{Y}$. After contracting $p-1$ of those $(-1)$-curves on $\widetilde{Y}$, we obtain a surface $S$ admitting two $(-1)$ curves $E_1$ and $E_2$ intersecting to each other.}
	\label{fig: p>=7_2_is_rational}
\end{figure}
Thus, $(\pi')^*(\widetilde{\ell})$ consists of $p$ copies of $\widetilde{\ell}$ on $Y'$, that are hence $(-1)$-curves  intersecting transversally each $E_i$ in one point, see Figure \ref{fig: p>=7_2_is_rational}. However, these $(-1)$-curves together with $E_1$ and $E_2$ are not passing through other singular points of $Y'$, so we would obtain the same configuration of exceptional curves on the minimal resolution $\widetilde{Y}$. This proves the claim and hence the thesis. 
\end{proof}

\section{The Hodge structures of SNC \texorpdfstring{$(p,k)$}{(p,k)}-planes satisfying Property \texorpdfstring{$\bigstar$}{*}}\label{subsec: K3Hodge}

In this section, we shall analyze the Hodge structure of smooth projective surfaces $S$ endowed with
an action of an elementary abelian $p$-group $G \cong (\mathbb{Z}/p)^k$, where $p$ is a prime number. Since $G$ is finite and abelian, the induced complex
representation on $H^2(S,\mathbb{C})$ decomposes into character eigenspaces:
\begin{equation}\label{eq: dec_into_isoty_comppn_over_C}
	H^2(S,\mathbb{C})
	=
	\bigoplus_{\chi\in G^*} H^2(S,\mathbb{C})_\chi,
\end{equation}
where
\[
H^2(S,\mathbb{C})_\chi
:=
\left\{
\alpha\in H^2(S,\mathbb{C})
\ \middle|\
g^*\alpha=\chi(g)\alpha
\text{ for every } g\in G
\right\}
\]
is the isotypic component of the character $\chi\in G^*$. 

Since the action of $G$ on $S$ is holomorphic, every $g\in G$ preserves the Hodge
decomposition. Hence each character eigenspace decomposes according to Hodge type:
\[
	H^2(S,\mathbb{C})_\chi
	=
	H^{2,0}(S)_\chi
	\oplus
	H^{1,1}(S)_\chi
	\oplus
	H^{0,2}(S)_\chi.
\]
Moreover, complex conjugation exchanges the eigenspaces corresponding to conjugate
characters:
\[
\overline{H^{r,s}(S)_\chi}
=
H^{s,r}(S)_{\overline{\chi}}.
\]
For every nontrivial character $\chi\in G^*$, the subgroup $H:=\ker(\chi)$ has index $p$ in $G$. Conversely, every subgroup $H\leq G$ of index $p$ arises as the
kernel of a nontrivial character. Moreover, two nontrivial characters have the same kernel
if and only if they generate the same cyclic subgroup of $G^*$. Equivalently, if
$\ker(\chi)=H$, then the nontrivial characters with kernel $H$ are precisely
$\chi,\chi^2,\ldots,\chi^{p-1}$.

For every subgroup $H\leq G$ of index $p$, choose a nontrivial character
$\chi\in G^*$ such that $\ker(\chi)=H$, and define
\begin{equation}\label{eq: V_H}
	V_H
	:=
	\bigoplus_{j=1}^{p-1} H^2(S,\mathbb{C})_{\chi^j}.
\end{equation}
Clearly, this definition is independent of the choice of $\chi$. 
We also set
\[
V_0
:=
H^2(S,\mathbb{C})^G
=
H^2(S,\mathbb{C})_1.
\]
\begin{prop} \label{prop:complex_Hodge_decomposition}
	Let $S$ be a smooth projective surface endowed with a $G$-action, $G\cong (\mathbb Z/p)^k$.
	With the notation introduced above, one has a decomposition
	\begin{equation}\label{eq: dec_H2_S_C_into_Comp_index_p}
		H^2(S,\mathbb{C})
		=
		V_0
		\oplus
		\bigoplus_{\substack{H\leq G\\ [G:H]=p}} V_H.
	\end{equation}
	Each summand is compatible with the Hodge decomposition of $H^2(S,\mathbb{C})$. Moreover, for every subgroup $H\leq G$ of index $p$, one has
	\[
	H^2(S,\mathbb{C})^H
	=
	V_0\oplus V_H.
	\]
\end{prop}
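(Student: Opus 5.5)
The plan is to derive everything from the character decomposition \eqref{eq: dec_into_isoty_comppn_over_C} together with a regrouping of the characters. Since $G\cong(\mathbb Z/p)^k$ is finite abelian, $G$ acts on $H^2(S,\mathbb C)$ through commuting operators $g^*$ with $(g^*)^p=\mathrm{id}$. These operators are simultaneously diagonalizable, which gives \eqref{eq: dec_into_isoty_comppn_over_C}. Explicitly, the projectors
\[
P_\chi=\frac{1}{|G|}\sum_{g\in G}\overline{\chi(g)}\,g^*
\]
are orthogonal idempotents summing to the identity, by the orthogonality relations.

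First I will partition $G^*\setminus\{1\}$ according to the cyclic subgroups it generates. Each nontrivial $\chi$ lies in exactly one subgroup $\langle\chi\rangle$ of order $p$, whose nontrivial elements are $\chi,\dots,\chi^{p-1}$. By the duality recalled above, the map $\langle\chi\rangle\mapsto\ker\chi$ is a bijection onto the subgroups $H$ of index $p$, and distinct index-$p$ subgroups give disjoint sets of nontrivial characters. Regrouping the eigenspaces in \eqref{eq: dec_into_isoty_comppn_over_C} along this partition gives \eqref{eq: dec_H2_S_C_into_Comp_index_p}. The same observation shows that $V_H$ does not depend on the choice of $\chi$.

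Next, for Hodge compatibility: each $g^*$ preserves every $H^{r,s}(S)$ because $g$ is holomorphic. Therefore each projector $P_\chi$ preserves every $H^{r,s}(S)$, and so
\[
H^2(S,\mathbb C)_\chi=\bigoplus_{r+s=2}P_\chi H^{r,s}(S).
\]
Taking direct sums, each $V_H$ and $V_0$ splits by Hodge type. I will also note that $V_H$ is stable under complex conjugation, since $\overline{\chi^j}=\chi^{p-j}$. Hence $V_H$ is defined over $\mathbb R$, and in fact over $\mathbb Q$, because the projector $\sum_{j}P_{\chi^j}$ has rational coefficients. This is not strictly needed for the statement, but it will be useful later.

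Finally, to compute $H^2(S,\mathbb C)^H$: a vector in $H^2(S,\mathbb C)_\eta$ is $H$-invariant exactly when $\eta|_H=1$, i.e.\ $\eta\in H^\perp\subset G^*$. For $[G:H]=p$, the characters trivial on $H$ form the subgroup $\{1,\chi,\dots,\chi^{p-1}\}$, where $\ker\chi=H$; these characters factor through $G/H\cong\mathbb Z/p$. Since $H$-invariance can be checked componentwise in the decomposition, we get $H^2(S,\mathbb C)^H=V_0\oplus V_H$.

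There is no real obstacle here. The only point needing care is this identification of the characters trivial on $H$ with $\langle\chi\rangle$, and it follows from counting: there are $|G/H|=p$ such characters, and $\langle\chi\rangle$ already supplies $p$ of them.
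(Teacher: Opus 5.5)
Your proof is correct and follows the same route as the paper. Both arguments regroup the character eigenspaces of \eqref{eq: dec_into_isoty_comppn_over_C} according to the cyclic subgroups $\langle\chi\rangle$, equivalently their kernels $H=\ker\chi$, and both identify $H^2(S,\mathbb C)^H$ with the sum of the eigenspaces for the characters trivial on $H$, namely $1,\chi,\dots,\chi^{p-1}$. You also supply details the paper leaves implicit: the explicit projectors $P_\chi$, the counting argument, and the Hodge compatibility, which the paper states just before the proposition. Your rationality remark anticipates the paper's next result, where $\sum_{j}P_{\chi^j}=P_H-P_G=Q_H$.
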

\begin{proof}
The decomposition \eqref{eq: dec_H2_S_C_into_Comp_index_p} follows directly replacing  \eqref{eq: V_H} into \eqref{eq: dec_into_isoty_comppn_over_C} for each subgroup $H\leq G$ of index $p$. 
Finally, we observe that 
\[
H^2(S,\mathbb{C})^H
=
H^2(S,\mathbb{C})_1
\oplus
\bigoplus_{j=1}^{p-1}H^2(S,\mathbb{C})_{\chi^j}
=
V_0\oplus V_H,
\]
and the thesis follows. 
\end{proof}

We now turn to the rational structure underlying the decomposition \ref{eq: dec_into_isoty_comppn_over_C}.
Although the individual isotypic components 
$H^2(S,\mathbb{C})_\chi$ need not be defined over $\mathbb{Q}$,
the blocks $V_H$ introduced above admit a natural rational structure.

For every subgroup $H\leq G$, consider the operator
\begin{equation}\label{eq:averaging_op}
P_H
:=
\frac{1}{|H|}
\sum_{h\in H} h^*
\end{equation}
defined on $H^2(S,\mathbb{Q})$. 
We also define $Q_H:=P_H-P_G$.

Since every $g^*$, $g\in G$, is a morphism of rational Hodge structures, the operators
$P_H$, $P_G$, and $Q_H$ are morphisms of rational Hodge structures. In particular, we have
\[ 
\textrm{Im}(P_H)=H^2(S,\QQ)^H.
\]
For this equality, notice that $P_H$ is indeed the projector onto $H^2(S,\mathbb{Q})^H$. We set $W_0:=\operatorname{Im}(P_G)=H^2(S,\mathbb{Q})^G$, and  
%
for every subgroup $H\leq G$ of index $p$, 
\[
W_H
:=
\operatorname{Im}(Q_H).
\]

The rational analogue of Proposition~\ref{prop:complex_Hodge_decomposition}
is the following.

\begin{proposition}
\label{prop:rational_Hodge_decomposition}

Let $S$ be a smooth projective surface endowed with a  $G$-action, $G\cong (\mathbb Z/p)^k$. Then one has a decomposition of rational Hodge structures
\[
H^2(S,\mathbb{Q})
=
W_0
\oplus
\bigoplus_{\substack{H\leq G\\ [G:H]=p}} W_H.
\]
Moreover, $W_0\otimes_{\mathbb{Q}}\mathbb{C}=V_0$
and, for every subgroup $H\leq G$ of index $p$, $W_H\otimes_{\mathbb{Q}}\mathbb{C}=V_H$.

\end{proposition}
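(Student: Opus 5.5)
The proof is pure linear algebra over $\mathbb Q$, checked after extending scalars to $\mathbb C$ and comparing with Proposition~\ref{prop:complex_Hodge_decomposition}. Since every $g^*$ is a morphism of rational Hodge structures, so are $P_H$, $P_G$ and $Q_H=P_H-P_G$. Their images $W_0$ and $W_H$ are therefore rational sub-Hodge structures. It remains to show that $H^2(S,\mathbb Q)$ is their internal direct sum and to identify the complexifications.

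\emph{Step 1 (complexifications).} On $H^2(S,\mathbb C)$ the operator $P_H\otimes\mathbb C$ acts on the eigenspace $H^2(S,\mathbb C)_\chi$ as the scalar $\frac{1}{|H|}\sum_{h\in H}\chi(h)$. By orthogonality of characters this scalar is $1$ if $\chi|_H$ is trivial and $0$ otherwise.

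For $H$ of index $p$, the characters trivial on $H$ are exactly $1,\chi,\dots,\chi^{p-1}$, where $\ker\chi=H$. Hence $P_H\otimes\mathbb C$ is the projection onto $V_0\oplus V_H$, in accordance with Proposition~\ref{prop:complex_Hodge_decomposition}. Likewise $P_G\otimes\mathbb C$ is the projection onto $V_0$.

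Both projections are diagonal in the eigenspace decomposition \eqref{eq: dec_into_isoty_comppn_over_C}. Therefore $Q_H\otimes\mathbb C$ is the projection onto $V_H$. Since taking the image commutes with the flat extension $-\otimes_{\mathbb Q}\mathbb C$, we get $W_0\otimes\mathbb C=V_0$ and $W_H\otimes\mathbb C=V_H$.

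\emph{Step 2 (direct sum).} The operators $P_G$ and $Q_H$, for $H$ ranging over the index-$p$ subgroups, are diagonal in the same eigenspace basis. After tensoring with $\mathbb C$ they are projections onto the pairwise distinct blocks $V_0$ and $V_H$ of \eqref{eq: dec_H2_S_C_into_Comp_index_p}. Hence they are mutually orthogonal idempotents, $P_G^2=P_G$, $Q_H^2=Q_H$, $Q_HQ_{H'}=0$ for $H\ne H'$ and $P_GQ_H=0$, and they sum to the identity:
\[
P_G+\sum_{[G:H]=p}Q_H=\mathrm{id}.
\]
These are identities between endomorphisms of $H^2(S,\mathbb Q)$. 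They hold after $\otimes\mathbb C$, and $\mathrm{End}(H^2(S,\mathbb Q))\to\mathrm{End}(H^2(S,\mathbb C))$ is injective, so they already hold over $\mathbb Q$.

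A complete family of orthogonal idempotents gives the internal direct sum $H^2(S,\mathbb Q)=W_0\oplus\bigoplus_H W_H$. Each summand is a sub-Hodge structure by the preliminary remark, so this is a decomposition of rational Hodge structures.

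\emph{Main obstacle.} The only delicate point is the identity $P_G+\sum_H Q_H=\mathrm{id}$. On $V_0$ every $Q_H$ vanishes and $P_G=1$, so the sum is $1$. On $H^2(S,\mathbb C)_\chi$ with $\chi\neq1$, the operator $P_G$ vanishes and exactly one $Q_H$, namely the one with $H=\ker\chi$, acts as $1$. This uses the fact that distinct index-$p$ subgroups correspond to distinct cyclic subgroups of $G^*$, which is recalled before \eqref{eq: V_H}.
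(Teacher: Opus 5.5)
Your proof is correct. Step 1 is the same as the paper's argument; Step 2 takes a different route.

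\textbf{Orthogonality.} The paper derives $Q_HQ_K=0$ for $H\neq K$ from the identity $P_H\circ P_K=P_G$. This holds because distinct maximal subgroups satisfy $HK=G$, so each element of $G$ appears exactly $|H\cap K|$ times among the products $hk$. This is an identity in the group algebra $\mathbb Q[G]$. It therefore holds over $\mathbb Q$ without extending scalars, and it transfers directly to the correspondences $\Gamma_g$ of Lemma~\ref{lem:motivated_projectors}.

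\textbf{Completeness.} The paper only asserts $P_G+\sum_H Q_H=\mathrm{id}$, without justification. You instead obtain idempotency, orthogonality and completeness at once from the simultaneous diagonalization on the eigenspaces $H^2(S,\mathbb C)_\chi$. You then descend to $\mathbb Q$ using the injectivity of $\operatorname{End}(H^2(S,\mathbb Q))\to\operatorname{End}(H^2(S,\mathbb C))$.

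\textbf{Comparison.} Your route actually proves the completeness relation, which rests on each nontrivial character lying in exactly one block $\{\chi,\dots,\chi^{p-1}\}$. The paper's route is independent of the representation, which is what makes the later motivic statement immediate. With your approach, the motivic analogue would instead rely on faithfulness of the Betti realization, which the paper does invoke elsewhere. Completeness can also be checked directly in $\mathbb Q[G]$: each nonzero $g$ lies in $\frac{p^{k-1}-1}{p-1}$ subgroups of index $p$, and with this count the coefficient of every $g\neq 0$ in $P_G+\sum_H Q_H$ vanishes while that of the identity element is $1$.
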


\begin{proof}
Let $H\leq G$ be a subgroup of index $p$. 
On a isotypic component $H^2(S,\mathbb{C})_\eta$ of character $\eta\in G^*$, the operator
$P_H$ (seen as an operator of $H^2(S, \CC)$) acts as the multiplication by the complex number 
\[
\frac{1}{|H|}
\sum_{h\in H}\eta(h).
\]
Therefore,
\[
P_H\big|_{H^2(S,\mathbb{C})_\eta}
=
\begin{cases}
\operatorname{id} & \text{if } \eta|_H=1,\\
0 & \text{otherwise}.
\end{cases}
\]
It follows that $\operatorname{Im}(P_H)\otimes_{\mathbb{Q}}\mathbb{C}
=
V_0\oplus V_H, \textrm{ and }
\operatorname{Im}(P_G)\otimes_{\mathbb{Q}}\mathbb{C}
=
V_0$. Hence
$
W_H\otimes_{\mathbb{Q}}\mathbb{C}
=
\operatorname{Im}(Q_H)\otimes_{\mathbb{Q}}\mathbb{C}
=
V_H.
$

Let now $H$ and $K$ be two distinct subgroups of $G$ of index $p$.
Since they are distinct maximal proper subgroups, one has $HK=G$. As $G$ is abelian, the corresponding operators satisfy
$P_H\circ P_K=P_G$.
Moreover,  $P_H\circ P_G=P_G$ and $P_G^2=P_G$.

Consequently, $Q_H\circ Q_K
=
(P_H-P_G)\circ (P_K-P_G)
=
0$
for $H\neq K$, whereas
$
Q_H^2
=
(P_H-P_G)^2
=
Q_H.
$
Thus the $Q_H$ are pairwise orthogonal idempotents.
 Furthermore, one has 
 \[P_G+\sum_{\underset{[G:H]=p}{H\leq G}}Q_H=id_{H^{2}(S,\mathbb Q)}.\]
Therefore,
\[
H^2(S,\mathbb{Q})
=
W_0
\oplus
\bigoplus_{\substack{H\leq G\\ [G:H]=p}}W_H.
\]
After tensoring with $\mathbb{C}$, their images, together with the image of
$P_G$, give precisely the decomposition \eqref{eq: dec_H2_S_C_into_Comp_index_p}.
 
Finally, since all the projectors involved are morphisms of rational Hodge
structures, each $W_H$ and $W_0$ is a rational Hodge substructure of
$H^2(S,\mathbb{Q})$.
\end{proof}
\

\begin{remark}
\label{rem:p2_rational_decomposition}

When $p=2$, the above discussion simplifies considerably. Indeed, every character of
$G$ takes values in $\{\pm 1\}\subset\mathbb{Q}$, and every subgroup $H\leq G$ of index $2$ is the kernel of a unique nontrivial
character $\chi$. Hence the individual isotypic component $H^2(S,\mathbb{Q})_\chi$
is already defined over $\mathbb{Q}$, and no grouping of characters is necessary.
In this case, $W_H\otimes_{\mathbb{Q}}\mathbb{C}
=
H^2(S,\mathbb{C})_\chi$.
\end{remark}

We now give a geometric interpretation of the rational Hodge substructures introduced above. Let $H\leq G$ be any subgroup and consider the quotient map
\[
\pi_H:S\longrightarrow Y_H:=S/H.
\]
The surface $Y_H$ may be singular. Nevertheless, since $\pi_H$ is a finite morphism
by a finite group, its rational cohomology is naturally identified with the $H$-invariant
part of the cohomology of $S$.

\begin{prop} \label{prop:quotient_cohomology_invariants}

Let $S$ be a smooth projective surface endowed with a $G$-action, $G\cong (\mathbb Z/p)^k$, 
and let $H\leq G$ be a subgroup. Then the pullback map
\[
\pi_H^*:H^2(Y_H,\mathbb{Q})\longrightarrow H^2(S,\mathbb{Q})
\]
induces an isomorphism
\[
H^2(Y_H,\mathbb{Q})
\overset{\sim}{\longrightarrow}
H^2(S,\mathbb{Q})^H.
\]
This is an isomorphism of rational Hodge structures.

In particular, we have
\begin{equation}\label{eq : decomposition_intermidiat_quot}
	H^2(Y_H,\mathbb{Q})
	\cong
	W_0\oplus W_H.
\end{equation}

\end{prop}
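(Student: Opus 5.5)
The plan is the standard transfer argument for finite quotients, carried out with rational coefficients. Write $\pi=\pi_H\colon S\to Y_H$. Since $\pi$ is finite and $H$ is finite, there is a transfer (trace) map $\pi_*\colon H^2(S,\mathbb Q)\to H^2(Y_H,\mathbb Q)$. One way to get it is through the sheaf-theoretic trace $\pi_*\underline{\mathbb Q}_S\to\underline{\mathbb Q}_{Y_H}$, which is the sum over preimages. Another is to use the classical identification $H^\ast(X/H,\mathbb Q)\cong H^\ast(X,\mathbb Q)^H$, valid for a finite group acting on a reasonable (triangulable) space. Concretely, I would check the two compositions
\[
\pi_*\circ\pi^*=|H|\cdot\mathrm{id}_{H^2(Y_H,\mathbb Q)}, \qquad \pi^*\circ\pi_*=\sum_{h\in H}h^*=|H|\,P_H .
\]
The first holds because a class pulled back from $Y_H$ and then summed over the $|H|$ preimages gives $|H|$ times the class. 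The second is the statement that pushing forward and pulling back averages over the group orbit.

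From these identities the isomorphism follows formally. The first shows that $\pi^*$ is injective. The second shows that the image of $\pi^*$ contains $\operatorname{Im}(P_H)=H^2(S,\mathbb Q)^H$. Conversely, $\pi^*$ lands in the invariants, since $\pi\circ h=\pi$ for every $h\in H$. Hence $\pi^*\colon H^2(Y_H,\mathbb Q)\xrightarrow{\sim}H^2(S,\mathbb Q)^H$.

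For compatibility with Hodge structures, note that $Y_H$ is a normal projective surface with only quotient singularities, so it is a $\mathbb Q$-homology manifold (a $V$-manifold). Its $H^2(Y_H,\mathbb Q)$ therefore carries a pure Hodge structure of weight $2$, by Steenbrink's theory or by Deligne's mixed Hodge theory together with purity for rationally smooth compact varieties. The map $\pi^*$ is induced by an algebraic morphism, so it is a morphism of mixed Hodge structures. A bijective morphism of (mixed) Hodge structures is an isomorphism, since morphisms are strict with respect to the filtrations. This gives the isomorphism of rational Hodge structures. Alternatively, one can avoid mixed Hodge theory by transporting the Hodge structure of $H^2(S,\mathbb Q)^H$ through $\pi^*$; it agrees with the one defined via $H$-invariant forms on $S$.

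Finally, \eqref{eq : decomposition_intermidiat_quot} follows by combining this with Proposition~\ref{prop:rational_Hodge_decomposition}. For $[G:H]=p$ we have $H^2(S,\mathbb Q)^H=\operatorname{Im}(P_H)$. Moreover $P_H=P_G+Q_H$, and $P_G$ and $Q_H$ are orthogonal idempotents because $P_HP_G=P_G$. So $\operatorname{Im}(P_H)=W_0\oplus W_H$ as rational Hodge structures, in agreement with the complex statement $H^2(S,\mathbb C)^H=V_0\oplus V_H$ of Proposition~\ref{prop:complex_Hodge_decomposition}.

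I expect the main obstacle to be justifying the transfer map and the purity of $H^2(Y_H,\mathbb Q)$ when $Y_H$ is singular. I would handle it by citing Grothendieck's theorem (Tôhoku) that $H^\ast(X/H,\mathcal F^H)=H^\ast(X,\mathcal F)^H$ with rational coefficients for finite groups, together with the $V$-manifold Hodge theory for quotient singularities, rather than reproving either.
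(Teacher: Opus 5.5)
Your proof is correct, but it reaches the isomorphism by a somewhat different mechanism than the paper.

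The paper argues sheaf-theoretically. Since $\pi_H$ is finite, $R^i\pi_{H*}\mathbb Q_S=0$ for $i>0$, so $H^2(S,\mathbb Q)\cong H^2(Y_H,\pi_{H*}\mathbb Q_S)$. It then identifies $(\pi_{H*}\mathbb Q_S)^H=\mathbb Q_{Y_H}$ and uses that taking $H$-invariants is exact over $\mathbb Q$. This is essentially the Grothendieck statement you keep as a fallback.

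Your transfer identities $\pi_*\pi^*=|H|\cdot\mathrm{id}$ and $\pi^*\pi_*=|H|P_H$ rest on the same averaging projector. They have the advantage of making both the injectivity of $\pi_H^*$ and its image completely explicit. One precision is needed: the trace on $\pi_{H*}\mathbb Q_S$ must be $s\mapsto\sum_{h\in H}h\cdot s$. Equivalently, it sums over the fibre with each point counted with the order of its stabilizer. The naive set-theoretic sum over preimages is not locally constant near branch points, so it is not a map of sheaves.

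On the Hodge side, the paper does not need $V$-manifold theory. Once $\pi_H^*$ is a bijective morphism of mixed Hodge structures onto the pure sub-Hodge structure $H^2(S,\mathbb Q)^H=\operatorname{Im}(P_H)$, strictness already forces $H^2(Y_H,\mathbb Q)$ to be pure of weight $2$. Your appeal to Steenbrink-type purity is therefore redundant, though harmless.

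The final step, $\operatorname{Im}(P_H)=W_0\oplus W_H$ via $P_H=P_G+Q_H$ with $P_G Q_H=0$, is the same as in the paper.
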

\begin{proof}
Since the quotient map
$
\pi_H:S\longrightarrow Y_H
$
is finite, its fibers are finite and hence
$
R^i\pi_{H*}\mathbb{Q}_S=0 $ for all  $i>0
$.
Therefore,
$
H^2(S,\mathbb{Q})
\cong
H^2(Y_H,\pi_{H*}\mathbb{Q}_S).
$

The group $H$ acts naturally on the sheaf $\pi_{H*}\mathbb{Q}_S$. Since every fiber of
$\pi_H$ is an $H$-orbit, its invariant subsheaf is the constant sheaf:
$
(\pi_{H*}\mathbb{Q}_S)^H
=
\mathbb{Q}_{Y_H}.
$
Moreover, since we work with rational coefficients, taking $H$-invariants is an exact
functor. It follows that
\[
H^2(Y_H,\mathbb{Q})
\cong
H^2(S,\mathbb{Q})^H.
\]

The isomorphism is induced by the algebraic morphism $\pi_H$, hence it is compatible with
the natural mixed Hodge structures. Since $H^2(S,\mathbb{Q})^H$ is a pure Hodge structure
of weight $2$, the same is true for $H^2(Y_H,\mathbb{Q})$, and the above isomorphism is
therefore an isomorphism of rational Hodge structures.

Finally, $P_G+Q_H=P_H$ and $P_G\circ Q_H=0$ gives
\[
H^2(S,\mathbb{Q})^H
=
W_0\oplus W_H,
\]
and the last assertion follows.
\end{proof}
Applying Proposition~\ref{prop:quotient_cohomology_invariants} to the action of $G$ itself,
we obtain
\[
H^2(Y_G,\mathbb{Q})
\cong
H^2(S,\mathbb{Q})^G
=
W_0.
\]
Hence, under the identification
\[
H^2(Y_H,\mathbb{Q})
\cong
W_0\oplus W_H,
\]
the summand $W_0$ is precisely the part coming from the total quotient $Y_G$, whereas
$W_H$ is the complementary rational Hodge substructure.

Thus $W_H$ may be regarded as the new cohomological contribution associated with the
$G/H\cong \mathbb Z/p$ -intermediate quotient $Y_H\longrightarrow Y_G$ of the $G$-quotient $\pi_G\colon S\to Y_G$. 
\vspace{0.3cm}

We now study how the second cohomology changes when passing from a quotient surface
to a resolution of its singularities. In our setting, all quotient surfaces have only
quotient singularities, and therefore their exceptional divisors consist of configurations
of smooth rational curves.

\begin{lemma}
\label{lem:cohomology_resolution_quotient}

Let $Y$ be a normal projective surface with quotient singularities, and let
$\rho:\widetilde{Y}\longrightarrow Y$
be a resolution of singularities. Denote by
\[
E:=\bigcup_{x\in\operatorname{Sing}(Y)}E_x
\]
the exceptional divisor, and let
\[
\mathcal{E}
:=
\left\langle [E_{x,j}]
\ \middle|\
x\in\operatorname{Sing}(Y),\ 
E_{x,j}\text{ an irreducible component of }E_x
\right\rangle_{\mathbb{Q}}
\subset H^2(\widetilde{Y},\mathbb{Q}).
\]
Then
\[
H^2(\widetilde{Y},\mathbb{Q})
=
\rho^*H^2(Y,\mathbb{Q})
\oplus
\mathcal{E}.
\]
Moreover,
\[
\mathcal{E}\subseteq \operatorname{NS}(\widetilde{Y})\otimes\mathbb{Q}.
\]
In particular, the resolution introduces only algebraic classes of type $(1,1)$.

\end{lemma}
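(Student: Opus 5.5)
The plan is to compare the two sides of the decomposition via the long exact sequence of the pair $(\widetilde{Y},E)$, or equivalently the Mayer--Vietoris/decomposition-theorem style argument for a resolution of a surface with isolated rational singularities. Since $\rho$ is an isomorphism outside the finite set $\operatorname{Sing}(Y)$, and each fibre $E_x=\rho^{-1}(x)$ is a connected tree of smooth rational curves (a Hirzebruch--Jung string, or more generally a tree for quotient singularities), we have
\[
H^i(\widetilde{Y},E;\mathbb Q)\cong H^i(Y,\operatorname{Sing}(Y);\mathbb Q)\cong H^i(Y;\mathbb Q)\quad (i\ge 2),
\]
by excision (both relative groups compute the compactly supported cohomology of the common open set $Y\setminus \operatorname{Sing}(Y)$).

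First, I will write the long exact sequence
\[
H^1(E;\mathbb Q)\to H^2(\widetilde{Y},E;\mathbb Q)\to H^2(\widetilde{Y};\mathbb Q)\to H^2(E;\mathbb Q)\to H^3(\widetilde{Y},E;\mathbb Q).
\]
Since each $E_x$ is a tree of $\mathbb P^1$'s, $H^1(E;\mathbb Q)=0$ and $H^2(E;\mathbb Q)\cong \bigoplus_{x,j}\mathbb Q$, one copy per component $E_{x,j}$. Thus the map $H^2(Y;\mathbb Q)\cong H^2(\widetilde Y,E;\mathbb Q)\to H^2(\widetilde{Y};\mathbb Q)$ is injective, and it coincides with $\rho^*$ by naturality.

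Second, I will show the restriction $H^2(\widetilde{Y};\mathbb Q)\to H^2(E;\mathbb Q)=\bigoplus \mathbb Q$ is surjective and that $\mathcal E$ is a complement to $\rho^*H^2(Y;\mathbb Q)$. The restriction sends a class $\alpha$ to the vector of degrees $(\alpha\cdot E_{x,j})_{x,j}$. Restricted to $\mathcal E$ this is given by the intersection matrix of the exceptional curves, which is negative definite by Mumford--Grauert, hence invertible over $\mathbb Q$. So $\mathcal E\to H^2(E;\mathbb Q)$ is an isomorphism, giving surjectivity, and the kernel of restriction, namely $\rho^*H^2(Y;\mathbb Q)$ by exactness, meets $\mathcal E$ trivially. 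A dimension count then yields $H^2(\widetilde Y;\mathbb Q)=\rho^*H^2(Y;\mathbb Q)\oplus\mathcal E$. In fact the sum is orthogonal, since $\rho^*\beta\cdot E_{x,j}=0$ by the projection formula.

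Finally, each $[E_{x,j}]$ is the class of a curve, hence lies in $\operatorname{NS}(\widetilde Y)\otimes\mathbb Q$ and is of type $(1,1)$. The only delicate step I expect is the identification of $H^2(\widetilde Y,E)$ with $H^2(Y)$ compatibly with $\rho^*$ (excision around the contractible cone neighbourhoods of the singular points). I would handle it by noting that $Y$ and $\widetilde Y$ have the same complement of closed sets $\operatorname{Sing}(Y)$ and $E$, and that $H^{1},H^2$ of a finite set vanish.
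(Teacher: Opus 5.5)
Your proposal is correct and follows essentially the same route as the paper: the long exact sequence of the pair $(\widetilde Y,E)$ together with excision identifies $\rho^*H^2(Y,\mathbb Q)$ with the kernel of the restriction to $H^2(E,\mathbb Q)$, and negative definiteness of the exceptional intersection matrix makes $\mathcal E$ map isomorphically onto $H^2(E,\mathbb Q)$. You actually give a bit more detail than the paper. You derive surjectivity of the restriction from the intersection matrix rather than simply asserting the short exact sequence, and you note via the projection formula that the splitting is orthogonal.
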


\begin{proof}
This is standard for resolutions of quotient surface singularities; see, for
instance, \cite{mum}. The exceptional fiber over each
singular point is a tree of smooth rational curves, and its intersection matrix
is negative definite.

The long exact sequence of the pair $(\widetilde Y,E)$, together with excision,
gives
\[
0
\longrightarrow
H^2(Y,\QQ)
\xrightarrow{\rho^*}
H^2(\widetilde Y,\QQ)
\longrightarrow
H^2(E,\QQ)
\longrightarrow
0.
\]
Since the intersection matrix of the exceptional curves is nondegenerate, their
classes map isomorphically onto $H^2(E,\QQ)$. Hence
$H^2(\widetilde Y,\QQ)
=
\rho^*H^2(Y,\QQ)
\oplus
\mathcal E$.

Finally, the classes generating $\mathcal E$ are classes of algebraic curves, so $\mathcal E
\subset
\operatorname{NS}(\widetilde Y)\otimes\QQ$.
\end{proof}

We can now apply the previous lemma to the intermediate quotient
$Y_H=S/H$ and compare the transcendental Hodge structures of its resolution
with the $H$-invariant part of the transcendental cohomology of $S$. 

\begin{lemma}
\label{lem:transcendental_quotient}

Let $S$ be a smooth projective surface endowed with a $G$-action, $G\cong( \mathbb Z/p)^k$.
Let $H\leq G$, set $Y_H:=S/H$,
and let $\rho_H:\widetilde{Y}_H\longrightarrow Y_H$
be a resolution of singularities. Then there is a natural isomorphism of the transcendental part of the rational
Hodge structures
\[
T(\widetilde{Y}_H)_{\mathbb{Q}}
\cong
T(S)_{\mathbb{Q}}^H.
\]

\end{lemma}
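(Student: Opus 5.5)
We will combine Proposition~\ref{prop:quotient_cohomology_invariants} with Lemma~\ref{lem:cohomology_resolution_quotient}, and then check that the splitting into algebraic and transcendental parts is respected on both sides. By Lemma~\ref{lem:cohomology_resolution_quotient} applied to $Y_H$ (which has only quotient singularities) we have
\[
H^2(\widetilde{Y}_H,\mathbb{Q})=\rho_H^*H^2(Y_H,\mathbb{Q})\oplus\mathcal E,
\]
with $\mathcal E$ spanned by exceptional curves, hence algebraic. By Proposition~\ref{prop:quotient_cohomology_invariants}, $\pi_H^*$ identifies $H^2(Y_H,\mathbb{Q})$ with $H^2(S,\mathbb{Q})^H$ as Hodge structures. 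Composing, we obtain a morphism of rational Hodge structures
\[
\Phi\colon H^2(\widetilde{Y}_H,\mathbb{Q})\twoheadrightarrow H^2(Y_H,\mathbb{Q})\xrightarrow{\ \pi_H^*\ } H^2(S,\mathbb{Q})^H .
\]
Here the first map is the projection killing $\mathcal E$, which is the inverse of $\rho_H^*$ on its image. This map $\Phi$ is surjective, and its kernel is $\mathcal E$.

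Next we compare Néron--Severi parts. Because $\Phi$ is a morphism of Hodge structures, it sends rational $(1,1)$-classes to rational $(1,1)$-classes. By Lefschetz~$(1,1)$ it therefore maps $\operatorname{NS}(\widetilde Y_H)_{\mathbb Q}$ into $\operatorname{NS}(S)_{\mathbb Q}^H$. Conversely, an $H$-invariant rational $(1,1)$-class on $S$ lifts under the surjective Hodge morphism $\Phi$ to a class of type $(1,1)$: take any rational preimage and project it to $\rho_H^*H^2(Y_H,\mathbb Q)$, where $\Phi$ is an isomorphism of Hodge structures. Hence
\[
\Phi^{-1}\bigl(\operatorname{NS}(S)^H_{\mathbb Q}\bigr)=\operatorname{NS}(\widetilde Y_H)_{\mathbb Q}.
\]

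Finally we pass to transcendental parts via orthogonal complements, and this is the step needing care. Using the decomposition $H^2(S,\mathbb Q)=\operatorname{NS}(S)_{\mathbb Q}\oplus T(S)_{\mathbb Q}$, which is orthogonal and $G$-stable because $g^*$ preserves both the cup product and $\operatorname{NS}$, we get
\[
H^2(S,\mathbb Q)^H=\operatorname{NS}(S)^H_{\mathbb Q}\oplus T(S)^H_{\mathbb Q}.
\]
Thus $T(S)^H_{\mathbb Q}$ is the smallest rational Hodge substructure of $H^2(S,\mathbb Q)^H$ whose complexification contains $H^{2,0}$; equivalently, it is the orthogonal of $\operatorname{NS}(S)^H_{\mathbb Q}$ inside the invariant part. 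The same characterization holds for $T(\widetilde Y_H)_{\mathbb Q}$ inside $H^2(\widetilde Y_H,\mathbb Q)$. To transport it, we use that $\pi_H^*$ multiplies the cup product by $|H|$, so $\Phi$ is a similitude on $\rho_H^*H^2(Y_H,\mathbb Q)$. We also use that $\mathcal E$ is orthogonal to $\rho_H^*H^2(Y_H,\mathbb Q)$, by the projection formula, and is contained in $\operatorname{NS}$. Consequently $T(\widetilde Y_H)_{\mathbb Q}\subset\rho_H^*H^2(Y_H,\mathbb Q)$, and $\Phi$ maps it isomorphically onto the orthogonal of $\operatorname{NS}(S)^H_{\mathbb Q}$ in $H^2(S,\mathbb Q)^H$, which is $T(S)^H_{\mathbb Q}$.

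The main obstacle is making the intersection-form compatibility precise on the singular surface $Y_H$. To handle it, I would define the pairing on $H^2(Y_H,\mathbb Q)$ by $\frac{1}{|H|}\pi_H^*a\cdot\pi_H^*b$ and check that it agrees with $\rho_H^*a\cdot\rho_H^*b$ on $\widetilde Y_H$, comparing both on the smooth locus via the projection formula. Alternatively, one can avoid forms entirely and use the minimal-sub-Hodge-structure characterization of $T$ on both sides, which $\Phi$ visibly respects.
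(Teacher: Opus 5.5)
Your argument is correct and follows the paper's proof. Both use the splitting $H^2(\widetilde Y_H,\mathbb Q)\cong H^2(S,\mathbb Q)^H\oplus\mathcal E_H$ from Proposition~\ref{prop:quotient_cohomology_invariants} and Lemma~\ref{lem:cohomology_resolution_quotient}, and then the matching $\operatorname{NS}(\widetilde Y_H)_{\mathbb Q}\cong\operatorname{NS}(S)^H_{\mathbb Q}\oplus\mathcal E_H$.

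The only difference is the last step. The paper passes to the quotients $H^2/\operatorname{NS}$ on both sides, each isomorphic to $T$ via the orthogonal splitting. So the form-compatibility you single out as the main obstacle is not needed for the lemma itself. It is needed only for your stronger claim that $\Phi$ restricts to a similitude $T(\widetilde Y_H)_{\mathbb Q}\to T(S)^H_{\mathbb Q}$, which is the content of Remark~\ref{rem:polarization_quotient}. That claim follows directly from $\deg\rho_H=1$ and $\deg\pi_H=|H|$ acting on $H^4$.
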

\begin{proof}
By Proposition~\ref{prop:quotient_cohomology_invariants} and
Lemma~\ref{lem:cohomology_resolution_quotient}, we have a decomposition
of rational Hodge structures
\begin{equation}\label{eq: H2_min_res_int_quo}
	H^2(\widetilde{Y}_H,\mathbb{Q})
	\cong
	H^2(S,\mathbb{Q})^H
	\oplus
	\mathcal{E}_H,
\end{equation}
where $\mathcal{E}_H$ is the subspace generated by the classes of the irreducible
components of the exceptional divisor of $\rho_H$. In particular, $\mathcal{E}_H
\subseteq
\operatorname{NS}(\widetilde{Y}_H)_\mathbb{Q}$.

On the other hand, the action of $H$ preserves both
$\operatorname{NS}(S)_{\mathbb{Q}}$ and the intersection pairing, hence it also
preserves the transcendental Hodge structure $T(S)_{\mathbb{Q}}
=
\operatorname{NS}(S)_{\mathbb{Q}}^\perp$.

Thus the orthogonal decomposition $
H^2(S,\mathbb{Q})
=
\operatorname{NS}(S)_{\mathbb{Q}}
\oplus
T(S)_{\mathbb{Q}}$
is $H$-equivariant, so 
\[
H^2(S,\mathbb{Q})^H
=
\operatorname{NS}(S)_{\mathbb{Q}}^H
\oplus
T(S)_{\mathbb{Q}}^H.
\]

Furthermore, note that  
\[
\operatorname{NS}(\widetilde{Y}_H)_{\mathbb{Q}}
\cong
\operatorname{NS}(S)_{\mathbb{Q}}^H
\oplus
\mathcal{E}_H.
\]
Indeed, the rational $(1,1)$-classes in the summand
$H^2(S,\mathbb{Q})^H$ are precisely $\textrm{NS}(S)_{\mathbb{Q}}^H$
by the Lefschetz theorem on $(1,1)$-classes. Moreover, all classes belonging to
$\mathcal{E}_H$ are algebraic. Conversely, if a rational algebraic class on
$\widetilde{Y}_H$ is decomposed according to \eqref{eq: H2_min_res_int_quo}, 
then its component in $H^2(S,\mathbb{Q})^H$ is again of Hodge type $(1,1)$,
and therefore belongs to $\operatorname{NS}(S)_{\mathbb{Q}}^H$. Consequently,
\[
\frac{H^2(\widetilde{Y}_H,\mathbb{Q})}
     {\operatorname{NS}(\widetilde{Y}_H)_{\mathbb{Q}}}
\cong
\frac{H^2(S,\mathbb{Q})^H}
     {\operatorname{NS}(S)_{\mathbb{Q}}^H}.
\]
Using the above orthogonal decompositions, the left-hand side is isomorphic to $T(\widetilde{Y}_H)_{\mathbb{Q}}$, while the right-hand side is isomorphic to $T(S)_{\mathbb{Q}}^H$.
\end{proof}

\begin{remark}
\label{rem:polarization_quotient}

The isomorphism of Lemma~\ref{lem:transcendental_quotient} is compatible with the natural intersection pairings up to a scalar factor. More precisely, let
\[
\Phi_H:
T(\widetilde{Y}_H)_{\mathbb{Q}}
\longrightarrow
T(S)_{\mathbb{Q}}^H
\]
denote the isomorphism induced by the quotient map $\pi_H$. Then, for every
$\alpha,\beta\in T(\widetilde{Y}_H)_{\mathbb{Q}}$, one has
\[
\bigl\langle
\Phi_H(\alpha),\Phi_H(\beta)
\bigr\rangle_S
=
|H|\,
\langle\alpha,\beta\rangle_{\widetilde{Y}_H}.
\]
Thus $\Phi_H$ is a similitude of polarized rational Hodge structures with
multiplier $|H|$. Equivalently,
\[
\left(
T(\widetilde{Y}_H)_{\mathbb{Q}},
|H|\langle\ ,\ \rangle_{\widetilde{Y}_H}
\right)
\cong
\left(
T(S)_{\mathbb{Q}}^H,
\langle\ ,\ \rangle_S
\right)
\]
as polarized rational Hodge structures.
\end{remark}

\begin{corollary} \label{cor:transcendental_decomposition_smooth}

Let $S$ be a smooth projective surface endowed with a $G$-action, $G\cong (\mathbb Z/p)^k$ and assume that the quotient $S/G$ is a rational surface.

For every subgroup $H\leq G$ of index $p$, let $Y_H:=S/H$
and let $\widetilde{Y}_H$ be a resolution of singularities of $Y_H$. Then there is
an isomorphism of rational Hodge structures
\begin{equation}\label{eq: dec_trasc_part_smooth_case}
	T(S)_{\mathbb{Q}}
	\cong
	\bigoplus_{\substack{H\leq G\\ [G:H]=p}}
	T(\widetilde{Y}_H)_{\mathbb{Q}},
\end{equation}
where the decomposition is orthogonal with respect to the intersection
pairing, and induces a similitude
as polarized rational Hodge structures.

\end{corollary}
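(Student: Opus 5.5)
The argument combines the rational Hodge decomposition of Proposition~\ref{prop:rational_Hodge_decomposition} with Lemma~\ref{lem:transcendental_quotient}. The $G$-action preserves $\operatorname{NS}(S)_{\mathbb Q}$ and the intersection form, so it preserves $T(S)_{\mathbb Q}=\operatorname{NS}(S)_{\mathbb Q}^\perp$. The projectors $P_G$ and $Q_H=P_H-P_G$ are averages of the $g^*$, so they commute with the $G$-action and preserve both $\operatorname{NS}(S)_{\mathbb Q}$ and $T(S)_{\mathbb Q}$. Restricting the decomposition $\mathrm{id}=P_G+\sum_H Q_H$ into pairwise orthogonal idempotents to $T(S)_{\mathbb Q}$ therefore gives
\[
T(S)_{\mathbb Q}=T(S)_{\mathbb Q}^G\oplus\bigoplus_{[G:H]=p}Q_H\bigl(T(S)_{\mathbb Q}\bigr),
\]
a decomposition into rational Hodge substructures.

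The first step is to show that $T(S)_{\mathbb Q}^G=0$. By Lemma~\ref{lem:transcendental_quotient} applied to $H=G$, we have $T(S)^G_{\mathbb Q}\cong T(\widetilde{Y}_G)_{\mathbb Q}$, where $\widetilde Y_G$ is a resolution of $S/G$. This resolution is a smooth rational surface, so $p_g=0$ and $H^2=\operatorname{NS}$, and its transcendental lattice vanishes.

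Next, for each $H$ of index $p$, the relation $P_H=P_G+Q_H$ together with $P_GQ_H=0$ gives $T(S)_{\mathbb Q}^H=T(S)_{\mathbb Q}^G\oplus Q_H(T(S)_{\mathbb Q})=Q_H(T(S)_{\mathbb Q})$. Lemma~\ref{lem:transcendental_quotient} then identifies this summand with $T(\widetilde Y_H)_{\mathbb Q}$ through $\Phi_H$. Summing over $H$ yields the isomorphism \eqref{eq: dec_trasc_part_smooth_case}.

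The last step is orthogonality. The operators $g^*$ are isometries, so each $P_H$ is self-adjoint for the cup product: the adjoint of $h^*$ is $(h^{-1})^*$, and $H$ is closed under inversion. Hence $Q_H$ is self-adjoint as well. For $H\neq K$ we get $\langle Q_Hx,Q_Ky\rangle=\langle x,Q_HQ_Ky\rangle=0$, because $Q_HQ_K=0$ as shown in the proof of Proposition~\ref{prop:rational_Hodge_decomposition}. On each summand, Remark~\ref{rem:polarization_quotient} shows that $\Phi_H$ multiplies the form by $|H|=p^{k-1}$, which is the same constant for every $H$. The direct sum of the $\Phi_H$ is therefore a similitude of polarized rational Hodge structures with multiplier $p^{k-1}$.

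The main obstacle is the orthogonality and self-adjointness step, which requires checking that cup product is $G$-invariant. Two points need care: $G$ acts by orientation-preserving homeomorphisms, since the action is holomorphic, and the identification in Lemma~\ref{lem:transcendental_quotient} is compatible with the pairing. The second point is exactly the content of Remark~\ref{rem:polarization_quotient}, which I would invoke as stated.
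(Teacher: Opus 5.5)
Your proof is correct and follows the paper's argument essentially step by step. You kill $T(S)^G_{\mathbb Q}$ via Lemma~\ref{lem:transcendental_quotient} and rationality of $S/G$, split $T(S)_{\mathbb Q}$ with the idempotents $P_G,Q_H$ (your $Q_H(T(S)_{\mathbb Q})$ is the paper's $T(S)_{\mathbb Q}\cap W_H$), identify $T(S)^H_{\mathbb Q}$ with $T(\widetilde{Y}_H)_{\mathbb Q}$, and obtain orthogonality and the common multiplier $p^{k-1}$ from self-adjointness of the projectors together with Remark~\ref{rem:polarization_quotient}; your explicit check that the adjoint of $h^*$ is $(h^{-1})^*$ just fills in a point the paper asserts without proof.
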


\begin{proof}
Applying Lemma~\ref{lem:transcendental_quotient} to the subgroup $G$ itself give $T(S)_{\mathbb{Q}}^G
\cong
T(\widetilde{Y_G})_{\mathbb{Q}}$.
Since $Y_G$ is rational, its resolution is rational as well, and hence
$
T(S)_{\mathbb{Q}}^G=0.
$

The rational decomposition of
Proposition~\ref{prop:rational_Hodge_decomposition} is preserved by the
transcendental part, and therefore
\[
T(S)_{\mathbb{Q}}
=
T(S)_{\mathbb{Q}}^G
\oplus
\bigoplus_{\substack{H\leq G\\ [G:H]=p}}
\left(T(S)_{\mathbb{Q}}\cap W_H\right)=\bigoplus_{\substack{H\leq G\\ [G:H]=p}}
\left(T(S)_{\mathbb{Q}}\cap W_H\right).
\]
For a subgroup $H$ of index $p$, we have
\[
T(S)_{\mathbb{Q}}^H
=
T(S)_{\mathbb{Q}}^G
\oplus
\left(T(S)_{\mathbb{Q}}\cap W_H\right)
=
T(S)_{\mathbb{Q}}\cap W_H.
\]
Combining this together with Lemma~\ref{lem:transcendental_quotient}, we obtain 
\[
T(\widetilde{Y}_H)_{\mathbb{Q}}
\cong
T(S)_{\mathbb{Q}}\cap W_H.
\]

Finally, the projectors $P_H$, $P_G$, and $Q_H$ are self-adjoint with respect to
the intersection pairing. Hence the different summands
$T(S)_{\mathbb{Q}}\cap W_H$ are mutually orthogonal. Since $|H|=p^{k-1}$
for every subgroup $H\leq G$ of index $p$, the last assertion follows from
Remark~\ref{rem:polarization_quotient}.
\end{proof}

We can now extend the decomposition \eqref{eq: dec_trasc_part_smooth_case} to SNC $(p,k)$-planes.

\begin{theorem}
\label{prop:transcendental_decomposition_SNC}

Let $\pi\colon X\to Y$ be a SNC $(p,k)$-cover of a rational surface $Y$. Let $\rho:\widetilde{X}\longrightarrow X$
be the minimal resolution of singularities, and let $\widetilde{X}_{min}$ be its minimal model.

For every subgroup $H\leq G$ of index $p$, set $Y_H:=X/H$,
and let $\widetilde{Y}_H$ be the minimal resolution of singularities of $Y_H$.
Then there is an isomorphism of rational Hodge structures
\[
T(\widetilde{X}_{min})_{\mathbb{Q}}
\cong
\bigoplus_{\substack{H\leq G\\ [G:H]=p}}
T(\widetilde{Y}_H)_{\mathbb{Q}}.
\]

Moreover, this decomposition is orthogonal with respect to the intersection pairing,
and induces a similitude
as polarized rational Hodge structures.
\end{theorem}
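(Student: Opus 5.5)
The plan is to reduce to Corollary~\ref{cor:transcendental_decomposition_smooth} by applying it to the smooth surface $S:=\widetilde X$, the minimal resolution of $X$. The first task is to make $G$ act on $\widetilde X$. The singularities of $X$ are cyclic quotient singularities (Proposition~\ref{prop: type_of_sing}), and the minimal resolution is unique, hence functorial. Therefore every automorphism $g\in G$ of $X$ lifts uniquely to $\widetilde X$, and this gives a biregular $G$-action with $\rho$ equivariant. The quotient $\widetilde X/G$ is then a normal surface that dominates birationally, and maps finitely onto, $X/G\cong Y$. I would rather not work with this quotient directly. Instead, I will use Lemma~\ref{lem:transcendental_quotient} with $H=G$: it gives $T(\widetilde X)^G_\QQ\cong T(\widetilde{\widetilde X/G})_\QQ$, and the resolution of $\widetilde X/G$ is birational to the rational surface $Y$, so this space vanishes. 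This is exactly the hypothesis used in Corollary~\ref{cor:transcendental_decomposition_smooth}, whose proof goes through verbatim once $T(S)^G_\QQ=0$. The corollary then yields an orthogonal decomposition
\[
T(\widetilde X)_\QQ\cong\bigoplus_{[G:H]=p} T(\widetilde{Z}_H)_\QQ,
\]
where $Z_H:=\widetilde X/H$ and $\widetilde Z_H$ is a resolution of $Z_H$. The scaling by $|H|=p^{k-1}$ makes this a similitude.

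Next I would identify $T(\widetilde Z_H)_\QQ$ with $T(\widetilde Y_H)_\QQ$, where $\widetilde Y_H$ is the minimal resolution of $Y_H=X/H$. The equivariant map $\rho$ descends to a proper birational morphism $Z_H\to Y_H$, so $\widetilde Z_H$ and $\widetilde Y_H$ are birational smooth projective surfaces. By weak factorization for surfaces, they are connected by blow-ups at points. Blow-ups only add classes of exceptional curves, which are algebraic, so the rational transcendental Hodge structure is a birational invariant and $T(\widetilde Z_H)_\QQ\cong T(\widetilde Y_H)_\QQ$. One could instead resolve $Z_H$ by a further $H$-equivariant blow-up; either way, transcendental lattices are unchanged.

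Finally, I would pass from $\widetilde X$ to $\widetilde X_{min}$. The morphism $\widetilde X\to\widetilde X_{min}$ is a composition of blow-downs of $(-1)$-curves. As in the proof of Proposition~\ref{prop: bound_on_the _prime_p}, this gives $T(\widetilde X)\cong T(\widetilde X_{min})$ as integral lattices, compatibly with the intersection form. Orthogonality and the similitude statement transfer unchanged, because the pullback from $\widetilde X_{min}$ is an isometry onto its image.

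I expect the main obstacle to be the compatibility of the polarizations in the middle step. The similitude of Remark~\ref{rem:polarization_quotient} is stated for the quotient $Z_H$, and its intersection form must be matched with that of $\widetilde Y_H$. The morphism $\widetilde Z_H\to\widetilde Y_H$ (or the chain of blow-ups) is birational, so pullback is an isometry on transcendental parts; I would argue this by noting that transcendental classes are orthogonal to all exceptional curves, so the projection formula preserves the pairing. A secondary subtlety is verifying that the lifted $G$-action on $\widetilde X$ is by automorphisms and not just by birational maps. This follows from the uniqueness of the minimal resolution of normal surface singularities.
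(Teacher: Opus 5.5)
Your proposal is correct and follows essentially the same route as the paper. You lift the $G$-action to $\widetilde X$ and apply Corollary~\ref{cor:transcendental_decomposition_smooth} to $\widetilde X$, using that $\widetilde X/G$ is birational to the rational surface $Y$. You then use birational invariance of $T(\cdot)_{\mathbb Q}$, which is isometric on transcendental parts, to replace the resolutions of $\widetilde X/H$ by $\widetilde Y_H$ and $\widetilde X$ by $\widetilde X_{min}$. One harmless slip: the induced map $\widetilde X/G\to Y$ is proper birational and in general not finite, since it contracts the images of the exceptional curves; you never actually use finiteness.
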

\begin{proof}
The action of $G$ on $X$ lifts to its minimal resolution
$\widetilde{X}$. Moreover, the $G$-equivariant morphism $\rho$
induces a birational morphism $\widetilde{X}/G\to X/G\cong Y$. Hence $\widetilde{X}/G$ is rational.

We can therefore apply
Corollary~\ref{cor:transcendental_decomposition_smooth} to $\widetilde{X}$ and obtain
\[
T(\widetilde{X})_{\mathbb{Q}}
\cong
\bigoplus_{\substack{H\leq G\\ [G:H]=p}}
T(\widetilde{\widetilde{X}/H})_{\mathbb{Q}},
\]
where $\widetilde{\widetilde{X}/H}$ denotes any smooth resolution of
$\widetilde{X}/H$. For every subgroup $H\leq G$, the morphism
$\widetilde{X}\to X$ induces a birational morphism $\widetilde{X}/H\to Y_H$.
Thus $\widetilde{\widetilde{X}/H}$ and $\widetilde{Y}_H$ are birational smooth
projective surfaces. Since the transcendental Hodge structure is a birational
invariant of smooth projective surfaces, $T(\widetilde{\widetilde{X}/H})_{\mathbb{Q}}
\cong
T(\widetilde{Y}_H)_{\mathbb{Q}}$. Similarly, $\widetilde{X}$ and $\widetilde{X}_{min}$ are birational smooth projective
surfaces, hence $T(\widetilde{X})_{\mathbb{Q}}
\cong
T(\widetilde{X}_{min})_{\mathbb{Q}}$.

Finally, birational transformations between smooth projective surfaces add or
remove only algebraic classes generated by exceptional curves, and therefore
preserve the intersection form on the transcendental part. The statement
concerning the polarization follows from
Corollary~\ref{cor:transcendental_decomposition_smooth}.
\end{proof}

We now specialize Theorem~\ref{prop:transcendental_decomposition_SNC}
to SNC $(p,k)$-planes satisfying Property $\bigstar$.

\begin{corollary}
\label{cor:K3_Hodge_decomposition}

Let $X$ be a SNC $(p,k)$-plane satisfying Property $\bigstar$.
For every subgroup $H\leq G$ of index $p$, let $Y_H:=X/H$,
and denote by $\widetilde{Y_H}$ the minimal resolution of singularities of $Y_H$ and with $Y_H^{\circ}$ its minimal model. 
Then
\[
T(X_{\min})_{\mathbb{Q}}
\cong
\bigoplus_{\substack{H\leq G,\ [G:H]=p\\ p_g(Y_H)=1}}
T(Y_H^\circ)_{\mathbb{Q}}.
\]
In particular, every nonzero summand on the right-hand side is the
transcendental Hodge structure of a K3 surface. Moreover, it induces a similitude 
as polarized rational Hodge structures.
\end{corollary}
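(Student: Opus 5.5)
The plan is to specialize Theorem~\ref{prop:transcendental_decomposition_SNC} to the case $Y=\PP^2$ and then sort the summands according to the geometric genus of $Y_H$. First, $\PP^2$ is rational, so Theorem~\ref{prop:transcendental_decomposition_SNC} applies directly and yields an orthogonal decomposition
\[
T(X_{\min})_{\QQ}\cong\bigoplus_{\substack{H\leq G\\ [G:H]=p}}T(\widetilde{Y_H})_{\QQ},
\]
which induces a similitude of polarized rational Hodge structures with multiplier $p^{k-1}$ on each summand. Here $X_{\min}$ stands for $\widetilde{X}_{min}$. Since the transcendental Hodge structure is a birational invariant of smooth projective surfaces, with only exceptional algebraic classes being added or removed, we may replace each $T(\widetilde{Y_H})_{\QQ}$ by $T(Y_H^\circ)_{\QQ}$. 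This is the same argument already used at the end of the proof of Theorem~\ref{prop:transcendental_decomposition_SNC}.

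Next, we invoke Property~$\bigstar$: each $Y_H$, for $H$ of index $p$, is an SNC $(p,1)$-plane with building data as in Lemma~\ref{lem: interm_building_data}, and its geometric genus is $0$ or $1$.

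If $p_g(Y_H)=0$, Theorem~\ref{thm: main_thm_(p,1)_planes_pg=0} shows that $\widetilde{Y_H}$ is rational. Hence $H^2(\widetilde{Y_H},\QQ)$ consists entirely of algebraic classes, so $T(\widetilde{Y_H})_{\QQ}=0$ and the summand drops out.

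If $p_g(Y_H)=1$, Theorem~\ref{thm: pg1_Zp_cover_structure} shows that $Y_H^\circ$ is a K3 surface. Its transcendental lattice therefore has $h^{2,0}=1$ and is a Hodge structure of K3 type; it is nonzero since $p_g=1$. Discarding the zero summands, we obtain exactly the stated sum over the $H$ with $p_g(Y_H)=1$. Orthogonality and the similitude statement pass to the sub-sum, because only zero summands have been removed.

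The main point requiring care is the identification of the resolutions. The $\widetilde{Y_H}$ appearing in Theorem~\ref{prop:transcendental_decomposition_SNC} is the minimal resolution of $Y_H=X/H$, while the genus dichotomy is stated for the intermediate cover $Y_{\ker\chi}$ of Lemma~\ref{lem: interm_building_data}; we must check that these are the same surface. The check is that $X/H\to\PP^2$ is the $G/H\cong\ZZ/p$-cover whose building data are those given in the lemma. The uniqueness in Theorem~\ref{thm: Par91a}, together with the torsion-free Picard group of $\PP^2$, then identifies the two surfaces, and so their geometric genera agree by formula~\eqref{eq: geom_genus_int_quotient}.
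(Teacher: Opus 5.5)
Your proposal is correct and follows the paper's proof essentially verbatim. You specialize Theorem~\ref{prop:transcendental_decomposition_SNC} to the rational base $\PP^2$, drop the $p_g(Y_H)=0$ summands by rationality (Proposition~\ref{prop: rationality_p,1_planes_pg=0}), and identify the $p_g(Y_H)=1$ summands with $T(Y_H^\circ)_{\QQ}$ of a K3 surface via Theorem~\ref{thm: pg1_Zp_cover_structure} and birational invariance. Your extra identification step through uniqueness in Theorem~\ref{thm: Par91a} is harmless but unnecessary, since Lemma~\ref{lem: interm_building_data} already states that $X/H\to\PP^2$ is itself the SNC $\ZZ/p$-cover with those building data.
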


\begin{proof}
By Property $\bigstar$, every intermediate $(p,1)$-plane $Y_H$ has geometric
genus either $0$ or $1$.

By Proposition~\ref{prop: rationality_p,1_planes_pg=0}, if $p_g(Y_H)=0$, then the
minimal resolution of $Y_H$ is rational. Hence $T(\widetilde{Y}_H)_{\mathbb{Q}}=0$. On the other hand, by Theorem~\ref{thm: pg1_Zp_cover_structure}, if $p_g(Y_H)=1$, then its
minimal model $Y_H^\circ$ is a K3 surface. Since $\widetilde{Y}_H$ and
$Y_H^\circ$ are birational smooth projective surfaces, $T(\widetilde{Y}_H)_{\mathbb{Q}}
\cong
T(Y_H^\circ)_{\mathbb{Q}}$. The result now follows immediately from
Theorem~\ref{prop:transcendental_decomposition_SNC}.
\end{proof}
%
%
%
\begin{corollary}
\label{cor:formule_rk_T_X_and_Pic_numb_p_k_plane}

Let $X$ be a SNC $(p,k)$-plane, and let $\rho:\widetilde{X}\longrightarrow X$
be its minimal resolution of singularities. For every subgroup $H\leq G$ of
index $p$, let $\widetilde{Y}_H$ be the minimal resolution of $Y_H:=X/H$.
Then
\[
\operatorname{rk}T(\widetilde{X})
=
\sum_{\substack{H\leq G\\ [G:H]=p}}
\operatorname{rk}T(\widetilde{Y}_H),
\]
and
\begin{equation}\label{eq: rk_(p,k)_plane_and_Pic_numb}
\rho(\widetilde{X})
=
12\chi(\mathcal{O}_{\widetilde{X}})
-
K_{\widetilde{X}}^2
-
2
-
\sum_{\substack{H\leq G\\ [G:H]=p}}
\operatorname{rk}T(\widetilde{Y}_H).
\end{equation}

Furthermore,
\[
\rho(X)
=
12\chi(\mathcal{O}_{\widetilde{X}})
-
K_{\widetilde{X}}^2
-
2
-
\sum_{\substack{H\leq G\\ [G:H]=p}}
\operatorname{rk}T(\widetilde{Y}_H)
-
\sum_{x\in\operatorname{Sing}(X)}\ell_x,
\]
where $\ell_x$ is the number of irreducible components of the exceptional
divisor over the singular point $x$.
\end{corollary}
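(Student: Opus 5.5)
The first formula is a rank count applied to Theorem~\ref{prop:transcendental_decomposition_SNC}. For an SNC $(p,k)$-plane the base is $\mathbb P^2$, which is rational, so that theorem gives an isomorphism of rational Hodge structures
\[
T(\widetilde{X}_{min})_{\mathbb Q}\cong \bigoplus_{[G:H]=p} T(\widetilde{Y}_H)_{\mathbb Q}.
\]
Since $\widetilde{X}$ and $\widetilde{X}_{min}$ are birational smooth projective surfaces related by blow-ups, their transcendental lattices are isomorphic. This was already used in the proof of Proposition~\ref{prop: bound_on_the _prime_p}. Taking $\mathbb Q$-dimensions then yields $\operatorname{rk}T(\widetilde X)=\sum_H \operatorname{rk}T(\widetilde Y_H)$.

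For the Picard number of $\widetilde X$ I would write
\[
\rho(\widetilde X)=b_2(\widetilde X)-\operatorname{rk}T(\widetilde X),
\]
using the orthogonal decomposition $H^2(\widetilde X,\mathbb Q)=\operatorname{NS}(\widetilde X)_{\mathbb Q}\oplus T(\widetilde X)_{\mathbb Q}$. This decomposition holds because the intersection form is nondegenerate on $\operatorname{NS}$ by the Hodge index theorem. To compute $b_2$, I would use Noether's formula $e(\widetilde X)=12\chi(\mathcal O_{\widetilde X})-K_{\widetilde X}^2$ together with $e=2-2b_1+b_2$. Lemma~\ref{lem: (p,k)_planes_are_regular} gives $q(\widetilde X)=0$, hence $b_1=0$ and
\[
b_2(\widetilde X)=12\chi(\mathcal O_{\widetilde X})-K_{\widetilde X}^2-2.
\]
Substituting this into the expression for $\rho(\widetilde X)$ gives \eqref{eq: rk_(p,k)_plane_and_Pic_numb}.

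For $\rho(X)$ I would apply Lemma~\ref{lem:cohomology_resolution_quotient} to $\rho\colon\widetilde X\to X$; its singularities are cyclic quotient singularities by Proposition~\ref{prop: type_of_sing}. The lemma gives
\[
H^2(\widetilde X,\mathbb Q)=\rho^*H^2(X,\mathbb Q)\oplus\mathcal E,
\]
where $\mathcal E$ is spanned by the exceptional curves. These curves are linearly independent because their intersection matrix is negative definite, so $\dim\mathcal E=\sum_x\ell_x$. Algebraic classes decompose compatibly with this splitting, as in the proof of Lemma~\ref{lem:transcendental_quotient}. Hence $\operatorname{NS}(\widetilde X)_{\mathbb Q}=\rho^*\operatorname{NS}(X)_{\mathbb Q}\oplus\mathcal E$, and therefore $\rho(X)=\rho(\widetilde X)-\sum_x \ell_x$.

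The one step needing some care is this last identification of $\rho(X)$ for the singular normal surface $X$. One must interpret $\operatorname{NS}(X)_{\mathbb Q}$ as the $(1,1)$-classes in the pure Hodge structure $H^2(X,\mathbb Q)$, which is pure since $X$ has quotient singularities, and check that $\rho^*$ is injective on it. Injectivity holds because $\rho^*H^2(X)$ is exactly the orthogonal complement of $\mathcal E$; equivalently, $\mathbb Q$-Cartier classes on $X$ correspond to classes on $\widetilde X$ orthogonal to all exceptional curves. Everything else is formal bookkeeping.
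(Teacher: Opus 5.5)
Your proposal is correct and follows the paper's argument. The rank identity comes from Theorem~\ref{prop:transcendental_decomposition_SNC} plus birational invariance of $T$. The formula for $\rho(\widetilde X)$ is the computation $b_2=12\chi(\mathcal O_{\widetilde X})-K_{\widetilde X}^2-2$ (via Noether and $q=0$), which the paper simply calls ``standard''. The identity $\rho(X)=\rho(\widetilde X)-\sum_x\ell_x$ is the same linear-independence argument for the exceptional curves that the paper uses.

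One small point: injectivity of $\rho^*$ on $H^2(X,\mathbb Q)$ comes from the exact sequence in the proof of Lemma~\ref{lem:cohomology_resolution_quotient}. It does not follow from identifying the image with $\mathcal E^\perp$, but this does not affect your conclusion.
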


\begin{proof}
The first equality follows directly from Theorem~\ref{prop:transcendental_decomposition_SNC}. While Equation \ref{eq: rk_(p,k)_plane_and_Pic_numb} is standard.
Finally, the exceptional curves of the resolution of the quotient singularities of
$X$ are linearly independent in
$\operatorname{NS}(\widetilde{X})_{\mathbb{Q}}$. Hence resolving a singular point
$x$ increases the Picard number by precisely $\ell_x$. Thus
\[
\rho(X)
=
\rho(\widetilde{X})
-
\sum_{x\in\operatorname{Sing}(X)}\ell_x,
\]
which gives the last formula.
\end{proof}
\subsection{The Mumford-Tate conjecture for SNC \texorpdfstring{$(p,k)$}{(p,k)}-planes satisfying Property \texorpdfstring{$\bigstar$}{*}}

We conclude this section explaining briefly how to address the Mumford-Tate conjecture for SNC
$(p,k)$-planes constructed in this paper.

A detailed and comprehensive treatment of the Mumford-Tate groups arising from
Hodge structures of K3 type associated with abelian covers of rational surfaces
can be found in \cite{GP22}.
We start with some facts on categories of motives and fix the notation that will be used here. For a more detailed introduction to the subject, we refer to \cite{Moonen,Moonenh1}.

\medskip 

Let $K \subseteq \CC$ be a number field. We consider the category of André motives over~$K$, that we denote with ~$\Mot_K$, introduced in \cite{A96} to which we refer for all the definition and fact about this category. 
Therefore a \emph{motive} is a triple $(X,t,m)$, where $X$ is a smooth projective variety over~$K$, $t$ is an idempotent motivated cycle on $X \times X$,
and $m\in\ZZ$. Given a smooth projective variety $X$ over~$K$, we denote by $\HH(X)$ the object $(X,\Delta_X,0)$, where $\Delta_X$ is the class of the diagonal in $X \times X$.

This category of motives has several good properties, for example is Tannakian, semisimple, graded and polarized, see \cite[Thm 0.2]{A96}. Moreover the  K\"unneth decomposition always exists 
\[
\HH(X)=\bigoplus_i \HH^i(X),
\]
where $\HH^i(\_)$ is the \emph{$i$-th K\"unneth component} and is a contravariant functor from the category of smooth projective varieties over~$K$ to~$\Mot_K$, see \cite[Proposition~2.2]{A96}.

We denote with $\AbMot_K$ the Tannakian subcategory of \emph{abelian motives} that are generated by the motives of the form $\HH(X)$, where $X$ is either an abelian variety or $X=\Spec(L)$ for some finite extension $L/K$ with $L\subset\CC$.

To conclude with the notation, for a given Weil cohomology $\mathcal{H}$ (here we restrict only to  the Betti and the \'etale cohomology),  we introduce the \emph{realization functor} $H_{\mathcal{H}}(\_)$ from $\Mot_K$ to $\Vect_{\QQ}$ for the Betti cohomology and to $\Vect_{\QQ_{\ell}}$ for the \'etale cohomology, such that $H_{\mathcal{H}} \circ \HH = \mathcal{H}$. The Betti realization functor on \(\mathrm{Mot}_K\) is faithful, see 
 \cite[~\S6]{A96}.

\medskip 

We now want to consider the case of SNC $(p,k)$-planes. 


Throughout the remainder of this subsection, we work over a number
field $K\subset\mathbb C$ over which the covering, the action of $G\cong (\mathbb Z/p)^k$,
and all the intermediate models and maps used below are defined.
We also assume that $\operatorname{NS}(S_{\mathbb C})_{\mathbb Q}$
is generated by classes of divisors defined over $K$.
These conditions can be achieved after a single finite extension
of the original field of definition, see Subsection \ref{subsec: equations_p,k_plane_in_suitable_proj_planes}.

Let $\pi: X\longrightarrow \mathbb{P}^2$
be a SNC $(p,k)$-plane, and let $\rho: \widetilde{X}\longrightarrow X$
be its minimal resolution. We denote $S:=\widetilde{X}$ for short. We shall work motivically with $S$, rather than with the singular model $X$. The action of $G$ on $X$ lifts naturally to $S$.
So it makes sense to ask if the decomposition in \eqref{eq: dec_trasc_part_smooth_case} is induced by motivated
correspondences.

	Following the notation introduced in Proposition \ref{prop:rational_Hodge_decomposition}, for every subgroup $H\leq G$ of index $p$, we define
	\[
	P_H:=\frac{1}{|H|}\sum_{h\in H}\Gamma_h,
	\qquad
	P_G:=\frac{1}{|G|}\sum_{g\in G}\Gamma_g,
	\qquad
	Q_H:=P_H-P_G,
	\]
	where $\Gamma_g\subset S\times S$ denotes the graph of the automorphism induced by $g$.
	
\begin{lemma}
\label{lem:motivated_projectors}
Let $S=\widetilde{X}$ be the minimal resolution of singularities of a SNC $(p,k)$-plane. Then $P_G$ and the $Q_H$'s define mutually orthogonal idempotent
endomorphisms of $H^2_{\mathrm{mot}}(S)$, and
\[
P_G+
\sum_{\substack{H\leq G\\ [G:H]=p}}
Q_H
=
\operatorname{id}_{H^2_{\mathrm{mot}}(S)}.
\]
Consequently, defined  $M_0:=\operatorname{Im}(P_G)$ and $
M_H:=\operatorname{Im}(Q_H)$, we have 
\[
H^2_{\mathrm{mot}}(S)
\simeq
M_0
\oplus
\bigoplus_{\substack{H\leq G\\ [G:H]=p}}
M_H.
\]
Moreover, under Betti realization, we have $H_B(M_0)=W_0$ and $H_B(M_H)=W_H$, where $W_0$ and $W_H$ are the $\mathbb Q$-vector spaces that appear in Proposition \ref{prop:rational_Hodge_decomposition}.
\end{lemma}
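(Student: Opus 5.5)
The plan is to transport the purely group-theoretic argument of Proposition~\ref{prop:rational_Hodge_decomposition} from $H^2(S,\mathbb Q)$ to the motive, using only that the graphs of automorphisms compose like the group. First, each $g\in G$ acts as an automorphism of the smooth projective surface $S=\widetilde{X}$ defined over $K$. Its graph $\Gamma_g$ is an algebraic cycle, hence a motivated cycle on $S\times S$. Composition of correspondences gives $\Gamma_g\circ\Gamma_h=\Gamma_{gh}$ (up to the usual order convention, irrelevant since $G$ is abelian), and $\Gamma_0=\Delta_S$. Moreover, $\Gamma_g$ respects the Künneth decomposition: the Künneth projectors in André's category are motivated and commute with every correspondence by weight reasons. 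Hence $\Gamma_g$ induces an endomorphism of $H^2_{\mathrm{mot}}(S)$, and $g\mapsto \Gamma_g$ is a homomorphism from $G$ into $\operatorname{Aut}(H^2_{\mathrm{mot}}(S))$. Because we work with $\mathbb Q$-coefficients, the averages $P_H$, $P_G$ and the differences $Q_H$ are well-defined motivated endomorphisms.

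Second, the algebraic identities are verified exactly as in the proof of Proposition~\ref{prop:rational_Hodge_decomposition}, now inside the group algebra $\mathbb Q[G]$, which maps to $\operatorname{End}(H^2_{\mathrm{mot}}(S))$. In $\mathbb Q[G]$ the element $e_H=\frac1{|H|}\sum_{h\in H}h$ is idempotent. For two distinct index-$p$ subgroups $H\neq K$ one has $HK=G$, which gives $e_He_K=e_G$; also $e_He_G=e_G$. It follows that the elements $e_H-e_G$ are pairwise orthogonal idempotents, each orthogonal to $e_G$. For the identity $e_G+\sum_H(e_H-e_G)=1$, I would check it on characters: $\mathbb Q[G]\otimes\mathbb C\cong\prod_{\chi\in G^*}\mathbb C$, and the identity holds because each nontrivial $\chi$ is trivial on exactly one index-$p$ subgroup, namely $\ker\chi$. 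Since it is an identity in $\mathbb Q[G]$, it holds in $\operatorname{End}(H^2_{\mathrm{mot}}(S))$ by functoriality.

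Third, $\Mot_K$ is pseudo-abelian (in fact semisimple abelian), so every idempotent endomorphism has an image. A complete family of orthogonal idempotents therefore yields the direct sum decomposition $H^2_{\mathrm{mot}}(S)\simeq M_0\oplus\bigoplus_H M_H$. Finally, the Betti realization is a $\mathbb Q$-linear tensor functor sending $\Gamma_g$ to $g^*$ on $H^2(S,\mathbb Q)$. It therefore sends $P_G$ and $Q_H$ to the operators of \eqref{eq:averaging_op}, and it sends images of idempotents to images, giving $H_B(M_0)=W_0$ and $H_B(M_H)=W_H$.

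The only genuinely delicate point is the first step. One has to make sure that the $\Gamma_g$ are defined over $K$, which is guaranteed by the standing assumption that the $G$-action on $S$ is defined over $K$. One also has to check that they commute with the Künneth projector $\pi^2$, so that they really act on the summand $H^2_{\mathrm{mot}}(S)$. For the latter I would cite \cite[Prop.~2.2]{A96}: the Künneth components are functorial for morphisms of motives, and the rest is formal.
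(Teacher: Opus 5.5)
Your proposal is correct and follows the same route as the paper. The paper's proof only observes that $P_H$, $P_G$, $Q_H$ are motivated, being $\mathbb{Q}$-combinations of graphs of automorphisms, and then repeats the argument of Proposition~\ref{prop:rational_Hodge_decomposition}; you carry this out explicitly in $\mathbb{Q}[G]$, including the completeness identity via characters (which the paper leaves implicit), and transport the relations along $g\mapsto\Gamma_g$.

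One cosmetic remark: the functoriality you need is just that of $H^2_{\mathrm{mot}}$ with respect to the morphisms of varieties $g\colon S\to S$, which is what \cite[Prop.~2.2]{A96} provides, so the separate commutation argument with the K\"unneth projectors is not needed.
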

\begin{proof}
	The proof goes as the proof of Proposition \ref{prop:rational_Hodge_decomposition} once one observes that $P_H$, $P_G$ and  $Q_H$ are motived. 
\end{proof}
The summands \(M_H\) may contain algebraic classes, including those arising from the resolution of singularities. We therefore separate their algebraic and transcendental parts and identify the latter with the transcendental motives of the intermediate K3 surfaces, whenever the SNC $(p,k)$-plane satisfies the further Property $\bigstar$. 

\begin{proposition}
\label{prop:motivated_K3_summand}

Let $\pi:X\to\mathbb P^2$ be a SNC $(p,k)$-plane satisfying
Property $\bigstar$, and let $S=\widetilde X$ be its minimal
resolution of singularities. For every subgroup $H\leq G$ of index $p$, let $M_H$ be the
motivated summand of $H^2_{\mathrm{mot}}(S)$ defined by $Q_H$.

There exist motivated projectors $\pi_{\mathrm{alg}}$ and
$\pi_{\mathrm{tr}}$ on $H^2_{\mathrm{mot}}(S)$ whose Betti
realizations project onto $\operatorname{NS}(S)_{\mathbb Q}$
and $T(S)_{\mathbb Q}$, respectively.

Setting
\[
M_H^{\mathrm{alg}}
:=
\operatorname{Im}(Q_H\circ \pi_{\mathrm{alg}}),
\qquad
M_H^{\mathrm{tr}}
:=
\operatorname{Im}(Q_H\circ \pi_{\mathrm{tr}}),
\]
one obtains
\[
M_H
\simeq
M_H^{\mathrm{alg}}\oplus M_H^{\mathrm{tr}}.
\]

If $p_g(Y_H)=1$, then there exists a motivated direct
summand
\[
T_{\mathrm{mot}}(Y_H^\circ)
\subset H^2_{\mathrm{mot}}(Y_H^\circ)
\]
whose Betti realization is $T(Y_H^\circ)_{\mathbb Q}$,
and an isomorphism
\[
M_H^{\mathrm{tr}}
\simeq
T_{\mathrm{mot}}(Y_H^\circ)
\]
in $\operatorname{Mot}_K$. Moreover, $M_H^{\mathrm{alg}}$ is a direct sum of Lefschetz motives.
\end{proposition}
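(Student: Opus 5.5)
The plan is to build everything from algebraic cycles and graphs of morphisms, since these are motivated by definition, and to use that the Betti realization on $\operatorname{Mot}_K$ is faithful and that $\operatorname{Mot}_K$ is semisimple and polarized.

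\emph{Step 1: the algebraic and transcendental projectors.} Since $\operatorname{NS}(S_{\mathbb C})_{\mathbb Q}$ is generated by divisors $D_1,\dots,D_\rho$ defined over $K$, and the intersection form on $\operatorname{NS}(S)_{\mathbb Q}$ is nondegenerate by the Hodge index theorem, let $(a_{ij})$ be the inverse of the Gram matrix $(D_i\cdot D_j)$. Set
\[
\pi_{\mathrm{alg}}:=\sum_{i,j} a_{ij}\,[D_i\times D_j]\in \mathrm{CH}^2(S\times S)_{\mathbb Q},
\]
composed with the Künneth projector onto $H^2$, which is motivated by \cite[Prop.~2.2]{A96}. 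Its Betti realization is the orthogonal projection onto $\operatorname{NS}(S)_{\mathbb Q}$, and $\pi_{\mathrm{tr}}:=\operatorname{id}_{H^2_{\mathrm{mot}}(S)}-\pi_{\mathrm{alg}}$ realizes to the projection onto $T(S)_{\mathbb Q}$. Both are idempotent by faithfulness of realization. The image of $\pi_{\mathrm{alg}}$ is a sum of $\rho$ copies of $\mathbb L$, with each $D_i$ giving a map $\mathbb L\to H^2_{\mathrm{mot}}(S)$.

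\emph{Step 2: splitting $M_H$.} The classes $\Gamma_h^*$ preserve $\operatorname{NS}(S)_{\mathbb Q}$ and the intersection form, so in Betti realization $Q_H$ commutes with the orthogonal projection $\pi_{\mathrm{alg}}$. By faithfulness this also holds motivically. Hence $Q_H\pi_{\mathrm{alg}}$ and $Q_H\pi_{\mathrm{tr}}$ are orthogonal idempotents summing to $Q_H$, which gives $M_H\simeq M_H^{\mathrm{alg}}\oplus M_H^{\mathrm{tr}}$. Moreover, $M_H^{\mathrm{alg}}$ is a direct summand of $\operatorname{Im}\pi_{\mathrm{alg}}\simeq\mathbb L^{\oplus\rho}$. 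By semisimplicity and the fact that $\operatorname{End}(\mathbb L)=\mathbb Q$, it is itself a sum of Lefschetz motives.

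\emph{Step 3: the K3 summand.} Applying Step 1 to $Y_H^\circ$ gives $T_{\mathrm{mot}}(Y_H^\circ):=\operatorname{Im}(\pi_{\mathrm{tr}}^{Y_H^\circ})$. To compare it with $M_H^{\mathrm{tr}}$, consider the $G$-equivariant rational map $S\dashrightarrow S/H\dashrightarrow Y_H^\circ$. Resolve its indeterminacy by a blow-up $\beta\colon\widehat S\to S$, obtaining a morphism $f\colon\widehat S\to Y_H^\circ$, and let $\Gamma:=\beta_*\circ f^*$, the closure of the graph. This is algebraic, hence motivated, and gives $\Gamma\colon H^2_{\mathrm{mot}}(Y_H^\circ)\to H^2_{\mathrm{mot}}(S)$. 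Set
\[
\Phi:=Q_H\circ\pi_{\mathrm{tr}}\circ\Gamma\circ\pi_{\mathrm{tr}}^{Y_H^\circ}\colon T_{\mathrm{mot}}(Y_H^\circ)\to M_H^{\mathrm{tr}}.
\]
Its Betti realization is the isomorphism $T(Y_H^\circ)_{\mathbb Q}\cong T(S)^H_{\mathbb Q}=T(S)_{\mathbb Q}\cap W_H$. This follows from Lemma \ref{lem:transcendental_quotient}, the argument in Corollary \ref{cor:transcendental_decomposition_smooth} (using $T(S)^G_{\mathbb Q}=0$), and the birational invariance of $T$. The exceptional classes of $\beta$ and of the resolution lie in $\operatorname{NS}$ and are killed by the $\pi_{\mathrm{tr}}$'s.

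\emph{Step 4: the main obstacle.} The hard point is upgrading $\Phi$ from a Betti isomorphism to an isomorphism in $\operatorname{Mot}_K$. For this, take the transpose $\Gamma^t$ and, by Remark \ref{rem:polarization_quotient}, rescale the corresponding composition $\Psi$ by $1/|H|$. Its realization is then the inverse of $H_B(\Phi)$. Since $\Phi\circ\Psi$ and $\Psi\circ\Phi$ are motivated endomorphisms realizing to identities, faithfulness of the Betti realization makes them identities, so $\Phi$ is an isomorphism. The care needed is in checking that the graph correspondence realizes exactly to the pullback identification of Proposition \ref{prop:quotient_cohomology_invariants}, up to the factor $|H|$, on transcendental classes. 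Every other step only uses the fact that correspondences from divisors and graphs are motivated.
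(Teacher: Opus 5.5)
Your proposal is correct and follows essentially the same route as the paper: the inverse-Gram-matrix projector $\pi_{\mathrm{alg}}$, its commutation with $Q_H$ via faithfulness, the graph correspondence of the resolved map $S\dashrightarrow Y_H^\circ$, and its transpose rescaled by $1/|H|$. The paper verifies ${}^tC_H\circ C_H=|H|$ on $T(Y_H^\circ)_{\mathbb Q}$ directly from $\mu_H^*\mu_{H*}f_H^*u=f_H^*u$ and the projection formula, which is exactly the check you defer to Remark~\ref{rem:polarization_quotient}. The only real variation is in how $T_{\mathrm{mot}}(Y_H^\circ)$ is defined: you take it intrinsically as $\operatorname{Im}(\pi_{\mathrm{tr}}^{Y_H^\circ})$, which needs a N\'eron--Severi projector on the K3 surface over $K$, whereas the paper sets $T_{\mathrm{mot}}(Y_H^\circ):=\operatorname{Im}(\beta_H\circ\alpha_H)$, transporting the splitting from $S$ and so needing no hypothesis on $\operatorname{NS}(Y_H^\circ)$.
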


\begin{proof}
Choose divisor classes $D_1,\ldots,D_{\rho}$ defined over $K$ and forming a basis of
$\operatorname{NS}(S)_{\mathbb Q}$.

Let
\[
A=(D_i\cdot D_j)_{i,j},
\qquad
A^{-1}=(a^{ij})_{i,j}.
\]
Since the intersection form on
$\operatorname{NS}(S)_{\mathbb Q}$ is nondegenerate,
the algebraic correspondence
\[
\pi_{\mathrm{alg}}
:=
\sum_{i,j=1}^{\rho}
a^{ij}D_i\times D_j
\]
induces the orthogonal projector $H^2(S,\mathbb Q)
\longrightarrow
\operatorname{NS}(S)_{\mathbb Q}.
$

Thus, $\pi_{\mathrm{alg}}$ is a motivated projector
on $H^2_{\mathrm{mot}}(S)$. We set
$
\pi_{\mathrm{tr}}
:=
\operatorname{id}_{H^2_{\mathrm{mot}}(S)}
-\pi_{\mathrm{alg}}.
$
Its Betti realization is the orthogonal projector onto
$T(S)_{\mathbb Q}$.

Since the action of $G$ preserves both
$\operatorname{NS}(S)_{\mathbb Q}$ and the intersection pairing, these projectors commute with every $Q_H$, $H\leq G$.
Consequently,
\[
M_H
\simeq
M_H^{\mathrm{alg}}\oplus M_H^{\mathrm{tr}},
\]
with
\[
H_B(M_H^{\mathrm{tr}})
=
W_H\cap T(S)_{\mathbb Q}.
\] Suppose now that $p_g(Y_H)=1$, so from Theorem \ref{thm: pg1_Zp_cover_structure} the minimal model $Y_H^\circ$ of $Y_H$ is a K3 surface.
Looking at the proof of Corollary \ref{cor:transcendental_decomposition_smooth}, we have 
\[
W_H\cap T(S)_{\mathbb Q}
\cong
T(Y_H^\circ)_{\mathbb Q}.
\]
Consider the rational map
$
S\dashrightarrow Y_H^\circ
$
induced by the quotient map $\pi_H\colon S\to Y_H$ and the corresponding
birational transformations. We may resolve its indeterminacies over $K$:
\[
\begin{tikzcd}
	Z_H \arrow[r, "f_H"] \arrow[d, "\mu_H"'] & Y_H^\circ \\
	S \arrow[ur, dashed]            &
\end{tikzcd}
\]
where $\mu_H$ is birational and $f_H$ is generically
finite of degree $d_H=|H|$.

Let $C_H$ be the algebraic correspondence induced by
this diagram, acting on Betti cohomology as
\[
(C_H)_B=\mu_{H*}\circ f_H^*.
\]
Since $C_H$ is algebraic, it is motivated. Define also 
\[
\alpha_H
:=
(\pi_{\mathrm{tr}}\circ Q_H)\circ C_H:
H^2_{\mathrm{mot}}(Y_H^\circ)
\longrightarrow M_H^{\mathrm{tr}}
\]
and
\[
\beta_H := \frac1{d_H}\,
{}^tC_H\circ (Q_H\circ \pi_{\mathrm{tr}}):
M_H^{\mathrm{tr}}
\longrightarrow
H^2_{\mathrm{mot}}(Y_H^\circ).
\]
By the transcendental decomposition established above,
the Betti realization of $\alpha_H$ restricts to an
isomorphism
\[
(\alpha_H)_B:
T(Y_H^\circ)_{\mathbb Q}
\xrightarrow{\sim}
W_H\cap T(S)_{\mathbb Q}.
\]

We claim that $\beta_H$ induces its inverse. Indeed, for $u\in T(Y_H^\circ)_{\mathbb Q}$,
the class $f_H^*u$ belongs to $T(Z_H)_{\mathbb Q}$.
Since $\mu_H$ is birational, one has
\[
\mu_H^*\circ \mu_{H*}\circ f_H^*u=f_H^*u.
\]
Therefore, using the projection formula,
\[
\begin{aligned}
({}^tC_H\circ C_H)u
&=
(f_{H*}\circ \mu_H^*\circ \mu_{H*}\circ f_H^*)u\\
&=
(f_{H*}\circ f_H^*)u\\
&=
d_Hu.
\end{aligned}
\]
Since $Q_H\pi_{\mathrm{tr}}$ acts as the identity
on the transcendental image of $C_H$, it follows that
$
(\beta_H\circ\alpha_H)_B
=
\operatorname{id}_{T(Y_H^\circ)_{\mathbb Q}}
$. As $(\alpha_H)_B$ is an isomorphism onto
$H_B(M_H^{\mathrm{tr}})$, we also obtain
$
(\alpha_H\circ\beta_H)_B
=
\operatorname{id}_{H_B(M_H^{\mathrm{tr}})}
$.

By the faithfulness of the Betti realization functor
on motivated motives,
\[
\alpha_H\circ\beta_H
=
\operatorname{id}_{M_H^{\mathrm{tr}}}.
\]
Setting
\[
e_H:=\beta_H\circ\alpha_H
\in
\operatorname{End}_{\operatorname{Mot}_K}
\bigl(H^2_{\mathrm{mot}}(Y_H^\circ)\bigr), 
\]
note that $e_H^2=e_H$. Since $C_H$ maps algebraic classes to algebraic
classes, the Betti realization of $\alpha_H$ vanishes on
$\operatorname{NS}(Y_H^\circ)_{\mathbb Q}$. Together with
$(\beta_H\circ \alpha_H)_B|_{T(Y_H^\circ)_{\mathbb Q}}
=\operatorname{id}_{T(Y_H^\circ)_{\mathbb Q}}$, this shows that
$(e_H)_B$ is the orthogonal projector onto
$T(Y_H^\circ)_{\mathbb Q}$.

Defining
\[
T_{\mathrm{mot}}(Y_H^\circ)
:=
\operatorname{Im}(e_H),
\]
we obtain mutually inverse motivated isomorphisms
\[
M_H^{\mathrm{tr}}
\simeq
T_{\mathrm{mot}}(Y_H^\circ).
\]

Finally, $M_H^{\mathrm{alg}}$ is a direct summand of
the motivated algebraic part of $H^2(S)$, which is a direct sum
of Lefschetz motives.
\end{proof}

\begin{theorem}
\label{thm:Mumford_Tate_SNC}

Let $\pi:X\longrightarrow\mathbb P^2$ be a SNC $(p,k)$-plane satisfying Property $\bigstar$,
defined over a number field $K\subset\mathbb C$, and let $S:=\widetilde X$ be its minimal resolution of singularities. Then both the Mumford--Tate conjecture and the Tate
conjecture hold for $S$. Consequently, they also hold
for its minimal model $\widetilde{X}_{\min}$.
\end{theorem}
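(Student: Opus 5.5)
The plan is to reduce everything to the motivic decomposition of Proposition~\ref{prop:motivated_K3_summand} and then to invoke the known results for K3 surfaces. By Lemma~\ref{lem:motivated_projectors} and Proposition~\ref{prop:motivated_K3_summand} we have, in $\Mot_K$,
\[
H^2_{\mathrm{mot}}(S)\simeq M_0\oplus\bigoplus_{[G:H]=p}\bigl(M_H^{\mathrm{alg}}\oplus M_H^{\mathrm{tr}}\bigr).
\]
Every $M_H^{\mathrm{alg}}$ is a sum of Lefschetz motives. Likewise $M_0$ is a sum of Lefschetz motives: its Betti realization $W_0\cong H^2(Y_G,\QQ)$ is algebraic, since $T(S)^G_\QQ=0$ by Corollary~\ref{cor:transcendental_decomposition_smooth}, so $M_0$ is cut out by $\pi_{\mathrm{alg}}$. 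When $p_g(Y_H)=0$, the summand $M_H^{\mathrm{tr}}$ has zero Betti realization, because $\widetilde Y_H$ is rational by Proposition~\ref{prop: rationality_p,1_planes_pg=0}; faithfulness of the realization then gives $M_H^{\mathrm{tr}}=0$. When $p_g(Y_H)=1$, we have $M_H^{\mathrm{tr}}\simeq T_{\mathrm{mot}}(Y_H^\circ)$, a direct summand of $H^2_{\mathrm{mot}}$ of the K3 surface $Y_H^\circ$.

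Next I would use the theorem of André~\cite{A96} that the motive $H^2_{\mathrm{mot}}$ of a K3 surface over $K$ is abelian, coming from its Kuga--Satake abelian variety. Since Lefschetz motives are abelian and $\AbMot_K$ is closed under sums and direct summands, $H^2_{\mathrm{mot}}(S)$ lies in $\AbMot_K$. The irregularity $q(S)$ vanishes by Lemma~\ref{lem: (p,k)_planes_are_regular}, so $H^1$ and $H^3$ vanish. Hence $\HH(S)=\mathbf 1\oplus H^2_{\mathrm{mot}}(S)\oplus\mathbb L^2$ is an abelian motive.

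For abelian motives, André proved that the motivic Galois group coincides with the Mumford--Tate group. The Mumford--Tate conjecture for an abelian motive $M$ then reduces to the Mumford--Tate conjecture for a product of abelian varieties generating $M$. This is where I expect the main obstacle. The Mumford--Tate conjecture is not known for all abelian varieties, so a general argument does not suffice; we need the specific ones here, namely the Kuga--Satake varieties of the K3 surfaces $Y_H^\circ$. I would argue via the direct sum of the rational Hodge structures $T(Y_H^\circ)_\QQ$. The Mumford--Tate conjecture is known for each K3 surface (Tankeev, André). To get it for the sum, I would follow the strategy of \cite{Com19, CP, GP22}: by Commelin's results, the Mumford--Tate conjecture for a product of motives of K3 type holds once it holds for each factor. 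This uses Zarhin's description of their Mumford--Tate groups as orthogonal or unitary groups over the totally real or CM endomorphism field, together with the analysis of the $\ell$-adic monodromy groups. Applying this to $T(S)_\QQ\cong\bigoplus T(Y_H^\circ)_\QQ$ gives the Mumford--Tate conjecture for $H^2(S)$. The algebraic part is handled by the hypothesis that $\NS(S_\CC)$ is generated over $K$, so the Galois action on it is trivial.

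The Tate conjecture then follows in the standard way from the Mumford--Tate conjecture together with the Hodge conjecture for divisors (the Lefschetz $(1,1)$ theorem): Tate classes are Hodge classes, and Hodge classes on $S$ are algebraic. Alternatively, the Tate conjecture for $S$ follows directly from the Tate conjecture for K3 surfaces through the motivated isomorphisms $M_H^{\mathrm{tr}}\simeq T_{\mathrm{mot}}(Y_H^\circ)$. Finally, $\widetilde X_{\min}$ is obtained from $S$ by contracting $(-1)$-curves defined over $K$, after enlarging $K$ if needed. Therefore $H^2(S)=H^2(\widetilde X_{\min})\oplus\QQ(-1)^{r}$, with the summands realized by algebraic correspondences, and both conjectures descend to $\widetilde X_{\min}$.
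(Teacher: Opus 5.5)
Your proposal is correct and follows essentially the same route as the paper. It splits $H^2_{\mathrm{mot}}(S)$ via the motivated projectors into Lefschetz summands and the transcendental K3 motives $T_{\mathrm{mot}}(Y_H^\circ)$, uses Andr\'e's theorem that these are abelian motives for which the Mumford--Tate conjecture holds, passes to the direct sum via Commelin's theorem, and deduces the Tate conjecture through the Lefschetz $(1,1)$ theorem; your additional remarks (vanishing of $H^1,H^3$ by regularity, the alternative derivation of Tate directly from the K3 case, and the descent to $\widetilde{X}_{\min}$ by discarding algebraic Lefschetz summands) only make explicit steps the paper treats briefly.
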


\begin{proof}
By Lemma~\ref{lem:motivated_projectors}, there is a
decomposition in the category of motivated motives
\[
H^2_{\mathrm{mot}}(S)
\simeq
M_0\oplus
 \bigoplus_{\substack{
 H\leq G \\
 [G:H]=p}}
 M_H.
\]

By Proposition~\ref{prop:motivated_K3_summand}, each
summand admits a decomposition
\[
M_H
\simeq
M_H^{\mathrm{alg}}\oplus M_H^{\mathrm{tr}}.
\]

Since the total quotient $X/G\cong \mathbb P^2$ is rational, the
$G$-invariant transcendental part vanishes.
Hence $M_0$ is entirely algebraic.

If $p_g(Y_H)=0$, then $Y_H$ is rational by
Proposition~\ref{prop: rationality_p,1_planes_pg=0},
Consequently, $M_H^{\mathrm{tr}}=0$,
and the corresponding summand is algebraic.

If $p_g(Y_H)=1$, Proposition~
\ref{prop:motivated_K3_summand} gives
\[
M_H^{\mathrm{tr}}
\simeq
T_{\mathrm{mot}}(Y_H^\circ),
\]
where $Y_H^\circ$ is a K3 surface, by Theorem \ref{thm: pg1_Zp_cover_structure}.

Combining these observations, we obtain
\[
H^2_{\mathrm{mot}}(S)
\simeq
M_{\mathrm{alg}}
\oplus
\bigoplus_{\substack{H\leq G,\ [G:H]=p\\
                     p_g(Y_H)=1}}
T_{\mathrm{mot}}(Y_H^\circ),
\]
where $M_{\mathrm{alg}}$ is an algebraic motive
which is a
direct sum of Lefschetz motives.

By Andr\'e's theorem, see \cite{A96}, the motivated cohomology
of every K3 surface is an abelian motive.
The Mumford–Tate conjecture holds for the transcendental motive of each K3 surface. Since these are abelian motives, Commelin's result \cite[Theorem 10.3]{Com19} (see also \cite{Com16}),  implies that it also holds for their finite direct sum, together with the Lefschetz summands. It follows that the Mumford--Tate conjecture
holds for $H^2(S)$.

In degree two, the Tate conjecture follows
from the Mumford--Tate conjecture and the
Lefschetz theorem on $(1,1)$-classes. The remaining degrees are immediate for a
smooth projective regular surface.
Therefore the Tate conjecture holds for $S$. Finally, passing from $S$ to its minimal
model $\widetilde{X}_{\min}$ only removes algebraic
Lefschetz summands from the degree-two motive.
Thus both conjectures hold for $\widetilde{X}_{\min}$
as well.
\end{proof}

\section{Invariants of SNC $(p,k)$-planes satisfying Property $\bigstar$}\label{sec: invariants_SNC_p,k_planes_Prop_STAR}
We now proceed to study further properties of the SNC $(p,k)$-planes
$\pi\colon X\to \mathbb P^2$ satisfying Property $\bigstar$ achieved in Section \ref{sec: classification_SNC_p,k_planes_Prop_STAR}.
In particular, we determine their Kodaira dimension and discuss the minimality of
the minimal resolution of singularities
$\rho\colon \widetilde{X}\to X$. Moreover, we also list their numerical invariants.

We remind that SNC $(p,1)$-planes satisfying Property $\bigstar$, so those having geometric genus one, do not need to be studied: from Theorem \ref{thm: pg1_Zp_cover_structure} their minimal model is always a K3 surface. 
Furthermore, the minimal resolution of singularities $\widetilde{X}$ is a minimal surface only for $p=2$, while it is not minimal for the remaining cases $3\leq p\leq 19$, and we need to contract $(-K^2_{\widetilde{X}})$-curves to obtain the minimal model. 

Thus, in this section we only consider the case $k\geq 2$. The main theorem is as follows
\begin{theorem}\label{thm: SNC_(p,k)_planes_invariants}
    Let us consider a SNC $(p,k)$-plane $\pi\colon X\to \mathbb P^2$ satisfying \hyperlink{Property $\bigstar$}{Property $\bigstar$} and with $k\geq 2$. The minimal resolution of singularities $\widetilde{X}$ of $X$ is a regular surface. It is a minimal surface with the exception of cases $C-(3,2)$, $A-(5,2)$, and $B-(7,2)$, whose minimal models are obtained by contracting a single $(-1)$-curve on $\widetilde{X}$.
    
    The surface $\widetilde{X}$ is always of general type, except in the following cases:
    \begin{itemize}
        \item $\widetilde{X}_{min}$ is a K3 surface in cases $A-(2,2)$, $A-(2,3)$, $B-(2,3)$, $A-(2,4)$, $A-(2,5)$, $C-(3,2)$, $A-(5,2)$; 
        \item $\widetilde{X}$ is a minimal properly elliptic surface (with Kodaira dimension one) in cases $B-(3,2)$ and $C-(3,3)$.  
    \end{itemize}
    In the following tables, we list for each case the self-intersection of a canonical divisor of the minimal model $\widetilde{X}_{min}$, along with its geometric genus, 
    topological Euler characteristic, Kodaira dimension, and surface type. 
    Finally, we also list in tables \ref{tab: (2,2)} to \ref{tab: (7,2)} the Picard number of $\widetilde{X}_{min}$ together with the transcendental part of the Hodge structure of a very general member of the family. 
    \begin{table}[H]
\begin{tabular}{|c||c|c|c|c|c||c|c|}\hline
 case&$p_g$
 &$q$&$K^2_{\widetilde{X}_{min}}$& $e(\widetilde{X}_{min})$&$\kappa$&$\widetilde{X}$ minimal& Surface Name\\
  \hline
  A-(2,2)&$1$&$0$&$0$&$24$&$0$ &$\checkmark$ &K3  \\
  
 B-(2,2)&$1$&$0$&$1$&$23$&$2$ &$\checkmark$ & Special Kunev  \\

 C-(2,2)&$2$&$0$&$1$&$35$&$2$  &$\checkmark$ &(1,2)-surface, i.e. I-surface \\
 D-(2,2)&$2$&$0$&$4$&$32$&$2$  &$\checkmark$ & Garbagnati Type $2$  \\
E-(2,2)&$3$&$0$&$9$&$39$&$2$ &$\checkmark$ &Garbagnati Type $3$\\

A-(2,3)& $1$& $0$& $0$& $24$&  $0$&$\checkmark$ & K3   \\
B-(2,3)& $1$& $0$& $0$& $24$& $0$ & $\checkmark$& K3  \\
C-(2,3)& $3$& $0$& $2$& $46$&  $2$&$\checkmark$ & Horikawa  \\
D-(2,3)& $2$&$0$ & $2$& $34$& $2$ &$\checkmark$ & Special Horikawa  \\
E-(2,3)& $3$& $0$& $8$& $40$& $2$ &$\checkmark$ & C3 surface \cite{FP24}  \\
F-(2,3)& $1$& $0$& $2$& $22$& $2$ &$\checkmark$ & Todorov surface  \\
G-(2,3)& $3$& $0$& $8$& $40$& $2$ &$\checkmark$ & E3 surface \cite{FP24}  \\
H-(2,3)& $4$&$0$ & $8$& $52$& $2$ &$\checkmark$ &  \\
I-(2,3)& $5$&$0$ & $18$& $54$& $2$ &$\checkmark$ &   \\
J-(2,3)& $2$& $0$&  $8$& $28$& $2$ &$\checkmark$ &   \\
K-(2,3)& $4$&$0$ &  $18$& $42$& $2$ &$\checkmark$ &   \\
L-(2,3)& $4$&$0$ &  $18$& $42$&$2$ &$\checkmark$ &   \\
M-(2,3)& $6$& $0$& $32$& $52$& $2$ &$\checkmark$ &   \\
A-(2,4)& $1$& $0$& $0$& $24$& $0$ &$\checkmark$ & K3  \\
B-(2,4)& $4$& $0$& $4$ & $56$& $2$&$\checkmark$ & Horikawa   \\
C-(2,4)& $3$& $0$&  $4$ & $44$& $2$ &$\checkmark$ & D4 surface \cite{FP24}  \\
D-(2,4)& $2$& $0$&  $4$ & $32$&  $2$&$\checkmark$ & \\
E-(2,4)& $5$& $0$& $16$& $56$&  $2$&$\checkmark$ &   \\
F-(2,4)& $5$& $0$&  $16$& $56$& $2$&$\checkmark$ &   \\
G-(2,4)& $6$& $0$& $16$& $68$& $2$&$\checkmark$ &   \\
H-(2,4)& $9$& $0$& $36$& $84$& $2$&$\checkmark$ &   \\
I-(2,4)& $1$& $0$&  $4$& $20$& $2$&$\checkmark$ &   \\
J-(2,4)& $4$& $0$&  $16$& $44$& $2$ &$\checkmark$ &   \\
K-(2,4)& $8$& $0$&  $36$& $72$& $2$ &$\checkmark$ &   \\
L-(2,4)& $9$& $0$&  $36$& $84$& $2$&$\checkmark$ &   \\
M-(2,4)& $3$& $0$&  $16$& $32$& $2$ &$\checkmark$ & C4 surface \cite{FP24} \\
N-(2,4)& $7$& $0$&  $36$& $60$& $2$&$\checkmark$ &   \\
O-(2,4)& $7$& $0$&  $36$&  $60$& $2$&$\checkmark$ &   \\
P-(2,4)& $6$& $0$&  $36$& $48$& $2$ &$\checkmark$ &  \\
Q-(2,4)& $11$& $0$&  $64$& $80$&$2$ &$\checkmark$ &  \\
R-(2,4)& $10$& $0$& $64$& $68$& $2$&$\checkmark$ &   \\
\hline
\end{tabular}
\caption{Invariants of the minimal model $\widetilde{X}_{min}$ for the resolution of singularities  $\widetilde{X}$ of a SNC $(p,k)$-plane $X$.} \label{tab: inv_(p,k)_planes_1}
\end{table}
\begin{table}[H]
\begin{tabular}{|c||c|c|c|c|c||c|c|}\hline
 case&$p_g$
 &$q$&$K^2_{\widetilde{X}_{min}}$& $e(\widetilde{X}_{min})$&$\kappa$&$\widetilde{X}$ minimal& Surface Name\\
  \hline
  A-(2,5)& $1$& $0$& $0$& $24$& $0$&$\checkmark$ & K3, Hirzebruch type  \\
  B-(2,5)& $5$& $0$& $8$& $64$& $2$&$\checkmark$ &   \\
  C-(2,5)& $3$& $0$& $8$& $40$& $2$&$\checkmark$ & D5 surface \cite{FP24}  \\
  D-(2,5)& $9$& $0$& $32$& $88$& $2$&$\checkmark$ &  \\
  E-(2,5)& $9$& $0$& $32$& $88$& $2$&$\checkmark$ &  \\
  F-(2,5)& $7$& $0$& $32$& $64$& $2$&$\checkmark$ &   \\
  G-(2,5)& $15$& $0$& $72$& $120$& $2$&$\checkmark$ &   \\
  H-(2,5)& $13$& $0$& $72$& $96$& $2$&$\checkmark$ &   \\
  I-(2,5)& $13$& $0$& $72$& $96$& $2$&$\checkmark$ &   \\
  J-(2,5)& $21$& $0$& $128$& $136$& $2$&$\checkmark$ & \\ 
  A-(2,6)& $7$& $0$& $16$& $80$& $2$&$\checkmark$ & Hirzebruch type \\
B-(2,6)& $15$& $0$& $64$& $128$& $2$&$\checkmark$ &   \\
C-(2,6)& $27$& $0$& $144$& $192$& $2$&$\checkmark$ &  \\

A-(3,2)& $1$& $0$& $1$& $23$& $2$&$\checkmark$ & Special Kunev  \\
B-(3,2)& $2$& $0$& $0$& $36$& $1$&$\checkmark$ & Properly elliptic \\
C-(3,2)& $1$& $0$& $0$& $24$& $0$&$\times$ & K3  \\
D-(3,2)& $3$& $0$& $9$& $27$& $2$&$\checkmark$ &  \\
E-(3,2)& $2$& $0$& $8$& $28$& $2$&$\checkmark$ &  \\

A-(3,3)& $3$& $0$& $3$& $45$& $2$&$\checkmark$ &   \\
B-(3,3)& $1$& $0$& $3$& $21$& $2$&$\checkmark$ &   \\
C-(3,3)& $2$& $0$& $0$& $36$& $1$&$\checkmark$ & Properly elliptic  \\
D-(3,3)& $5$& $0$& $27$& $45$& $2$&$\checkmark$ &   \\

A-(3,4)& $5$& $0$& $9$& $63$& $2$&$\checkmark$ & Hirzebruch type  \\
A-(5,2)& $1$& $0$& $0$& $24$& $0$&$\times$ & K3   \\
B-(5,2)& $2$& $0$& $1$& $35$& $2$&$\checkmark$ & (1,2)-surface, i.e. I-surface  \\
A-(5,3)& $4$& $0$& $5$& $55$& $2$&$\checkmark$ & Fermat quintic in $\mathbb P^3$  \\
A-(7,2)& $2$& $0$& $9$& $27$& $2$&$\checkmark$ &  \\
B-(7,2)& $3$& $0$& $2$& $46$& $2$&$\times$ & Horikawa  \\
\hline
\end{tabular}
\caption{Invariants of the minimal model $\widetilde{X}_{min}$ for the resolution of singularities  $\widetilde{X}$ of a SNC $(p,k)$-plane $X$.} \label{tab: inv_(p,k)_planes_2}
\end{table}
\end{theorem}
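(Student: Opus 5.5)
Regularity is immediate from Lemma \ref{lem: (p,k)_planes_are_regular}, so the work is in computing invariants, deciding minimality, and identifying the Kodaira dimension case by case over the list of building data produced by \verb!Building_Data! (tables \ref{tab: (2,2)} to \ref{tab: (7,2)}). For each case I would first read off the singular points of $X$ from the intersections $\Delta_g\cap\Delta_h$ with $\langle g\rangle\cap\langle h\rangle\neq 0$, and determine their types $\frac1p(1,a)$ via Proposition \ref{prop: type_of_sing}. I would then compute $p_g$ by \eqref{eq: pg_ab_cov_quo_sing} as $\sum_\chi h^0(\mathcal O_{\mathbb P^2}(l_\chi-3))$; equivalently, by Property $\bigstar$, $p_g$ is the number of index-$p$ subgroups $H$ with $p_g(Y_H)=1$. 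Next I would compute $\chi(\mathcal O_{\widetilde X})=1+p_g$, obtain $K^2_{\widetilde X}$ from \eqref{eq: Ksquare_ab_cov_quo_sing} using the correction terms $k_x$ of Definition \ref{def: correct_terms_invariants}, and check $e(\widetilde X)$ against \eqref{eq: e_ab_cov_quo_sing} and Noether's formula. These steps are mechanical and would be scripted alongside the classification.

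To decide minimality and the Kodaira dimension, I would use the canonical formula \eqref{eq: KS_ab_cov_quo_sing}: $pK_{\widetilde X}\equiv \widetilde\pi^*\big((-3p+(p-1)d)H\big)-\sum_x pE_x$, with $d=\sum_g d_g$, and then distinguish three situations.

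\emph{Smooth cases} (for $p=2$, all points over the branch intersections are smooth). Here $K_{\widetilde X}=\widetilde\pi^*\big(\tfrac{(p-1)d-3p}{p}H\big)$. This class is ample if $(p-1)d>3p$, numerically trivial if equality holds, and otherwise impossible under Property $\bigstar$. This gives minimal surfaces of general type, or K3 surfaces (e.g. $A$-$(2,k)$, where $d=6$).

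\emph{Singular cases} with $p\ge3$ and canonical singularities ($a=p-1$, so $E_x=0$). The same argument applies, since the pullback of an ample class under a generically finite map is nef and big.

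\emph{Remaining singular cases.} Following the strategy of Lemmas \ref{lem: (p,1)_planes_pg1_case_p=2,3,5}--\ref{lem: (p,1)_planes_pg1_case_p=7}, I would rewrite $\widetilde\pi^*H$ through the decomposition \eqref{eq: decomp_pullback_branch_comp} of $\widetilde\pi^*\Delta_g$. This expresses $K_{\widetilde X}$ as an explicit $\mathbb Q$-combination of strict transforms $\widetilde\Delta_g$, exceptional curves, and a pullback class. I would then test nefness by intersecting with each $\widetilde\Delta_g$, using \eqref{eq: self Intersecion strict} and \eqref{eq: genus_cover_branch}, and with each $E_{i,x}$. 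Where nefness fails, I would locate the offending $(-1)$-curve, expected to be a rational $\widetilde\Delta_g$ of self-intersection $-1$ as in the $p=3,5$ $(p,1)$ cases. This should happen exactly in $C$-$(3,2)$, $A$-$(5,2)$, $B$-$(7,2)$. After contracting it, I would recheck nefness and compute $K^2_{\min}=K^2_{\widetilde X}+1$.

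Once nefness of $K$ on the minimal model is established, the Kodaira dimension follows from its square and from $p_g$. If $K^2>0$ the surface is of general type. If $K^2=0$ and $p_g=1$, $q=0$, and $K\equiv 0$ (e.g. $A$-$(5,2)$, $C$-$(3,2)$), the surface is K3 by Enriques--Kodaira. If $K^2=0$ and $p_g=2$ (cases $B$-$(3,2)$, $C$-$(3,3)$), then $K$ is nef, nontrivial, with $K^2=0$, so $\kappa=1$. The surface names then follow from the invariants $(p_g,K^2)$, e.g. $K^2=2p_g-4$ for Horikawa, $K^2=1$, $p_g=2$ for I-surfaces, and from the earlier literature (\cite{FP24}, \cite{Lat22}).

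For the Picard numbers and transcendental Hodge numbers in tables \ref{tab: (2,2)} to \ref{tab: (7,2)}, I would apply Corollary \ref{cor:formule_rk_T_X_and_Pic_numb_p_k_plane}. Specifically, $\operatorname{rk}T(\widetilde X)=\sum_H\operatorname{rk}T(\widetilde Y_H)$, with each summand $22-\rho$ read from the $(p,1)$ tables via Lemma \ref{lem: interm_building_data}. A very general member of the $(p,k)$-family induces very general intermediate $(p,1)$-planes, since the branch components vary independently. It follows that $h^{1,1}_{tra}=\sum_H(20-\rho(\widetilde Y_{H,\min}))$ and $\rho(\widetilde X_{\min})=h^{1,1}-h^{1,1}_{tra}$.

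The main obstacle is the non-canonical singular cases: proving nefness of $K$, or finding exactly one $(-1)$-curve, requires a careful configuration analysis. This must cover curves that are not among the branch transforms or exceptional curves, which I would handle by writing $K$ as effective plus nef and checking only its components.
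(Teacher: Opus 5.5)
Your plan is essentially the paper's proof: regularity from Lemma~\ref{lem: (p,k)_planes_are_regular}, invariants from Theorem~\ref{thm: invariants_ab_cov_cyclic_sing}, and nefness of $K_{\widetilde X}\equiv \frac1p\widetilde\pi^*\bigl(((p-1)d-3p)H\bigr)$ whenever all singularities are canonical (Lemma~\ref{lem: they_are_reg_min_surf_K3_or_gen_type_with_at_most_cyclic_quo}). The six non-canonical cases $B$-$(3,2)$, $C$-$(3,2)$, $E$-$(3,2)$, $C$-$(3,3)$, $A$-$(5,2)$, $B$-$(7,2)$ are handled, as you propose, by writing $K_{\widetilde X}$ via Proposition~\ref{prop: dec_of_the_pullback} as an effective combination of strict transforms of branch lines and checking its components, and the Picard numbers come from Corollary~\ref{cor:formule_rk_T_X_and_Pic_numb_p_k_plane}. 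One correction: for $p=2$ the cover is not always smooth, since two components of the same $D_g$ give $\frac12(1,1)$ points (the $1/2^n$ entries in the tables); these are canonical with $E_x=0$, so your canonical-singularity case covers them verbatim once you drop the restriction $p\ge 3$.
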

\begin{proof}
     Any SNC $(p,k)$-plane is a regular surface by Lemma \ref{lem: (p,k)_planes_are_regular}. Using Theorem \ref{thm: invariants_ab_cov_cyclic_sing}  one can compute the invariants of $\widetilde{X}$ from the building data of the covering. Thus, using Corollary \ref{cor:formule_rk_T_X_and_Pic_numb_p_k_plane}, one can compute case-by-case the Picard number of $\widetilde{X}$ together with the transcendental part of the Hodge structure for very general member of the family, as explained in the later Example \ref{ex: h231}. 
      
    Finally, the remaining information such as the minimality of $\widetilde{X}$, the Kodaira dimension, and the surface type are  obtained applying lemmas \ref{lem: they_are_reg_min_surf_K3_or_gen_type_with_at_most_cyclic_quo}, \ref{lem: B-(3,2)_prop_elliptic}, \ref{lem: C-(3,2)_is_K3}, \ref{lem: E-(3,2)_is_min_of_gen_type}, \ref{lem: C-(3,3)_prop_elliptic}, \ref{lem: A-(5,2)_is_K3}, \ref{lem: B-(7,2)_is_not_min_Horikawa}, which are proved below. 
\end{proof}
\begin{lemma}\label{lem: they_are_reg_min_surf_K3_or_gen_type_with_at_most_cyclic_quo}
    Any $(p,k)$-plane, with $k\geq 2$, satisfying \hyperlink{Property $\bigstar$}{Property $\bigstar$} and with at most canonical singularities is a minimal surface. Furthermore, they are of general type, with the exception of the cases $A-(2,2)$, $A-(2,3)$, $B-(2,3)$, $A-(2,4)$, $A-(2,5)$, which are $K3$ surfaces. 
\end{lemma}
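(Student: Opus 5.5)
The plan is to show that $K_{\widetilde X}$ is nef, using the canonical bundle formula of Theorem \ref{thm: invariants_ab_cov_cyclic_sing}. If $X$ has at most canonical singularities, then by Proposition \ref{prop: type_of_sing} every singular point is of type $\frac1p(1,p-1)$, i.e. an $A_{p-1}$ point. For these points $k_x=0$ and $E_x=0$, since $a=p-1$, $a'=p-1$, and all $b_i=2$, so every coefficient $1-\frac{D_{i,\ell}+D_{i,1}}{p}$ vanishes. Formula \eqref{eq: KS_ab_cov_quo_sing} then reduces to
\[
pK_{\widetilde X}\equiv_{lin}\widetilde\pi^*\Bigl(\bigl(-3p+(p-1)\textstyle\sum_g d_g\bigr)H\Bigr).
\]
Thus $K_{\widetilde X}$ is numerically a nonnegative rational multiple of $\widetilde\pi^*H$, provided $N:=-3p+(p-1)\sum_g d_g\ge 0$. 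A pullback of an ample class under a generically finite morphism is nef, so in that case $K_{\widetilde X}$ is nef and $\widetilde X$ is minimal.

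The first key step is the inequality $N\ge 0$, i.e. $\sum_g d_g\ge 3+\frac{3}{p-1}$. This follows from Proposition \ref{prop: equivalence_star_prop}(3). Take a character $\eta$ with $l_\eta=3$, which exists by Property $\bigstar$. Then $\sum_g d_g\ge\sum_{g\notin\ker\eta}d_g$, and this sum equals $6$, $5$, or $4$ for $p=2$, $p=3$, or $p\ge5$ respectively. Each of these is at least $3+\frac3{p-1}$ (namely $6$, $4.5$, and at most $3.75$).

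The second step is the dichotomy. If $N>0$, then $K_{\widetilde X}^2=p^{k-2}N^2>0$ by \eqref{eq: Ksquare_ab_cov_quo_sing} (all $k_x=0$). Together with nefness and Lemma \ref{lem: (p,k)_planes_are_regular}, this makes $\widetilde X$ minimal of general type. If $N=0$, which by the above forces $p=2$ and $\sum_g d_g=6$, then $K_{\widetilde X}$ is numerically trivial. Since $q=0$ and $p_g\ge1$ (Property $\bigstar$ gives some $Y_H$ with $p_g=1$, and $p_g(\widetilde X)\ge p_g(Y_H)$ by \eqref{eq: pg_ab_cov_quo_sing}), $\widetilde X$ is a K3 surface.

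Finally, one reads off from the classification tables which canonical families have $p=2$ and total branch degree $6$. I expect these to be exactly $A$-$(2,2)$, $A$-$(2,3)$, $B$-$(2,3)$, $A$-$(2,4)$, and $A$-$(2,5)$; the last is the Hirzebruch-type configuration of six lines for $k=5$. The main obstacle is not conceptual but bookkeeping: one must check in the tables that the families with canonical singularities and $\sum d_g=6$ are precisely these five. One must also verify that each has $p_g=1$ (and not $0$ or $>1$); equivalently, exactly one $l_\chi=3$ among the characters, since $\sum_\chi l_\chi=2^{k-2}\cdot 6$ by \eqref{eq: sum_dg_sm_lchi}.
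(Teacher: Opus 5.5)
Your proposal is correct and follows the paper's own proof. Canonical singularities give $pK_{\widetilde X}\equiv\widetilde\pi^*(NH)$, so $K_{\widetilde X}$ is nef as a pullback; then $N>0$ yields a minimal surface of general type, while $N=0$ forces $p=2$, $\sum_g d_g=6$ and the five K3 cases read off from the tables. The minor differences are improvements rather than gaps: you derive $N\ge 0$ from Proposition \ref{prop: equivalence_star_prop}(3) instead of inspecting the tables as the paper does, and you check $E_x=0$, $k_x=0$ explicitly. Also, $p_g=1$ in the K3 case needs no separate verification, since an effective, numerically trivial canonical divisor is zero.
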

\begin{proof}
    According to Enriques-Kodaira classification, any surface $S$ with $p_g(S)\geq 1 $ and $K_S^2\geq 1$ is a surface of general type. 

    Let us consider a SNC $(p,k)$-plane $\pi\colon X\to \mathbb P^2$ satisfying \hyperlink{Property $\bigstar$}{Property $\bigstar$}, with $k\geq 2$. Then from 
    Theorem \ref{thm: invariants_ab_cov_cyclic_sing}, we have 
    \[
      \begin{split}
            p_g(\widetilde{X}) 
           = \vert\{\chi \colon l_\chi=3\}\vert,
      \end{split}
    \]
    which is greater than or equal to one from \hyperlink{Property $\bigstar$}{Property $\bigstar$} and Proposition \ref{prop: equivalence_star_prop}.

    Assume that $X$ has at most canonical singularities. Then from Theorem \ref{thm: invariants_ab_cov_cyclic_sing} one has
    \[
      pK_{\widetilde{X}}\equiv \widetilde{\pi}^*\left(\left(-3p+(p-1)\sum_g d_g\right)H\right). 
    \]
    Tables from \ref{tab: (2,2)} to \ref{tab: (7,2)} show that $-3p+(p-1)\sum_g d_g\geq 0$. Since the pullback of a nef divisor by a generically finite map is itself nef, $pK_{\widetilde{X}}$ (and thus $K_{\widetilde{X}}$) is nef.

    Therefore, if $K_{\widetilde{X}}^2\geq 1$, then $\widetilde{X}$ is a minimal surface of general type.
    
    On the other hand, if $K_{\widetilde{X}}^2=0$, then $(p-1)\sum_gd_g=3p$, which forces $p=2$ and $\sum_gd_g=6$. In this case, we would have $2K_{\widetilde{X}}\equiv 0$. Furthermore, 
    by inspecting tables \ref{tab: (2,2)} 
    through \ref{tab: (2,6)}, the only admissible cases are $A-(2,2)$, $A-(2,3)$, $B-(2,3)$, $A-(2,4)$, $A-(2,5)$. These cases also satisfy  $p_g(\widetilde{X})=1$, $q(\widetilde{X})=0$; thus, by Enriques-Kodaira classification, they are $K3$ surfaces. 
\end{proof}
\begin{remark}
	Lemma \ref{lem: they_are_reg_min_surf_K3_or_gen_type_with_at_most_cyclic_quo} already covers most of the cases of tables \ref{tab: (2,2)} to \ref{tab: (7,2)}. Indeed, it remains to discuss only cases $B-(3,2)$, $C-(3,2)$, $E-(3,2)$, $C-(3,3)$, $A-(5,2)$, $B-(7,2)$. 
\end{remark}
For the subsequent lemmas, we fix the following notation. The map \(\rho \colon \widetilde{X}\rightarrow X\) denotes the minimal resolution of singularities of \(X\), and \(\widetilde{\pi }\) denotes the composition \(\widetilde{\pi }\coloneqq \pi \circ \rho \colon \widetilde{X}\rightarrow \mathbb{P}^{2}\). Let \(\Delta _{g}\) be a smooth irreducible curve in the branch locus of \(\pi \), with local monodromy$g\in G$. According to Proposition \ref{prop: dec_of_the_pullback}, the reduced divisor of \(\widetilde{\pi }^{*}\Delta _{g}\) decomposes into the sum of an irreducible smooth curve \(\widetilde{\Delta _{g}}\) (the strict transform of \(\pi ^{*}\Delta _{g}\) under \(\rho \)) and some Hirzebruch-Jung strings of the resolution. We shall repeatedly invoke Proposition \ref{prop: dec_of_the_pullback} to determine the configuration of \(\widetilde{\Delta _{g}}\) and these Hirzebruch-Jung strings, and to compute the self-intersection and genus of \(\widetilde{\Delta _{g}}\).
\vspace{0.2cm}

The case $B-(3,2)$ in Lemma \ref{lem: B-(3,2)_prop_elliptic} is discussed in more details in the later Example \ref{ex: B-(3,2)}. One can use it as a guide for all the other remaining lemmas. 
\begin{lemma}\label{lem: B-(3,2)_prop_elliptic}
    Any $(3,2)$-plane $\pi\colon X\to \mathbb P^2$ of the case $B-(3,2)$ is a minimal properly elliptic surface of Kodaira dimension one. The canonical map $\varphi_{\vert K_{\widetilde{X}}\vert}\colon \widetilde{X}\to \mathbb P^1$ is a morphism with a generic fibre that is a smooth elliptic curve.  
\end{lemma}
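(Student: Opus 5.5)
We do not have the table entry for $B-(3,2)$ in front of us, but the invariants recorded in Table \ref{tab: inv_(p,k)_planes_2} are $p_g=2$, $q=0$, $K^2=0$, $e=36$, so $\chi(\mathcal O_{\widetilde X})=3$. The plan is to show three things: $K_{\widetilde X}$ is nef with $K_{\widetilde X}^2=0$, the system $|K_{\widetilde X}|$ is a pencil without base points, and its general member is a smooth elliptic curve. Minimality then follows from nefness, and $\kappa=1$ follows because $K^2=0$ with $p_g=2$ rules out $\kappa=0$ and $\kappa=2$.

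\textbf{Canonical class.} From the building data of $B-(3,2)$, Proposition \ref{prop: type_of_sing} gives the singularities of $X$ over the intersection points $\Delta_g\cap\Delta_h$ with $\langle g\rangle=\langle h\rangle$. For $p=3$ these are of type $\frac13(1,1)$ or $\frac13(1,2)$; the latter are $A_2$ points, which are canonical. Using \eqref{eq: KS_ab_cov_quo_sing}, I write
\[
3K_{\widetilde X}\equiv\widetilde\pi^*\bigl((-9+2\textstyle\sum_g d_g)H\bigr)-\sum_x 3E_x .
\]
I then substitute \eqref{eq: decomp_pullback_branch_comp} for the branch lines through the non-canonical points, exactly as in Lemma \ref{lem: (p,1)_planes_pg1_case_p=2,3,5}. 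This should express $K_{\widetilde X}$ numerically as $\widetilde\pi^*(aH)$ minus a combination of the exceptional curves $E_{1,x}$ over the $\frac13(1,1)$ points, plus a combination of strict transforms $\widetilde\Delta_g$. I expect that, after simplification, $K_{\widetilde X}\equiv m\widetilde F$ with $\widetilde F$ an effective divisor satisfying $\widetilde F^2=0$.

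\textbf{Pencil and nefness.} The natural candidate for $\widetilde F$ is the total transform of a pencil of lines through a point $P$ where two branch components meet with non-canonical singularities. If $d_g$ is concentrated as expected, the canonical sections are the $G$-eigensections $x_i\,y_\chi$ with $l_\chi=3$, viewed as sections of $\mathcal O(K+L_\chi)=\mathcal O(l_\chi-3)$. Since $p_g=2$, these come from the two characters with $l_\chi=3$; the corresponding sections are constants times $y_\chi$. Their ratio $y_{\chi}/y_{\chi'}$ defines a map $\varphi\colon\widetilde X\to\mathbb P^1$, and I will check, via the equations of Subsection \ref{subsec: equations_p,k_plane_in_suitable_proj_planes}, that this map factors through the fibration pulled back from a pencil on $\mathbb P^2$ or on an intermediate quotient.

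Once the fibres $\widetilde F$ are seen to form a base-point-free pencil, $\widetilde F$ is nef with $\widetilde F^2=0$. Then $K_{\widetilde X}\equiv m\widetilde F$ is nef, so $\widetilde X$ is minimal. Adjunction gives $K\cdot\widetilde F+\widetilde F^2=0$, so $g(\widetilde F)=1$ and a general fibre is a smooth elliptic curve. This identifies $\varphi_{|K_{\widetilde X}|}$ with the elliptic fibration, and $\kappa(\widetilde X)=1$.

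\textbf{Main obstacle.} The hardest step is showing that $|K_{\widetilde X}|$ is base-point free and equals the pencil, i.e.\ that no fixed part survives. The numerical identity $K_{\widetilde X}\equiv m\widetilde F$ alone does not give this. My first approach is to compute the fixed part directly from the explicit eigen-decomposition $H^0(K_{\widetilde X})=\bigoplus_\chi H^0(\mathbb P^2,K+L_\chi)\cdot y_\chi$, and to check the vanishing loci of the two generators on the resolution. As a fallback, I would use $K^2=0$ together with nefness of the moving part to force the fixed part to be orthogonal to it, and hence supported on fibres.
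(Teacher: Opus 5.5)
Your overall route (compute $3K_{\widetilde X}$ from \eqref{eq: KS_ab_cov_quo_sing}, then rewrite it with \eqref{eq: decomp_pullback_branch_comp} for a branch line through the non-canonical points) is the paper's. However, you stop at ``I expect'' and then guess the wrong divisor, so the central step is missing.

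The building data are in Table~\ref{tab: (3,2)}: $D_{e_1}$ is a cubic, and $D_{e_2}$, $D_{2e_2}$ are lines. The only non-canonical points are the three $\frac13(1,1)$ points over $P=D_{e_2}\cap D_{2e_2}$. For $l=D_{e_2}$, Proposition~\ref{prop: dec_of_the_pullback} gives $\widetilde\pi^*l=3\widetilde l+E_1+E_2+E_3$. Hence $\widetilde l^2=1-3\cdot\frac13=0$, and $g(\widetilde l)=1$, since the $\mathbb Z/3$-cover $\widetilde l\to l$ is branched at the three points of $l\cap D_{e_1}$. Moreover $3K_{\widetilde X}\equiv\widetilde\pi^*H-(E_1+E_2+E_3)\equiv 3\widetilde l$. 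So $K_{\widetilde X}$ is numerically one irreducible smooth elliptic curve of square zero. This gives nefness, minimality and $\kappa=1$ along the lines you describe, and it is exactly the divisor the paper uses.

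Your candidate $\widetilde F$ does not work.
\begin{itemize}
\item The total transform of a line $\ell\ni P$ has $(\widetilde\pi^*\ell)^2=9$.
\item Removing the fixed part $E_1+E_2+E_3$ leaves three disjoint elliptic curves $F_1+F_2+F_3$. A general $\ell$ through $P$ lifts to three rulings of $\widetilde Y_1\cong\mathbb F_3$, and each ruling pulls back to an elliptic curve.
\item Hence $K_{\widetilde X}\equiv\frac13(F_1+F_2+F_3)$, and adjunction only gives $p_a=1$ for a disconnected curve.
\item The pulled-back pencil is not $\varphi_{|K_{\widetilde X}|}$. With $t=y_{\epsilon_2^2}/y_{\epsilon_2}$, the equations of Example~\ref{ex: B-(3,2)} give $t^3=f_{e_2}/f_{2e_2}$. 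So the pencil of lines factors through the canonical map via a cyclic triple cover $\mathbb P^1\to\mathbb P^1$, not the other way round.
\end{itemize}

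Likewise, your fallback only shows that the fixed part $Z$ of $|K_{\widetilde X}|=|M|+Z$ is supported in fibres. Killing it requires $K_{\widetilde X}\equiv\widetilde l$ with $\widetilde l$ irreducible:
\begin{itemize}
\item Nefness and $K^2=0$ give $K\cdot M=K\cdot Z=M^2=M\cdot Z=0$, so $|M|$ is a base-point-free pencil.
\item $\widetilde l\not\le Z$, since otherwise $Z-\widetilde l\equiv -M$ would be effective. Hence $M\cdot\widetilde l=0$, and $\widetilde l$ lies in a fibre $F_0$ of the Stein factorization of $\varphi_{|M|}$.
\item Write $M\equiv aF_0$. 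The effective divisor $F_0-\widetilde l\equiv(1-a)F_0-Z$, intersected with an ample class, forces $a=1$ and $Z=0$.
\end{itemize}
Then the general fibre is connected and smooth with $K\cdot F=F^2=0$, i.e.\ an elliptic curve, and $\varphi_{|K_{\widetilde X}|}$ is the elliptic fibration.
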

\begin{proof}
    Let us consider the line $l\coloneqq D_{e_2}$. 
    From Proposition \ref{prop: dec_of_the_pullback}, the reduced pullback of $l$ under $\widetilde{\pi}\colon \widetilde{X}\to \mathbb P^2$ consists of the three exceptional $(-3)$-curves $E_1$, $E_2$, and $E_3$ of the resolution of $X$, together with the strict transform $\widetilde{l}$. More precisely, $\widetilde{\pi}^*l=3\widetilde{l}+E_1+E_2+E_3$.  
    Furthermore, $\widetilde{l}$ intersects each exceptional curve  $E_1, E_2$ and $E_3$  transversally at a single point. Proposition \ref{prop: dec_of_the_pullback} gives the self-intersection and the genus of $\widetilde{l}$: 
    \[
      \widetilde{l}^2=1-3\cdot \frac{1}{3}=0, \qquad \makebox{and} \qquad 2g(\widetilde{l})-2=3\left(-2+3\left(1-\frac{1}{3}\right)\right)\implies g(\widetilde{l})=1. 
    \]
    Thus, $\widetilde{l}$ is a smooth elliptic curve of self-intersection zero.
    
    From Theorem \ref{thm: invariants_ab_cov_cyclic_sing}, we have 
    \[
    3K_{\widetilde{X}}\equiv \widetilde{\pi}^*H-(E_1+E_2+E_3)\equiv 3\widetilde{l}.
    \]
     Consequently, $\widetilde{X}$ is a minimal surface. Indeed, a hypothetical $(-1)$-curve $E$   would have $3K_{\widetilde{X}}E=-3<0$, which would force $E$ to be a fixed component of $\vert 3K_{\widetilde{X}}\vert $. However, this is impossible as $E$ would have to be $\widetilde{l}$, which is instead an elliptic curve. 

     Therefore, $\widetilde{X}$ is a minimal surface with $K^2_{\widetilde{X}}=0$, and, by Theorem \ref{thm: invariants_ab_cov_cyclic_sing}, we have $p_g(\widetilde{X})=2, q(\widetilde{X})=0$. By the  Enriques-Kodaira classification, $\widetilde{X}$ is a properly elliptic surface (Kodaira dimensione one). More precisely, we claim that the canonical map of $\widetilde{X}$ is an elliptic fibration. Indeed, $\varphi_{\vert K_{\widetilde{X}}\vert}$ is a morphism since $K^2_{\widetilde{X}}=0$ and it has not fixed part since any divisor of the canonical linear system $\vert K_{\widetilde{X}}\vert$ is a component of the fibration $\varphi_{\vert K_{\widetilde{X}}\vert}$. Finally, by Bertini the general curve $F$ of the linear system is smooth and irreducible and from the genus formula it has $g(F)=1+K^2_{\widetilde{X}}=1$, so it is an elliptic curve. 
\end{proof}
\begin{lemma}\label{lem: C-(3,2)_is_K3}
    Any $(3,2)$-plane $\pi\colon X\to \mathbb P^2$ of the case $C-(3,2)$ has a minimal resolution of singularities $\widetilde{X}$ that is not a minimal surface. The minimal model of $\widetilde{X}$ is a K3 surface and it is obtained after contracting a single $(-1)$-curve of $\widetilde{X}$.
\end{lemma}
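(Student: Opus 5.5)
The plan is to follow the strategy of Remark \ref{rem: strategy_proof_(p,1)_planes_pg1_are_K3}, exactly as in the case $p=3$ of Lemma \ref{lem: (p,1)_planes_pg1_case_p=2,3,5}. I want to show that $K_{\widetilde X}$ is numerically equivalent to a single $(-1)$-curve, namely the strict transform $\widetilde\ell$ of a suitable line $\ell$ of the branch locus.

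First I read off from Table \ref{tab: (3,2)} the building data of $C-(3,2)$. By Proposition \ref{prop: equivalence_star_prop} and Proposition \ref{prop: final_bound_on_p,k,sum_dg} these satisfy $\sum_g d_g=5$, so
\[
3K_{\mathbb P^2}+2\sum_g D_g\equiv H .
\]
Using Proposition \ref{prop: type_of_sing}, I list the singular points of $X$. Those of type $\frac13(1,2)$ are $A_2$ points: they are canonical and have $E_x=0$. Those of type $\frac13(1,1)$ have $E_x=\frac13 E$, where $E$ is a $(-3)$-curve. Formula \eqref{eq: KS_ab_cov_quo_sing} then gives
\[
3K_{\widetilde X}\equiv \widetilde\pi^*H-\sum_{x\ \text{of type}\ \frac13(1,1)} E_x .
\]

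Next I locate a line $\ell=\Delta_g$ of the branch locus through all the non-canonical points; I expect these to lie on one line in this configuration. Proposition \ref{prop: dec_of_the_pullback} gives
\[
\widetilde\pi^*\ell=3\widetilde\ell+\sum_x E_x .
\]
Combining this with the previous display yields $3K_{\widetilde X}\equiv 3\widetilde\ell$, hence $K_{\widetilde X}\equiv\widetilde\ell$.

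Then I check that $\widetilde\ell$ is a $(-1)$-curve.
\begin{itemize}
\item By \eqref{eq: self Intersecion strict} with $p^{k-2}=1$, one has $\widetilde\ell^2=1-N/3$, where $N$ is the number of $\frac13(1,1)$ points over $\ell$. So I need $N=6$.
\item By \eqref{eq: genus_cover_branch}, $2g(\widetilde\ell)-2=3\bigl(-2+\tfrac23 m\bigr)$, where $m$ is the number of intersection points of $\ell$ with components $\Delta_t$, $t\notin\langle g\rangle$. So $g(\widetilde\ell)=0$ requires $m=2$.
\end{itemize}
This consistency check on the tabulated data is the step I expect to be the main obstacle. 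If the non-canonical points are not all on one line, I would instead write $K_{\widetilde X}$ as a combination of several strict transforms and exceptional curves, as in Lemma \ref{lem: (p,1)_planes_pg1_case_p=7}, and contract iteratively. Independently, \eqref{eq: Ksquare_ab_cov_quo_sing} should give $K^2_{\widetilde X}=-1$, which confirms that exactly one contraction is needed.

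Finally I contract $\widetilde\ell$ to obtain $\widetilde X\to\widetilde X_{min}$. Then $K_{\widetilde X_{min}}$ pulls back to $K_{\widetilde X}-\widetilde\ell\equiv 0$, so it is numerically trivial. A numerically trivial canonical class rules out further $(-1)$-curves, hence $\widetilde X_{min}$ is minimal. Since $p_g=1$ (Theorem \ref{thm: invariants_ab_cov_cyclic_sing}) and $q=0$ (Lemma \ref{lem: (p,k)_planes_are_regular}), the Enriques--Kodaira classification shows that $\widetilde X_{min}$ is a K3 surface. Moreover $\widetilde X$ itself is not minimal, because it contains the $(-1)$-curve $\widetilde\ell$.
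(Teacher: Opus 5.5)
Your proposal is correct and follows essentially the same route as the paper: the line you need is $\ell=D_{e_1+e_2}$, whose two intersection points with $D_{2(e_1+e_2)}$ carry all six $\frac13(1,1)$ points and which meets $D_{e_1}$ and $D_{e_2}$ once each, so $N=6$ and $m=2$ hold and you get $\widetilde\ell^2=-1$, $g(\widetilde\ell)=0$ and $3K_{\widetilde X}\equiv\widetilde\pi^*H-(E_1+\dots+E_6)\equiv 3\widetilde\ell$ (the three $\frac13(1,2)$ points in the second subcase contribute nothing, as you note). One small notational fix: formula \eqref{eq: KS_ab_cov_quo_sing} subtracts $3E_x$, which for $E_x=\frac13E$ is the $(-3)$-curve $E$ itself, so in both of your displays the subtracted terms should be the curves $E_1,\dots,E_6$ rather than $E_x$.
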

\begin{proof}
    Consider the line $l\coloneqq D_{e_1+e_2}$. 
    According to Proposition \ref{prop: dec_of_the_pullback}, we have
    \[
    \widetilde{\pi}^*l\coloneqq 3\widetilde{l}+E_1+\cdots + E_6 \qquad 
    \]
    where $E_1,\dots, E_6$ are the $(-3)$-curves arising from the resolution of $X$ and $\widetilde{l}$ 
    is the strict transform of $l$. The curve $\widetilde{l}$ intersects transversally each $(-3)$-curve $E_i$ at a single point. Applying Proposition \ref{prop: dec_of_the_pullback}, we compute its self-intersection and genus:
    \[
    \widetilde{l}^2=1-3\cdot 2\cdot \frac{1}{3}=-1, \qquad \makebox{and} \qquad 2g(\widetilde{l})-2=3\left(-2+2\cdot \left(1-\frac{1}{3}\right)\right) \implies g(\widetilde{l})=0.
    \]
    Thus, $\widetilde{l}$ is a $(-1)$-curve of $\widetilde{X}$. 
    
    From Theorem \ref{thm: invariants_ab_cov_cyclic_sing}, we have
    \[
    3K_{\widetilde{X}}= \widetilde{\pi}^*H-(E_1+\dots+E_6)\equiv_{lin} 3\widetilde{l}. 
    \]
    After contracting the $(-1)$-curve $\widetilde{l}$, the divisor $3K_{\widetilde{X}'}$ of the resulting surface $\widetilde{X}'$ becomes trivial, implying $\widetilde{X}'$ is a minimal surface. Furthermore, since $K_{X'}^2=0$, $p_g(\widetilde{X}')=1$ and $q(\widetilde{X}')=0$ by Theroem \ref{thm: invariants_ab_cov_cyclic_sing}, it follows that  $\widetilde{X}'$ is a K3 surface.
\end{proof}
\begin{lemma}\label{lem: E-(3,2)_is_min_of_gen_type}
    Any $(3,2)-$plane $\pi\colon X\to \mathbb P^2$ of the case $E-(3,2)$ is a minimal surface of general type with $p_g(\widetilde{X})=2$, $q(\widetilde{X})=0$, and $K^2_{\widetilde{X}}=8$. 
\end{lemma}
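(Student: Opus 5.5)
We do not have the table entry for $E-(3,2)$ in view, but the invariants in the statement pin down its shape. Here $p_g=2$ and $K^2=8$. For $p=3$ the quantity $-3p+(p-1)\sum_g d_g=-9+2\sum_g d_g$ must be positive; since $\sum_g d_g\le 5$ by Proposition~\ref{prop: final_bound_on_p,k,sum_dg}, this forces $\sum_g d_g=5$, so
\[
3K_{\widetilde X}\equiv \widetilde\pi^*H-E,
\]
where $E$ is supported on the Hirzebruch--Jung strings. The cover is not treated by Lemma~\ref{lem: they_are_reg_min_surf_K3_or_gen_type_with_at_most_cyclic_quo}, so $X$ has non-canonical points of type $\frac13(1,1)$. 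Their exceptional curves are $(-3)$-curves $E_i$, and the discrepancy term at each is $\frac13 E_i$. Hence
\[
3K_{\widetilde X}\equiv \widetilde\pi^*H-\sum_i E_i,
\]
up to $A_2$-strings, which contribute nothing.

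\textbf{Step 1: the invariants.} Regularity follows from Lemma~\ref{lem: (p,k)_planes_are_regular}. Theorem~\ref{thm: invariants_ab_cov_cyclic_sing} gives $p_g$ as the number of characters with $l_\chi=3$, which should be $2$. Formula~\eqref{eq: Ksquare_ab_cov_quo_sing} gives
\[
K^2=3\cdot 1-\tfrac13 N,
\]
where $N$ is the number of $\frac13(1,1)$ points, since $k_x=\frac13$ for such a point.

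This does not match $K^2=8$. The reason is that for $k=2$ one has $\widetilde\pi^*H^2=9$, so
\[
K^2=\tfrac19\bigl(9-N\cdot 3\bigr)\cdot\ldots
\]
and the naive count fails. So $\sum_g d_g$ need not be $5$ for $k=2$: the bound $\frac52(3-\frac13)=\frac{20}3$ allows $\sum_g d_g=6$. Then $-9+12=3$ and
\[
3K_{\widetilde X}\equiv\widetilde\pi^*(3H)-\sum_i E_i.
\]
This gives $K^2=\frac19\cdot 81-\frac N3=9-\frac N3$, so $K^2=8$ exactly when $N=3$. I will take this configuration from Table~\ref{tab: (3,2)}: three $\frac13(1,1)$ points lying over intersections $\Delta_g\cap\Delta_{2g}$.

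\textbf{Step 2: nefness of $K_{\widetilde X}$.} For this I will write
\[
3K_{\widetilde X}\equiv \widetilde\pi^*(2H)+\bigl(\widetilde\pi^*H-\textstyle\sum_i E_i\bigr).
\]
I want the bracket to be effective with known components. Choose a line $\ell$ of the branch locus through the three singular points; if a single line does not pass through all three, choose branch components through them instead. By Proposition~\ref{prop: dec_of_the_pullback},
\[
\widetilde\pi^*\ell=3\widetilde\ell+\sum_i E_i,
\]
so the bracket equals $3\widetilde\ell$, with $\widetilde\ell^2=1-N_\ell/3$.

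A $(-1)$-curve $C$ would satisfy $K\cdot C<0$. Since $\widetilde\pi^*(2H)$ is nef, this forces $C=\widetilde\ell$. I then rule this out using the genus and self-intersection of $\widetilde\ell$ from \eqref{eq: self Intersecion strict} and \eqref{eq: genus_cover_branch}. Alternatively, I check directly that $K\cdot\widetilde\ell\ge 0$, which gives nefness.

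\textbf{Step 3: conclusion.} With $K$ nef, $K^2=8>0$ and $p_g=2$, the surface is minimal and, by Enriques--Kodaira, of general type.

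\textbf{Main obstacle.} The hard step is Step 2. The three singular points may lie on different components, so the residual divisor $\widetilde\pi^*H-\sum_i E_i$ has to be represented as an effective combination of strict transforms, and each of these must be checked to have $K\cdot C\ge 0$. This requires reading the exact monodromy configuration of $E-(3,2)$ from the table.
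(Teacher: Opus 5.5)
Your final argument is correct and is essentially the paper's proof. The paper writes $3K_{\widetilde X}\equiv_{lin}3(\widetilde{t_1}+\widetilde{t_2}+\widetilde{l})$ with $l=D_{e_2}$ and $t_1,t_2$ the totally ramified lines $D_{2e_1+e_2},D_{2e_1+2e_2}$; this is your decomposition $\widetilde{\pi}^*(2H)+3\widetilde{\ell}$, since $3\widetilde{t_1}+3\widetilde{t_2}=\widetilde{\pi}^*(t_1+t_2)$. It then rules out $(-1)$-curves exactly as you do, using $\widetilde{l}^2=0$ and $g(\widetilde{l})=2$.

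Two clean-ups. First, read $\sum_g d_g=6$ and $N=3$ off Table~\ref{tab: (3,2)}: your opening bound $\sum_g d_g\le 5$ is false, and inferring $N$ from $K^2=8$ is circular. Second, your \emph{main obstacle} does not occur. By Proposition~\ref{prop: type_of_sing}, the three $\frac13(1,1)$ points form the whole fibre over the single point $D_{e_2}\cap D_{2e_2}$, so the line $D_{e_2}$ passes through all of them.
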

\begin{proof}
    Consider the line $l\coloneqq D_{e_2}$. According to Proposition \ref{prop: dec_of_the_pullback}, the pullback is 
    \[
    \widetilde{\pi}^*l\coloneqq 3\widetilde{l}+E_1+E_2+ E_3 
    \]
    where $E_1,E_2, E_3$ are the $(-3)$-curves of the resolution of $X$ and $\widetilde{l}$ 
    is the strict transform of $l$. 
    The curve $\widetilde{l}$ 
    intersects  each $(-3)$-curve $E_i$ transversally at a single point. Applying Proposition \ref{prop: dec_of_the_pullback}, we obtain 
    \[
    \widetilde{l}^2=1-3\cdot 1\cdot \frac{1}{3}=0, \qquad \makebox{and} \qquad 2g(\widetilde{l})-2=3\left(-2+4\left(1-\frac{1}{3}\right)\right) \implies g(\widetilde{l})=2. 
    \]
    Now, let us consider the remain lines $ t_1\coloneqq D_{2e_1+e_2}$ and $ t_2\coloneqq D_{2e_1+2e_2}$ together with their strict transforms $\widetilde{t_1}$ and $\widetilde{t_2}$. We have $\widetilde{\pi}^* t_i=3\widetilde{ t_i}$, with $\widetilde{t_i}^2=\widetilde{t_i}\cdot \widetilde{l}=\widetilde{t_1}\cdot \widetilde{t_2}=1$. From Theorem \ref{thm: invariants_ab_cov_cyclic_sing}, we have
    \[
       3K_{\widetilde{X}}\equiv_{lin} \widetilde{\pi}^*3H-(E_1+E_2+E_3)=3(\widetilde{t_1}+\widetilde{t_2}+\widetilde{l}) \implies K_{\widetilde{X}}^2=8. 
    \]
    Furthermore, $\widetilde{X}$ is a minimal surface, as any hypothetical $(-1)-$curve would have to be a fixed component of $\vert 3K_{\widetilde{X}}\vert $, namely one among $\widetilde{t_1}$, $\widetilde{t_2}$ or $\widetilde{l}$, neither of which is a  $(-1)$-curve. Finally,  using Theoroem \ref{thm: invariants_ab_cov_cyclic_sing}, we compute $q(\widetilde{X})=0$ and $p_g(\widetilde{X})=2$. 
\end{proof}
\begin{lemma}\label{lem: C-(3,3)_prop_elliptic}
    Any $(3,3)-$plane $\pi\colon X\to \mathbb P^2$ of the case $C-(3,3)$ is a minimal properly elliptic surface of Kodaira dimension one. The canonical map $\varphi_{\vert K_{\widetilde{X}}\vert}\colon \widetilde{X}\to \mathbb P^1$ is a morphism with a generic fibre that is a smooth elliptic curve.  
\end{lemma}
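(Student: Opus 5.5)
I will follow the template of Lemma \ref{lem: B-(3,2)_prop_elliptic}: produce a smooth elliptic curve of self-intersection zero on $\widetilde{X}$ that carries the canonical class, and then read off minimality and the Kodaira dimension from it. I do not have the table entry for $C-(3,3)$ in front of me. By Proposition \ref{prop: final_bound_on_p,k,sum_dg} we have $\sum_g d_g\le \frac{5}{2}(3-\frac19)$, hence $\sum_g d_g\le 7$. Since $K^2_{\widetilde X}=0$ in Table \ref{tab: inv_(p,k)_planes_2}, the non-canonical singularities must be of type $\frac13(1,1)$. Their correction must exactly absorb $(-9+2\sum_g d_g)^2\cdot 3$, which suggests $\sum_g d_g=5$. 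By analogy with the $(3,2)$ case, I expect a distinguished branch line $l=D_h$ through which pass all the intersection points with a component $\Delta_{2h}$, i.e.\ all the $\frac13(1,1)$ points. I will first read off from Table \ref{tab: (3,3)} the components and identify this line $l$, together with the number $N$ of points of $l\cap D_{2h}$.

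Next I apply Proposition \ref{prop: dec_of_the_pullback} with $k=3$. Over each point of $l\cap D_{2h}$ there are $9$ singular points, each of type $\frac13(1,1)$ with a single $(-3)$-curve, and we get $\widetilde\pi^*l=3\widetilde l+\sum E_i$. The self-intersection is $\widetilde l^2=3\cdot 1-9N\cdot\frac13=3-3N$, which forces $N=1$ and $\widetilde l^2=0$. The genus comes from formula \eqref{eq: genus_cover_branch}: $2g(\widetilde l)-2=9(-2+\frac23 m)$, where $m$ is the number of intersection points of $l$ with components whose monodromy is not in $\langle h\rangle$. I need $m=3$ to get $g(\widetilde l)=1$. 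The proposition also shows that $\widetilde l$ is irreducible, so it is a smooth elliptic curve.

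From \eqref{eq: KS_ab_cov_quo_sing}, with $E_x=\frac13E$ for each $\frac13(1,1)$ point, I get $3K_{\widetilde X}\equiv \widetilde\pi^*(-9+2\cdot 5)H-\sum E_i=\widetilde\pi^*l-\sum E_i=3\widetilde l$. This gives $K^2_{\widetilde X}=0$. Minimality then follows exactly as in Lemma \ref{lem: B-(3,2)_prop_elliptic}: a $(-1)$-curve $E$ has $3K_{\widetilde X}\cdot E<0$, so it would be a component of $3\widetilde l$, but $\widetilde l$ is elliptic.

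Finally, $p_g=2$ and $q=0$ by Theorem \ref{thm: invariants_ab_cov_cyclic_sing} and Lemma \ref{lem: (p,k)_planes_are_regular}. A minimal surface with $K^2=0$ and $p_g\ge 2$ has Kodaira dimension one, so $\widetilde X$ is properly elliptic. The canonical system $|K_{\widetilde X}|$ is then a base-point free pencil composed with the elliptic fibration, and its general member is a smooth curve of genus $1+K^2=1$ by adjunction; this yields the stated canonical map to $\mathbb P^1$.

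The main obstacle is the first step: pinning down from the table the right line $l$ and checking the local combinatorics, namely that there is exactly one $\frac13(1,1)$ point on $l$, that $m=3$, and that no other non-canonical singularities contribute to $E_x$ in the formula for $3K_{\widetilde X}$. If instead $\sum_g d_g\neq 5$, I would redo the same bookkeeping with the actual degrees, aiming again for $3K_{\widetilde X}\equiv 3\widetilde l$, possibly with $\widetilde l$ replaced by a sum of disjoint fibres.
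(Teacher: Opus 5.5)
Your proposal is correct and follows the paper's proof essentially step for step. In case $C-(3,3)$ the table gives the five lines $D_{e_1},D_{e_2},D_{e_3},D_{2e_2},D_{2e_1+2e_3}$, so your line is $l=D_{e_2}$ with $N=1$ (the single point $l\cap D_{2e_2}$, over which lie the nine $\frac13(1,1)$ points) and $m=3$; hence $\widetilde l^2=0$, $g(\widetilde l)=1$ and $3K_{\widetilde X}\equiv \widetilde\pi^*H-\sum_{i=1}^9E_i\equiv 3\widetilde l$, exactly as in the paper, and your remaining steps ($\kappa=1$, canonical pencil equal to the elliptic fibration) match it as well.
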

\begin{proof}
    Consider the line $l\coloneqq D_{e_2}$. Applying Proposition \ref{prop: dec_of_the_pullback}, we have  
    \[
    \widetilde{\pi}^*l=3\widetilde{l}+E_1+\dots+E_9,
    \]
    where the $E_i$ are the $(-3)$-curves of the resolution of $X$. This gives 
    \[
    \widetilde{l}^2=3-9\cdot \frac{1}{3}=0 \qquad \makebox{and} \qquad 2g(\widetilde{l})-2=9\left(-2+3\cdot \left(1-\frac{1}{3}\right)\right) \implies g(\widetilde{l})=1.
    \]
    Furthermore, from Theorem \ref{thm: invariants_ab_cov_cyclic_sing}, we have
    \[
    3K_{\widetilde{X}}=\widetilde{\pi}^*H-(E_1+\dots+E_9)\equiv 3\widetilde{l}
    \]
    Thus, $\widetilde{X}$ is a minimal surface, since any hypothetical $(-1)$-curve $E$ would be a fixed component of the linear system $\vert 3K_{\widetilde{X}}\vert $, namely $E$ must be  $\widetilde{l}$, which is instead an elliptic curve. Finally, we have a minimal surface with $K_{\widetilde{X}}^2=0$. By Theorem \ref{thm: invariants_ab_cov_cyclic_sing}, then $p_g(\widetilde{X})=2$ and $q(\widetilde{X})=0$, so it follows from Enriques-Kodaira classification that $\widetilde{X}$ is a properly elliptic surface. Finally, the canonical map of $\widetilde{X}$ is an elliptic fibration following the same argument in the proof of Lemma \ref{lem: B-(3,2)_prop_elliptic}. 
\end{proof}
\begin{lemma}\label{lem: A-(5,2)_is_K3}
    The $(5,2)$-plane $\pi\colon X\to \mathbb P^2$ of the case $A-(5,2)$ has a minimal resolution of singularities $\widetilde{X}$ that is not a minimal surface. The minimal model of $\widetilde{X}$  is a K3 surface and it is obtained after contracting a single $(-1)$-curve of $\widetilde{X}$. 
\end{lemma}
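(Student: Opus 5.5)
Following the template of Lemma \ref{lem: C-(3,2)_is_K3}, I will exhibit a $(-1)$-curve $\widetilde{l}$ on $\widetilde{X}$ and show that $K_{\widetilde{X}}\equiv\widetilde{l}$. The building data of $A-(5,2)$ come from Table \ref{tab: (5,2)}. By Proposition \ref{prop: final_bound_on_p,k,sum_dg}(4) the branch locus consists of four lines with $\sum_g d_g=4$. So
\[
5K_{\widetilde{X}}\equiv\widetilde{\pi}^*\bigl((-15+16)H\bigr)-\sum_x 5E_x=\widetilde{\pi}^*H-\sum_x 5E_x .
\]

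First I read off from the local monodromies which pairs of lines $\Delta_g,\Delta_h$ have $\langle g\rangle\cap\langle h\rangle\neq 0$. By Proposition \ref{prop: type_of_sing}, each such intersection point gives $5$ cyclic quotient singularities of type $\frac15(1,a)$ with $h=(5-a)g$. Comparing with the $(5,1)$ case $A-(5,1)$ in Lemma \ref{lem: (p,1)_planes_pg1_case_p=2,3,5}, I expect a line $l=\Delta_g$ through all the non-canonical singular points. Over each of these points lie singularities of type $\frac15(1,2)$ or $\frac15(1,1)$, while the other singular points are canonical, of type $\frac15(1,4)$, and contribute $E_x=0$.

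Next I apply Proposition \ref{prop: dec_of_the_pullback} to $l$. This gives
\[
\widetilde{\pi}^*l=5\widetilde{l}+\sum_x\sum_i D_{i,\ell_x}E_{i,x}.
\]
Formula \eqref{eq: self Intersecion strict} gives $\widetilde{l}^2=5-\sum_x a_x/5$, and I expect this to equal $-1$. Formula \eqref{eq: genus_cover_branch} should give $g(\widetilde{l})=0$; here I use that $l$ meets the other lines in points whose monodromy is nontrivial in $G/\langle g\rangle$. Connectedness of $\widetilde{l}$ comes from the same proposition.

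Then I compare $\widetilde{\pi}^*H=\widetilde{\pi}^*l$ with the discrepancy divisor $\sum_x 5E_x$ of Definition \ref{def: correct_terms_invariants}. For $\frac15(1,2)$ we have $5E_x=2E_1+E_2$, and the pullback coefficients $D_{i,\ell}$ are also $(2,1)$. For $\frac15(1,1)$ both are $E_1$. So the exceptional parts cancel and $K_{\widetilde{X}}\equiv\widetilde{l}$. Since $\widetilde{l}$ is a $(-1)$-curve, contracting it yields a surface $\widetilde{X}'$ with $K_{\widetilde{X}'}\equiv 0$. By Theorem \ref{thm: invariants_ab_cov_cyclic_sing} and Lemma \ref{lem: (p,k)_planes_are_regular} we have $p_g=1$ and $q=0$, so $\widetilde{X}'$ is a minimal K3 surface by Enriques--Kodaira.

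The main obstacle is the bookkeeping of the singularity types and checking that $l$ passes through all the non-canonical singular points. If instead two lines both carry non-canonical points, I would write $K_{\widetilde{X}}$ as a combination of the corresponding strict transforms, as in Lemma \ref{lem: (p,1)_planes_pg1_case_p=7}. I would then check that only one of them is a $(-1)$-curve and that after contracting it the canonical class becomes numerically trivial. This would be consistent with the invariants $K^2_{\widetilde{X}}=-1$, $e=25$ and $K^2_{\widetilde{X}_{min}}=0$ listed in Table \ref{tab: inv_(p,k)_planes_2}, which I would verify first via \eqref{eq: Ksquare_ab_cov_quo_sing}.
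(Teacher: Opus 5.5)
Your strategy is the paper's: exhibit a line $l$ whose strict transform $\widetilde l$ is a $(-1)$-curve with $K_{\widetilde X}\equiv\widetilde l$, then contract it. However, the computation that carries the argument is stated wrongly, and the bookkeeping you defer is what decides which line works.

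\textbf{The self-intersection.} In the self-intersection formula of Proposition~\ref{prop: dec_of_the_pullback} the leading term is $p^{k-2}\Delta_g^2=5^0\cdot 1=1$, not $5$. So $\widetilde l^{\,2}=1-\sum_x a_x/5$. With your expression the actual data give $5-2=3$, and the $(-1)$-curve claim fails.

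\textbf{The actual data.} In Table~\ref{tab: (5,2)} the four lines have monodromies $e_1,e_2,e_1+e_2,3(e_1+e_2)$. The only pair with $\langle g\rangle\cap\langle h\rangle\neq 0$ is $D_{e_1+e_2},D_{3(e_1+e_2)}$. So there is a single singular image point, over which lie the five points of type $\frac15(1,2)$. No $\frac15(1,1)$ or $\frac15(1,4)$ points occur.

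\textbf{The genus.} With $l=D_{e_1+e_2}$ one gets $\widetilde l^{\,2}=1-5\cdot\frac25=-1$ and $2g(\widetilde l)-2=5\bigl(-2+2\cdot\frac45\bigr)=-2$. Only $D_{e_1}$ and $D_{e_2}$ contribute here. $D_{3(e_1+e_2)}$ has monodromy in $\langle e_1+e_2\rangle$, and counting it as your sentence suggests would give genus $2$.

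\textbf{The choice of line.} Both $D_{e_1+e_2}$ and $D_{3(e_1+e_2)}$ pass through that point, so ``a line through all non-canonical singular points'' does not determine $l$. Only the first one works, because $a_x$ is read off from the monodromy of $l$.

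For $g=e_1+e_2$, $h=3g$ gives $a_x=2$. Then $\widetilde l$ meets the $(-3)$-end of each chain, and $\widetilde\pi^*l=5\widetilde l+\sum_i(2E_1^i+E_2^i)$. This cancels $\sum_x 5E_x$ exactly, which is the paper's proof.

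For $g=3(e_1+e_2)$ one gets $a_x=3$. Then $\widetilde l'$ is a $(-2)$-curve meeting the $(-2)$-ends, with exceptional part $3E_{(-2)}+E_{(-3)}$ in each chain. The difference with $5E_x=2E_{(-3)}+E_{(-2)}$ is not zero, so there is no cancellation.

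\textbf{The $\frac15(1,1)$ aside.} Your remark is false: there $5E_x=3E_1$ (discrepancy $-\frac35$), while the coefficient of $E_1$ in $\widetilde\pi^*l$ is $1$. It is harmless here only because no such points occur.
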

\begin{proof}
    Consider the line $l\coloneqq D_{e_1+e_2}$. Applying Proposition \ref{prop: dec_of_the_pullback}, we have 
    \[
    \widetilde{\pi}^*l\coloneqq 5\widetilde{l}+\sum_{i=1}^5\left(2E_1^i+E_2^i\right),
    \]
    where $E_1^i$ and $E_2^1$ are respectively the $(-3)$ and $(-2)$-curves of the resolution of the $i$-th cyclic quotient singularity $\frac{1}{5}(1,2)$ on $X$, for $i=1,\dots, 5$. The strict transform $\widetilde{l}$ of $l$ 
    intersects each  $(-3)$-curve $E_1^i$  transversally at a single point and it is disjoint from the $(-2)$-curves $E_2^i$. Thus, we have from Proposition \ref{prop: dec_of_the_pullback}
    \[
    \widetilde{l}^2=1-5\cdot \frac{2}{5}=-1, \qquad \makebox{and} \qquad 2g(\widetilde{l})-2=5\left(-2+2\cdot \left(1-\frac{1}{5}\right)\right) \implies g(\widetilde{l})=0.
    \]
    Thus, $\widetilde{l}$ is a $(-1)$-curve of $\widetilde{X}$. 
    From Theorem \ref{thm: invariants_ab_cov_cyclic_sing}, we have
    \[
    5K_{\widetilde{X}}= \widetilde{\pi}^*H-\sum_{i=1}^5\left(2E_1^i+E_2^i\right)\equiv_{lin} 5\widetilde{l}. 
    \]
    After contracting the $(-1)$-curve $\widetilde{l}$, the divisor $5K_{\widetilde{X}'}$ of the resulting surface $\widetilde{X}'$ becomes trivial, implying that  $\widetilde{X}'$ is a minimal surface. Furthermore, since $p_g(\widetilde{X}')=1$ and $q(\widetilde{X}')=0$ by Theorem \ref{thm: invariants_ab_cov_cyclic_sing}, it follows that $\widetilde{X}'$ is a K3 surface.
\end{proof}
\begin{lemma}\label{lem: B-(7,2)_is_not_min_Horikawa}
    The $(7,2)$-plane $\pi\colon X\to \mathbb P^2$ of the case $B-(7,2)$ has a minimal resolution of singularities $\widetilde{X}$ that is not a minimal surface.  The minimal model $\widetilde{X}_{min}$ is obtained after contracting a single $(-1)$-curve of $\widetilde{X}$. Moreover, $\widetilde{X}_{min}$ is a Horikawa surface with $K_{\widetilde{X}_{min}}^2=2$, $q(\widetilde{X}_{min})=0$ and $p_g(\widetilde{X}_{min})=3$.
\end{lemma}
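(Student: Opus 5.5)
We follow the pattern of Lemmas \ref{lem: C-(3,2)_is_K3} and \ref{lem: A-(5,2)_is_K3}. We single out the line $l$ in the branch locus of case $B-(7,2)$ (Table \ref{tab: (7,2)}) that passes through the non-canonical singular points of $X$. By Proposition \ref{prop: type_of_sing}, these points are cyclic quotient singularities of type $\frac17(1,a)$. By Proposition \ref{prop: dec_of_the_pullback}, the pullback decomposes as
\[
\widetilde{\pi}^*l=7\widetilde{l}+\sum_{x}\sum_{i}D_{i,\ell_x}E_{i,x},
\]
where $\widetilde{l}$ meets only the first curve $E_{1,x}$ of each Hirzebruch--Jung string. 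Formulas \eqref{eq: self Intersecion strict} and \eqref{eq: genus_cover_branch} then give $\widetilde{l}^2=7\cdot\frac17-\sum_x \frac{a_x}{7}$ and the genus of $\widetilde l$. We expect $\widetilde{l}^2=-1$ and $g(\widetilde{l})=0$, so that $\widetilde{l}$ is a $(-1)$-curve and $\widetilde{X}$ is not minimal.

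Next we compute $K_{\widetilde X}$ from \eqref{eq: KS_ab_cov_quo_sing}. Since $\sum_g d_g=4$ here,
\[
7K_{\widetilde X}\equiv \widetilde{\pi}^*\big((-21+6\cdot 4)H\big)-\sum_x 7E_x=\widetilde{\pi}^*(3H)-\sum_x 7E_x .
\]
We then rewrite one copy of $\widetilde\pi^*H$ as $\widetilde\pi^*l$, using the decomposition above, and express the other two copies through the pullbacks of the remaining branch lines, again via Proposition \ref{prop: dec_of_the_pullback}. The aim is to write
\[
K_{\widetilde X}\equiv \widetilde{l}+F,
\]
where $F$ is a nef, or at least effective, divisor containing no $(-1)$-curve, with $F\cdot\widetilde l=1$, so that $K_{\widetilde X}\cdot\widetilde l=-1$. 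This is the main obstacle: the exceptional coefficients must cancel exactly. We would check this by solving the tridiagonal systems for the coefficients of $E_x$ from Definition \ref{def: correct_terms_invariants}. Numerically, Theorem \ref{thm: invariants_ab_cov_cyclic_sing} should give $K_{\widetilde X}^2=1$, so $K^2_{\widetilde X_{min}}\geq 2$.

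After contracting $\widetilde l$, denote the resulting surface by $X'$, so that $K_{X'}^2=K_{\widetilde X}^2+1=2$. Minimality of $X'$ would follow if its canonical class is the image of $F$ and is nef. We would prove nefness by intersecting it with each component of its support, namely the images of the strict transforms of the branch lines and of the exceptional curves meeting $\widetilde l$. Where this is delicate, we would argue as in Lemma \ref{lem: E-(3,2)_is_min_of_gen_type}: any $(-1)$-curve $C$ must satisfy $K\cdot C<0$, hence be a fixed component of a multiple of the canonical class, and we rule these out one by one. The invariants $p_g=3$ and $q=0$ come from Theorem \ref{thm: invariants_ab_cov_cyclic_sing} and Lemma \ref{lem: (p,k)_planes_are_regular}, and $e=46$ follows from Noether's formula. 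A minimal surface with $K^2=2$, $p_g=3$ and $q=0$ lies on the Noether line $K^2=2p_g-4$, hence is a Horikawa surface of general type (its canonical map being a double cover of $\mathbb P^2$ branched on an octic). This also confirms that exactly one contraction is needed.
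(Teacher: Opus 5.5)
Your route is the same as the paper's: find a $(-1)$-curve over a branch line via Proposition~\ref{prop: dec_of_the_pullback}, write $K_{\widetilde X}$ through strict transforms, contract once, and check nefness. However, the decisive step is left undone, and the target you set for it cannot hold. If $K_{\widetilde X}\equiv \widetilde l+F$ with $F\cdot\widetilde l=1$, then $K_{\widetilde X}\cdot\widetilde l=\widetilde l^{\,2}+F\cdot\widetilde l=0$. This contradicts adjunction for a smooth rational curve with $\widetilde l^{\,2}=-1$. Any such decomposition has $F\equiv K_{\widetilde X}-\widetilde l$, the pullback of $K_{X'}$, so $F\cdot\widetilde l=0$.

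The line must also be chosen precisely. Both $D_{e_1+e_2}$ and $D_{5(e_1+e_2)}$ pass through the point over which the singularities lie. For $l=D_{e_1+e_2}$ one has $5(e_1+e_2)=(7-2)(e_1+e_2)$, so Proposition~\ref{prop: type_of_sing} gives seven points of type $\frac17(1,2)$. Then $\widetilde\pi^*l=7\widetilde l+\sum_{i=1}^7(2E_1^i+E_2^i)$, with $E_1^i$ a $(-4)$-curve and $E_2^i$ a $(-2)$-curve, and $\widetilde l^{\,2}=1-7\cdot\frac27=-1$, $g(\widetilde l)=0$. For $D_{5(e_1+e_2)}$ instead $a_x=4$, and its strict transform is a $(-3)$-curve.

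The identity your plan is missing involves no cancellation obstacle. The exceptional part of $2\widetilde\pi^*l$ is exactly $\sum_i(4E_1^i+2E_2^i)=\sum_x 7E_x$. No singular point lies over $t=D_{e_1}$, so $\widetilde\pi^*t=7\widetilde t$. Writing $3H\equiv t+2l$ therefore gives $K_{\widetilde X}\equiv \widetilde t+2\widetilde l$, which is what the paper does. Here $\widetilde t^{\,2}=1$, and $\widetilde t\cdot\widetilde l=1$ because $49=\widetilde\pi^*t\cdot\widetilde\pi^*l=49\,\widetilde t\cdot\widetilde l$, as $\widetilde t$ misses the exceptional curves. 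Hence $K_{\widetilde X}^2=1$ and $K_{\widetilde X}\cdot\widetilde l=-1$. So the $F$ produced from the branch lines is $\widetilde t+\widetilde l$, which contains the $(-1)$-curve, contrary to your requirement.

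Minimality is then one line rather than a component-by-component check. If $b$ contracts $\widetilde l$, then $b^*K_{X'}\equiv\widetilde t+\widetilde l=b^*b_*\widetilde t$, so $K_{X'}\equiv b_*\widetilde t$. This is an irreducible curve with $(b_*\widetilde t)^2=2>0$, so $K_{X'}$ is nef and $X'$ is minimal with $K_{X'}^2=2$. It is this nefness, not the Noether line (which only bounds $K^2$ from below), that shows exactly one contraction is needed. The remaining invariants and the Horikawa identification are as you state.
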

\begin{proof}
    Consider the line $l\coloneqq D_{e_1+e_2}$. 
    From Proposition \ref{prop: dec_of_the_pullback}, we have
    \[\widetilde{\pi}^*l\coloneqq 7\widetilde{l}+\sum_{i=1}^7\left(2E_1^i+E_2^i\right),
    \]
    where $E_1^i$ and $E_2^i$ are, respectively, the $(-4)$ and $(-2)$-curves of the resolution of the $i$-th cyclic quotient singularity $\frac{1}{7}(1,2)$ on $X$, for $i=1,\dots, 7$. 
    The strict transform $\widetilde{l}$ intersects each $(-4)$-curve $E_1^i$ transversally at a single point, and is disjoint from the $(-2)$-curves $E_2^i$. Thus, we find from Proposition \ref{prop: dec_of_the_pullback}
    \[
    \widetilde{l}^2=1-7\cdot \frac{2}{7}=-1, \qquad \makebox{and} \qquad 2g(\widetilde{l})-2=7\left(-2+2\cdot \left(1-\frac{1}{7}\right)\right) \implies g(\widetilde{l})=0.
    \]
    Accordingly, $\widetilde{l}$ is a $(-1)$-curve on $\widetilde{X}$. Now, consider the line $t\coloneqq D_{e_1}$ and its strict transform $\widetilde{t}$. We have $\widetilde{\pi}^*t=7\widetilde{t}$ and $\widetilde{t}\cdot \widetilde{l}=1$. 
    From Theorem \ref{thm: invariants_ab_cov_cyclic_sing}, then 
    \begin{equation}\label{eq: case_B-7,2,7K_X}
        7K_{\widetilde{X}}\equiv \widetilde{\pi}^*3H-\sum_{i=1}^7\left(4E_1^i+2E_2^i\right)\equiv 7(\widetilde{t}+2\widetilde{l}). 
    \end{equation}
    This implies $K_{\widetilde{X}}^2=1+4(-1)+2\cdot 2=1$.
    
    Let $b\colon \widetilde{X}\to \widetilde{X}'$ be the contraction of the $(-1)$-curve $\widetilde{l}$. Then $\eqref{eq: case_B-7,2,7K_X}$ yields $7b^*K_{\widetilde{X}'}\equiv 7\widetilde{t}+7\widetilde{l}$, so $7K_{\widetilde{X}'}\equiv_{lin} 7b_*\widetilde{t}$. Since $\widetilde{t}$ intersects $\widetilde{l}$ transversally at one point, we have $b^*b_*\widetilde{t}=\widetilde{t}+\widetilde{l}$, which implies $(b_*\widetilde{t})^2=1+(-1)+2=2$. Therefore, $b_*\widetilde{t}$ is not a $(-1)$-curve, and $\widetilde{X}'$ is a minimal surface 
     with $K_{\widetilde{X}'}^2=K_{\widetilde{X}}^2+1=2$. Finally, from Theorem \ref{thm: invariants_ab_cov_cyclic_sing}, we have  $q(\widetilde{X}')=0$ and $p_g(\widetilde{X}')=3$,
     confirming that $\widetilde{X}'$ is a Horikawa surface.
\end{proof}
\section{Explicit examples of SNC  \texorpdfstring{$(p,k)$}{(p,k)}-planes satisfying Property \texorpdfstring{$\bigstar$}{*}}\label{sec: examples_of_SNC_p,k_planes_Prop_STAR}
We present two concrete examples of $(p,k)$-planes  that illustrate how the output of the algorithm explained in Section \ref{sec: classification_SNC_p,k_planes_Prop_STAR} is translated into geometry. As a byproduct, we will show some interesting phenomenons can occur.

\begin{example}\label{ex: h231}
	{\rm We consider the case $H-(2,3)1)$ of Table~\ref{tab: (2,3)}. So we are dealing with a smooth $(2,3)$-plane $\pi \colon S \longrightarrow \PP^2$ . This corresponds to the second output produced by the algorithm explained at the end of Section \ref{sec: classification_SNC_p,k_planes_Prop_STAR}.
	
	In this case, the building data $\{D_g\}_{g\in G}$ and $\{\mathcal L_{\chi}\}_{\chi \in G^*}$  of the $(2,3)$-plane, where $G\cong (\mathbb Z/2)^3=\langle e_1,e_2,e_3\rangle$, have the following degrees: 
	\[
	\begin{split}
		d_{e_2+e_3}=4, 
		\qquad
		d_{e_1}=d_{e_2}=d_{e_3}=d_{e_1+e_2+e_3}=1, \quad  &\makebox{and} \quad d_g=0 \quad \makebox{otherwise}. \\ 
			l_{\epsilon_1}= l_{\epsilon_2\epsilon_3} =1, 
		\qquad
		l_{\epsilon_1\epsilon_2\epsilon_3}=2,
		\qquad
		l_{\chi}=3 \quad &\text{for all other } \chi\in G^*\setminus 1.
	\end{split}
	\]
	Thus, the branch divisor consists of four lines and a smooth quartic curve, that we colour as follows respectively for the convenience:
	\begin{itemize}
		\item
		$D_{e_1}$ (black),
		{\color{orange}$D_{e_2}$} (orange),
		{\color{green}$D_{e_3}$} (green),
		{\color{red}$D_{e_1+e_2+e_3}$} (red), and
		\item 
		{\color{blue}$D_{e_2+e_3}$} (blue),
	\end{itemize}
	as shown in Figure \ref{fig: branch_locus_H-(2,3)}.

	In the picture, we also indicate the following intermediate $(2,1)$-planes and their branch locus:
	\[
	\begin{aligned}
		Y_1 &\coloneqq  S/\langle e_2, e_3 \rangle, &
		Y_2 &\coloneqq  S/\langle e_1, e_3 \rangle, &
		Y_3 &\coloneqq  S/\langle e_1, e_2 \rangle,\\
		Y_4 &\coloneqq  S/\langle e_1+e_2, e_3 \rangle, &
		Y_5 &\coloneqq  S/\langle e_1+e_3, e_2 \rangle, &
		Y_6 &\coloneqq  S/\langle e_2+e_3, e_1 \rangle,\\
		Y_7 &\coloneqq  S/\langle e_2+e_3, e_1+e_2 \rangle. &
	\end{aligned}
	\]
	We now describe one of these intermediate quotients, namely $Y_1$, that corresponds to the subgroup $\ker(\epsilon_1)=\langle e_2,e_3\rangle\le G$. The others are 
	treated analogously. 
	
	According to Lemma~\ref{lem: interm_building_data}, since $\epsilon_1(e_j)=(-1)^{\delta_{1j}}$, then $\langle \epsilon_1,e_j\rangle=\delta_{1j}$ and so the branch locus of the intermediate $(2,1)$-plane $Y_1\to \mathbb P^2$ consists of the two lines $D_{e_1}+D_{e_1+e_2+e_3}$. 
	Accordingly, in Figure \ref{fig: branch_locus_H-(2,3)} the branch locus of $Y_1$ is given by the black and the red lines.
	
	Since the double cover is branched along exactly two lines, it follows from
	Theorem \ref{thm: main_thm_(p,1)_planes_pg=0} (see Table \ref{tab: (p,1)_pg=0_p=2})) that $Y_1$ is birational to $\mathbb{F}_2$.
	More precisely, the surface $Y_1$ has a single node, and blowing up this point yields $\mathbb{F}_2$. 
\begin{figure}
	\centering
	\begin{tikzpicture}
		
		\begin{scope}[shift={(-0,-1.2)}, scale=0.7, transform shape]
			\draw[thick, black] (-3.5, -2.2) rectangle (3.5, 3.8);
			\draw[thick, black] (-3.3, -0.65) -- (3.3, -0.65);
			\draw[thick, red] (-2.64, -1.98) -- (1.52, 3.65);
			\draw[thick, orange] (2.64, -1.98) -- (-1.52, 3.65);
			\draw[thick, green!60!black] (-3.24, 1.1) -- (2.92, -1.3);
			\draw[blue, thick] (0, 2.7) .. controls (0.6, 2.7) and (0.5, 1.3) .. (0.4, 0.8)
			.. controls (0.3, 0.4) and (0.7, 0.3) .. (1.0, 0.1)
			.. controls (1.5, -0.2) and (2.5, -0.9) .. (2.1, -1.6)
			.. controls (1.8, -2.1) and (1.0, -1.6) .. (0.6, -0.8)
			.. controls (0.2, -0.2) and (0.3, 0.4) .. (0, 0.4)
			.. controls (-0.3, 0.4) and (-0.2, -0.4) .. (-0.8, -1.1)
			.. controls (-1.2, -1.6) and (-1.8, -2.1) .. (-2.1, -1.6)
			.. controls (-2.5, -0.9) and (-1.5, -0.2) .. (-1.0, 0.1)
			.. controls (-0.7, 0.3) and (-0.3, 0.4) .. (-0.4, 0.8)
			.. controls (-0.5, 1.3) and (-0.6, 2.7) .. (0, 2.7) -- cycle;
		\end{scope}
		
		\begin{scope}[shift={(-6.3, 4.5)}, scale=0.3, transform shape]
			\draw[thick, black] (-3.3, -2.2) rectangle (3.1, 3.8);
			\draw[thick, black] (-3.1, -0.65) -- (2.7, -0.65);
			\draw[thick, red] (-2.64, -1.98) -- (1.52, 3.65);
		\end{scope}
		
		\begin{scope}[shift={(-4.3, 4.5)}, scale=0.3, transform shape]
			\draw[thick, black] (-3.3, -2.2) rectangle (3.1, 3.8);
			\draw[thick, red] (-2.64, -1.98) -- (1.52, 3.65);
			\draw[thick, orange] (2.64, -1.98) -- (-1.52, 3.65);
			\draw[blue, thick] (0, 2.7) .. controls (0.6, 2.7) and (0.5, 1.3) .. (0.4, 0.8) .. controls (0.3, 0.4) and (0.7, 0.3) .. (1.0, 0.1) .. controls (1.5, -0.2) and (2.5, -0.9) .. (2.1, -1.6) .. controls (1.8, -2.1) and (1.0, -1.6) .. (0.6, -0.8) .. controls (0.2, -0.2) and (0.3, 0.4) .. (0, 0.4) .. controls (-0.3, 0.4) and (-0.2, -0.4) .. (-0.8, -1.1) .. controls (-1.2, -1.6) and (-1.8, -2.1) .. (-2.1, -1.6) .. controls (-2.5, -0.9) and (-1.5, -0.2) .. (-1.0, 0.1) .. controls (-0.7, 0.3) and (-0.3, 0.4) .. (-0.4, 0.8) .. controls (-0.5, 1.3) and (-0.6, 2.7) .. (0, 2.7) -- cycle;
		\end{scope}
		
		\begin{scope}[shift={(-2.3, 4.5)}, scale=0.3, transform shape]
			\draw[thick, black] (-3.3, -2.2) rectangle (3.1, 3.8);
			\draw[thick, red] (-2.64, -1.98) -- (1.52, 3.65);
			\draw[thick, green!60!black] (-3, 1.4) -- (2.92, -1.3);
			\draw[blue, thick] (0, 2.7) .. controls (0.6, 2.7) and (0.5, 1.3) .. (0.4, 0.8) .. controls (0.3, 0.4) and (0.7, 0.3) .. (1.0, 0.1) .. controls (1.5, -0.2) and (2.5, -0.9) .. (2.1, -1.6) .. controls (1.8, -2.1) and (1.0, -1.6) .. (0.6, -0.8) .. controls (0.2, -0.2) and (0.3, 0.4) .. (0, 0.4) .. controls (-0.3, 0.4) and (-0.2, -0.4) .. (-0.8, -1.1) .. controls (-1.2, -1.6) and (-1.8, -2.1) .. (-2.1, -1.6) .. controls (-2.5, -0.9) and (-1.5, -0.2) .. (-1.0, 0.1) .. controls (-0.7, 0.3) and (-0.3, 0.4) .. (-0.4, 0.8) .. controls (-0.5, 1.3) and (-0.6, 2.7) .. (0, 2.7) -- cycle;
		\end{scope}
		
		\begin{scope}[shift={(-0.3, 4.5)}, scale=0.3, transform shape]
			\draw[thick, black] (-3.3, -2.2) rectangle (3.1, 3.8);
			\draw[thick, black] (-2.8, -0.65) -- (2.7, -0.65);
			\draw[thick, orange] (2.64, -1.98) -- (-1.52, 3.65);
			\draw[blue, thick] (0, 2.7) .. controls (0.6, 2.7) and (0.5, 1.3) .. (0.4, 0.8) .. controls (0.3, 0.4) and (0.7, 0.3) .. (1.0, 0.1) .. controls (1.5, -0.2) and (2.5, -0.9) .. (2.1, -1.6) .. controls (1.8, -2.1) and (1.0, -1.6) .. (0.6, -0.8) .. controls (0.2, -0.2) and (0.3, 0.4) .. (0, 0.4) .. controls (-0.3, 0.4) and (-0.2, -0.4) .. (-0.8, -1.1) .. controls (-1.2, -1.6) and (-1.8, -2.1) .. (-2.1, -1.6) .. controls (-2.5, -0.9) and (-1.5, -0.2) .. (-1.0, 0.1) .. controls (-0.7, 0.3) and (-0.3, 0.4) .. (-0.4, 0.8) .. controls (-0.5, 1.3) and (-0.6, 2.7) .. (0, 2.7) -- cycle;
		\end{scope}
		
		\begin{scope}[shift={(1.7, 4.5)}, scale=0.3, transform shape]
			\draw[thick, black] (-3.3, -2.2) rectangle (3.1, 3.8);
			\draw[thick, black] (-2.8, -0.65) -- (2.7, -0.65);
			\draw[thick, green!60!black] (-3, 1.4) -- (2.92, -1.3);
			\draw[blue, thick] (0, 2.7) .. controls (0.6, 2.7) and (0.5, 1.3) .. (0.4, 0.8) .. controls (0.3, 0.4) and (0.7, 0.3) .. (1.0, 0.1) .. controls (1.5, -0.2) and (2.5, -0.9) .. (2.1, -1.6) .. controls (1.8, -2.1) and (1.0, -1.6) .. (0.6, -0.8) .. controls (0.2, -0.2) and (0.3, 0.4) .. (0, 0.4) .. controls (-0.3, 0.4) and (-0.2, -0.4) .. (-0.8, -1.1) .. controls (-1.2, -1.6) and (-1.8, -2.1) .. (-2.1, -1.6) .. controls (-2.5, -0.9) and (-1.5, -0.2) .. (-1.0, 0.1) .. controls (-0.7, 0.3) and (-0.3, 0.4) .. (-0.4, 0.8) .. controls (-0.5, 1.3) and (-0.6, 2.7) .. (0, 2.7) -- cycle;
		\end{scope}
		
		\begin{scope}[shift={(3.7, 4.5)}, scale=0.3, transform shape]
			\draw[thick, black] (-3.3, -2.2) rectangle (3.1, 3.8);
			\draw[thick, orange] (2.64, -1.98) -- (-1.52, 3.65);
			\draw[thick, green!60!black] (-3, 1.4) -- (2.92, -1.3);
		\end{scope}
		
		\begin{scope}[shift={(5.7, 4.5)}, scale=0.3, transform shape]
			\draw[thick, black] (-3.3, -2.2) rectangle (3.1, 3.8);
			\draw[thick, black] (-3.1, -0.65) -- (2.9, -0.65);
			\draw[thick, red] (-2.64, -1.98) -- (1.52, 3.65);
			\draw[thick, orange] (2.64, -1.98) -- (-1.52, 3.65);
			\draw[thick, green!60!black] (-3, 1.4) -- (2.92, -1.5);
		\end{scope}
			
\draw[-stealth, thick, gray!80] (-6.0, 3.70) -- (-0.8, 1.65); 
\draw[-stealth, thick, gray!80] (-4.0, 3.70) -- (-0.5, 1.65); 
\draw[-stealth, thick, gray!80] (-2.0, 3.70) -- (-0.2, 1.65); 
\draw[-stealth, thick, gray!80] (0.0, 3.70)  -- (0.0, 1.65);  
\draw[-stealth, thick, gray!80] (2.0, 3.70)  -- (0.2, 1.65);  
\draw[-stealth, thick, gray!80] (4.0, 3.70)  -- (0.5, 1.65);  
\draw[-stealth, thick, gray!80] (6.0, 3.70)  -- (0.8, 1.65);  

\node[below right, font=\small] at (-7.35, 3.84) {$Y_1$};
\node[below right, font=\small] at (-5.35, 3.84) {$Y_2$};
\node[below right, font=\small] at (-3.35, 3.84) {$Y_3$};
\node[below right, font=\small] at (-1.35, 3.84) {$Y_4$};
\node[below right, font=\small] at (0.65, 3.84)  {$Y_5$};
\node[below right, font=\small] at (2.65, 3.84)  {$Y_6$};
\node[below right, font=\small] at (4.65, 3.84)  {$Y_7$};
\node[right] at (2.65, 0)  {$\mathbb P^2$};

\node[font=\large, above] at (-6.0, 8.5) {$S/\langle e_1\rangle$};
\node[font=\large, above] at (-4.0, 8.5) {$S/\langle e_2\rangle$};
\node[font=\large, above] at (-2.0, 8.5) {$S/\langle e_3\rangle$};
\node[font=\large, above] at (0.0, 8.5)  {$\frac{S}{\langle e_1+e_2\rangle}$};
\node[font=\large, above] at (2.0, 8.5)  {$\frac{S}{\langle e_1+e_3\rangle}$};
\node[font=\large, above] at (4.0, 8.5)  {$\frac{S}{\langle e_2+e_3\rangle}$};
\node[font=\large, above] at (6.0, 8.5)  {$\frac{S}{\langle e_1+e_2+e_3\rangle}$};

\node[font=\large, above] at (0, 11)  {$S$};

\draw[-stealth, thick, gray!80] (0.0, 11.0) -- (-5.7, 9.1); 
\draw[-stealth, thick, gray!80] (0.0, 11.0) -- (-3.7, 9.1); 
\draw[-stealth, thick, gray!80] (0.0, 11.0) -- (-1.7, 9.1); 
\draw[-stealth, thick, gray!80] (0.0, 11.0) -- (0.0, 9.3);  
\draw[-stealth, thick, gray!80] (0.0, 11.0) -- (1.7, 9.2);  
\draw[-stealth, thick, gray!80] (0.0, 11.0) -- (3.7, 9.2);  
\draw[-stealth, thick, gray!80] (0.0, 11.0) -- (5.7, 9.2);  

\draw[-stealth, thick, gray!80] (-4.0, 8.5) -- (-6.3, 5.8); 
\draw[-stealth, thick, gray!80] (-2.0, 8.5) -- (-6.3, 5.7); 
\draw[-stealth, thick, gray!80] (4.0, 8.5)  -- (-6.1, 5.7); 

\draw[-stealth, thick, gray!80] (-6.0, 8.5) -- (-4.2, 5.7); 
\draw[-stealth, thick, gray!80] (-2.0, 8.5) -- (-4.0, 5.7); 
\draw[-stealth, thick, gray!80] (2.0, 8.5)  -- (-3.8, 5.7); 

\draw[-stealth, thick, gray!80] (-6.0, 8.5) -- (-2.2, 5.7); 
\draw[-stealth, thick, gray!80] (-4.0, 8.5) -- (-2.0, 5.7); 
\draw[-stealth, thick, gray!80] (-0, 8.5)  -- (-2.0, 5.7); 

\draw[-stealth, thick, gray!80] (0, 8.5) -- (0, 5.7); 
\draw[-stealth, thick, gray!80] (-2.0, 8.5) -- (0, 5.7); 
\draw[-stealth, thick, gray!80] (6, 8.5)  -- (0, 5.7); 

\draw[-stealth, thick, gray!80] (2, 8.5) -- (2, 5.7); 
\draw[-stealth, thick, gray!80] (-4.0, 8.5) -- (2, 5.7); 
\draw[-stealth, thick, gray!80] (6, 8.5)  -- (2, 5.7); 

\draw[-stealth, thick, gray!80] (4, 8.5) -- (4, 5.7); 
\draw[-stealth, thick, gray!80] (-6.0, 8.5) -- (4, 5.7); 
\draw[-stealth, thick, gray!80] (6, 8.5)  -- (4, 5.7); 

\draw[-stealth, thick, gray!80] (4, 8.5) -- (6, 5.7); 
\draw[-stealth, thick, gray!80] (.0, 8.5) -- (6, 5.7); 
\draw[-stealth, thick, gray!80] (2, 8.5)  -- (6, 5.7); 
	\end{tikzpicture}
	\caption{The branch locus of the case $H-(2,3)1)$ together with the branch loci of its intermediate $(2,1)$-planes $Y_1,\dots, Y_7$.}
\label{fig: branch_locus_H-(2,3)}
\end{figure}
	In this example, the intermediate quotients $Y_2$, $Y_3$, $Y_4$, and $Y_5$ are all $(2,1)$-planes branched along a reduced singular sextic curve.
	Consequently, each of them has geometric genus one, and their minimal models are
	K3 surfaces, see Theorem \ref{thm: pg1_Zp_cover_structure} and Table \ref{tab: (2,1)}. 
	The remaining intermediate quotients $Y_6$ and $Y_7$ can be described similarly.
	As for $Y_6$, the double cover is again branched along two lines and is
	therefore birational to $\mathbb{F}_2$.
	The remaining quotient is branched along four lines and is birational to a Weak del
	Pezzo surface of degree~$2$, see again Theorem \ref{thm: main_thm_(p,1)_planes_pg=0} and Table \ref{tab: (p,1)_pg=0_p=2}.
	
	We obtained that the intermediate $(2,1)$-planes are either rational or K3 surfaces, so $S$ satisfies Property $\bigstar$.
	 
	Hence, counting the number of the intermediate $(2,1)$-planes that are K3 surfaces, the geometric genus of $S$ is $p_g(S)=4$.
	Since in this case the branch divisor of $\pi$ is simple normal crossing and $p=2$, the surface $S$ is smooth, see Proposition \ref{prop: smooth_cover}. Furthermore, $S$ is regular from Lemma \ref{lem: (p,k)_planes_are_regular}.  
	Denoting with $H$ the hyperplane section, the remaining numerical invariants of $S$ can be computed directly using the
	formulae of Proposition \ref{thm: invariants_ab_cov_cyclic_sing} (see also Remark \ref{rem: easier_invariants_formulae_smooth_cover})
	\[
	\chi(\mathcal O_S)= 8 + \frac{1}{2}\big( H(H - 3H)+H(H - 3H)+2H(2H - 3H)\big)=8+\frac{1}{2}(-2-2-2)=5,\]
	\[
	K_S\equiv \frac{1}{2}\pi^*(-6 H+(1+1+1+1+4)H)=\pi^*(H), \qquad \makebox{and} \qquad K_S^2
	= 2^{3-2}\left(-6H + 8H\right)^2=8.
	\]
	Consequently, by Noether formula we have $e(S)=52$. We conclude that $S$ is a minimal surface of general type with ample canonical class. 
	
	We now examine more closely the weight--$2$ Hodge structure of $S$.
	By 
	Corollary~\ref{cor:transcendental_decomposition_smooth},
	 the transcendental lattice $T(S)_{\mathbb Q}$ 
	 decompose as direct sum of the corresponding contributions coming from the intermediate $(2,1)$-planes 
	$Y_1, \dots, Y_7$.
	
	The Hodge numbers of the intermediate double covers can be found in tables \ref{tab: (p,1)_pg=0_p=2} and \ref{tab: (2,1)} depending if $Y_i$ is a rational or a K3 surface respectively.
	More precisely, the rational surfaces $Y_i$ with $i=1,6$ have
	\[
	\bigl(h^{2,0}(Y_i),\,h^{1,1}(Y_i),\,h^{0,2}(Y_i)\bigr)=(0,1,0),
	\]
	while for the rational surface $Y_7$ we obtain
	\[
	\bigl(h^{2,0}(Y_7),\,h^{1,1}(Y_7),\,h^{0,2}(Y_7)\bigr)=(0,2,0).
	\]
	
	Since these surfaces are rational, their Hodge structures are entirely algebraic
	and have no transcendental part.
	Moreover, in each case one generator of $H^{1,1}(Y_i)$ is given by the pullback of
	the hyperplane class on $\mathbb P^2$.
	
	We now turn to the intermediate quotients $Y_2,Y_3,Y_4,Y_5$, whose minimal models are K3
	surfaces. From Theorem \ref{thm: pg1_Zp_cover_structure} we have that the very general member of their families have $\rho(Y_i)=1$, and the generator of the Picard group is exactly the pull back of the hyperplane section of $\PP^2$. Moreover, since all $(2,1)$-planes $Y_i \longrightarrow \PP^2$ are ramified over a quartic and two lines, looking at Table \ref{tab: (2,1)} we have that the Hodge components $H^2_{tra}(Y_i)$ are of dimension $(1,10,1)$, respectively, 
	for $i=2,3,4,5$. We have that the Hodge diamond of $S$ is then
	\[
	\begin{array}{ccccc}
		&& 1 && \\
		& 0 && 0 & \\
		4 && 42 && 4 \\
		& 0 && 0 & \\
		&& 1 &&
	\end{array}
	\]
	and $H_{tra}^2(S)=(4,40,4)$ while the rational Neron-Severi group is generated by the pullback of the hyperplane section and an algebraic class coming from $Y_7$.  
	
	To conclude the example we present $S$ in a suitable weighted projective space according to the method explained in Subsection \ref{subsec: equations_p,k_plane_in_suitable_proj_planes}. 
	\medskip
	Let $(x_0:x_1:x_2)$ be homogeneous coordinates on $\PP^2$ and let
	\[
	f_{e_1},\, f_{e_2},\, f_{e_3},\, f_{e}\in H^0(\PP^2,\mathcal O_{\PP^2}(1)),
	\qquad
	f_{e_2+e_3}\in H^0(\PP^2,\mathcal O_{\PP^2}(4))
	\]
	be defining equations of the branch components 
	$D_{e_1}, D_{e_2}, D_{e_3},D_{e_1+e_2+e_3}$ (lines) and $D_{e_2+e_3}$ (smooth quartic),
	respectively. Setting $\varepsilon\coloneqq \epsilon_1\epsilon_2\epsilon_3$ for short, we recall that in this case
	\[
	\mathcal L_{\epsilon_1}\cong \mathcal O_{\PP^2}(1),\qquad
	\mathcal L_{\epsilon_2+\epsilon_3}\cong \mathcal O_{\PP^2}(1),\qquad
	\mathcal L_{\varepsilon}\cong \mathcal O_{\PP^2}(2),
	\]
	and $\mathcal L_{\chi}\cong \mathcal O_{\PP^2}(3)$ for all other nontrivial $\chi\in G^*$.
	
	We embed $S$ into the weighted projective space $\PP(1,1,1,1,1,2,3,3,3,3)$ with weighted homogeneous coordinates
	\[
	(x_0:x_1:x_2:\ y_{\epsilon_1}:y_{\epsilon_2+\epsilon_3}:\ y_{\varepsilon}:\ 
	y_{\epsilon_2}:y_{\epsilon_3}:y_{\epsilon_1\epsilon_2}:y_{\epsilon_1\epsilon_3}),
	\]
	with $\deg x_i=1$, $\deg y_{\epsilon_1}=\deg y_{\epsilon_2\epsilon_3}=1$,
	$\deg y_{\varepsilon}=2$, and the remaining $y_\chi$ have degree $3$.
	
	Note from Theorem \ref{thm: Par91a} (cf. Subsection \ref{subsec: equations_p,k_plane_in_suitable_proj_planes}) that one of the equations defining $S$ is:
	\[
	y_{\epsilon_1}\,y_{\epsilon_2\epsilon_3} = y_{\varepsilon}.
	\]
	This tells us that we can eliminate the variable  $y_{\varepsilon}$ and 
	consider the weighted projective space with one variable less $
	\PP(1,1,1,1,1,3,3,3,3)$, by projection. Thus, $S$ is defined by the following relations, see Subsection \ref{subsec: equations_p,k_plane_in_suitable_proj_planes}:
		\begin{equation}\label{eq: equations_H-(2,3)_cases}
			\rk
			\begin{pmatrix}
				f_{e_1} \,f_{e_2+e_3} & y_{\epsilon_1\epsilon_2} & y_{\epsilon_1\epsilon_3} &  y_{\epsilon_1}\, f_{e_2+e_3}\\
				y_{\epsilon_1\epsilon_2} & f_{e_2} &  y_{\epsilon_2\epsilon_3} &y_{\epsilon_2} \\
				y_{\epsilon_1\epsilon_3} & y_{\epsilon_2\epsilon_3} & f_{e_3} & y_{\epsilon_3}\\
				y_{\epsilon_1}\, f_{e_2+e_3} & y_{\epsilon_2} &y_{\epsilon_3} & f_\varepsilon\, f_{e_2+e_3}
			\end{pmatrix}
			\leq 1, \qquad y_{\epsilon_1}^2=f_{e_1}f_{\epsilon}.
		\end{equation}
	Note that the projection onto the first three coordinates gives the $(2,3)$-plane
	$\pi\colon S\to\PP^2$.
	
	Finally, from Theorem \ref{thm:Mumford_Tate_SNC} the Mumford--Tate and Tate conjectures hold or $S$. 
}
\end{example}
\begin{remark}
	We point out that the equations \eqref{eq: equations_H-(2,3)_cases} defining the $(2,3)$-plane $S$ of the case $H-(2,3)1)$ directly give a presentation for all the other cases $H-(2,3)2),\dots, H-(2,3)5)$. Indeed, it sufficies to replace the irreducible polynomial $f_{e_2+e_3}$ with a suitable factorization of it to provide equations of the (singular) corresponding SNC $(2,3)$-plane $\pi\colon X\to \mathbb P^2$. 
\end{remark}
\begin{example}\label{ex: B-(3,2)}
	{\rm 
	We consider case \(B-(3,2)1)\) in Table \ref{tab: (3,2)}. Thus, we are dealing with a SNC \((3,2)\)-plane $\pi \colon X \longrightarrow \PP^2$, where $X$ has three cyclic quotient singularities of type \(\frac{1}{3}(1,1)\). This case was roughly studied in Lemma \ref{lem: B-(3,2)_prop_elliptic}, where it was proved that $\widetilde{X}$ is a properly elliptic surface (of Kodaira dimension one). 
	\\
	Hence, this case exhibits an interesting phenomenon: the singularities of a \((p,k)\)-plane may cause the Kodaira dimension of its minimal model to jump from the expected value. 
	
	For this reason, we will study this example in greater detail and show an explicit elliptic fibration of the minimal model, which will coincide with its canonical map. 
	
	The building data $\{D_g\}_{g\in G}$ and $\{\mathcal L_{\chi}\}_{\chi \in G^*}$  of the $(3,2)$-plane have the following degrees, where $G\cong (\mathbb Z/3)^2=\langle e_1,e_2\rangle$:
	\begin{equation}\label{eq: building_data_ex_B-(3,2)1}
			\begin{split}
			d_{e_1}=3, 
			\qquad
			d_{e_2}=d_{2e_2}=1, \quad  &\makebox{and} \quad d_g=0 \quad \makebox{otherwise}. \\ 
			l_{\epsilon_1}= l_{\epsilon_2} =l_{\epsilon_2^2}=1, 
			\qquad
			l_{\epsilon_1^2\epsilon_2^2}=l_{\epsilon_1^2\epsilon_2}=3,
			\qquad
			l_{\chi}=2 \quad &\text{for all other } \chi\in G^*\setminus 1.
		\end{split}
	\end{equation}
	Thus, the branch divisor consists of two lines $\ell_1, \ell_2$ and a smooth cubic curve $\mathcal C$ in general position, that we colour as follows:
	\begin{itemize}
		\item
		{\color{green!60!black} $\mathcal C:=D_{e_1}$} (green),
		\item 
		{\color{blue}$\ell_1:=D_{e_2}$} (blue),
		{\color{red}$\ell_2:=D_{2e_2}$} (red),
	\end{itemize}
	as shown in Figure \ref{fig: branch_locus_B-(3,2)}.
	
		In Figure \ref{fig: branch_locus_B-(3,2)}, we also indicate the three intermediate $(3,1)$-planes and their branch locus:
	\[
	\begin{aligned}
		Y_1 &\coloneqq  X/\langle e_1 \rangle, &
		Y_2 &\coloneqq  X/\langle e_2 \rangle, &
		Y_3 &\coloneqq  X/\langle e_1+e_2 \rangle, \quad \makebox{and} \quad &
		Y_4 &\coloneqq  X/\langle e_1+2e_2 \rangle.
	\end{aligned}
	\]
	According to Lemma~\ref{lem: interm_building_data}, the branch locus of the intermediate $(3,1)$-plane $Y_1\to \mathbb P^2$ consists of the two lines $\ell_1+\ell_2$. 	Since the triple cover is branched along exactly two lines, it follows from
	Theorem \ref{thm: main_thm_(p,1)_planes_pg=0} (see Table \ref{tab: (p,1)_pg=0_p=3})) that $Y_1$ is birational to $\mathbb{F}_3$.
	More precisely, the surface $Y_1$ has a single cyclic quotient singularity of type $\frac{1}{3}(1,1)$, and blowing up this point yields $\mathbb{F}_3$. 
	
	Similarly, one can say that $Y_2\to \mathbb P^2$ is branched over only a smooth cubic and still from Theorem \ref{thm: main_thm_(p,1)_planes_pg=0} (see Table \ref{tab: (p,1)_pg=0_p=3}) we have $Y_2$ is a smooth del Pezzo of degree $3$. 
	
	In addition, the remaining two cases $Y_3$ and $Y_4$ are both branched over the total branch locus of $X\to \mathbb P^2$ (but with different local monodromies). Hence, from Theorem \ref{thm: pg1_Zp_cover_structure} these surfaces are K3 and correspond to the case $A-(3,1)2)$ in  Table \ref{tab: (3,1)}. 
	
	Thus, we have obtained that the intermediate quotients are either rational or K3 surfaces, so $\pi\colon X\to \mathbb P^2$ satisfies Property $\bigstar$. 
	
	Hence, counting the number of the intermediate $(3,1)$-planes that are K3 surfaces, the geometric genus of $X$ is $p_g(X)=2$.
	
	As mentioned above, the $(3,2)-$plane is singular only over the intersection point of the branch components $l_1$ and $l_2$, and above such a point there are $3$ singularities of type $\frac{1}{3}(1,1)$, according to Proposition \ref{prop: type_of_sing}. The same proposition tells us that over the other intersection points of the branch locus no other singularities occur. 
	
	We denote by $\rho\colon \widetilde{X}\to X$ the minimal resolution of singularities of $X$, by $E_1,E_2$ and $E_3$ the $(-3)$-exceptional curves, and by $\widetilde{\pi}\colon \widetilde{X}\to \mathbb P^2$ the composition $\widetilde{\pi}:=\pi\circ \rho$. 
	
	The surface $\widetilde{X}$ is regular from Lemma \ref{lem: (p,k)_planes_are_regular}.  
	The remaining numerical invariants of $\widetilde{X}$ can be computed directly using the
	formulae of Proposition \ref{thm: invariants_ab_cov_cyclic_sing}
	\[
	\begin{split}
		\chi(\mathcal O_{\widetilde{X}})& = 9 + \frac{1}{2}\big( 3H(H - 3H)+6H(2H - 3H)+2H(3H - 3H)\big) =9+\frac{1}{2}(-6-6)=3, \\
		K_{\widetilde{X}} & \equiv \frac{1}{3}\widetilde{\pi}^*(-9H+(3-1)(1+1+3)H)-(E_1+E_2+E_3)=\frac{1}{3}\pi^*(H)-(E_1+E_2+E_3),\\
		K_{\widetilde{X}}^2 & = 3^{2-2}\left(-9H + 10H\right)^2-3\frac{1}{3}=0.
	\end{split}
	\]
	Consequently, by Noether formula we have $e(\widetilde{X})=36$. Furthermore, from Proposition \ref{prop: dec_of_the_pullback}, the strict transforms $\widetilde{\ell}_1$ and $\widetilde{\ell}_2$ of the lines $\ell_1$ and $\ell_2$ on $\widetilde{X}$ are 
	\[
	  \widetilde{\ell_1}\equiv \widetilde{\ell_2}\equiv \frac{1}{3}\widetilde{\pi}^*(H)-(E_1+E_2+E_3).
	\]
	Thus, $K_{\widetilde{X}}\equiv \widetilde{\ell_1}\equiv \widetilde{\ell_2}$ and as already explained in Lemma \ref{lem: B-(3,2)_prop_elliptic}, then $\widetilde{X}$ is minimal and properly elliptic.

We now exhibit an elliptic fibration of $\widetilde{X}$. We consider first the resolution $\widetilde Y_1\to Y_1$ of $Y_1$, which is the Hirzebruch surface $\mathbb F_3$ as explained above. We note that $\widetilde{Y_1}$ is also a triple cover of the blow-up ${\rm Bl}_{pt}(\mathbb P^2)$ branched over the strict transforms $\ell_1$ and $\ell_2$, where $pt$ is the intersection point of $\ell_1$ and $\ell_2$. From here one can easily see the natural ruling $q\colon \widetilde{Y_1}\to \mathbb P^1$ of  $\widetilde{Y_1}=\mathbb F_3$: for any line $\ell$ in $\mathbb P^2$ passing through $pt$ there are three fibres of the ruling, that becomes a single fibre counted with multiplicity $3$ when $\ell$ coincides with one between $\ell_1$ and $\ell_2$. 

We now consider the strict transform $\widetilde{\mathcal C}$ of the cubic curve $\mathcal {C}$ on $\widetilde{Y_1}$. We observe that $\widetilde{\mathcal C}$ is $3$-divisible in ${\rm Pic}(\widetilde{Y_1})$, so there is a triple cover $\pi'\colon X'\to \widetilde{Y_1}$ branched along $\widetilde{C}$. Since $\mathcal C$ intersects any line passing through $pt$ in three points, then $\widetilde{\mathcal C}$ intersects each fibre of $q$ in three points. Thus, $X'$ inherits a natural fibration $\pi'\circ q \colon X'\to \mathbb P^1$ whose general fibre is a triple cover of $\mathbb P^1$ branched over three points, hence is an elliptic curve. 

We also observe that $X'$ has three disjoint $(-3)$-curves, that are the pull-back of the unique $(-3)$-curve of $\widetilde{Y_1}$, which is not intersecting $\widetilde{\mathcal C}$ and is the resolution of the singular point of type $\frac{1}{3}(1,1)$ lying over the point $pt$. Furthermore, $X'$ is a smooth minimal surface and if we contract those three $(-3)$-curves on $X'$, then we obtain the surface $X$, by construction. In other words, $X'\to X$ is the minimal resolution of the singularities of $X$, $\widetilde{X}=X'$. Thus, we have exhibited an elliptic fibration $\pi'\circ q\colon \widetilde{X}\to \mathbb P^1$, which according to Lemma \ref{lem: B-(3,2)_prop_elliptic} is the canonical map of $\widetilde{X}$, since they have the same general fibres. 
\begin{figure}[H]
	\begin{center}
		\begin{tikzpicture}[scale=0.9, transform shape]
			
			\begin{scope}[shift={(0,-1.2)}, scale=0.7, transform shape]
				\draw[thick, black] (-3.5, -2.2) rectangle (3.5, 3.8);
				
				\draw[thick, blue] (-2.64, -1.98) -- (1.52, 3.65);
				\draw[thick, red] (2.64, -1.98) -- (-1.52, 3.65);
				
				\draw[green!60!black, thick] (-2.8, 2.5) 
				
				.. controls (-1.0, 3.5) and (4.0, 1.5) .. (0, 0.8)
				.. controls (-5, 0.2) and (-0.8, -1.2) .. (0, -1.25)
				.. controls (0.8, -1.35) and (1.8, -1.2) .. (2.8, -0.5);
			\end{scope}
			
			\begin{scope}[shift={(-6.6, 4.5)}, scale=0.6, transform shape]
				\draw[thick, black] (-3.5, -2.2) rectangle (3.5, 3.8);
				\draw[thick, blue] (-2.64, -1.98) -- (1.52, 3.65);
				\draw[thick, red] (2.64, -1.98) -- (-1.52, 3.65);
				
				
				.. controls (-1.0, 3.5) and (4.0, 1.5) .. (0, 0.8)
				.. controls (-5, 0.2) and (-0.8, -1.2) .. (0, -1.25)
				.. controls (0.8, -1.35) and (1.8, -1.2) .. (2.8, -0.5);
			\end{scope}
			
			\begin{scope}[shift={(-2.3, 4.5)}, scale=0.6, transform shape]
				\draw[thick, black] (-3.5, -2.2) rectangle (3.5, 3.8);
				
				\draw[green!60!black, thick] (-2.8, 2.5) 
				
				.. controls (-1.0, 3.5) and (4.0, 1.5) .. (0, 0.8)
				.. controls (-5, 0.2) and (-0.8, -1.2) .. (0, -1.25)
				.. controls (0.8, -1.35) and (1.8, -1.2) .. (2.8, -0.5);
			\end{scope}
			
			\begin{scope}[shift={(2, 4.5)}, scale=0.6, transform shape]
				\draw[thick, black] (-3.5, -2.2) rectangle (3.5, 3.8);
				\draw[thick, blue] (-2.64, -1.98) -- (1.52, 3.65);
				\draw[thick, red] (2.64, -1.98) -- (-1.52, 3.65);
				
				\draw[green!60!black, thick] (-2.8, 2.5) 
				
				.. controls (-1.0, 3.5) and (4.0, 1.5) .. (0, 0.8)
				.. controls (-5, 0.2) and (-0.8, -1.2) .. (0, -1.25)
				.. controls (0.8, -1.35) and (1.8, -1.2) .. (2.8, -0.5);
			\end{scope}
			
			\begin{scope}[shift={(6.3, 4.5)}, scale=0.6, transform shape]
				\draw[thick, black] (-3.5, -2.2) rectangle (3.5, 3.8);
				\draw[thick, blue] (-2.64, -1.98) -- (1.52, 3.65);
				\draw[thick, red] (2.64, -1.98) -- (-1.52, 3.65);
				
				\draw[green!60!black, thick] (-2.8, 2.5) 
				
				.. controls (-1.0, 3.5) and (4.0, 1.5) .. (0, 0.8)
				.. controls (-5, 0.2) and (-0.8, -1.2) .. (0, -1.25)
				.. controls (0.8, -1.35) and (1.8, -1.2) .. (2.8, -0.5);
			\end{scope}

			\draw[-stealth, thick, gray!80] (-6.0, 3) -- (-0, 1.65); 
			\draw[-stealth, thick, gray!80] (-2.0, 3) -- (-0, 1.65); 
			\draw[-stealth, thick, gray!80] (2.0, 3) -- (-0, 1.65); 
			\draw[-stealth, thick, gray!80] (6.0, 3)  -- (0, 1.65);  
			
			\node[below right, font=\small] at (-7.35, 3) {$Y_1$};
			\node[below right, font=\small] at (-2.55, 3) {$Y_2$};
			\node[below right, font=\small] at (2, 3) {$Y_3$};
			\node[below right, font=\small] at (7.35, 3) {$Y_4$};
			
			\node[right] at (2.65, 0)  {$\mathbb P^2$};

			\node[font=\large, above] at (0, 8)  {$X$};
			
			\draw[-stealth, thick, gray!80] (0.0, 8.0) -- (-6, 7.1); 
			\draw[-stealth, thick, gray!80] (0.0, 8.0) -- (-2, 7.1); 
			\draw[-stealth, thick, gray!80] (0.0, 8.0) -- (2, 7.1); 
			\draw[-stealth, thick, gray!80] (0.0, 8.0) -- (6, 7.1);  
		\end{tikzpicture}
		\caption{The branch locus of the case $B-(3,2)1)$ together with the branch loci of its intermediate $(3,1)$-planes $Y_1,\dots, Y_4$.}
		\label{fig: branch_locus_B-(3,2)}
	\end{center}
\end{figure}
	
	We now examine more closely the weight--$2$ Hodge structure of $\widetilde{X}$.
	By  Corollary~\ref{cor:K3_Hodge_decomposition},
 the transcendental lattice $T(\widetilde{X})_{\mathbb Q}$ 
 decomposes
	as direct sum of the corresponding contributions coming from the intermediate $(3,1)$-planes that are K3 surface, hence 
	$Y_3$ and  $Y_4$. Looking at the case $A-(3,1)2)$ in Table \ref{tab: (3,1)}, then we have $\bigl(h^{2,0}(Y_i),\,h^{1,1}(Y_i),\,h^{0,2}(Y_i)\bigr)_{tra}=(1,10,1)$ for $i=3,4$. To complete the study of $H^2(\widetilde{X},\mathbb Q)$ we need to look at $Y_1$ and $Y_2$. Indeed, for these two rational surfaces we have $h^{1,1}(Y_1)=1$ and $h^{1,1}(Y_2)=7$, see Table \ref{tab: (p,1)_pg=0_p=3}.  Thus, $\bigl(h^{2,0}(\widetilde{X}),\,h^{1,1}(\widetilde{X}),\,h^{0,2}(\widetilde{X})\bigr)_{tra}=(2,20,2)$ and $\rho(\widetilde{X})=7+3=10$, where $7$ is the contribution given by $Y_2$ and $3$ are the three $(-3)$-curves of the resolution of $X$. We point out that the computation holds for a very general member $\widetilde{X}$ of the family. 
	The Hodge diamond of $\widetilde{X}$ is
	\[
	\begin{array}{ccccc}
		&& 1 && \\
		& 0 && 0 & \\
		2 && 30 && 2 \\
		& 0 && 0 & \\
		&& 1 &&
	\end{array}.
	\]
	The same kind of result can be obtained with different methods using one among \cite{GPTriple}, \cite{PP13}, and \cite{M85} to the (singular) triple cover $X\to Y_3$. 

	To conclude this example we present $X$ in a suitable weighted projective space according to the method explained in Subsection \ref{subsec: equations_p,k_plane_in_suitable_proj_planes}. 
	\medskip
	Let $(x_0:x_1:x_2)$ be homogeneous coordinates on $\PP^2$ and let
	\[
	f_{e_1}\in H^0(\PP^2,\mathcal O_{\PP^2}(3)),
	\qquad
	\, f_{e_2}\, f_{2e_2},\in H^0(\PP^2,\mathcal O_{\PP^2}(1))
	\]
	be defining equations of the branch components 
	$D_{e_1}$ (smooth cubic) and $D_{e_2}, D_{2e_2}$ (lines),
	respectively. We recall the degrees of the $\mathcal L_\chi$, $\chi\in G^*$, listed in Equation \ref{eq: building_data_ex_B-(3,2)1}. 
	
	Thus, we embed $X$ into the weighted projective space $\PP(1,1,1,1,1,1,2,2,2,3,3)$ with weighted homogeneous coordinates
	\[
	(x_0:x_1:x_2:\ y_{\epsilon_1}:y_{\epsilon_2}:\ y_{\epsilon_2^2}:\ y_{\epsilon_1^2}:\ 
	y_{\epsilon_1\epsilon_2^2}:y_{\epsilon_1\epsilon_2}:y_{\epsilon_1^2\epsilon_2^2}:y_{\epsilon_1^2\epsilon_2}),
	\]
	with $\deg x_i=1$, $\deg y_{\epsilon_1}=\deg y_{\epsilon_2}=\deg y_{\epsilon_2^2}=1$, $\deg y_{\epsilon_1^2\epsilon_2^2}=\deg y_{\epsilon_1^2\epsilon_2}=3$, and the remaining $y_\chi$ have degree $2$.
	
	Note that from Theorem \ref{thm: Par91a} (cf. Subsection \ref{subsec: equations_p,k_plane_in_suitable_proj_planes}) five of the equations defining $X$ are:
	\[
	y_{\epsilon_1}^2=y_{\epsilon_1^2}, \qquad y_{\epsilon_1}y_{\epsilon_2^2}=y_{\epsilon_1\epsilon_2^2}, \qquad y_{\epsilon_1}y_{\epsilon_2}=y_{\epsilon_1\epsilon_2}, \qquad y_{\epsilon_1^2}y_{\epsilon_2^2}=y_{\epsilon_1^2\epsilon_2^2}, \qquad y_{\epsilon_1^2}y_{\epsilon_2}=y_{\epsilon_1^2\epsilon_2}.
	\]
	This tells us that we can eliminate the last five variables and 
	consider $X$ defined in $\PP^5$, by projection. Thus, $X$ is defined by the following equations, see Subsection \ref{subsec: equations_p,k_plane_in_suitable_proj_planes}:
		\begin{equation}\label{eq: equations_B-(3,2)}
			\rk
			\begin{pmatrix}
				f_{2e_2}  & y_{\epsilon_2} & y_{\epsilon_2^2} \\
				y_{\epsilon_2} & y_{\epsilon_2^2} & f_{e_2},\\
			\end{pmatrix}
			\leq 1, \qquad y_{\epsilon_1}^3=f_{e_1}.
		\end{equation}
	Finally, the projection onto the first three coordinates gives the $(3,2)$-plane
	$\pi\colon X\to\PP^2$.
	
	Furthermore, one can easily see that the elliptic fibration studied above is the (unique) fibration $\widetilde{X}\to \mathbb P^1$ obtained by resolving the indeterminacy locus of the rational map
	\[
	X\dashrightarrow \mathbb P^1, \quad 	(x_0:x_1:x_2:\ y_{\epsilon_1}:y_{\epsilon_2}:\ y_{\epsilon_2^2}:\ y_{\epsilon_1^2})\mapsto (f_{2e_2}(x_0,x_1,x_2): -f_{e_2}(x_0,x_1,x_2)). 
	\]
}
\end{example}
\newpage
\section{Appendix: Tables}\label{app:classification_tables}

In this appendix, we collect the complete list of SNC $(p,k)$-planes $\pi\colon X\to \mathbb P^2$ satisfying
Property~$\bigstar$ produced by the algorithm described in
Section~\ref{sec: classification_SNC_p,k_planes_Prop_STAR}, together with the Picard number of the minimal model and the transcendental part of the Hodge structure of a very general member  of the family of $\widetilde{X}_{min}$. The results are organized into several tables according to the corresponding
values of $(p,k)$.

For each admissible pair $(p,k)$, we list all possible configurations of divisors
$\{D_g\}_{g\in G}$.
Cases arising from the same collection of degrees $\{d_g\}_{g\in G}$ are grouped
together.
Indeed, for fixed degrees $d_g$, the divisor $D_g$ may decompose into several smooth
irreducible components, provided that their total degree is $d_g$ and that the
overall branch divisor remains in simple normal crossing position.

For each case, we also record the basket of singularities of the corresponding
$(p,k)$-plane, computed using Proposition~\ref{prop: type_of_sing}.
Any divisor $D_g$ not explicitly appearing in the tables is understood to be zero.

Each case is identified by a letter, the pair $(p,k)$, and a numerical index.
The letter distinguishes cases corresponding to different collections of degrees
$\{d_g\}_{g\in G}$, while the numerical index separates cases with identical degree
data but distinct choices for the decomposition of the divisors $\{D_g\}_{g\in G}$.
\begin{table}[H]
    \footnotesize
    \setlength{\tabcolsep}{5pt}
    \renewcommand{\arraystretch}{1.2}
\begin{tabular}{|c||c|c||c|c|c|}\hline
			case& No.& $D_{e_1}$&$\textrm{Sing}(X)$ &$\rho(\widetilde{X}_{min})$& $h^2(X)_{tra}$\\
			\hline
            \hline
        $A-(2,1)$ &	1)&sextic&{\rm $\emptyset$}&1&(1,19,1)\\
       &	2)&quintic + line&{\rm $1/2^5$}&6&(1,14,1)\\
        &	3)&quartic + conic&{\rm $1/2^8$}&9&(1,11,1)\\
        &	4)&quartic + 2 lines&{\rm $1/2^9$}&10&(1,10,1)\\
        &	5)&cubic + cubic&{\rm $1/2^9$}&10&(1,10,1)\\
         &	6)&cubic + conic +line&{\rm $1/2^{11}$}&12&(1,8,1)\\
        &	7)&cubic + 3 lines&{\rm $1/2^{12}$}&13&(1,7,1)\\
    &	8)&3 conics &{\rm $1/2^{12}$}&13&(1,7,1)\\
         &	9)&2 conics + 2 lines&{\rm $1/2^{13}$}&14&(1,6,1)\\
         &	10)&conic + 4 lines&{\rm $1/2^{14}$}&15&(1,5,1)\\
        &	11)&6 lines&{\rm $1/2^{15}$}&16&(1,4,1)\\
            \hline
         \end{tabular}
         \caption{Building data 
         	of a SNC $(2,1)$-plane $\pi\colon X\to \mathbb P^2$ with $p_g(X)=1$.} \label{tab: (2,1)}
         \end{table}
\begin{table}[H]
    \footnotesize
    \setlength{\tabcolsep}{5pt}
    \renewcommand{\arraystretch}{1.2}
		\begin{tabular}{|c||c|c|c||c|c|c|}\hline
			case& No.& $D_{e_1}$&$D_{2e_1}$&$\textrm{Sing}(X)$&$\rho(\widetilde{X}_{min})$&$h^2(X)_{tra}$\\
			\hline
            \hline
        $A-(3,1)$ &	1)&quartic&line&{\rm $1/3^4$}&4&(1,16,1)\\
        &	2)&cubic + line &line&{\rm $1/3^4, 2/3^3$}&10&(1,10,1)\\
        &	3)&2 conics &line&{\rm $1/3^4, 2/3^4$}&12&(1,8,1)\\
         &	4)&conic + 2 lines &line&{\rm $1/3^4, 2/3^5$}&14&(1,6,1)\\
         &	5)&4 lines &line&{\rm $1/3^4, 2/3^6$}&16&(1,4,1)\\
            \hline
         \end{tabular}
         \caption{Building data 
         	of a SNC $(3,1)$-plane $\pi\colon X\to \mathbb P^2$ with $p_g(X)=1$.} \label{tab: (3,1)}
         \end{table}
         \vspace{-0.5cm}
         
         \begin{table}[H]
    \footnotesize
    \setlength{\tabcolsep}{5pt}
    \renewcommand{\arraystretch}{1.2}
		\begin{tabular}{|c||c|c|c||c|c|c|}\hline
			case& No.& $D_{3e_1}$&$D_{4e_1}$&$\textrm{Sing}(X)$&$\rho(\widetilde{X}_{min})$&$h^2(X)_{tra}$\\
			\hline
            \hline
        $A-(5,1)$ &	1)&line&cubic&{\rm $3/5^3$}&6&(1,14,1)\\
           &	2)&line&conic + line &{\rm $3/5^3, 4/5^2$}&14&(1,6,1)\\
           &	3)&line&3 lines&{\rm $3/5^3, 4/5^3$}&18&(1,2,1)\\
            \hline
         \end{tabular}
         \caption{Building data 
         	of a SNC $(5,1)$-plane $\pi\colon X\to \mathbb P^2$ with $p_g(X)=1$.} \label{tab: (5,1)}
         \end{table}
          \vspace{-0.4cm}
          
          \begin{table}[H]
    \footnotesize
    \setlength{\tabcolsep}{5pt}
    \renewcommand{\arraystretch}{1.2}
		\begin{tabular}{|c||c|c|c|c||c|c|c|}\hline
			case& No.& $D_{4e_1}$&$D_{5e_1}$&$D_{6e_1}$&$\textrm{Sing}(X)$&$\rho(\widetilde{X}_{min})$&$h^2(X)_{tra}$\\
			\hline
            \hline
        $A-(7,1)$ &	1)&line&line&conic & {\rm $2/7^2,3/7^2,4/7$}&10&(1,10,1)\\
           &	2)&line& line&2 lines  &{\rm $2/7^2,3/7^2, 4/7,6/7$}&16&(1,4,1)\\
            \hline
         \end{tabular}
         \caption{Building data 
         	of a SNC $(7,1)$-plane $\pi\colon X\to \mathbb P^2$ with $p_g(X)=1$.} \label{tab: (7,1)}
         \end{table}
          \vspace{-0.4cm}
          
          \begin{table}[H]
    \footnotesize
    \setlength{\tabcolsep}{5pt}
    \renewcommand{\arraystretch}{1.2}
		\begin{tabular}{|c||c|c|c|c|c||c|c|c|}\hline
			case& No.& $D_{6e_1}$&$D_{8e_1}$&$D_{9e_1}$&$D_{10e_1}$&$\textrm{Sing}(X)$&$\rho(\widetilde{X}_{min})$&$h^2(X)_{tra}$\\
			\hline
            \hline
        $A-(11,1)$ &	1)&line&line&line&line& {\rm \makecell{$2/11,3/11,$\\ $4/11,5/11,$ \\ $6/11,7/11$}}&12&(1,8,1)\\
            \hline
         \end{tabular}
         \caption{Building data 
         	of a SNC $(11,1)$-plane $\pi\colon X\to \mathbb P^2$ with $p_g(X)=1$.} \label{tab: (11,1)}
         \end{table}
              \begin{table}[H]
    \footnotesize
    \setlength{\tabcolsep}{5pt}
    \renewcommand{\arraystretch}{1.2}
		\begin{tabular}{|c||c|c|c|c|c||c|c|c|}\hline
			case& No.& $D_{8e_1}$&$D_{9e_1}$&$D_{10e_1}$&$D_{12e_1}$&$\textrm{Sing}(X)$&$\rho(\widetilde{X}_{min})$&$h^2(X)_{tra}$\\
			\hline
            \hline
        $A-(13,1)$ &	1)&line&line&line&line& {\rm \makecell{$2/13,3/13,$\\$4/13,5/13,$\\$7/13,9/13$}}&10&(1,10,1)\\
            \hline
         \end{tabular}
         \caption{Building data 
         	of a SNC $(13,1)$-plane $\pi\colon X\to \mathbb P^2$ with $p_g(X)=1$.} \label{tab: (13,1)}
         \end{table}
         \begin{table}[H]
    \footnotesize
    \setlength{\tabcolsep}{5pt}
    \renewcommand{\arraystretch}{1.2}
		\begin{tabular}{|c||c|c|c|c|c||c|c|c|}\hline
			case& No.& $D_{10e_1}$&$D_{12e_1}$&$D_{14e_1}$&$D_{15e_1}$&$\textrm{Sing}(X)$&$\rho(\widetilde{X}_{min})$&$h^2(X)_{tra}$\\
			\hline
            \hline
        $A-(17,1)$ &	1)&line&line&line&line& {\rm \makecell{$2/17,3/17,$\\$5/17,7/17,$\\$9/17,13/17$}}&6&(1,14,1)\\
            \hline
         \end{tabular}
         \caption{Building data 
         	of a SNC $(17,1)$-plane $\pi\colon X\to \mathbb P^2$ with $p_g(X)=1$.} \label{tab: (17,1)}
         \end{table}
         \begin{table}[H]
    \footnotesize
    \setlength{\tabcolsep}{5pt}
    \renewcommand{\arraystretch}{1.2}
		\begin{tabular}{|c||c|c|c|c|c||c|c|c|}\hline
			case& No.& $D_{12e_1}$&$D_{14e_1}$&$D_{15e_1}$&$D_{16e_1}$&$\textrm{Sing}(X)$&$\rho(\widetilde{X}_{min})$&$h^2(X)_{tra}$\\
			\hline
            \hline
        $A-(19,1)$ &	1)&line&line&line&line& {\rm \makecell{$2/19,3/19,$\\$4/19,5/19,$\\$7/19,13/19$}}&4&(1,16,1)\\
            \hline
         \end{tabular}
         \caption{Building data 
         	of a SNC $(19,1)$-plane $\pi\colon X\to \mathbb P^2$ with $p_g(X)=1$.} \label{tab: (19,1)}
         \end{table}
    \begin{table}[H]
    \footnotesize
		\begin{tabular}{|c||c|c|c|c||c|c||c|c|c|c|c|}\hline
			case& No.& $D_{e_1}$&$D_{e_2}$&$D_{e_{12}}$&$\textrm{Sing}(X)$&$\rho(\widetilde{X}_{min})$&$h^2(X)_{tra}$\\
			\hline
		$A-(2,2)$ &	1)&quartic&conic&$0$ &{\rm $\emptyset$}&9&(1,11,1)\\
			
		&	2)&cubic + line &conic&$0$&{\rm $1/2^6$}&12&(1,8,1)\\
			
		&	3)&2 conics&conic&$0$&{\rm $1/2^8$}&13&(1,7,1)\\

         &   4)&conic + 2 lines &conic&$0$&{\rm $1/2^{10}$}&14&(1,6,1)\\
			
		&	5)&4 lines&conic&$0$&{\rm $1/2^{12}$}&15&(1,5,1)\\
			
		&	6)&quartic&2 lines&$0$ &{\rm $1/2^2$}&10&(1,10,1)\\
			
		&	7)&cubic + line &2 lines&$0$&{\rm $1/2^8$}&13&(1,7,1)\\
\hline
		\end{tabular}
        \hypertarget{tab: (2,2)}{\caption{Building data 
        		of a SNC $(2,2)$-plane $\pi\colon X\to \mathbb P^2$ satisfying Property~$\bigstar$.}} \label{tab: (2,2)}
	\end{table}

     \begin{table}[H]
    \footnotesize
    \hspace*{-3.3cm}
		\begin{tabular}{|c||c|c|c|c|c|c|c|c||c|c|c|}\hline
			case& No.& $D_{e_1}$&$D_{e_2}$&$D_{e_{3}}$&$D_{e_{1}+e_2}$&$D_{e_{1}+e_3}$&$D_{e_{2}+e_3}$&$D_{e_{1}+e_2+e_3}$&$\textrm{Sing}(X)$&$\rho(\widetilde{X}_{min})$&$h^2(X)_{tra}$\\
			\hline
		$A-(2,3)$ &	1)&conic&conic&conic &0&0&0&0&{\rm $\emptyset$}&13&(1,7,1)\\
			
		&	2)&2 lines &conic&conic&0&0&0&0&{\rm $1/2^4$}&14&(1,6,1)\\
			
		&	3)&2 lines&2 lines &conic&0&0&0&0&{\rm $1/2^8$}&15&(1,5,1)\\

         &   4)&2 lines &2 lines &2 lines &0&0&0&0&{\rm $1/2^{12}$}&16&(1,4,1)\\
        \hline
		$B-(2,3)$ &	1)&line&cubic&line &0&0&0&line&{\rm $\emptyset$}&13&(1,7,1)\\
			
		&	2)&line &conic + line&line&0&0&0&line&{\rm $1/2^8$}&15&(1,5,1)\\
			
		&	3)&line&3 lines &line&0&0&0&line&{\rm $1/2^{12}$}&16&(1,4,1)\\
\hline
		\end{tabular}
        \hypertarget{tab: (2,3)}{\caption{Building data 
        		of a SNC $(2,3)$-plane $\pi\colon X\to \mathbb P^2$ satisfying Property~$\bigstar$.}} \label{tab: (2,3)}
	\end{table}
    \begin{table}[H]
    \footnotesize
		\begin{tabular}{|c||c|c|c|c|c|c|c|c||c|c|c|}\hline
			case& No.& $D_{e_1}$&$D_{e_2}$&$D_{e_{3}}$&$D_{e_{1}+e_2}$&$D_{e_{1}+e_3}$&$D_{e_{2}+e_3}$&$D_{e_{1}+e_2+e_3}$&$\textrm{Sing}(X)$&$\rho(\widetilde{X}_{min})$&$h^2(X)_{tra}$\\
			\hline
            \hline
        $K-(2,3)$ &	1)&line&line&conic &line&conic&conic&0&{\rm $\emptyset$}&7&(4,25,4)\\
        &	2)&line&line&conic &line&conic&2 lines&0&{\rm $1/2^4$}&10&(4,22,4)\\
        &	3)&line&line&conic &line&2 lines&2 lines&0&{\rm $1/2^8$}&13&(4,19,4)\\
        &	4)&line&line&2 lines &line&2 lines&2 lines&0&{\rm $1/2^{12}$}&16&(4,16,4)\\
        \hline
        $L-(2,3)$ &	1)&line&line&line &line&line&cubic&line&{\rm $\emptyset$}&4&(4,28,4)\\
        &	2)&line&line&line &line&line&conic+line&line&{\rm $1/2^8$}&12&(4,20,4)\\
        &	3)&line&line&line &line&line&3 lines&line&{\rm $1/2^{12}$}&16&(4,16,4)\\
        \hline
        $M-(2,3)$ &	1)&line&line&conic &conic&line&line&conic&{\rm $\emptyset$}&2&(6,36,6)\\
        &	2)&line&line&conic &conic&line&line&2 lines&{\rm $1/2^4$}&6&(6,32,6)\\
        &	3)&line&line&conic &2 lines&line&line&2 lines&{\rm $1/2^8$}&10&(6,28,6)\\
        &	4)&line&line&2 lines &2 lines&line&line&2 lines&{\rm $1/2^{12}$}&14&(6,24,6)\\
        \hline
		\end{tabular}
        \caption{Building data 
        	of a SNC $(2,3)$-plane $\pi\colon X\to \mathbb P^2$ satisfying Property~$\bigstar$.} \label{tab: (2,3)-part2}
	\end{table}
    \vspace{-0.9cm}
    
    \begin{table}[H]
    \begin{flushleft} 
    \footnotesize
    \setlength{\tabcolsep}{5pt}
    \renewcommand{\arraystretch}{1.2}
		\begin{tabular}{|c||c|c|c|c|c|c||c|c|c|}\hline
			case& No.& $D_{e_1}$&$D_{e_2}$&$D_{e_{3}}$&$D_{e_4}$&$D_{e_1+e_2+e_4}$&$\textrm{Sing}(X)$&$\rho(\widetilde{X}_{min})$&$h^2(X)_{tra}$\\
			\hline
            \hline
        $A-(2,4)$ &	1)&line&line&conic &line&line&{\rm $\emptyset$}&15&(1,5,1)\\
         &	2)&line&line&2 lines &line&line&{\rm $1/2^8$}&16&(1,4,1)\\
            \hline
         \end{tabular}

    \vspace{0.5em} 
    \begin{tabular}{|c||c|c|c|c|c|c||c|c|c|}
    \hline
        case& No.& $D_{e_1}$&$D_{e_2}$&$D_{e_{3}}$&$D_{e_4}$&$D_{e_1+e_2+e_3+e_4}$&$\textrm{Sing}(X)$&$\rho(\widetilde{X}_{min})$&$h^2(X)_{tra}$\\
			\hline
            \hline
        $B-(2,4)$ &	1)&line&line&cubic &line&line&{\rm $\emptyset$}&18&(4,28,4)\\
        &	2)&line&line&conic+line&line&line&{\rm $1/2^{16}$}&26&(4,20,4)\\
        &	3)&line&line&3 lines&line&line&{\rm $1/2^{24}$}&30&(4,16,4)\\
        \hline
         \end{tabular}

    \vspace{0.5em} 
    \begin{tabular}{|c||c|c|c|c|c|c||c|c|c|}
    \hline
        case& No.& $D_{e_1}$&$D_{e_2}$&$D_{e_{3}}$&$D_{e_4}$&$D_{e_1+e_2}$&$\textrm{Sing}(X)$&$\rho(\widetilde{X}_{min})$&$h^2(X)_{tra}$\\
			\hline
            \hline
        $C-(2,4)$ &	1)&line&line&conic &conic&line&{\rm $\emptyset$}&18&(3,18,3)\\
        &	2)&line&line&conic &2 lines&line&{\rm $1/2^8$}&21&(3,15,3)\\
        &	3)&line&line&2 lines &2 lines&line&{\rm $1/2^{16}$}&24&(3,12,3)\\
        \hline
         \end{tabular}

    \vspace{0.5em} 
    \begin{tabular}{|c||c|c|c|c|c|c|c||c|c|c|}
    \hline
        case& No.& $D_{e_1}$&$D_{e_2}$&$D_{e_{3}}$&$D_{e_4}$&$D_{e_1+e_2}$&$D_{e_1+e_2+e_3+e_4}$&$\textrm{Sing}(X)$&$\rho(\widetilde{X}_{min})$&$h^2(X)_{tra}$\\
			\hline
            \hline
        $D-(2,4)$ &	1)&line&line&line &line&conic&line&{\rm $\emptyset$}&16&(2,10,2)\\
        &	2)&line&line&line &line&2 lines&line&{\rm $1/2^8$}&18&(2,8,2)\\
       \hline
         \end{tabular}

    \vspace{0.5em} 
    \begin{tabular}{|c||c|c|c|c|c|c|c||c|c|c|}
    \hline
        case& No.& $D_{e_1}$&$D_{e_2}$&$D_{e_{3}}$&$D_{e_4}$&$D_{e_1+e_3}$&$D_{e_2+e_3+e_4}$&$\textrm{Sing}(X)$&$\rho(\widetilde{X}_{min})$&$h^2(X)_{tra}$\\
			\hline
            \hline
        $E-(2,4)$ &	1)&conic&line&line &line&conic&line&{\rm $\emptyset$}&16&(5,28,5)\\
        &	2)&conic&line&line &line&2 lines&line&{\rm $1/2^8$}&20&(5,24,5)\\
        &	3)&2 lines&line&line &line&2 lines&line&{\rm $1/2^{16}$}&24&(5,20,5)\\
        \hline
         \end{tabular}

    \vspace{0.5em} 
    \begin{tabular}{|c||c|c|c|c|c|c|c||c|c|c|}
    \hline
        case& No.& $D_{e_1}$&$D_{e_2}$&$D_{e_{3}}$&$D_{e_4}$&$D_{e_2+e_3}$&$D_{e_2+e_4}$&$\textrm{Sing}(X)$&$\rho(\widetilde{X}_{min})$&$h^2(X)_{tra}$\\
			\hline
            \hline
        $F-(2,4)$ &	1)&conic&conic&line &line&line&line&{\rm $\emptyset$}&15&(5,29,5)\\
        &2)&conic&2 lines&line &line&line&line&{\rm $1/2^8$}&19&(5,25,5)\\
       & 3)&2 lines &conic&line &line&line&line&{\rm $1/2^8$}&20&(5,24,5)\\
       & 4)&2 lines&2 lines&line &line&line&line&{\rm $1/2^{16}$}&24&(5,20,5)\\
        \hline
         \end{tabular}
         
         \vspace{0.5em}
          \begin{tabular}{|c||c|c|c|c|c|c|c||c|c|c|}
    \hline
        case& No.& $D_{e_1}$&$D_{e_2}$&$D_{e_{3}}$&$D_{e_4}$&$D_{e_1+e_3}$&$D_{e_2+e_4}$&$\textrm{Sing}(X)$&$\rho(\widetilde{X}_{min})$&$h^2(X)_{tra}$\\
			\hline
            \hline
        $G-(2,4)$ &	1)&line&line&line &cubic&line&line&{\rm $\emptyset$}&12&(6,42,6)\\
        &	2)&line&line&line &conic+line&line&line&{\rm $1/2^{16}$}&24&(6,30,6)\\
        &	3)&line&line&line &3 lines&line&line&{\rm $1/2^{24}$}&30&(6,24,6)\\
        \hline
         \end{tabular}

    \vspace{0.5em} 
        \hypertarget{tab: (2,4)}{\caption{Building data 
        		of a SNC $(2,4)$-plane $\pi\colon X\to \mathbb P^2$ satisfying Property~$\bigstar$.}} \label{tab: (2,4)}
        \end{flushleft}
	\end{table}
    
    \begin{table}[H]
    \begin{flushleft}
    \footnotesize
    \setlength{\tabcolsep}{5pt}
    \renewcommand{\arraystretch}{1.2}
    
    \hspace*{-3cm}
    \begin{minipage}{\textwidth}
    \begin{tabular}{|c||c|c|c|c|c|c|c||c|c|c|}
    \hline
        case& No.& $D_{e_1}$&$D_{e_2}$&$D_{e_{3}}$&$D_{e_4}$&$D_{e_1+e_2}$&$D_{e_3+e_4}$&$\textrm{Sing}(X)$&$\rho(\widetilde{X}_{min})$&$h^2(X)_{tra}$\\
			\hline
            \hline
        $H-(2,4)$ &	1)&conic&conic&line &line&conic&line&{\rm $\emptyset$}&10&(9,54,9)\\
        &	2)&conic&conic&line &line&2 lines&line&{\rm $1/2^8$}&16&(9,48,9)\\
        &	3)&conic&2 lines&line &line&2 lines&line&{\rm $1/2^{16}$}&22&(9,42,9)\\
        &	4)&2 lines&2 lines&line &line&2 lines&line&{\rm $1/2^{24}$}&28&(9,36,9)\\
        \hline
         \end{tabular}
         
      \vspace{0.5em} 
    \begin{tabular}{|c||c|c|c|c|c|c|c|c||c|c|c|}
    \hline
        case& No.& $D_{e_1}$&$D_{e_2}$&$D_{e_{3}}$&$D_{e_4}$&$D_{e_3+e_4}$&$D_{e_1+e_3+e_4}$&$D_{e_2+e_3+e_4}$&$\textrm{Sing}(X)$&$\rho(\widetilde{X}_{min})$&$h^2(X)_{tra}$\\
			\hline
            \hline
        $I-(2,4)$ &	1)&line&line&line &line&line&line&line&{\rm $\emptyset$}&12&(1,4,1)\\
        \hline
         \end{tabular}

    \vspace{0.5em} 
    \begin{tabular}{|c||c|c|c|c|c|c|c|c||c|c|c|}
    \hline
        case& No.& $D_{e_1}$&$D_{e_2}$&$D_{e_{3}}$&$D_{e_4}$&$D_{e_1+e_2}$&$D_{e_1+e_4}$&$D_{e_2+e_3}$&$\textrm{Sing}(X)$&$\rho(\widetilde{X}_{min})$&$h^2(X)_{tra}$\\
			\hline
            \hline
        $J-(2,4)$ &	1)&line&line&line &line&conic&line&line&{\rm $\emptyset$}&14&(4,20,4)\\
        &	2)&line&line&line &line&2 lines&line&line&{\rm $1/2^8$}&18&(4,16,4)\\
        \hline
         \end{tabular}

    \vspace{0.5em} 
    \begin{tabular}{|c||c|c|c|c|c|c|c|c||c|c|c|}
    \hline
        case& No.& $D_{e_1}$&$D_{e_2}$&$D_{e_{3}}$&$D_{e_4}$&$D_{e_1+e_3}$&$D_{e_2+e_3}$&$D_{e_1+e_3+e_4}$&$\textrm{Sing}(X)$&$\rho(\widetilde{X}_{min})$&$h^2(X)_{tra}$\\
			\hline
            \hline
        $K-(2,4)$ &	1)&line&line&conic &line&conic&line&line&{\rm $\emptyset$}&10&(8,44,8)\\
        &	2)&line&line&conic &line&2 lines&line&line&{\rm $1/2^8$}&16&(8,38,8)\\
        &	3)&line&line&2 lines &line&2 lines&line&line&{\rm $1/2^{16}$}&22&(8,32,8)\\
        \hline
		\end{tabular}
        \vspace{0.5em} 
		\begin{tabular}{|c||c|c|c|c|c|c|c|c||c|c|c|}\hline
			case& No.& $D_{e_1}$&$D_{e_2}$&$D_{e_{3}}$&$D_{e_4}$&$D_{e_1+e_2}$&$D_{e_1+e_3}$&$D_{e_1+e_4}$&$\textrm{Sing}(X)$&$\rho(\widetilde{X}_{min})$&$h^2(X)_{tra}$\\
			\hline
            \hline
        $L-(2,4)$ &	1)&cubic&line&line &line&line&line&line&{\rm $\emptyset$}&4&(9,60,9)\\
         &	2)&conic+line&line&line &line&line&line&line&{\rm $1/2^{16}$}&20&(9,44,9)\\
         &	3)&3 lines&line&line &line&line&line&line&{\rm $1/2^{24}$}&28&(9,36,9)\\
         \hline
         \end{tabular}

    \vspace{0.5em} 
    \begin{tabular}{|c||c|c|c|c|c|c|c|c|c||c|c|c|}
    \hline
         case& No.& $D_{e_1}$&$D_{e_2}$&$D_{e_{3}}$&$D_{e_4}$&$D_{e_1+e_4}$&$D_{e_1+e_2+e_3}$&$D_{e_2+e_3+e_4}$&$D_{e_1+e_2+e_3+e_4}$&$\textrm{Sing}(X)$&$\rho(\widetilde{X}_{min})$&$h^2(X)_{tra}$\\
			\hline
            \hline
        $M-(2,4)$ &	1)&line&line&line &line&line&line&line&line&{\rm $\emptyset$}&12&(3,12,3)\\
         \hline
         \end{tabular}

    \vspace{0.5em} 
    \begin{tabular}{|c||c|c|c|c|c|c|c|c|c||c|c|c|}
    \hline
         case& No.& $D_{e_1}$&$D_{e_2}$&$D_{e_{3}}$&$D_{e_4}$&$D_{e_1+e_2}$&$D_{e_2+e_3}$&$D_{e_3+e_4}$&$D_{e_1+e_2+e_3}$&$\textrm{Sing}(X)$&$\rho(\widetilde{X}_{min})$&$h^2(X)_{tra}$\\
			\hline
            \hline
        $N-(2,4)$ &	1)&conic&line&line &line&line&line&line&line&{\rm $\emptyset$}&10&(7,34,7)\\
        &	2)&2 lines&line&line &line&line&line&line&line&{\rm $1/2^8$}&16&(7,28,7)\\
        \hline
         \end{tabular}

    \vspace{0.5em} 
    \begin{tabular}{|c||c|c|c|c|c|c|c|c|c||c|c|c|}
    \hline
         case& No.& $D_{e_1}$&$D_{e_2}$&$D_{e_{3}}$&$D_{e_4}$&$D_{e_1+e_2}$&$D_{e_1+e_4}$&$D_{e_2+e_4}$&$D_{e_1+e_2+e_4}$&$\textrm{Sing}(X)$&$\rho(\widetilde{X}_{min})$&$h^2(X)_{tra}$\\
			\hline
            \hline
        $O-(2,4)$ &	1)&line&line&conic &line&line&line&line&line&{\rm $\emptyset$}&9&(7,35,7)\\
        &	2)&line&line&2 lines &line&line&line&line&line&{\rm $1/2^8$}&16&(7,28,7)\\
        \hline
         \end{tabular}

    \vspace{0.5em} 
    \begin{tabular}{|c||c|c|c|c|c|c|c|c|c|c||c|c|c|}
    \hline
         case& No.& $D_{e_1}$&$D_{e_2}$&$D_{e_{3}}$&$D_{e_4}$&$D_{e_1+e_2}$&$D_{e_1+e_3}$&$D_{e_3+e_4}$&$D_{e_1+e_2+e_4}$&$D_{e_2+e_3+e_4}$&$\textrm{Sing}(X)$&$\rho(\widetilde{X}_{min})$&$h^2(X)_{tra}$\\
			\hline
            \hline
        $P-(2,4)$ &	1)&line&line&line&line&line&line&line&line&line&{\rm $\emptyset$}&10&(6,24,6)\\
       \hline
         \end{tabular}

    \vspace{0.5em} 
    \begin{tabular}{|c||c|c|c|c|c|c|c|c|c|c||c|c|c|}
    \hline
         case& No.& $D_{e_1}$&$D_{e_2}$&$D_{e_{3}}$&$D_{e_4}$&$D_{e_1+e_2}$&$D_{e_1+e_4}$&$D_{e_2+e_4}$&$D_{e_3+e_4}$&$D_{e_1+e_2+e_4}$&$\textrm{Sing}(X)$&$\rho(\widetilde{X}_{min})$&$h^2(X)_{tra}$\\
			\hline
            \hline
        $Q-(2,4)$ &	1)&line&line&line&conic&line&line&line&line&line&{\rm $\emptyset$}&4&(11,52,11)\\
        &	2)&line&line&line&2 lines&line&line&line&line&line&{\rm $1/2^8$}&12&(11,44,11)\\
        \hline
         \end{tabular}

    \vspace{0.5em} 
    \begin{tabular}{|c||c|c|c|c|c|c|c|c|c|c|c||c|c|c|}
    \hline
         case& No.& $D_{e_1}$&$D_{e_2}$&$D_{e_{3}}$&$D_{e_4}$&$D_{e_1+e_2}$&$D_{e_2+e_3}$&$D_{e_3+e_4}$&$D_{\sum_{i=1}^3e_i}$&$D_{\sum_{i=2}^4e_i}$&$D_{\sum_{i=1}^4e_i}$&$\textrm{Sing}(X)$&$\rho(\widetilde{X}_{min})$&$h^2(X)_{tra}$\\
			\hline
            \hline
        $R-(2,4)$ &	1)&line&line&line&line&line&line&line&line&line&line&{\rm $\emptyset$}&6&(10,40,10)\\
        \hline
         \end{tabular}
     \end{minipage}
        \caption{Building data 
        	of a SNC $(2,4)$-plane $\pi\colon X\to \mathbb P^2$ satisfying Property~$\bigstar$.} \label{tab: (2,4)_part2}
    \end{flushleft}
	\end{table}

    \begin{table}[H]
    \begin{flushleft} 
    \footnotesize
    \setlength{\tabcolsep}{5pt}
    \renewcommand{\arraystretch}{1.2}
    
     \hspace*{-1.3cm}
    \begin{minipage}{\textwidth}
    \begin{tabular}{|c||c|c|c|c|c|c|c||c|c|c|}\hline
			case& No.& $D_{e_1}$&$D_{e_2}$&$D_{e_{3}}$&$D_{e_4}$&$D_{e_5}$&$D_{e_1+e_2+e_3+e_4+e_5}$&$\textrm{Sing}(X)$&$\rho(\widetilde{X}_{min})$&$h^2(X)_{tra}$\\
			\hline
            \hline
        $A-(2,5)$ &	1)&line&line&line &line&line&line&{\rm $\emptyset$}&16&(1,4,1)\\
            \hline
         \end{tabular}

    \vspace{0.5em} 
		\begin{tabular}{|c||c|c|c|c|c|c|c||c|c|c|}\hline
			case& No.& $D_{e_1}$&$D_{e_2}$&$D_{e_{3}}$&$D_{e_4}$&$D_{e_5}$&$D_{e_1+e_2+e_4+e_5}$&$\textrm{Sing}(X)$&$\rho(\widetilde{X}_{min})$&$h^2(X)_{tra}$\\
			\hline
            \hline
        $B-(2,5)$ &	1)&line&line&conic &line&line&line&{\rm $\emptyset$}&27&(5,25,5)\\
         &	2)&line&line&2 lines &line&line&line&{\rm $1/2^{16}$}&32&(5,20,5)\\
            \hline
         \end{tabular}

    \vspace{0.5em} 
    \begin{tabular}{|c||c|c|c|c|c|c|c|c||c|c|c|}\hline
			case& No.& $D_{e_1}$&$D_{e_2}$&$D_{e_{3}}$&$D_{e_4}$&$D_{e_5}$&$D_{e_4+e_5}$&$D_{e_1+e_2+e_3}$&$\textrm{Sing}(X)$&$\rho(\widetilde{X}_{min})$&$h^2(X)_{tra}$\\
			\hline
            \hline
        $C-(2,5)$ &	1)&line&line&line &line&line&line&line&{\rm $\emptyset$}&20&(3,12,3)\\
            \hline
         \end{tabular}

          \vspace{0.5em} 
    \begin{tabular}{|c||c|c|c|c|c|c|c|c||c|c|c|}\hline
			case& No.& $D_{e_1}$&$D_{e_2}$&$D_{e_{3}}$&$D_{e_4}$&$D_{e_5}$&$D_{e_1+e_2}$&$D_{e_1+e_3+e_4+e_5}$&$\textrm{Sing}(X)$&$\rho(\widetilde{X}_{min})$&$h^2(X)_{tra}$\\
			\hline
            \hline
        $D-(2,5)$ &	1)&conic&line&line&line&line&line&line&{\rm $\emptyset$}&24&(9,44,9)\\
        &	2)&2 lines&line&line&line&line&line&line&{\rm $1/2^{16}$}&32&(9,36,9)\\
            \hline
         \end{tabular}

         \vspace{0.5em} 
    \begin{tabular}{|c||c|c|c|c|c|c|c|c||c|c|c|}\hline
			case& No.& $D_{e_1}$&$D_{e_2}$&$D_{e_{3}}$&$D_{e_4}$&$D_{e_5}$&$D_{e_1+e_3}$&$D_{e_2+e_5}$&$\textrm{Sing}(X)$&$\rho(\widetilde{X}_{min})$&$h^2(X)_{tra}$\\
			\hline
            \hline
        $E-(2,5)$ &	1)&line&line&line&conic&line&line&line&{\rm $\emptyset$}&23&(9,45,9)\\
        &	2)&line&line&line&2 lines&line&line&line&{\rm $1/2^{16}$}&32&(9,36,9)\\
            \hline
         \end{tabular}

         \vspace{0.5em} 
    \begin{tabular}{|c||c|c|c|c|c|c|c|c|c||c|c|c|}\hline
			case& No.& $D_{e_1}$&$D_{e_2}$&$D_{e_{3}}$&$D_{e_4}$&$D_{e_5}$&$D_{e_2+e_5}$&$D_{e_1+e_2+e_5}$&$D_{e_2+e_3+e_4+e_5}$&$\textrm{Sing}(X)$&$\rho(\widetilde{X}_{min})$&$h^2(X)_{tra}$\\
			\hline
            \hline
        $F-(2,5)$ &	1)&line&line&line &line&line&line&line&line&{\rm $\emptyset$}&20&(7,28,7)\\
            \hline
         \end{tabular}
         
          \vspace{0.5em} 
    \begin{tabular}{|c||c|c|c|c|c|c|c|c|c||c|c|c|}\hline
			case& No.& $D_{e_1}$&$D_{e_2}$&$D_{e_{3}}$&$D_{e_4}$&$D_{e_5}$&$D_{e_1+e_3}$&$D_{e_2+e_4}$&$D_{e_2+e_4+e_5}$&$\textrm{Sing}(X)$&$\rho(\widetilde{X}_{min})$&$h^2(X)_{tra}$\\
			\hline
            \hline
        $G-(2,5)$ &	1)&line&line&line &line&line&line&conic&line&{\rm $\emptyset$}&16&(15,72,15)\\
        & 2)&line&line&line &line&line&line&2 lines&line&{\rm $1/2^{16}$}&28&(15,60,15)\\
            \hline
         \end{tabular}

          \vspace{0.5em} 
    \begin{tabular}{|c||c|c|c|c|c|c|c|c|c|c||c|c|c|}\hline
			case& No.& $D_{e_1}$&$D_{e_2}$&$D_{e_{3}}$&$D_{e_4}$&$D_{e_5}$&$D_{e_1+e_3}$&$D_{e_2+e_5}$&$D_{e_3+e_5}$&$D_{e_3+e_4+e_5}$&$\textrm{Sing}(X)$&$\rho(\widetilde{X}_{min})$&$h^2(X)_{tra}$\\
			\hline
            \hline
        $H-(2,5)$ &	1)&line&line&line &line&line&line&line&line&line&{\rm $\emptyset$}&16&(13,52,13)\\
            \hline
         \end{tabular}

         \vspace{0.5em} 
    \begin{tabular}{|c||c|c|c|c|c|c|c|c|c|c||c|c|c|}\hline
			case& No.& $D_{e_1}$&$D_{e_2}$&$D_{e_{3}}$&$D_{e_4}$&$D_{e_5}$&$D_{e_1+e_3}$&$D_{e_1+e_5}$&$D_{e_1+e_3+e_5}$&$D_{e_2+e_3+e_4+e_5}$&$\textrm{Sing}(X)$&$\rho(\widetilde{X}_{min})$&$h^2(X)_{tra}$\\
			\hline
            \hline
        $I-(2,5)$ &	1)&line&line&line &line&line&line&line&line&line&{\rm $\emptyset$}&16&(13,52,13)\\
            \hline
         \end{tabular}

         \vspace{0.5em} 
    \begin{tabular}{|c||c|c|c|c|c|c|c|c|c|c|c||c|c|c|}\hline
			case& No.& $D_{e_1}$&$D_{e_2}$&$D_{e_{3}}$&$D_{e_4}$&$D_{e_5}$&$D_{e_1+e_2}$&$D_{e_1+e_4}$&$D_{e_2+e_4}$&$D_{e_3+e_5}$&$D_{e_1+e_2+e_4}$&$\textrm{Sing}(X)$&$\rho(\widetilde{X}_{min})$&$h^2(X)_{tra}$\\
			\hline
            \hline
        $J-(2,5)$ &	1)&line&line&line &line&line&line&line&line&line&line&{\rm $\emptyset$}&8&(21,84,21)\\
            \hline
         \end{tabular}
     \end{minipage}
    \end{flushleft}
    \hypertarget{tab: (2,5)}{\caption{Building data 
    		of a SNC $(2,5)$-plane $\pi\colon X\to \mathbb P^2$ satisfying Property~$\bigstar$.}} \label{tab: (2,5)}
    \end{table}

      \begin{table}[H]
    \begin{flushleft} 
    \footnotesize
    \setlength{\tabcolsep}{5pt}
    \renewcommand{\arraystretch}{1.2}
    \begin{tabular}{|c||c|c|c|c|c|c|c|c||c|c|c|}\hline
			case& No.& $D_{e_1}$&$D_{e_2}$&$D_{e_{3}}$&$D_{e_4}$&$D_{e_5}$&$D_{e_6}$&$D_{e_1+e_2+e_3+e_4+e_5+e_6}$&$\textrm{Sing}(X)$&$\rho(\widetilde{X}_{min})$&$h^2(X)_{tra}$\\
			\hline
            \hline
        $A-(2,6)$ &	1)&line&line&line &line&line&line&line&{\rm $\emptyset$}&36&(7,28,7)\\
            \hline
         \end{tabular}

    \vspace{0.5em} 
    \begin{tabular}{|c||c|c|c|c|c|c|c|c|c||c|c|c|}\hline
			case& No.& $D_{e_1}$&$D_{e_2}$&$D_{e_{3}}$&$D_{e_4}$&$D_{e_5}$&$D_{e_6}$&$D_{e_3+e_6}$&$D_{e_1+e_2+e_4+e_5}$&$\textrm{Sing}(X)$&$\rho(\widetilde{X}_{min})$&$h^2(X)_{tra}$\\
			\hline
            \hline
        $B-(2,6)$ &	1)&line&line&line &line&line&line&line&line&{\rm $\emptyset$}&36&(15,60,15)\\
            \hline
         \end{tabular}

    \vspace{0.5em} 
    \begin{tabular}{|c||c|c|c|c|c|c|c|c|c|c||c|c|c|}\hline
			case& No.& $D_{e_1}$&$D_{e_2}$&$D_{e_{3}}$&$D_{e_4}$&$D_{e_5}$&$D_{e_6}$&$D_{e_1+e_6}$&$D_{e_2+e_4}$&$D_{e_3+e_5}$&$\textrm{Sing}(X)$&$\rho(\widetilde{X}_{min})$&$h^2(X)_{tra}$\\
			\hline
            \hline
        $C-(2,6)$ &	1)&line&line&line &line&line&line&line&line&line&{\rm $\emptyset$}&28&(27,108,27)\\
            \hline
         \end{tabular}
    \end{flushleft}
    \hypertarget{tab: (2,6)}{\caption{Building data 
    		of a SNC $(2,6)$-plane $\pi\colon X\to \mathbb P^2$ satisfying Property~$\bigstar$.}} \label{tab: (2,6)}
    \end{table}

    \begin{table}[H]
    \begin{flushleft} 
    \footnotesize
    \setlength{\tabcolsep}{5pt}
    \renewcommand{\arraystretch}{1.2}
    \begin{tabular}{|c||c|c|c|c||c|c|c|}\hline
			case& No.& $D_{e_1}$&$D_{e_2}$&$D_{e_{1}+e_2}$&$\textrm{Sing}(X)$&$\rho(\widetilde{X}_{min})$&$h^2(X)_{tra}$\\
			\hline
            \hline
        $A-(3,2)$ &	1)&conic&conic&line&{\rm $\emptyset$}&11&(1,8,1)\\
        &	2)&conic&2 lines&line&{\rm $2/3^3$}&13&(1,6,1)\\
        &	3)&2 lines&2 lines&line&{\rm $2/3^6$}&15&(1,4,1)\\
            \hline
         \end{tabular}

    \vspace{0.5em} 
    \begin{tabular}{|c||c|c|c|c||c|c|c|}\hline
			case& No.& $D_{e_1}$&$D_{e_2}$&$D_{2e_2}$&$\textrm{Sing}(X)$&$\rho(\widetilde{X}_{min})$&$h^2(X)_{tra}$\\
			\hline
            \hline
        $B-(3,2)$ &	1)&cubic&line&line&{\rm $1/3^3$}&10&(2,20,2)\\
        &	2)&conic+line&line&line&{\rm $1/3^3$, $2/3^6$}&18&(2,12,2)\\
        &	3)&3 lines&line&line&{\rm $1/3^3$, $2/3^9$}&22&(2,8,2)\\
            \hline
         \end{tabular}

         \vspace{0.5em} 
    \begin{tabular}{|c||c|c|c|c|c||c|c|c|}\hline
			case& No.& $D_{e_1}$&$D_{e_2}$&$D_{e_1+e_2}$&$D_{2(e_1+e_2)}$&$\textrm{Sing}(X)$&$\rho(\widetilde{X}_{min})$&$h^2(X)_{tra}$\\
			\hline
            \hline
        $C-(3,2)$ &	1)&line&line&line&conic&{\rm $1/3^6$}&14&(1,6,1)\\
        &	2)&line&line&line&2 lines&{\rm $1/3^6$, $2/3^3$}&16&(1,4,1)\\
            \hline
         \end{tabular}

    \vspace{0.5em} 
    \begin{tabular}{|c||c|c|c|c|c||c|c|c|}\hline
			case& No.& $D_{e_1}$&$D_{e_2}$&$D_{e_1+2e_2}$&$D_{2(e_1+e_2)}$&$\textrm{Sing}(X)$&$\rho(\widetilde{X}_{min})$&$h^2(X)_{tra}$\\
			\hline
            \hline
        $D-(3,2)$ &	1)&line&line&cubic&line&{\rm $\emptyset$}&1&(3,30,3)\\
        &	2)&line&line&conic+line&line&{\rm $2/3^6$}&13&(3,18,3)\\
         &	3)&line&line&3 lines&line&{\rm $2/3^9$}&19&(3,12,3)\\
            \hline
         \end{tabular}

         \vspace{0.5em} 
    \begin{tabular}{|c||c|c|c|c|c|c||c|c|c|}\hline
			case& No.& $D_{e_1}$&$D_{e_2}$&$D_{2e_2}$&$D_{2e_1+e_2}$&$D_{2(e_1+e_2)}$&$\textrm{Sing}(X)$&$\rho(\widetilde{X}_{min})$&$h^2(X)_{tra}$\\
			\hline
            \hline
        $E-(3,2)$ &	1)&conic&line&line&line&line&{\rm $1/3^3$}&10&(2,12,2)\\
         &	2)&2 lines&line&line&line&line&{\rm $1/3^3$, $2/3^3$}&14&(2,8,2)\\
            \hline
         \end{tabular}
    \end{flushleft}
    \hypertarget{tab: (3,2)}{\caption{Building data 
    		of a SNC $(3,2)$-plane $\pi\colon X\to \mathbb P^2$ satisfying Property~$\bigstar$.}} \label{tab: (3,2)}
    \end{table}

    \begin{table}[H]
    \begin{flushleft} 
    \footnotesize
    \setlength{\tabcolsep}{5pt}
    \renewcommand{\arraystretch}{1.2}
    \begin{tabular}{|c||c|c|c|c|c||c|c|c|}\hline
			case& No.& $D_{e_1}$&$D_{e_2}$&$D_{e_3}$&$D_{2e_{1}+e_2+2e_3}$&$\textrm{Sing}(X)$&$\rho(\widetilde{X}_{min})$&$h^2(X)_{tra}$\\
			\hline
            \hline
        $A-(3,3)$ &	1)&line&conic&line&line&{\rm $\emptyset$}&19&(3,18,3)\\
        &	2)&line&2 lines&line&line&{\rm $2/3^9$}&25&(3,12,3)\\
            \hline
         \end{tabular}

    \vspace{0.5em} 
    \begin{tabular}{|c||c|c|c|c|c|c||c|c|c|}\hline
			case& No.& $D_{e_1}$&$D_{e_2}$&$D_{e_3}$&$D_{e_1+2e_2}$&$D_{e_{1}+2e_3}$&$\textrm{Sing}(X)$&$\rho(\widetilde{X}_{min})$&$h^2(X)_{tra}$\\
			\hline
            \hline
        $B-(3,3)$ &	1)&line&line&line&line&line&{\rm $\emptyset$}&13&(1,4,1)\\
            \hline
         \end{tabular}

    \vspace{0.5em} 
    \begin{tabular}{|c||c|c|c|c|c|c||c|c|c|}\hline
			case& No.& $D_{e_1}$&$D_{e_2}$&$D_{e_3}$&$D_{2e_2}$&$D_{2e_{1}+2e_3}$&$\textrm{Sing}(X)$&$\rho(\widetilde{X}_{min})$&$h^2(X)_{tra}$\\
			\hline
            \hline
        $C-(3,3)$ &	1)&line&line&line&line&line&{\rm $1/3^9$}&22&(2,8,2)\\
            \hline
         \end{tabular}

         \vspace{0.5em} 
    \begin{tabular}{|c||c|c|c|c|c|c|c||c|c|c|}\hline
			case& No.& $D_{e_1}$&$D_{e_2}$&$D_{e_3}$&$D_{2e_1+e_2}$&$D_{2e_{1}+2e_2}$&$D_{e_{1}+2e_2+2e_3}$&$\textrm{Sing}(X)$&$\rho(\widetilde{X}_{min})$&$h^2(X)_{tra}$\\
			\hline
            \hline
        $D-(3,3)$ &	1)&line&line&line&line&line&line&{\rm $\emptyset$}&13&(5,20,5)\\
            \hline
         \end{tabular}
    \end{flushleft}
    \hypertarget{tab: (3,3)}{\caption{Building data 
    		of a SNC $(3,3)$-plane $\pi\colon X\to \mathbb P^2$ satisfying Property~$\bigstar$.}} \label{tab: (3,3)}
    \end{table}
\vspace{-0.5cm}

    \begin{table}[H]
    \begin{flushleft} 
    \footnotesize
    \setlength{\tabcolsep}{5pt}
    \renewcommand{\arraystretch}{1.2}
    \begin{tabular}{|c||c|c|c|c|c|c||c|c|c|}\hline
			case& No.& $D_{e_1}$&$D_{e_2}$&$D_{e_3}$&$D_{e_4}$&$D_{2(e_{1}+e_2+e_3+e_4)}$&$\textrm{Sing}(X)$&$\rho(\widetilde{X}_{min})$&$h^2(X)_{tra}$\\
			\hline
            \hline
        $A-(3,4)$ &	1)&line&line&line&line&line&{\rm $\emptyset$}&31&(5,20,5)\\
            \hline
         \end{tabular}
         \end{flushleft}
    \hypertarget{tab: (3,4)}{\caption{Building data 
    		of a SNC $(3,4)$-plane $\pi\colon X\to \mathbb P^2$ satisfying Property~$\bigstar$.}} \label{tab: (3,4)}
    \end{table}
   \vspace{-1cm}
   
    \begin{table}[H]
    \begin{flushleft} 
    \footnotesize
    \setlength{\tabcolsep}{5pt}
    \renewcommand{\arraystretch}{1.2}
    \begin{tabular}{|c||c|c|c|c|c||c|c|c|}\hline
			case& No.& $D_{e_1}$&$D_{e_2}$&$D_{e_1+e_2}$&$D_{3e_1+3e_2}$&$\textrm{Sing}(X)$&$\rho(\widetilde{X}_{min})$&$h^2(X)_{tra}$\\
			\hline
            \hline
        $A-(5,2)$ &	1)&line&line&line&line&{\rm $2/5^5$}&18&(1,2,1)\\
            \hline
         \end{tabular}

          \vspace{0.5em} 
          \begin{tabular}{|c||c|c|c|c||c|c|c|}\hline
			case& No.& $D_{e_1}$&$D_{e_2}$&$D_{3e_1+4e_2}$&$\textrm{Sing}(X)$&$\rho(\widetilde{X}_{min})$&$h^2(X)_{tra}$\\
			\hline
            \hline
        $B-(5,2)$ &	1)&conic&line&line&{\rm $\emptyset$}&17&(2,12,2)\\
        &	2)&2 lines&line&line&{\rm $4/5^5$}&25&(2,4,2)\\
            \hline
         \end{tabular}
         \end{flushleft}
    \hypertarget{tab: (5,2)}{\caption{Building data 
    		of a SNC $(5,2)$-plane $\pi\colon X\to \mathbb P^2$ satisfying Property~$\bigstar$.}} \label{tab: (5,2)}
    \end{table}
\vspace{-1cm}

    \begin{table}[H]
    \begin{flushleft} 
    \footnotesize
    \setlength{\tabcolsep}{5pt}
    \renewcommand{\arraystretch}{1.2}
    \begin{tabular}{|c||c|c|c|c|c||c|c|c|}\hline
			case& No.& $D_{e_1}$&$D_{e_2}$&$D_{e_3}$&$D_{4(e_1+e_2+e_3)}$&$\textrm{Sing}(X)$&$\rho(\widetilde{X}_{min})$&$h^2(X)_{tra}$\\
			\hline
            \hline
        $A-(5,3)$ &	1)&line&line&line&line&{\rm $\emptyset$}&37&(4,8,4)\\
            \hline
         \end{tabular}
         \end{flushleft}
    \hypertarget{tab: (5,3)}{\caption{Building data 
    		of a SNC $(5,3)$-plane $\pi\colon X\to \mathbb P^2$ satisfying Property~$\bigstar$.}} \label{tab: (5,3)}
    \end{table}
\vspace{-1cm}

    \begin{table}[H]
    \begin{flushleft} 
    \footnotesize
    \setlength{\tabcolsep}{5pt}
    \renewcommand{\arraystretch}{1.2}
    \begin{tabular}{|c||c|c|c|c|c||c|c|c|}\hline
			case& No.& $D_{e_1}$&$D_{e_2}$&$D_{e_1+2e_2}$&$D_{5e_1+4e_2}$&$\textrm{Sing}(X)$&$\rho(\widetilde{X}_{min})$&$h^2(X)_{tra}$\\
			\hline
            \hline
        $A-(7,2)$ &	1)&line&line&line&line&{\rm $\emptyset$}&13&(2,8,2)\\
            \hline
         \end{tabular}

          \vspace{0.5em} 
           \begin{tabular}{|c||c|c|c|c|c||c|c|c|}\hline
			case& No.& $D_{e_1}$&$D_{e_2}$&$D_{e_1+e_2}$&$D_{5e_1+5e_2}$&$\textrm{Sing}(X)$&$\rho(\widetilde{X}_{min})$&$h^2(X)_{tra}$\\
			\hline
            \hline
        $B-(7,2)$ &	1)&line&line&line&line&{\rm $2/7^7$}&26&(3,12,3)\\
            \hline
         \end{tabular}
         \end{flushleft}
    \hypertarget{tab: (7,2)}{\caption{Building data 
    		of a SNC $(7,2)$-plane $\pi\colon X\to \mathbb P^2$ satisfying Property~$\bigstar$.}}\label{tab: (7,2)}
    \end{table}

\bigskip

Federico Fallucca, INdAM, Universit\`a degli Studi di Trento,  Dipartimento di Matematica, I-{38123} Trento, Italy \\
\emph{e-mail} \verb|federico.fallucca@unitn.it|

\smallskip

Matteo Penegini, Universit\`a degli Studi di Pavia,  Dipartimento di Matematica \emph{''Felice Casorati"}, I-27100 Pavia, Italy \\
\emph{e-mail} \verb|matteo.penegini@unipv.it|

\smallskip

Anna Ulivi, Universit\`a degli Studi di Genova, DIMA Dipartimento di Matematica, I-16146 Genova, Italy \\
\emph{e-mail} \verb|anna.ulivi@edu.unige.it|
\end{document}